\theoremstyle{plain}
\newtheorem{thm}{Theorem}[section]
\newtheorem{prop}[thm]{Proposition}
\newtheorem{lem}[thm]{Lemma}
\newtheorem{cor}[thm]{Corollary}
\theoremstyle{definition}
\newtheorem{defn}{Definition}
\theoremstyle{remark}
\newtheorem{remark}{Remark}
\newtheorem{example}{Example}
  \def\C{{\mathbb{C}}}  \def\E{{\mathbb{E}}} \def\F{{\mathbb{F}}}  \def\H{{\mathbb{H}}}      \def\N{{\mathbb{N}}}    \def\R{{\mathbb{R}}}        \def\Z{{\mathbb{Z}}}
 \def\cB{{\mathcal{B}}} \def\cC{{\mathcal{C}}}   \def\cF{{\mathcal{F}}} \def\cG{{\mathcal{G}}} \def\cH{{\mathcal{H}}}   \def\cK{{\mathcal{K}}} \def\cL{{\mathcal{L}}} \def\cM{{\mathcal{M}}} \def\cN{{\mathcal{N}}} \def\cO{{\mathcal{O}}} \def\cP{{\mathcal{P}}}  \def\cR{{\mathcal{R}}} \def\cS{{\mathcal{S}}} \def\cT{{\mathcal{T}}}      
       \def\bcH{\overline{ {\mathcal{H}}}}
           \def\hmu{{\widehat{\mu}}}
  \def\sC{{\mathscr{C}}}     \def\sH{{\mathscr{H}}}                  \def\sZ{{\mathscr{Z}}}
\newcommand\Aut{\operatorname{Aut}}
\newcommand\APER{{\operatorname{APER}}}
\newcommand\diam{\operatorname{diam}}
\newcommand\dom{\operatorname{dom}}
\newcommand\ERG{\operatorname{ERG}}
\newcommand\Fix{{\operatorname{Fix}}}
\newcommand\Hull{\operatorname{Hull}}
\newcommand\Isom{\operatorname{Isom}}
\newcommand\LOXO{\operatorname{LOXO}}
\newcommand\PARA{\operatorname{PARA}}
\newcommand\Prob{\operatorname{Prob}}
\newcommand\Proj{\operatorname{Proj}}
\newcommand\Res{\operatorname{Res}}
\newcommand\rng{{\operatorname{rng}}}
\newcommand\supp{\operatorname{supp}}
\newcommand{\resto}{\upharpoonright}
\def\cc{{\curvearrowright}}
\newcommand{\lb}{\llbracket}
\newcommand{\rb}{\rrbracket}
\newcommand{\grph}{\operatorname{graph}}
\newcommand{\bsH}{\overline{\sH}}
\begin{document}
\title{Equivalence relations that act on bundles of hyperbolic spaces}
\author{Lewis Bowen\footnote{supported in part by NSF grant DMS-1500389, NSF CAREER Award DMS-0954606} \\ University of Texas at Austin}
\maketitle

\begin{abstract}
Consider a measured equivalence relation acting on a bundle of hyperbolic metric spaces by isometries. We prove that every aperiodic hyperfinite subequivalence relation is contained in a {\em unique} maximal hyperfinite subequivalence relation. We classify elements of the full group according to their action on fields on boundary measures (extending earlier results of Kaimanovich), study the existence and residuality of different types of elements and obtain an analogue of Tits' alternative. 
\end{abstract}

\noindent
{\bf Keywords}: measured equivalence relations, Tits alternative, treeable equivalence relations \\
{\bf MSC}:37A20\\

\noindent
\tableofcontents

\section{Introduction}

The broad goal of this research is to generalize the theory of groups acting on hyperbolic spaces to measured equivalence relations. To explain this properly we introduce some notation, leaving details to later sections.

Let $(X,\mu)$ denote a standard probability space and $\cR \subset X \times X$ a discrete Borel equivalence relation. We require that $\mu$ is $\cR$-invariant which means that if $\phi:X \to X$ is any Borel isomorphism with graph contained in $\cR$, then $\phi_*\mu=\mu$. We let $[\cR]$, the {\bf full group}, denote the set of all such Borel isomorphisms up to equivalence (isomorphisms are equivalent if they agree $\mu$-almost everywhere). We usually  require that $\cR$ is ergodic which means that if $Y \subset X$ is any Borel set equal to a union of $\cR$-classes then $\mu(Y) \in \{0,1\}$. The triple $(X,\mu,\cR)$ is an ergodic {\bf discrete probability-measure-preserving  (pmp) equivalence relation}.

\begin{defn}[metric and Borel bundles]\label{defn:main0}
A {\bf Borel bundle over $X$} is a standard Borel space $B$ with a Borel surjection $\pi:B \to X$ called the {\bf bundle projection}. We let $B*B = \{ (y,z):~\pi(y)=\pi(z)\}$ denote the fiber product with its Borel structure inherited from the inclusion $B*B \subset B\times B$. For each $x\in X$, the {\bf fiber} over $x$ is the subset $\pi^{-1}(x)$. It is denoted by $B_x:=\pi^{-1}(x)$. A {\bf metric bundle over $X$} consists of a Borel bundle $\pi:B \to X$ with a Borel map $d:B*B \to [0,\infty)$ such that for each $x\in X$, $B_x$ equipped with the restriction $d\resto B_x \times B_x$ is a metric space. A {\bf section} is a map $\sigma:X \to B$ such that $\sigma(x) \in B_x$ for all $x$. A metric bundle is {\bf separable} if there exists a countable set $\{\sigma_i\}_{i\in \N}$ of Borel sections $\sigma_i:X \to B$ such that for every $x\in X$, the set $\{\sigma_i(x)\}_{i\in \N}$ is dense in $B_x$. If each $(B_x, d\resto B_x \times B_x)$ is a geodesic Gromov hyperbolic space (the definition of which is in \S \ref{sec:hyperbolic}
 below) then we say $B$ is a {\bf bundle of hyperbolic spaces}. 
\end{defn}

\begin{defn}\label{defn:main}
Let $\pi:\sH \to X$ denote a bundle of hyperbolic spaces. An {\bf action of the equivalence relation $\cR$ on $\sH$ by isometries} consists of a family $\{\alpha(x,y):~(x,y) \in \cR\}$ of isometries $\alpha(x,y): \sH_y \to \sH_x$ satisfying:
\begin{itemize}
\item (cocycle condition) $\alpha(x,y)\alpha(y,z)=\alpha(x,z)$ for all $x\cR y \cR z$,
\item (Borel condition) $\{ (p,q) \in \sH \times \sH:~ \alpha(\pi(p),\pi(q))(q) = p\}$ is Borel.
\end{itemize}
The tuple $(\sH,d,\pi,\alpha)$ is an {\bf isometric action of $\cR$ on a bundle of hyperbolic spaces}. 
\end{defn}


\begin{example}\label{ex:graphing}
A {\bf graphing} is a Borel subset $\cG \subset \cR$ such that $\cR$ is the smallest equivalence relation containing $\cG$ and $\cG$ is symmetric (which means $(x,y) \in \cG \Rightarrow (y,x) \in \cG$). For each $x\in X$, we let $\cG_x$ denote the {\bf graph at $x$}. It has vertex set $[x]_\cR$ (the equivalence class of $x$) and edge set $E_x=\{ \{y,z\} :~ (y,z) \in \cG$ and $y,z \in [x]_\cR\}$. By abuse of notation we also consider $\cG_x$ to be a metric graph by assigning each edge length 1. We say $\cG$ is {\bf hyperbolic} if each $\cG_x$ is a Gromov hyperbolic metric space. In this case, let $\sH=\sqcup_x \cG_x$ be the disjoint union of the metric graphs. We consider $\sH$ to be a bundle with projection map that takes $\cG_x$ to $x$. It is a bundle of hyperbolic spaces where $d:\sH*\sH \to [0,\infty)$ is defined by setting $d(y,z)$ equal to the length of the shortest path from $y$ to $z$ in $\cG_x$ (if $\pi(y)=\pi(z)=x$). We define the action by setting $\alpha(x,y):\cG_y \to \cG_x$ equal to the natural identification. Equivalence relations with hyperbolic graphings were studied by Kaimanovich in \cite{kaimanovich-boundary-amenability}.
\end{example}

\begin{remark}
We prefer to work with isometric actions on bundles of hyperbolic spaces instead of hyperbolic graphings for the following reason: the class of equivalence relations that admit such actions is closed under taking subequivalence relations (because we can always restrict the action to the subequivalence relation).  By contrast, it is unknown whether the existence of a hyperbolic graphing is closed under taking subequivalence relations.
\end{remark}

\begin{defn}[Bundle of Gromov completions]
Let $(\sH,d,\pi,\alpha)$ be as in Definition \ref{defn:main}. For each $x\in X$, let $\partial \sH_x$ denote the Gromov boundary of $\sH_x$ and $\bsH_x = \sH_x \cup \partial \sH_x$ denote the Gromov completion. We also let 
$$\bsH= \sqcup_x \bsH_x,\quad \partial \sH = \sqcup_x \partial \sH_x \subset \bsH$$
 be the disjoint unions. We extend the projection map $\pi$ to $\bsH$ so that $\pi$ maps $\bsH_x$ to $x$ (for $x\in X$). In this manner, we consider $\bsH$ and $\partial \sH$ to be bundles over $X$. In \S \ref{sec:hyperbolic-bundle} we show that $\bsH$ and $\partial \sH$ are naturally endowed with Borel structures so that the inclusions $\sH\to \bsH, \partial \sH \to \bsH$ are Borel and the projection map $\pi:\bsH \to X$ is Borel. We also extend the action $\alpha$ as follows. Because $\alpha(x,y):\sH_y \to \sH_x$ is an isometry there is a unique extension, which we also denote by $\alpha(x,y):\bsH_y \to \bsH_x$ that is a homeomorphism. It satisfies the cocycle condition and the Borel condition of Definition \ref{defn:main}. These statements are proven in \S \ref{sec:hyperbolic-bundle}  below.
\end{defn}

\begin{defn}[Sections, orbits and limit sets]\label{defn:section}
Now let $(\sH,d,\pi,\alpha)$ be as in Definition \ref{defn:main}.  Recall that a section is a map $\sigma: X \to \sH$ such that $\sigma(x) \in \sH_x$ for a.e. $x$. This induces a map
$$\sigma_x:[x]_\cR \to \sH_x, \quad \sigma_x(y) := \alpha(x,y)\sigma(y).$$
The image of $\sigma_x$ is the {\bf orbit of $x$}, denoted $\cO_x^\sigma$. In order to ensure non-triviality, we require that $\sigma$ is {\bf metrically proper} which means: for every metric ball $\cB \subset \sH_x$, $\cO_x^\sigma \cap \cB$ is finite. Let $\cL^\sigma_x(\cR):= \overline{\cO_x^\sigma} \cap \partial \sH_x$ be the {\bf limit set} (where the closure of $\cO_x^\sigma$ is taken in $\bsH_x$). 

Many arguments rely on a map  $d_\sigma:\cR  \to [0,\infty)$ defined by
$$d_\sigma(x,y) = d(\sigma_z(x),\sigma_z(y))$$
for any $z\in [x]_\cR=[y]_\cR$. The cocycle property of the action implies that this definition of $d_\sigma$ is independent of the choice of $z$. This gives a pseudo-metric on each equivalence class $[x]_{\cR}$. It is a metric if $\sigma_x:[x]_\cR \to \sH_x$ is injective. In this case $\sigma_x$ is an isometric embedding of $[x]_\cR$ into $\sH_x$. 
\end{defn}

Our main results hold under the following hypotheses:
\begin{defn}[Main Assumption]
We say the {\bf Main Assumption} is satisfied if $(X,\mu)$ is a standard non-atomic probability space, $\cR \subset X\times X$ is a discrete Borel equivalence relation, $\mu$ is $\cR$-invariant and ergodic, $\pi:\sH \to X$ is a separable bundle of hyperbolic spaces with isometric action $\alpha$, $\sigma:X\to \sH$ is a metrically proper section and  for a.e. $x$, the closure $\overline{\cO^\sigma_x}$ of the orbit of $x$ is compact in $\bsH_x$. This compactness assumption is automatically satisfied if each fiber $\sH_x$ is locally compact. 
\end{defn} 
Our first result is that the limit sets are essentially independent of $\sigma$:
\begin{thm}\label{thm:limit-set}
If the Main Assumption is satisfied and $\sigma,\eta:X \to \sH$ are metrically proper sections whose orbit closures $\overline{\cO^\sigma_x}, \overline{\cO^\eta_x}$ are compact (for a.e. $x$) then $\cL^\sigma_x = \cL^\eta_x$ for a.e. $x$.
\end{thm}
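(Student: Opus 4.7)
The plan is to show $\cL^\sigma_x \subseteq \cL^\eta_x$ for $\mu$-a.e.\ $x$; the reverse inclusion then follows by symmetry in $\sigma$ and $\eta$. Define $f:X \to [0,\infty)$ by $f(y) := d(\sigma(y),\eta(y))$. This is Borel and everywhere finite since $\sigma(y),\eta(y) \in \sH_y$, so the sets $A_M := f^{-1}([0,M])$ satisfy $\mu(A_M) \nearrow 1$. Fix $M > 0$ large enough that $A := A_M$ has $\mu(A) > 0$. For any $y \in [x]_\cR \cap A$, since $\alpha(x,y)$ is an isometry,
\[
d(\sigma_x(y),\eta_x(y)) = d(\sigma(y),\eta(y)) = f(y) \leq M.
\]

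The theorem then reduces to the following Key Lemma: \emph{for a.e.\ $x$ and every $\xi \in \cL^\sigma_x$, there is a sequence $(y_n) \subset [x]_\cR \cap A$ with $\sigma_x(y_n) \to \xi$ in $\bsH_x$.} Granting this, the uniform bound $d(\sigma_x(y_n),\eta_x(y_n)) \leq M$ together with the standard Gromov product inequality $|(\eta_x(y_n)|\eta_x(y_m))_{\sigma(x)} - (\sigma_x(y_n)|\sigma_x(y_m))_{\sigma(x)}| \leq 2M$ forces the Gromov products on the $\eta$-side to diverge as well. Thus $\eta_x(y_n) \to \xi$ in $\bsH_x$, placing $\xi$ in $\cL^\eta_x$.

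For the Key Lemma, I first observe that the condition ``$\cL^\sigma_x \subseteq \overline{\{\sigma_x(y) : y \in [x]_\cR \cap A\}}$'' is $\cR$-invariant: both sides transform equivariantly under $\alpha(x,y)$ when the basepoint is moved along an equivalence class, and $A \subseteq X$ is basepoint-free. By ergodicity, the set of $x$ where the condition fails has measure $0$ or $1$. I would argue by contradiction, assuming full measure. Using Kuratowski--Ryll-Nardzewski selection applied to the Borel compact-set-valued map $x \mapsto \cL^\sigma_x$ (nonempty compactness of $\cL^\sigma_x$ comes from the Main Assumption), choose Borel maps $x \mapsto \xi_x \in \cL^\sigma_x$ and $x \mapsto U_x$, where $U_x \subseteq \bsH_x$ is an open neighborhood of $\xi_x$ so small that $\sigma_x^{-1}(U_x) \cap A = \emptyset$. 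Since $\xi_x$ is an accumulation point of the metrically discrete (by properness of $\sigma$) orbit $\cO_x^\sigma$, each fiber $E_x := \sigma_x^{-1}(U_x) \subseteq [x]_\cR$ is infinite, yet contained in $A^c = \{f > M\}$. The contradiction with $\mu(A) > 0$ should be extracted by combining Poincar\'e recurrence of $\cR$ on $A$, the cocycle identity applied to the nested family $\alpha(y,x)U_x \subseteq \bsH_y$, and the compactness of $\overline{\cO_x^\sigma}$ to locate some $y \in [x]_\cR \cap A$ close enough to $\xi_x$ in $\bsH_x$ to lie inside $U_x$.

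The main obstacle is precisely this last measure-theoretic contradiction in the Key Lemma. The neighborhoods $U_x$ depend on the fiber in a way controlled only by the cocycle $\alpha$, so forcing the positive-measure recurrence into $A$ to intersect the small directional neighborhood $U_x$ requires a careful transfer of information between the measure-theoretic structure on $X$ and the hyperbolic geometry near $\xi_x$ in $\sH_x$. I expect the compactness of orbit closures to be essential here, both to validate Borel selection of $(\xi_x, U_x)$ and to guarantee that limit points are approached by enough orbit points so that a shadow-style estimate in the fiber can convert ``topological avoidance of $A$ near $\xi_x$'' into a statement contradicting $\mu(A) > 0$.
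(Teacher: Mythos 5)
Your overall reduction is reasonable: if you could show every $\xi\in\cL^\sigma_x$ is approached by $\sigma_x(y_n)$ with $y_n$ drawn from a positive-measure subset $A\subset X$ on which $d(\sigma,\eta)\le M$, then the Gromov-product estimate does transport $\xi$ into $\cL^\eta_x$. This part is fine.

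The gap is the Key Lemma, and it is not a small one. You acknowledge yourself that the measure-theoretic contradiction is ``the main obstacle,'' and the tools you list do not close it. Poincar\'e recurrence gives that $[x]_\cR\cap A$ is infinite for a.e.\ $x$; it says nothing about the \emph{directions} in $\partial\sH_x$ from which those returns to $A$ approach. Starting from the assumption that $\cL^\sigma_x\not\subseteq\overline{\{\sigma_x(y):y\in A\cap[x]_\cR\}}$ a.e., selecting $\xi_x$ and a shrinking neighborhood $U_x$ is harmless, but ``compactness of $\overline{\cO^\sigma_x}$'' and ``the cocycle identity'' do not by themselves force any return to $A$ to land in $U_x$; the whole content of the theorem is precisely that positive-measure avoidance of a ``direction'' is impossible, and that requires a quantitative counting mechanism. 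The paper supplies exactly this via the Mass-Transport Principle (Lemma \ref{lem:mtp}) in Lemma \ref{limit-set1}: for an $\alpha$-invariant, fiberwise closed, quasi-convex $K$ meeting $\sH_x$, one considers nearest-point projection of orbit points onto $K_x$, shows projections have uniformly bounded diameter (Lemma \ref{lem:projection}), and then the invariance of $\mu$ rules out the infinite-to-one behavior that would occur if $\cL^\sigma_x\setminus K_x\neq\emptyset$ on a positive-measure set. Without some avatar of this argument your Key Lemma remains unproved, even though it is true (indeed it follows from Lemma \ref{limit-set1} applied to the convex hull of $\overline{\{\sigma_x(y):y\in A\cap[x]_\cR\}}$, together with Lemma \ref{lem:greenberg}).

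It is also worth noting that the paper's route is cleaner and sidesteps your $A$-truncation entirely: one takes $K=\Hull(\overline{\cO^\eta})$, checks it is $\alpha$-invariant, fiberwise closed and quasi-convex with $K_x\cap\sH_x\ne\emptyset$, applies Lemma \ref{limit-set1} to conclude $\cL^\sigma_x\subset K_x$, and then uses Lemma \ref{lem:greenberg} ($\Hull(\overline{\cO^\eta_x})\cap\partial\sH_x=\cL^\eta_x$) to finish. No uniform bound $M$ and no Borel selection of boundary points and neighborhoods is needed. Your approach, if pushed through, would require both the MTP-style lemma you are missing and the extra overhead of the selection argument, so even once repaired it would be longer than the paper's proof.
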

Because of this theorem, we may write $\cL_x:=\cL^\sigma_x$.


\subsection{Treeable subequivalence relations}

Our main results concern the structure of subequivalence relations of $\cR$. More precisely, they concern two specific types of subequivalence relations: treeable and hyperfinite. Let us recall that an equivalence relation $\cR$ is {\bf treeable} if it admits a graphing $\cG$ (as in Example \ref{ex:graphing}) in which each local graph $\cG_x$ is a tree. Such equivalence relations are analogous to free groups in group theory and have been  studied intensively. For example treeings play a central role in \cite{gaboriau-cost} (treeings realize the cost). A group is {\bf treeable} if it admits an essentially free action whose orbit-equivalence relation is treeable. See \cite{gaboriau-2005} for many examples of treeable groups. If $\cR$ is ergodic, treeable and non-hyperfinite then there exists an essentially free ergodic action $\F_2 \cc X$ of the rank 2 free group such that each orbit of the action is contained in an $\cR$-class \cite[Proposition 14]{gaboriau-lyons}. Since trees are hyperbolic metric spaces, any treeable equivalence relation with finite cost satisfies the Main Assumption (with respect to the bundle defined in Example \ref{ex:graphing}). 

 Let us also remark on the von Neumann-Day problem in group theory and its analog in the theory of equivalence relations. This problem asked whether every non-amenable group necessarily contains a  non-amenable free group. It was disproven by Ol'shankii \cite{olshankii-book}. However, the analogous problem for equivalence relation, ``does every non-hyperfinite pmp equivalence relation contain a non-hyperfinite treeable subequivalence relation? '' remains open. A strong partial answer due to Gaboriau-Lyons \cite{gaboriau-lyons} states that the orbit equivalence relation of any Bernoulli shift action (with large enough base entropy) of a non-amenable group has this property. Our first main result is a positive answer to this question under hyperbolicity assumptions:

\begin{thm}\label{thm:tits}
If $\cR$ satisfies the Main Assumption and is non-hyperfinite then $\cR$ contains an ergodic non-hyperfinite treeable subequivalence relation. 
\end{thm}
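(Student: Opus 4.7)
My plan is to establish the equivalence-relation analogue of the classical Tits alternative for isometric actions on Gromov hyperbolic spaces: a non-elementary action contains a free group of rank~$2$ obtained from two loxodromic elements with disjoint fixed-point pairs by ping-pong on the boundary. The goal is to construct an essentially free measurable action of $\F_2$ on a positive-measure subset of $X$ whose orbits lie in $\cR$-classes; the orbit equivalence relation of this action, after globalization and ergodic decomposition, will be the required treeable, non-hyperfinite, ergodic subrelation of $\cR$.

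The first step is to show that non-hyperfiniteness forces the action to be \emph{non-elementary}, in the sense that for a.e.\ $x$ the limit set $\cL_x$ contains at least three points; in that case the convergence-type dynamics on $\bsH_x$, enabled by compactness of $\overline{\cO_x^\sigma}$, force $\cL_x$ to be perfect and uncountable. Indeed, if $|\cL_x|\le 2$ almost surely, the $\cR$-invariant Borel field $x\mapsto \cL_x$ (well-defined by Theorem~\ref{thm:limit-set}) is either an invariant section of $\partial\sH$ or an invariant unordered pair; then averaging on horoballs based at the fixed point (via Busemann cocycles) or averaging between the two fixed points produces an $\cR$-invariant measurable mean, which gives amenability of $\cR$ and hence hyperfiniteness by Connes--Feldman--Weiss, contradicting our hypothesis.

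Next I would produce two loxodromic elements $\phi,\psi\in [\cR]$ with disjoint fixed-point pairs on a positive measure set $Y\subset X$. Existence of a single loxodromic on a positive measure set follows from the paper's classification of elements of $[\cR]$ by boundary action (the extension of Kaimanovich's trichotomy mentioned in the abstract); the existence of a pair with disjoint fixed-point fields $\{\phi^+(x),\phi^-(x)\}\cap\{\psi^+(x),\psi^-(x)\}=\emptyset$ can then be arranged either by conjugating $\phi$ by a generic element of $[\cR]$ (using richness of $\cL_x$ to move fixed points off), or via the residuality in the Polish topology on $[\cR]$ also announced in the abstract. Luzin--Novikov selection yields Borel exponents $n(x),m(x)\in\N$ and Borel disjoint neighborhoods of the four fixed points so that $\Phi(x):=\phi^{n(x)}$ and $\Psi(x):=\psi^{m(x)}$ satisfy the fiberwise ping-pong hypothesis on $Y$ (by North--South dynamics of loxodromics). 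Hence $\Phi,\Psi$ generate an essentially free measurable action of $\F_2$ on $Y$, contained in $\cR|_Y$, whose orbit equivalence relation $\cS_Y$ is a treeable, non-hyperfinite subrelation of $\cR|_Y$.

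Finally, to extend from $Y$ to $X$ ergodically while preserving treeability, I would first pass to an ergodic component of $\cS_Y$ (still treeable and non-hyperfinite on a positive-measure set), then globalize along a Borel selector from $X$ into $Y$ inside $\cR$, which exists because $\mu(Y)>0$ and $\cR$ is ergodic. A careful choice of this selector---for instance, enlarging the $\F_2$-graphing by a hyperfinite graphing transverse to its tree structure, so that the combined graphing remains a forest---yields an ergodic treeable subrelation $\cS\subset\cR$, non-hyperfinite because it contains $\cS_Y$. The main obstacle I expect is the ping-pong construction: obtaining a Borel family of pairs $(\phi,\psi)$ with measurable, pairwise disjoint fixed-point fields, together with uniformly large Borel-selected exponents making the ping-pong criterion hold fiberwise on a single positive-measure set, requires delicate measurable selection combined with quantitative fiberwise North--South estimates, all of which rest on the Borel structure of $\partial\sH$ provided by the Main Assumption.
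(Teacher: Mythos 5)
Your high-level strategy matches the paper's: find a loxodromic $f\in[\cR]$, obtain a second loxodromic with disjoint limit-set field, build a treed $\F_2$-type subrelation on a positive-measure set, then globalize via a Borel selector (your last step is exactly the paper's Lemma \ref{lem:ergodicity}). But there is a genuine gap in your ping-pong step. You propose to select Borel exponents $n(x),m(x)$ so that $\Phi(x):=\phi^{n(x)}(x)$ and $\Psi(x):=\psi^{m(x)}(x)$ satisfy a fiberwise ping-pong hypothesis and then conclude that $\Phi,\Psi$ generate a free action. In $[\cR]$, however, the iterates of $\Phi$ are not powers of $\phi$: $\Phi^2(x)=\phi^{\,n(x)+n(\Phi x)}(x)$, with a new exponent at each step, and the corresponding boundary cocycle $\alpha(\Phi^k x,x)$ is a product whose factors change from fiber to fiber. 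So the classical ping-pong lemma, which bounds North--South dynamics of a \emph{fixed} pair of isometries of a \emph{fixed} compact, does not transfer; one would need uniform quantitative contraction estimates over the whole Borel family of iterates, and it is not clear how to get them from Luzin--Novikov selection alone. The paper avoids this entirely by working metrically: after producing $g=hfh^{-1}$ with $\cL_x(f)\cap\cL_x(g)=\emptyset$ (proved by contradiction via Theorem \ref{thm:K} and Lemma \ref{cor:non-nested} --- not by genericity; note that the paper's residuality theorem concerns $\PARA$, not $\LOXO$, so your appeal to residuality of loxodromics is misplaced), it passes to a subset $X'_t$ via a Borel vertex coloring of a locally finite graph so that distinct $\cR$-equivalent points of $X'_t$ are automatically $d_\sigma$-distance $\ge r$ apart, and then uses the \emph{first-return maps} $f_0,g_0$ of $f,g$ to $X'_t$. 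These are honest elements of $[\cR\resto X'_t]$; the metric separation together with the Gromov-product bounds built into $X_t$ feed directly into the local-to-global quasi-geodesic criterion (Lemma \ref{lem:local-global2}), which simultaneously certifies essential freeness and the quasi-isometric embedding of the treeing without any actual boundary ping-pong. Finally, your ``pass to an ergodic component'' step is unneeded, since the first-return map $f_0$ of the ergodic $f$ is already ergodic.
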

Because the hypotheses on $\cR$ are inherited by subequivalence relations, the result above also holds for all subequivalence relations of $\cR$. Thus we may think of this as an analog of Tits' alternative for hyperbolic groups.

\subsection{Hyperfinite subequivalence relations}

Let us now recall that an equivalence relation $\cR$ is {\bf hyperfinite} if there exist Borel subequivalence relations $\cR_1 \le \cR_2 \le \cdots $ such that 
\begin{itemize}
\item for each $n$, all $\cR_n$-classes are finite;
\item $\cR=\cup_n \cR_n$.
\end{itemize}
Hyperfiniteness is analogous to amenability in group theory. In fact, it is equivalent to amenability in the theory of equivalence relations \cite{OW80, CFW81}. It is well-known that, in a hyperbolic group, any infinite amenable subgroup is contained in a unique maximal amenable subgroup (which must, in fact, be virtually cyclic). Moreover, normalizers of infinite amenable subgroups are necessarily amenable. Our next result shows that this phenomenon extends to equivalence relations:

\begin{thm}\label{thm:maximal}
If $\cR$ satisfies the Main Assumption and $\cS \le \cR$ is an aperiodic hyperfinite subequivalence relation then $\cS$ is contained in a {\em unique} maximal hyperfinite subequivalence relation. Moreover, the subequivalence relation generated by $\cS$ and the normalizer of $[\cS]$ in $[\cR]$ is hyperfinite.
\end{thm}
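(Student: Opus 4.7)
The plan is to construct a maximal hyperfinite subequivalence relation containing $\cS$ as the setwise stabilizer in $\cR$ of the limit set field of $\cS$ on $\partial \sH$, mirroring the classical fact that every infinite amenable subgroup of a hyperbolic group is contained in a unique maximal amenable subgroup---namely, the stabilizer of its $1$- or $2$-point limit set on the Gromov boundary.

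\textbf{Step 1 (limit set is elementary).} I would first show that the limit set field $x \mapsto \cL_x(\cS) \subseteq \partial \sH_x$ is non-empty and of cardinality at most $2$ almost surely. Non-emptiness follows from aperiodicity of $\cS$, metric properness of $\sigma$, and compactness of orbit closures in $\bsH_x$ (built into the Main Assumption). The cardinality bound is the measurable analogue of the hyperbolic-geometry fact that elementary subgroups of hyperbolic groups have limit sets of size at most $2$. Writing $\cS = \bigcup_n \cS_n$ with finite classes and estimating Gromov products along orbits of each $\cS_n$ shows that three distinct limit points would permit one to embed non-elementary (hence non-hyperfinite) dynamics into $\cS$. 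Alternatively, one may invoke a measurable analogue of Kaimanovich's boundary measure classification alluded to in the abstract.

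\textbf{Step 2 (the candidate).} Define
\[
\cS^{\max} := \{ (x,y) \in \cR : \alpha(x,y)\,\cL_y(\cS) = \cL_x(\cS) \},
\]
the setwise stabilizer in $\cR$ of the limit set field. This is a Borel subequivalence relation containing $\cS$ (the $\cS$-invariance of $\cL(\cS)$ is immediate from the cocycle property of $\alpha$). It also contains the normalizer $N_{[\cR]}([\cS])$: any $\phi$ normalizing $[\cS]$ permutes $\cS$-orbits inside each $\cR$-class and hence sends $\cL_y(\cS)$ onto $\cL_{\phi(y)}(\cS)$.

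\textbf{Step 3 (main step: $\cS^{\max}$ is hyperfinite).} Let $\cS^{\max}_0 \le \cS^{\max}$ be the subequivalence relation that acts trivially (i.e., fixes each point pointwise) on $\cL(\cS)$. Each $\cS^{\max}$-class contains at most two $\cS^{\max}_0$-classes, so since hyperfiniteness is preserved under finite-index extensions it suffices to show $\cS^{\max}_0$ is hyperfinite. Choose a Borel selector $p : X \to \partial \sH$ with $p(x) \in \cL_x(\cS)$; then $\cS^{\max}_0$ preserves $p$ pointwise. Using a Borel family of Busemann cocycles $b_x : \sH_x \to \R$ based at $p(x)$ one obtains an $\R$-valued cocycle $c : \cS^{\max}_0 \to \R$ via $c(x,y) = b_x(\sigma_x(y))$. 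Gromov hyperbolicity of the fibers forces the level sets of $b_x$ to be horosphere-like, in particular uniformly thin along orbits, producing Følner sequences inside each $\cS^{\max}_0$-class. The Connes-Feldman-Weiss equivalence of amenability and hyperfiniteness then delivers the conclusion.

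\textbf{Step 4 (maximality and uniqueness).} Let $\cT$ be any hyperfinite subequivalence relation with $\cS \le \cT \le \cR$. Then $\cT$ is itself aperiodic hyperfinite and inherits the Main Assumption, so Step 1 applied to $\cT$ gives $|\cL_x(\cT)| \le 2$ a.e., while $\cL_x(\cS) \subseteq \cL_x(\cT)$ is automatic. The key claim is equality $\cL_x(\cS) = \cL_x(\cT)$ a.e.: any proper containment on a positive-measure set would yield $\cT$-elements whose boundary dynamics generate a non-elementary limit set, contradicting the cardinality bound for $\cT$. Given equality, every $(x,y) \in \cT$ preserves $\cL_y(\cS)$, so $\cT \le \cS^{\max}$. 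Hence $\cS^{\max}$ is the unique maximal hyperfinite extension of $\cS$, and it contains the normalizer by Step 2.

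\textbf{Expected main obstacle.} Step 3 is the technical heart: adapting the horoball/Busemann argument from classical hyperbolic dynamics to the measurable equivalence-relation setting requires Borel selection of Busemann functions for Gromov-hyperbolic (rather than CAT(0) or Riemannian) fibers, and the construction of Borel Følner sequences uniform enough to yield hyperfiniteness of the ambient relation. The limit-set equality asserted in Step 4 is a secondary delicate point, requiring a careful combinatorial analysis of how $\cS$-classes sit inside $\cT$-classes and a ruling-out of the case where a 1-point limit set of $\cS$ could sit inside a 2-point limit set of a hyperfinite extension.
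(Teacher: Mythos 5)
Your overall strategy---construct a maximal hyperfinite extension as the stabilizer of a canonical boundary invariant of $\cS$, show the stabilizer is hyperfinite, then show any larger hyperfinite relation also stabilizes the same invariant---is structurally the same as the paper's. But there are two serious problems with the specific invariant you chose and with the step you dismiss as secondary.

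\textbf{Step 1 is not established and is in fact open.} You use the limit set field $x \mapsto \cL_x(\cS)$ and claim $|\cL_x(\cS)| \le 2$ a.e. What is actually proved (Theorem~\ref{thm:K}) is that any $\cS$-invariant \emph{field of boundary probability measures} $\nu$ has $|\supp \nu_x| \le 2$. The limit set of $\cS$ can a priori be much larger than the support of any fixed measure field. The paper proves $|\cL_x(\cS)| = 2$ only when $\cS$ is loxodromic (Lemma~\ref{lem:loxodromic-case}); for parabolic $\cS$ the question of whether $|\cL_x(\cS)| = 1$ is \emph{explicitly stated to be open} in the paper. So your candidate $\cS^{\max}$ is built on a field whose basic structure you cannot yet control. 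The paper sidesteps this by replacing the limit set with the \emph{canonical $\cS$-invariant field of boundary measures} $\omega$ (Definition~\ref{defn:canonical-field}), built fiberwise from the parabolic/loxodromic decomposition of Lemma~\ref{lem:mixed}. Stabilizing $\omega$ (i.e., taking $\cM = \{(x,y) : \alpha(x,y)_*\omega_y = \omega_x\}$) then gives hyperfiniteness for free from Theorem~\ref{thm:K}, since $\omega \in \Fix(\cM)$. Your Step 3, constructing Følner sets via Busemann cocycles, is workable in principle but is doing by hand what Theorem~\ref{thm:K} and Proposition~\ref{prop:limit-set3} already provide.

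\textbf{Step 4 hides the actual theorem.} You assert that $\cL_x(\cS) = \cL_x(\cT)$ for any hyperfinite $\cT \ge \cS$, saying a proper containment would ``yield non-elementary dynamics, contradicting the cardinality bound,'' and you flag the 1-point-inside-2-point case as a ``secondary delicate point.'' This is exactly backwards: the case where $\cS$ is parabolic (canonical measure a single Dirac) sitting inside a loxodromic $\cT$ is the crux of the whole theorem, and nothing in your argument rules it out. The paper devotes Theorem~\ref{thm:non-nested} (a lengthy mass-transport / quasi-geodesic argument) precisely to showing that if $\cR$ is parabolic then every aperiodic subequivalence relation is also parabolic; this, via Lemma~\ref{cor:non-nested}, is what lets one conclude the canonical invariant field is the same for $\cS$ and $\cT$. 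Without an argument at that level of difficulty, your uniqueness claim does not follow. In short: swap the limit set for the canonical boundary-measure field, and supply a real proof of the nesting lemma---your Step 4---which is where nearly all the work lives.
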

\begin{remark}
It is easy to prove that every hyperfinite subequivalence relation is contained in some maximal hyperfinite subequivalence relation. However, uniqueness is not true in general. For example  let $G$ be any countable group with infinite amenable groups $H_1,H_2,H_3$ such that $\langle H_1,H_2\rangle$, $\langle H_2,H_3\rangle$ are amenable but $\langle H_1,H_2,H_3\rangle$ is non-amenable (for example this property is satisfied by $G=SL(3,\Z)$ with respect to its elementary subgroups). If $\cR$ denotes the orbit equivalence relation of an essentially free pmp $G \cc (X,\mu)$ and $\cS_{ij} \le \cR$ is the subequivalence relation generated by the $\langle H_i,H_j\rangle$-action then $\cS_{12}, \cS_{23}$ and $\cS_2=\cS_{12}\cap \cS_{23}$ are aperiodic and hyperfinite. But since the subequivalence relation generated by $\cS_{12}$ and $\cS_{23}$ is non-hyperfinite, there is more than one maximal hyperfinite subequivalence relation containing $\cS_2$. 
\end{remark}

\begin{remark}
The von Neumann algebra analog of Theorem \ref{thm:maximal} is open even in the case of free group factors (see the end of \cite{peterson-thom-group-cocycles} where this is stated as a conjecture). There are several constructions of maximal amenable subalgebras of von Neumann algebras with hyperbolic flavor \cite{CFRW-2010, houdayer-2014, shen-2006}. Recently (and independently of this research) R. Boutonnet and A. Carderi \cite{boutonnet-carderi} have shown that if $H<G$ is a maximal amenable subgroup of a word hyperbolic group and $G \cc (X,\mu)$ any essentially free pmp action then the orbit equivalence relation of the $H$-action is a maximal hyperfinite subequivalence relation of the orbit-equivalence relation the $G$-action. 
\end{remark}


\subsubsection{Parabolic and loxodromic subequivalence relations}
We now turn towards a more detailed picture of the hyperfinite subequivalence relations of $\cR$ analogous to the elliptic/parabolic/loxodromic classification of isometries of real hyperbolic space. This classification arises from considering the action of $\cR$ on fields of boundary measures. To be precise, a {\bf field of boundary measures} is an assignment $x\mapsto \nu_x$ where $\nu_x$ is a Borel probability measure on $\partial \sH_x$ and the assignment satisfies a certain Borel condition (see \S \ref{sec:fields}). We do not distinguish between fields that agree almost everywhere. The space $\Prob(\partial\sH \to X)$ of all Borel fields of boundary measures admits the structure of a compact convex subspace of a Banach space (Lemma \ref{lem:fields}). Moreover, the full group $[\cR]$ acts jointly continuously on $\Prob(\partial\sH \to X)$ by $(f\nu)_{fx} = \alpha(fx,x)_*\nu_x$. The subspace $\Prob(\cL \to X) \subset \Prob(\partial \sH \to X)$ of fields with $\nu_x$ supported on the limit set $\cL_x$ is a minimal set for this action (Corollary \ref{cor:minimal}) whenever $\cR$ is non-hyperfinite. By contrast, if $\cR$ is hyperfinite then $[\cR]$ is extremely amenable \cite{giordano-pestov-extreme-amenability} and therefore every minimal action is trivial.

Given a subequivalence relation $\cS \le \cR$, let $\Fix(\cS)$ be the set of $\eta\in \Prob(\partial\sH \to X)$ that are fixed by $\cS$ (so $\alpha(x,y)_*\eta_y=\eta_x$ for $(x,y)\in \cS$). It is essentially a result of Kaimanovich \cite{kaimanovich-boundary-amenability} that $\Fix(\cS)$ is nonempty if and only if $\cS$ is hyperfinite. Moreover, if $\cS$ is aperiodic then for any $\eta \in \Fix(\cS)$ the support of $\eta_x$ has cardinality at most 2. So we say $\cS$ is 
\begin{itemize}
\item {\bf parabolic} if there is a unique $\eta \in \Fix(\cS)$ and for a.e. $x$, the support of $\eta_x$ has cardinality 1;
\item {\bf loxodromic} if there exists $\eta \in \Fix(\cS)$ such that for a.e. $x$, the support of $\eta_x$ has cardinality 2;
\item {\bf mixed} if there is a nontrivial disjoint measurable partition $X=Y\sqcup Z$ such that $\cS \resto Y$ is parabolic and $\cS \resto Z$ is loxodromic where $\resto$ denotes ``restricted to''. Moreover this partition is unique up to null sets.
\end{itemize}
It follows from Kaimanovich's results that if $\cS\le \cR$ is hyperfinite and aperiodic then it is either parabolic, loxodromic or mixed. Moreover, if $\cS$ is ergodic then it must be either parabolic or loxodromic. Similarly, if $f \in [\cR]$ then we say that $f$ is parabolic/loxodromic/mixed if the subequivalence relation generated by $f$ is parabolic/loxodromic/mixed. Let $\APER$, $\PARA$,$\LOXO \subset [\cR]$ denote the subsets of aperiodic, parabolic and loxodromic elements. As shown in \cite[I.2]{Kechris-global-aspects}, $\APER$ is a $G_\delta$ subset of $[\cR]$; in particular, it is a Polish space. We prove
\begin{thm}\label{thm:classification}
If the Main Assumption is satisfied and $\cR$ is non-hyperfinite then both $\PARA$ and $\LOXO$ are nonempty. Moreover, $\PARA$ is a dense $G_\delta$ subset of $\APER$. 
On the other hand, if $\cR$ is hyperfinite then either $\APER=\PARA$ or $\APER=\LOXO$. 
\end{thm}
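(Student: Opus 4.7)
My plan is to handle the hyperfinite and non-hyperfinite cases of $\cR$ separately. For the hyperfinite case, Kaimanovich's cited results give $\Fix(\cR)\ne\emptyset$, and each $\eta\in\Fix(\cR)$ satisfies $|\supp\eta_x|\le 2$ a.e. Because $\alpha(x,y)$ is a homeomorphism, the function $x\mapsto|\supp\eta_x|$ is $\cR$-invariant, so by ergodicity it is a.e.\ constant, equal to some $c\in\{1,2\}$. If $c=2$ then this same $\eta$ is a $\cS_f$-fixed measure of support $2$ for every aperiodic $f\in[\cR]$, so $\APER=\LOXO$. If $c=1$, write $\eta=\delta_\xi$; a second $\delta_{\xi'}\in\Fix(\cR)$ with $\xi\ne\xi'$ on a positive set would give $\tfrac12(\delta_\xi+\delta_{\xi'})\in\Fix(\cR)$ with support $2$ on the $\cR$-invariant set $\{\xi\ne\xi'\}$, which by ergodicity would be conull, contradicting $c=1$. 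So $\xi$ is unique, and Kaimanovich's parabolic characterization forces the limit set of $\cR$ to be $\{\xi(x)\}$ a.e. For any aperiodic $f\in[\cR]$, the orbit inclusion $\cO_x^\sigma(\cS_f)\subseteq\cO_x^\sigma(\cR)$ together with Theorem~\ref{thm:limit-set} gives $\cL_x(\cS_f)\subseteq\{\xi(x)\}$; aperiodicity of $\cS_f$ forces nonemptiness of its limit set, and since hyperfinite fixed measures are supported on the limit set, $\Fix(\cS_f)=\{\delta_\xi\}$, so $\cS_f$ is parabolic and $\APER=\PARA$.

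Now suppose $\cR$ is non-hyperfinite. To produce a loxodromic element, I apply Theorem~\ref{thm:tits} to obtain an ergodic non-hyperfinite treeable $\cT\le\cR$. Since $\cT$ is also non-hyperfinite, $\Fix(\cT)=\emptyset$, which forces the limit set of $\cT$ to contain at least three points (otherwise one could average over the limit set to produce a $\cT$-fixed measure). A measurable ping-pong construction in $\cT$ then produces $f\in[\cT]\subseteq[\cR]$ whose $\cS_f$-orbit translates along a geodesic with measurably chosen attracting and repelling boundary fixed points $\xi^\pm$; then $\tfrac12(\delta_{\xi^+}+\delta_{\xi^-})\in\Fix(\cS_f)$ shows $f\in\LOXO$. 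To see $\PARA$ is $G_\delta$ in $\APER$, observe that $\{(f,\eta):\eta\in\Fix(\cS_f)\}$ is closed in $\APER\times\Prob(\partial\sH\to X)$ by joint continuity of the $[\cR]$-action, so parabolicity of $f$---uniqueness together with support $1$---is the countable intersection, over $n$, of the open conditions that $\Fix(\cS_f)$ has diameter $<1/n$ and lies within $1/n$ of the set of delta-sections.

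The main obstacle is density of $\PARA$ in $\APER$. Given $f\in\APER$ and a neighborhood $U$, my strategy is to choose a Borel section $\xi:X\to\partial\sH$ with $\xi(x)\in\cL_x$ (available by Borel selection on the limit-set bundle) and then perturb $f$ on a set of arbitrarily small measure, via a Rokhlin tower decomposition for $f$, to obtain $g\in U\cap\APER$ satisfying $\alpha(gx,x)\xi(x)=\xi(gx)$ a.e., so that $\delta_\xi\in\Fix(\cS_g)$. The subtle part is securing \emph{uniqueness} of the fixed measure: no auxiliary $\cS_g$-fixed measure $\eta'\ne\delta_\xi$ may exist. Minimality of the $[\cR]$-action on $\Prob(\cL\to X)$ (Corollary~\ref{cor:minimal}) lets us move any candidate $\eta'$ by $[\cR]$ arbitrarily far from $\delta_\xi$, while the perturbation being confined to a small set restricts the movement in a way incompatible with the non-hyperfiniteness of $\cR$. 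Making this tension precise, and choosing $\xi$ generically enough to rule out secondary fixed measures, is the technical heart of the non-hyperfinite direction.
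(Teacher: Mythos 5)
There is a genuine gap in the hyperfinite parabolic case. You argue that if $\Fix(\cR)$ is a singleton Dirac field $\delta_\xi$, then ``Kaimanovich's parabolic characterization forces the limit set of $\cR$ to be $\{\xi(x)\}$ a.e.'' This is precisely the statement that the paper flags as \emph{open}: in the subsection on limit sets of hyperfinite equivalence relations, the author explicitly writes that while $|\cL_x(\cR)|=2$ is proved in the loxodromic case (Lemma~\ref{lem:loxodromic-case}), the analogous claim $|\cL_x(\cR)|=1$ in the parabolic case ``remains open.'' Your chain $\cL_x(\cS_f)\subseteq\cL_x(\cR)=\{\xi(x)\}\Rightarrow\Fix(\cS_f)=\{\delta_\xi\}$ therefore cannot be made. (Even granting it, the last step also needs that any $\cS_f$-fixed field is supported on $\cL_x(\cS_f)$, which is again exactly the open parabolic limit-set question, just for $\cS_f$.) The paper instead proves $\APER=\PARA$ in this case via Theorem~\ref{thm:non-nested}: that subequivalence relations of a parabolic relation are parabolic. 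That theorem is established by a substantially different argument (Busemann functions, a contracting map $\Phi$, the Mass Transport Principle) that never needs to know the cardinality of the parabolic limit set. Your uniqueness argument for $\delta_\xi$ also only rules out a second \emph{Dirac} fixed field; to conclude $\cR$ is not loxodromic one should argue as in Lemma~\ref{lem:mixed} and use ergodicity, though this is a small fix.

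In the non-hyperfinite case, you miss the shortcut the paper uses for density: parabolicity is a conjugacy invariant, and conjugacy classes in $\APER$ are dense by \cite[Theorem~3.4]{Kechris-global-aspects}; so once $\PARA$ is nonempty and $G_\delta$ (your semicontinuity-of-$\Fix$ argument for $G_\delta$ is essentially the paper's), density is automatic. Your proposed Rokhlin-tower perturbation route is not only unfinished but is in danger of circularity, since it invokes Corollary~\ref{cor:minimal}, whose proof in the paper uses Lemma~\ref{lem:exists} (existence of parabolic elements) and hence cannot be used to establish existence of parabolic elements. Similarly, deriving loxodromic existence from Theorem~\ref{thm:tits} plus an unspecified measurable ping-pong is circular relative to the paper's dependency structure: the Tits alternative is proved \emph{from} Theorem~\ref{thm:exists}, the paper's self-contained construction via extremely expansive sets and matchings. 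Nonemptiness of $\PARA$ is obtained in the paper from the Tits alternative together with the one-ended forest theorem of \cite{conley-brooks} (Lemma~\ref{lem:exists}), an ingredient your sketch does not surface.
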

This is a surprising result; by contrast consider the isometry group $\Isom(\H^n)$ of real hyperbolic $n$-space. It is easy to show that the set of parabolic elements has positive codimension. Moreover, if $\Gamma$ is a countable Gromov hyperbolic group then $\Gamma$ does not have any parabolic elements at all.

\subsection{Other results}

\subsubsection{Svarc-Milner Lemma}
The Svarc-Milner Lemma is a fundamental result in geometric group theory. It states that if $\Gamma$ is a finitely generated group acting isometrically and properly discontinuously on a metric space $X$ and $x\in X$ then the map $\gamma \in \Gamma \mapsto \gamma x \in X$ is a quasi-isometric embedding with respect to any word metric on $\Gamma$. In particular, if $X$ is a Gromov hyperbolic space and $\Gamma$ acts isometrically and properly discontinuously with compact quotient then $\Gamma$ itself is Gromov hyperbolic. In Theorem \ref{thm:svarc-milner} we obtain the following analog: if the Main Assumption is satisfied and the section $\sigma:X\to \sH$ is $r$-cobounded (meaning: the open radius $r$ neighborhood of the orbit $O^\sigma_x$ in $\sH_x$ is all of $\sH_x$), then $\cR$ admits a hyperbolic graphing.

\subsubsection{Minimality}
We prove that if the Main Assumption is satisfied and $\cR$ is not hyperfinite then the action of $\cR$ on the bundle of limit sets $\cL$ is minimal in two different senses. First, suppose $K \subset \cL$ is a Borel subset such that for a.e. $x\in X$, $K_x = K \cap \cL_x$ is closed in $\cL_x$. Suppose also that $K$ is invariant in the sense that $\alpha(x,y)K_y=K_x$ for $(x,y) \in \cR$. Then either $K_x$ is empty for a.e. $x$ or $K_x=\cL_x$ for a.e. $x$. This is Theorem \ref{thm:minimal}. It plays a key role in proving the existence of loxodromic elements of $[\cR]$. Second, as mentioned above, the full group $[\cR]$ acts minimally on $\Prob(\cL \to X)$ the space of fields of boundary measures. This is Corollary \ref{cor:minimal}. 

\subsubsection{Limit sets}
If $G$ is a rank 1 simple Lie group then $g \in G$ is either elliptic, parabolic or loxodromic depending on whether it has 0, 1 or 2 fixed points on the boundary. These fixed points form the limit set of the subgroup $\langle g\rangle$. This observation leads one to guess that if $\cS \le \cR$ is ergodic and hyperfinite then it should be parabolic or loxodromic depending on whether for the limit set $\cL_x(\cS)$ has one or two elements for a.e. $x$. We prove in Lemma \ref{lem:loxodromic-case} below that indeed, if the Main Assumption is satisfied and if $\cS$ is loxodromic then $|\cL_x(\cS)|=2$ for a.e. $x$. The question remains open if $\cS$ is parabolic. By contrast, if the measure $\mu$ on $X$ is only required to be $\cR$-quasi-invariant instead of $\cR$-invariant then there are counterexamples (this was observed earlier by Kaimanovich \cite{kaimanovich-boundary-amenability}).

\subsubsection{Maximal hyperfinite subequivalence relations}
The result mentioned above is used to prove: if the Main Assumption is satisfied and $\cR$ is parabolic (in particular it is hyperfinite) then all of its aperiodic subequivalence relations are also parabolic. In turn this result is used to prove Theorem \ref{thm:maximal} that every aperiodic hyperfinite subequivalence relation $\cS \le \cR$ is contained in a {\em unique} maximal hyperfinite subequivalence relation $\cM$. Indeed, we obtain an explicit description of $\cM$ as the stabilizer for a canonical $\cS$-invariant field of boundary measures. 

\subsubsection{Rank 1}
Let us say that a measured equivalence relation $\cR$ has {\bf rank 1} if every aperiodic hyperfinite subequivalence relation $\cS\le \cR$ is contained in a {\em unique} maximal hyperfinite subequivalence relation. This definition is motivated by the theory of semisimple Lie groups: a semisimple Lie group $G$ of noncompact type has real rank 1 if and only if every closed noncompact unimodular amenable subgroup is contained in a unique maximal unimodular amenable subgroup.

In Theorem \ref{thm:norm} we prove that if $\cR$ has rank 1 and $\cS\le \cR$ is an aperiodic hyperfinite subequivalence relation then the subequivalence relation generated by $\cS$ and the normalizer of $[\cS]$ in $[\cR]$ is hyperfinite. (In fact, we prove a more general result using quasi-normalizers).

Also we provide examples of equivalence relations that are not rank 1: let $G=SL(n,\Z)$ ($n\ge 3$) or $G=H_1\times H_2$ where $H_1$ contains an infinite amenable subgroup and $H_2$ is non-amenable. Let $\cR$ be the orbit-equivalence of an essentially free ergodic pmp $G$-action. In Corollaries \ref{cor:SLnZ}, \ref{cor:product} we show that $\cR$ does not have rank 1. It follows that $\cR$ does not admit a hyperbolic graphing (as in Example \ref{ex:graphing}). The latter fact is a well-known result of Adams  \cite{adams-indecomp} that has been partially generalized and reproven in \cite{hjorth-nontreeability, pemantle-peres-2000, gaboriau-cost} (for example, \cite{gaboriau-cost} shows non-amenable groups with a cost 1 action are not treeable).




\subsection{Related literature}

The paper owes a large debt to Kaimanovich's paper \cite{kaimanovich-boundary-amenability}. The latter studies amenable equivalence relations that admit hyperbolic graphings (as in Example \ref{ex:graphing}). However, the measure is only assumed to be quasi-invariant under the relation rather than invariant. Kaimanovich proves that any such equivalence relation necessarily has an invariant field of boundary measures $\nu$ such that the support of $\nu_x$ has cardinality at most 2. The proof strategy has roots in earlier work of Adams \cite{adams-indecomp} which shows that an orbit-equivalence relation of a pmp essentially free action of a non-elementary hyperbolic group cannot decompose as a nontrivial direct product of measured equivalence relations. Corollary \ref{cor:product} below partially generalizes this result. There is also a related result of Adams showing that any cocycle of a higher rank semisimple Lie group into the isometry group of a hyperbolic space must be cohomologous to a cocycle taking values in a compact subgroup \cite{adams-reduction}.

A recent paper \cite{anderegg-henry} of Anderegg and Henry studies isometric actions of equivalence relations on bundles of CAT(0) spaces. Unfortunately, I only learned about their paper when this paper was nearly finished. In addition to their results on isometric CAT(0) actions, it develops the general theory of fields of metric spaces over a Borel space much more thoroughly than is done in this paper. 

As mentioned above, this paper proves that if $\cR$ satisfies the Main Assumption and $\cS\le \cR$ is hyperfinite and aperiodic then the subequivalence relation $\cS'$ generated by $\cS$ and the normalizer of $[\cS]$ in $[\cR]$ is also hyperfinite. There is a well-known analogous result in the theory of von Neumann algebras due to Ozawa \cite{ozawa-solid}: If $\Gamma$ is an i.c.c. (infinite conjugacy classes) Gromov hyperbolic group, then its von Neumann algebra, denoted $L\Gamma$ is solid, i.e., $A'\cap L\Gamma$ is amenable for every diffuse von Neumann subalgebra $A \subset L\Gamma$. This result was strengthened in \cite{chifan-sinclair-I}.

\subsection{Organization}
Standard definitions and results on Gromov hyperbolic spaces and measured equivalence relations are relegated to Appendices \ref{sec:hyperbolic} and \ref{sec:mer} for easy reference. 

Theorem \ref{thm:maximal} (that every aperiodic hyperfinite subequivalence relation is contained in a {\em unique} maximal hyperfinite subequivalence relation) is proven in \S \ref{sec:maximal}. The proof uses all of the material in \S \ref{sec:hyperbolic-bundle} - \S \ref{sec:maximal} except for the Svarc-Milner Lemma (Theorem \ref{thm:svarc-milner}), the Minimality Theorem \ref{thm:minimal} and Theorem \ref{thm:jcontinuous1} on the topology of $\Prob(\cL \to X)$ (the space of fields of boundary measures). 

The proof of the Tit's Alternative (Theorem \ref{thm:tits}) is contained in \S \ref{sec:tits}. The proof relies on the existence of loxodromic elements (Theorem \ref{thm:exists}) which relies on the Minimality Theorem \ref{thm:minimal}. 

The existence of parabolic elements (Lemma \ref{lem:exists}) relies on the Tit's Alternative and a result of \cite{conley-brooks} on the existence of one-ended forests. The abundance of parabolic elements (Theorem \ref{thm:parabolic}) uses the topological structure of $\Prob(\cL \to X)$, proofs of which are in Appendix \ref{sec:fields}. The final result, that the full group $[\cR]$ acts minimally on $\Prob(\cL \to X)$ uses the existence of parabolic elements and everything in Appendix \ref{sec:fields}.


{\bf Acknowledgements}. I am especially grateful to Sukhpreet Singh for many conversations that got this project started. Thanks also to Robin Tucker-Drob for  helpful discussions and to Damien Gaboriau for remarks and corrections.

\section{Bundles of hyperbolic spaces}\label{sec:hyperbolic-bundle}

Let $(X,\mu,\cR)$ be a discrete ergodic pmp equivalence relation, $(\sH,d,\pi)$ a separable bundle of hyperbolic spaces over $X$ and $\alpha$ an isometric action of $\cR$ on $(\sH,d,\pi)$.

\subsection{The boundary extension}

In this section we define a Borel structure on $\bsH$, the bundle of Gromov completions, and extend the action $\alpha$ to this bundle. Recall that each fiber $\sH_x$ may be regarded as a Gromov hyperbolic space with respect to the metric $d_x$ which denotes the restriction of $d$ to $\sH_x\times \sH_x$. Thus it has a Gromov boundary, denoted $\partial \sH_x$ and a Gromov completion, denoted $\bsH_x:=\sH_x \cup \partial \sH_x$. We let $\bsH = \sqcup_x \bsH_x$, $\partial \sH = \sqcup_x \partial \sH_x \subset \bsH$ denote the disjoint unions. We extend the projection map $\pi$ to $\bsH$ in the obvious way: $\pi(\xi) = x$ if $\xi \in \bsH_x$.


Recall from Appendix \ref{sec:hyperbolic} that the {\bf Gromov product} of $p,q \in \sH_x$ with respect to $r \in \sH_x$ is defined by
$$(p|q)_r = (1/2)( d(p,r) + d(q,r) - d(p,q)).$$
It extends to the boundary by taking limits inferior (see Appendix \ref{sec:hyperbolic}). Now we choose the Borel structure on $\bsH$ to be the smallest one such that for any Borel sections $\sigma,\eta:X \to \sH$ and Borel set $B \subset [0,\infty)$ the subset
$$\Omega(\sigma,\eta,B):=\left\{ \xi \in \bsH:~ \pi(\xi)=x \Rightarrow ( \xi | \sigma(x))_{\eta(x)} \in B \right\}$$
is Borel. 


\begin{lem}
A section $\xi:X \to \bsH$ is Borel if and only if for every pair of Borel sections $\sigma,\eta:X \to \sH$ the map $F:X \to [0,\infty)$ defined by
$$F(x) =  (\xi(x)|\sigma(x))_{\eta(x)}$$
is Borel.
\end{lem}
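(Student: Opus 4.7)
The statement is essentially a tautology once the definition of the Borel structure on $\bsH$ is unwound, so the proof will just record that the two conditions describe the same $\sigma$-algebra. The plan is to recognize that the sets $\Omega(\sigma,\eta,B)$ form a generating family for the Borel structure on $\bsH$ and to rewrite $\xi^{-1}(\Omega(\sigma,\eta,B))$ explicitly as $F^{-1}(B)$, so that Borel-ness of $\xi$ as a map $X \to \bsH$ becomes equivalent, generator-by-generator, to Borel-ness of the scalar functions $F = (\xi(\cdot)|\sigma(\cdot))_{\eta(\cdot)}$.

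First I would record the key identity: for any Borel sections $\sigma,\eta:X \to \sH$, any Borel $B \subset [0,\infty)$, and any section $\xi:X \to \bsH$,
\[
\xi^{-1}\bigl(\Omega(\sigma,\eta,B)\bigr) \;=\; \bigl\{ x \in X : (\xi(x)|\sigma(x))_{\eta(x)} \in B \bigr\} \;=\; F^{-1}(B).
\]
This follows immediately from the definition of $\Omega(\sigma,\eta,B)$, since $\pi(\xi(x))=x$ for any section $\xi$, so the defining condition of $\Omega(\sigma,\eta,B)$ applied to $\xi(x)$ reads simply $F(x)\in B$.

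For the forward direction, assume $\xi$ is Borel. Each $\Omega(\sigma,\eta,B)$ is Borel in $\bsH$ by definition of the Borel structure, so $F^{-1}(B) = \xi^{-1}(\Omega(\sigma,\eta,B))$ is Borel in $X$; varying $B$ over the Borel subsets of $[0,\infty)$ shows $F$ is Borel. For the converse, assume each such $F$ is Borel. Then $\xi^{-1}(\Omega(\sigma,\eta,B)) = F^{-1}(B)$ is Borel in $X$ for every generator $\Omega(\sigma,\eta,B)$ of the $\sigma$-algebra on $\bsH$. Since the preimage operation commutes with countable unions, intersections, and complements, the class of subsets $A \subset \bsH$ with $\xi^{-1}(A)$ Borel is a $\sigma$-algebra containing all generators, hence contains the full Borel $\sigma$-algebra of $\bsH$; therefore $\xi$ is Borel.

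There is no real obstacle here: the only subtle point is to confirm that the sub-basis $\{\Omega(\sigma,\eta,B)\}$ genuinely generates the Borel $\sigma$-algebra on $\bsH$ (which is built into the definition of that structure) and to observe the standard fact that a map into a space with a generated $\sigma$-algebra is measurable iff preimages of generators are measurable. Both are immediate from the setup, so the proof is a two- or three-line argument built around the identity displayed above.
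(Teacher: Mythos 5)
Your proof is correct and is essentially identical to the paper's: both hinge on the identity $F^{-1}(B)=\xi^{-1}(\Omega(\sigma,\eta,B))$ and the fact that the sets $\Omega(\sigma,\eta,B)$ generate the Borel $\sigma$-algebra on $\bsH$.
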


\begin{proof}
Let $\sigma,\eta:X \to \sH$ be Borel sections and $B \subset [0,\infty)$ be Borel. For any section $\xi:X \to \bsH$,
$$F^{-1}(B) = \xi^{-1}(\Omega(\sigma,\eta,B)).$$
If $\xi$ is Borel then $F^{-1}(B)$ is Borel which implies (because $B$ is arbitrary) that $F$ is Borel. On the other hand, if $F$ is Borel then $\xi^{-1}(\Omega(\sigma,\eta,B))$ is Borel which implies that $\xi$ is Borel (because the sets $\Omega(\sigma,\eta,B)$ generate the Borel sigma-algebra on $\bsH$).
\end{proof}

\begin{lem}
If $\xi,\sigma:X \to \bsH$ and $\eta:X \to \sH$ are Borel sections then the map
$$x \mapsto  (\xi(x)|\sigma(x))_{\eta(x)}$$
is Borel.
\end{lem}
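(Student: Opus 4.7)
The plan is to approximate $\sigma$ by Borel sections taking values in $\sH$, and then reduce to the previous lemma. By the separability hypothesis, fix a countable family $\{\tau_i\}_{i\in\N}$ of Borel sections $\tau_i:X\to \sH$ whose values are fiberwise dense in $\sH_x$. Applying the previous lemma to the Borel section $\sigma:X\to\bsH$ and each $\tau_i$ gives that $K_i(x):=(\sigma(x)|\tau_i(x))_{\eta(x)}$ is Borel for every $i$.

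Since $\sH$ and $\partial\sH$ are Borel subsets of $\bsH$, the sets $X_\sH:=\sigma^{-1}(\sH)$ and $X_\partial:=\sigma^{-1}(\partial\sH)$ are Borel. On $X_\sH$, $\sigma$ restricts to a Borel section into $\sH$, and since $d$ is Borel on $\sH*\sH$ the map
$$\phi_n(x):=\min\{i:d(\tau_i(x),\sigma(x))<1/n\}$$
is Borel, with $\tau_{\phi_n(x)}(x)\to\sigma(x)$ in $\sH_x$ by density. On $X_\partial$, convergence in $\bsH_x$ to the boundary point $\sigma(x)$ is characterised by the Gromov product with reference point $\eta(x)$ tending to infinity, so
$$\phi_n(x):=\min\{i:K_i(x)>n\}$$
is Borel. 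Combining the two pieces produces a Borel map $\phi_n:X\to\N$, and $\sigma_n(x):=\tau_{\phi_n(x)}(x)$ defines a Borel section $\sigma_n:X\to\sH$ (on each level set of $\phi_n$ it coincides with a fixed $\tau_i$) satisfying $\sigma_n(x)\to\sigma(x)$ in $\bsH_x$ for every $x$. Running the identical construction with $\xi$ in place of $\sigma$ produces Borel sections $\xi_m:X\to\sH$ with $\xi_m(x)\to\xi(x)$ in $\bsH_x$.

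Since all arguments lie in $\sH$, the Gromov products $H_{mn}(x):=(\xi_m(x)|\sigma_n(x))_{\eta(x)}$ are directly Borel from the Borelness of $d$ on $\sH*\sH$. By the definition of the extended Gromov product on $\bsH_x$ (recalled in the hyperbolicity appendix) along sequences converging to the boundary arguments,
$$(\xi(x)|\sigma(x))_{\eta(x)} = \liminf_{m,n\to\infty} H_{mn}(x),$$
a countable $\liminf$ of Borel functions, and hence Borel.

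The main obstacle is verifying that the particular Borel-selected approximating sequences $\xi_m,\sigma_n$ actually realise the defining limit of $(\xi|\sigma)_\eta$: the extended Gromov product is typically only semicontinuous, and one must confirm that the liminf (or, depending on convention, the $\sup$-over-sequences of liminfs) is attained by our sequences rather than only approximated up to the $2\delta$-ambiguity inherent in Gromov-hyperbolic geometry. Running the approximation symmetrically in both coordinates, together with the boundary-convergence characterisation via Gromov products, is what resolves this point.
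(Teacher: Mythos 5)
Your proof contains a genuine gap that you yourself identify but do not close. The crux is the claim
\[
(\xi(x)|\sigma(x))_{\eta(x)} = \liminf_{m,n\to\infty} H_{mn}(x),
\]
which is \emph{false in general} for a single choice of approximating sequences $\xi_m(x)\to\xi(x)$, $\sigma_n(x)\to\sigma(x)$. The extended Gromov product is defined as an \emph{infimum over all} sequences of the liminf, and the standard estimate in a $\delta$-hyperbolic space only gives
\[
(\xi(x)|\sigma(x))_{\eta(x)} \;\le\; \liminf_{m,n\to\infty} H_{mn}(x) \;\le\; (\xi(x)|\sigma(x))_{\eta(x)} + 2\delta.
\]
Your construction commits to one particular (Borel) approximating sequence per point, so the quantity you compute can overshoot the true Gromov product by as much as $2\delta$. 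Your last paragraph acknowledges exactly this issue and then asserts that ``running the approximation symmetrically in both coordinates'' resolves it, but no argument is given, and symmetrizing alone does not remove the $2\delta$ ambiguity: the ambiguity comes from fixing a single sequence rather than from asymmetry between the two slots.

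The paper's proof avoids this trap by never fixing a single approximating sequence. Instead, it first invokes the previous lemma to reduce to the case $\sigma(x)\in\partial\sH_x$ (this already handles $\xi$ taking values anywhere in $\bsH$, so there is no need to approximate $\xi$ at all --- this is a second, minor redundancy in your approach), and then sets
\[
f_n(x) = \inf\bigl\{(\xi(x)|\sigma_m(x))_{\eta(x)} : \sigma_m(x)\in K_x(n)\bigr\}, \qquad K_x(n)=\{p:(p|\sigma(x))_{\eta(x)}\ge n\}.
\]
Because $f_n$ is an \emph{infimum over the whole dense family} restricted to the shrinking shadows $K_x(n)$, the expression $\liminf_n f_n(x)$ genuinely realises the ``infimum over all sequences tending to $\sigma(x)$'' in the definition, rather than the liminf along one chosen sequence. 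If you want to salvage your version, you should replace your Borel-selected single sequence $\sigma_n$ by an infimum over the entire dense set within shrinking shadows --- at which point you are essentially reproducing the paper's argument.
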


\begin{proof}
If $\sigma$ maps into $\sH$ then this follows from the previous lemma. So without loss of generality, we may assume $\sigma$ maps into $\partial \sH$. Because $\sH \to X$ is a separable bundle, there exists a sequence $\{\sigma_n\}_{n=1}^\infty$ of Borel sections such that for every $x\in X$, $\{\sigma_n(x)\}_{n\in \N}$ is dense in $\sH_x$. 


For $n \in \N$ and $x\in X$, let
$$K_x(n)=\{p \in \sH_x:~ (p |\sigma(x))_{\eta(x)} \ge n\}.$$
Then $K(n) = \cup_x K_x(n)$ is Borel. Intuitively, $K_x(n)$ is a small neighborhood of $\sigma(x)$ when $n$ is large. Let $f_n:X \to \R$ denote the function
$$f_n(x)=\inf (\xi(x)| \sigma_m(x) )_{\eta(x)} )  $$
where the infimum is over all $m$ such that $\sigma_m(x) \in K_x(n)$.  Because $K(n)$ is Borel, $f_n$ is also Borel.  Because
$$(\xi(x)|\sigma(x))_{\eta(x)} = \liminf_{n\to\infty}  f_n(x)$$
the lemma follows.
\end{proof}

\begin{lem} 
There exists a constant $\delta>0$ such that $\sH_x$ is $\delta$-hyperbolic for a.e. $x$.
\end{lem}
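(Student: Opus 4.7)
The plan is to exhibit a Borel function $\delta:X\to [0,\infty)$ such that $\sH_x$ is $\delta(x)$-hyperbolic and $\delta(x)$ is constant along $\cR$-orbits; ergodicity of $\cR$ and finiteness of $\delta(x)$ at every point then give the conclusion.

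First I would use the four-point formulation of Gromov hyperbolicity: a metric space $(M,d)$ is $\delta$-hyperbolic iff
$(p|q)_s \ge \min\{(p|r)_s, (q|r)_s\} - \delta$ for all $p,q,r,s \in M$. Define
\[
\delta(x) \;=\; \sup_{p,q,r,s \in \sH_x} \Big(\min\{(p|r)_s, (q|r)_s\} - (p|q)_s\Big)_+ .
\]
By hypothesis each fiber $\sH_x$ is Gromov hyperbolic, so $\delta(x) < \infty$ for every $x$.

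The key step is showing $\delta$ is Borel. Since $\sH$ is a separable bundle, fix a countable family $\{\sigma_n\}_{n\in\N}$ of Borel sections $X\to\sH$ such that $\{\sigma_n(x):n\in\N\}$ is dense in $\sH_x$ for every $x$. Because the Gromov product $(p|q)_s$ is $1$-Lipschitz in each variable on $\sH_x$, the supremum defining $\delta(x)$ is unchanged if the four points are restricted to any dense countable subset of $\sH_x$. Thus
\[
\delta(x) \;=\; \sup_{i,j,k,l \in \N} \Big( \min\{(\sigma_i(x)|\sigma_k(x))_{\sigma_l(x)},\, (\sigma_j(x)|\sigma_k(x))_{\sigma_l(x)}\} - (\sigma_i(x)|\sigma_j(x))_{\sigma_l(x)} \Big)_+.
\]
By the preceding lemma, each map $x\mapsto (\sigma_i(x)|\sigma_j(x))_{\sigma_l(x)}$ is Borel, so $\delta$ is a countable supremum of Borel functions, hence Borel.

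Finally, since $\alpha(x,y):\sH_y\to\sH_x$ is an isometry for every $(x,y)\in \cR$, it preserves all Gromov products and therefore $\delta(x)=\delta(y)$ whenever $x\,\cR\,y$. The function $\delta$ is thus $\cR$-invariant, so by ergodicity of $(X,\mu,\cR)$ there is a constant $\delta_0 \in [0,\infty]$ with $\delta(x)=\delta_0$ for $\mu$-a.e.\ $x$. Since $\delta(x)<\infty$ pointwise, $\delta_0<\infty$, and any $\delta > \delta_0$ witnesses the claim. The only mild subtlety is verifying that the four-point condition really extends from a dense subset to all of $\sH_x$; this is the routine continuity argument for Gromov products, which is the only place where separability is used.
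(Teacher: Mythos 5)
Your proof is correct and follows essentially the same route as the paper's: define the hyperbolicity constant via the four-point condition evaluated over a countable dense family of Borel sections (available by separability of the bundle), observe the resulting function of $x$ is Borel and $\cR$-invariant since the $\alpha(x,y)$ are isometries, and conclude by ergodicity. The paper's proof is a condensed version of exactly this argument; the only additions you make (Lipschitz continuity of Gromov products to pass from the dense countable family to all quadruples, and pointwise finiteness to rule out the essential value being $+\infty$) are details the paper leaves implicit.
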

\begin{proof}
Because $\sH$ is separable there exists a sequence $\{\sigma_i\}_{i\in\N}$ of Borel sections $\sigma_i:X \to \sH$ such that for every $x$, $\{\sigma_i(x)\}_{i\in\N}$ is dense in $\sH_x$. Let 
$$\delta_x:=\sup_{i,j,k,l} \min\{ (\sigma_i(x)|\sigma_j(x))_{\sigma_l(x)}, (\sigma_j(x)|\sigma_k(x))_{\sigma_l(x)} \} - (\sigma_i(x)|\sigma_k(x))_{\sigma_l(x)}.$$
This formula shows $x \mapsto \delta_x$ is Borel. Because $\{\sigma_i(x)\}_{i\in\N}$ is dense in $\sH_x$, it follows that $\delta_x$ is a hyperbolicity constant for $\sH_x$. Because $x\mapsto \delta_x$ is $\cR$-invariant (meaning $\delta_x=\delta_y$ whenever $x\cR y$), $\mu$-ergodicity of $\cR$ implies the lemma.
\end{proof}

Now let $\epsilon>0$ be such that $\epsilon \delta \le 1/5$ where $\delta>0$ is a hyperbolicity constant for the fibers $\sH_x$ (for a.e. $x$). Given a Borel section $\sigma:X \to \sH$, we define the metric $\rho_\sigma:\bsH*\bsH \to [0,\infty)$ by 
\begin{displaymath}
\rho_\sigma(\xi,\eta) = \left\{\begin{array}{cc}
\inf  \sum_{i=1}^{n-1}  \exp(-\epsilon (\xi_i|\xi_{i+1})_{\sigma(x)}) & \xi\ne \eta \\
0 & \xi =\eta
\end{array}\right.
\end{displaymath}
where the infimum is over all sequences $\xi_1,\ldots, \xi_n \in \bsH_x$ with $\xi_1=\xi, \xi_n = \eta$. By Lemma \ref{lem:meta}, the restriction of $\rho_\sigma$ to any fiber $\bsH_x$ is a metric. The previous lemmas imply that $\rho_\sigma$ is Borel.

We extend the action $\alpha$ to $\bsH$ by continuity. More precisely, because each $\alpha(x,y):\sH_y \to \sH_x$ is an isometry there is a unique extension (which we also denote by $\alpha(x,y)$) from $\bsH_y \to \bsH_x$ that is a homeomorphism. The uniqueness of the extension implies that the cocycle equation $\alpha(x,y)\alpha(y,z)=\alpha(x,z)$ is still satisfied. 
\begin{lem}
$$\{ (p,q) \in \bsH \times \bsH:~ \alpha(\pi(q),\pi(p))p = q\}$$
is a Borel subset of $\bsH \times \bsH$. 
\end{lem}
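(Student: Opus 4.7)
The plan is to use the Feldman--Moore decomposition to turn the fiberwise $\cR$-action on $\bsH$ into a countable family of genuine Borel maps, and then reduce to the already-known Borel condition on $\sH$. Write $\cR = \bigcup_n \grph(g_n)$ with $g_n \in [\cR]$, and for each $n$ define $T_n : \bsH \to \bsH$ by $T_n(p) := \alpha(g_n(\pi(p)), \pi(p))p$. Since every pair in $\cR$ has the form $(g_n(y), y)$ for some $n$,
$$\{(p,q) \in \bsH \times \bsH : \alpha(\pi(q),\pi(p))p = q\} = \bigcup_n \grph(T_n),$$
so it suffices to prove (i) each $T_n$ is Borel and (ii) each $\grph(T_n)$ is a Borel subset of $\bsH \times \bsH$.

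For (i), by definition of the Borel structure on $\bsH$ it is enough to check that $T_n^{-1}(\Omega(\sigma, \eta, B))$ is Borel for every Borel sections $\sigma, \eta : X \to \sH$ and every Borel $B \subset [0,\infty)$. Writing $y = \pi(p)$ and using that $\alpha(g_n(y), y)$ is a metric isometry (whose extension to $\bsH$ therefore preserves the extended Gromov product, since the latter is defined by $\liminf$ of Gromov products of approximating interior sequences and the extension is a homeomorphism), one gets
$$(T_n(p) \mid \sigma(g_n(y)))_{\eta(g_n(y))} = (p \mid \tilde\sigma_n(y))_{\tilde\eta_n(y)}$$
with $\tilde\sigma_n(y) := \alpha(y, g_n(y))\sigma(g_n(y))$ and $\tilde\eta_n(y) := \alpha(y, g_n(y))\eta(g_n(y))$. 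These are Borel sections of $\sH$: the graph of $\tilde\sigma_n$ is the preimage under the Borel map $(y, p') \mapsto (p', \sigma(g_n(y)))$ of the Borel set $R = \{(p',q') \in \sH \times \sH : \alpha(\pi(p'),\pi(q'))q' = p'\}$ furnished by the Borel condition on $\sH$ in Definition \ref{defn:main}, and likewise for $\tilde\eta_n$. Hence $T_n^{-1}(\Omega(\sigma,\eta,B)) = \Omega(\tilde\sigma_n, \tilde\eta_n, B)$, which is Borel by construction.

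For (ii), I would combine Borelness of $T_n$ with the previously established Borelness of the fiberwise metric $\rho_\sigma$ on $\bsH$. The diagonal $\Delta \subset \bsH \times \bsH$ is contained in the Borel fiber product $\bsH * \bsH = \{(p,q): \pi(p) = \pi(q)\}$ (Borel because $\pi$ is) and equals $\{(p,q) \in \bsH * \bsH : \rho_\sigma(p,q) = 0\}$, so $\Delta$ is Borel. Since $(p,q) \mapsto (T_n(p), q)$ is Borel, $\grph(T_n)$ is its preimage of $\Delta$ and hence Borel; taking the countable union over $n$ gives the lemma. The main bookkeeping obstacle is verifying carefully that the auxiliary sections $\tilde\sigma_n, \tilde\eta_n$ are Borel as sections of $\sH$ (where the original Borel condition applies), and that the Gromov product identity used above really passes to the extended product on $\bsH$; everything else is routine once Feldman--Moore and the already-proven Borelness of $\rho_\sigma$ are invoked.
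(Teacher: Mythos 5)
Your proof is correct, but it follows a genuinely different route from the one in the paper. The paper's argument is a closure argument: it writes the target set as $\bigcap_n D_n$, where $D=\{(p,q)\in\sH\times\sH:\alpha(\pi(q),\pi(p))p=q\}$ is Borel by the hypothesis on $\alpha$ and $D_n$ is the open $1/n$-neighborhood of $D$ with respect to the fiberwise metric $\rho_\sigma$ (extended diagonally to $\bsH\times\bsH$). This is shorter, but the step ``$D_n$ is Borel'' implicitly requires replacing the existential quantifier over the uncountable set $D$ by a countable one (for instance via a fiberwise-dense countable family of Borel sections); your approach sidesteps that issue entirely. You instead invoke Feldman--Moore to write the target set as $\bigcup_n \grph(T_n)$ for explicit fiberwise maps $T_n$, then verify directly that each $T_n$ is a Borel self-map of $\bsH$ by computing $T_n^{-1}(\Omega(\sigma,\eta,B))=\Omega(\tilde\sigma_n,\tilde\eta_n,B)$ (using that the extension of an isometry to the Gromov completion preserves the extended Gromov product), and finally pull back the diagonal $\Delta=\{(r,s)\in\bsH*\bsH:\rho_\sigma(r,s)=0\}$ under $(p,q)\mapsto(T_n(p),q)$. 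This is a bit more bookkeeping but each step is concrete, and as a byproduct you establish that the fiberwise maps $T_n:\bsH\to\bsH$ are themselves Borel, which is a useful fact in its own right.

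One small slip worth recording: the graph of $\tilde\sigma_n$ is not exactly $\phi^{-1}(R)$ where $\phi(y,p')=(p',\sigma(g_n(y)))$ and $R$ is the Borel set from Definition \ref{defn:main}; the preimage $\phi^{-1}(R)$ picks up all pairs $(y,p')$ with $\pi(p')\cR g_n(y)$ and $\alpha(\pi(p'),g_n(y))\sigma(g_n(y))=p'$, not just those with $\pi(p')=y$. You need to intersect with the Borel set $\{(y,p'):\pi(p')=y\}$ to get the graph. Since that extra set is Borel (as $\pi$ is), the conclusion that $\tilde\sigma_n$ is a Borel section (via Luzin--Souslin, using that $\sH$ is standard Borel) is unaffected, but the justification should mention the intersection.
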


\begin{proof}
Let $\sigma:X \to \sH$ be a Borel section and define the metric $\rho_\sigma$ as above. Let $D$ denote the set of all $(p,q) \in \sH \times \sH$ such that $\alpha(\pi(q),\pi(p))p = q$. This set is Borel by hypothesis on $\alpha$. For $n>0$, let $D_n$ denote a certain $1/n$-neighborhood of $D$ in the square of the Gromov completions:
$$D_n:=\{(p,q) \in \bsH \times \bsH:~ \exists (p',q') \in D ~ \rho_\sigma(p,p') <1/n, ~\rho_\sigma(q,q') < 1/n\}.$$
Because $\rho_\sigma$ is Borel, $D_n$ is Borel. The lemma now follows from:
$$\bigcap_n D_n = \{ (p,q) \in \bsH \times \bsH:~ \alpha(\pi(q),\pi(p))p = q\}.$$
\end{proof}

 \subsection{Svarc-Milner}

Here we prove an analogue of the Svarc-Milner Lemma, giving conditions under which a hyperbolic graphing exists. 

\begin{thm}\label{thm:svarc-milner}
Suppose the Main Assumption is satisfied and there is a metrically proper Borel section $\sigma:X \to \sH$ and a real number $r>0$ such that $\sigma$ is  $r$-cobounded: the open radius $r$ neighborhood of the orbit $\cO^\sigma_x=\{\alpha(x,y)\sigma(y):~y\in [x]_\cR\}$ in $\sH_x$ is all of $\sH_x$. Then $\cR $ admits a hyperbolic graphing. 
\end{thm}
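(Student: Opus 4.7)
The plan is to imitate the classical Svarc-Milner lemma by using the section $\sigma$ as a ``basepoint'' and defining a graphing from $d_\sigma$-neighbors. Fix $R := 4r + 1$ and put
\[
\cG := \{(x,y) \in \cR : x \neq y,\ d_\sigma(x,y) \leq R\}.
\]
First I verify that $\cG$ is a graphing in the sense of Example \ref{ex:graphing}. Borel-ness of $d_\sigma:\cR \to [0,\infty)$ follows from the Borel condition on $\alpha$ together with Borel-ness of $d:\sH*\sH \to [0,\infty)$, hence $\cG$ is Borel, and symmetry of $d_\sigma$ gives symmetry of $\cG$. To show $\cG$ generates $\cR$, fix $x\cR y$ and work in the fiber $\sH_x$; set $p_0 = \sigma_x(x) = \sigma(x)$ and $p_* = \sigma_x(y) = \alpha(x,y)\sigma(y)$ and let $D := d(p_0, p_*)$. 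Pick any geodesic from $p_0$ to $p_*$ and sample points $q_0=p_0, q_1, \ldots, q_n=p_*$ along it with $d(q_k,q_{k+1}) \leq 2r$ (so $n = \lceil D/(2r)\rceil$). By $r$-coboundedness each $q_k$ lies within $r$ of some orbit point $\sigma_x(y_k)$, and we may take $y_0 = x$, $y_n = y$ since $q_0,q_n$ already lie in $\cO^\sigma_x$. The triangle inequality gives $d_\sigma(y_k,y_{k+1}) < r + 2r + r = 4r \le R$, so $x = y_0, y_1, \ldots, y_n = y$ is a $\cG$-path from $x$ to $y$.

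Next I show each local graph $\cG_x$ is Gromov hyperbolic. Metric properness of $\sigma$ implies that each ball of radius $R$ in $\sH_x$ contains only finitely many orbit points, so $\cG_x$ is locally finite. Consider the map $\Phi_x:\cG_x \to \sH_x$, $y \mapsto \sigma_x(y)$, where $\cG_x$ carries the graph metric with unit edge length. Each $\cG$-edge corresponds to orbit points within $\sH_x$-distance at most $R$, so by triangle inequality
\[
d_{\sH_x}(\Phi_x(y), \Phi_x(y')) \leq R \cdot d_{\cG_x}(y, y'),
\]
and the geodesic-sampling argument above (applied between arbitrary $\sigma_x(y)$ and $\sigma_x(y')$) shows
\[
d_{\cG_x}(y, y') \leq \frac{1}{2r} d_{\sH_x}(\Phi_x(y), \Phi_x(y')) + 1.
\]
Thus $\Phi_x$ is a quasi-isometric embedding with constants depending only on $r$ and $R$, and $r$-coboundedness makes it coarsely surjective, hence a quasi-isometry. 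Since $\sH_x$ is a geodesic $\delta$-hyperbolic space (with uniform $\delta$ by the lemma in \S 2.1), quasi-isometry invariance of Gromov hyperbolicity for geodesic spaces yields that $\cG_x$ is Gromov hyperbolic, with a hyperbolicity constant depending only on $\delta$, $r$, and $R$.

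The only substantive technical point is that the quasi-isometry constants must not vary with $x$, which is automatic because all estimates involve only the globally-chosen constants $r, R, \delta$. The Borel-ness of $d_\sigma$ and local finiteness of $\cG_x$ are formal consequences of the hypotheses (Borel condition on $\alpha$ and metric properness of $\sigma$). I do not anticipate a serious obstacle; the proof is essentially a transcription of the classical argument with the orbit $\Gamma \cdot x_0$ replaced by the orbit $\cO^\sigma_x$ and with all constructions carried out Borel-fiberwise.
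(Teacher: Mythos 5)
Your proof is correct and follows essentially the same strategy as the paper: define the graphing by a $d_\sigma$-bound, use $r$-coboundedness along a geodesic to show connectivity and to bound $d_\cG$ above by a linear function of $d_\sigma$, bound $d_\sigma$ above by a linear function of $d_\cG$ from the definition of $\cG$, conclude $\sigma_x$ is a quasi-isometry, and invoke quasi-isometry invariance of hyperbolicity (the paper cites V\"ais\"al\"a, Theorem 3.18). The only differences are cosmetic choices of constants ($R=4r+1$ with sampling interval $2r$ versus $3r$ with sampling interval $r$); also note that uniformity of the hyperbolicity constant across fibers is not actually required, since Example 1's definition of a hyperbolic graphing only asks that each $\cG_x$ be Gromov hyperbolic fiberwise.
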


\begin{remark}
We do not actually need all of the Main Assumption. We do not use separability or compactness of the orbit closure.
\end{remark}

\begin{proof}

Recall the definition of $d_\sigma$ from Definition \ref{defn:section}. Let $\cG =\{ (x,y) \in \cR :~d_\sigma(x,y)\le 3r\}$. We claim that $\cG$ is a hyperbolic graphing. To see that $\cG$ is a graphing, let $(x,y) \in \cR $. Let $\gamma$ be a geodesic in $\sH_x$ from $\sigma(x)$ to $\alpha(x,y)\sigma(y)$. So $\gamma$ is a continuous map from the interval $[0,d_\sigma(x,y)]$ to $\sH_x$ with $\gamma(0)=\sigma(x)$ and $\gamma(d_\sigma(x,y))=\alpha(x,y)\sigma(y)$. Because $\sigma$ is $r$-cobounded, for every number $t \in [0,d_\sigma(x,y)]$ there exists $y_t \in [x]_\cR $ such that 
$$d(\gamma(t),\sigma_x(y_t)) \le r.$$
By the triangle inequality, if $t,s \in [0,d_\sigma(x,y)]$ then
$$d_\sigma(y_t, y_s) \le 2r+|t-s|.$$
Therefore, if $|t-s|\le r$ then $(y_t,y_s)\in \cG$. In particular, $x, y_r, y_{2r},\ldots, y_{nr}, y$ is a path in $\cG$ from $x$ to $y$ where $n =\lfloor d_\sigma(x,y)/r\rfloor$. This proves $\cG$ is a graphing. Moreover if $d_\cG(x,y)$ is the length of the shortest path in $\cG$ from $x$ to $y$ then
$$d_\cG(x,y) \le 2+  d_\sigma(x, y)/r.$$

On the other hand, there exists a shortest path $x=x_1,\ldots,x_m=y$ in $\cG$ from $x$ to $y$. By definition $d_\sigma(x_i,x_{i+1})\le 3r$. Therefore $d_\sigma(x,y)\le 3rm = 3rd_\cG(x,y)$. So we have proven
$$(1/3r)d_\sigma(x,y) \le d_\cG(x,y) \le 2+  d_\sigma(x,y)/r.$$
This proves that $\sigma_x:[x]_\cR \to \sH_x$ is a quasi-isometry with respect to $d_\cG$ and $d \resto \sH_x$. Because $\sigma$ is metrically proper, $\cG$ is locally finite. By \cite[Theorem 3.18]{vaisala-hyperbolic}, $([x]_\cR,d_\cG)$ is a hyperbolic metric space.
\end{proof}

 \subsection{Orbits \& limit sets}
Let $\sigma:X \to \sH$ be a Borel section and define $\sigma_x, \cO_x^\sigma, \cL_x^\sigma$ as in the introduction. The main result of this section is:




\noindent {\bf Theorem \ref{thm:limit-set}}
{\it If the Main Assumption is satisfied and $\sigma,\eta:X \to \sH$ are metrically proper sections whose orbit closures $\overline{\cO^\sigma_x}, \overline{\cO^\eta_x}$ are compact (for a.e. $x$) then $\cL^\sigma_x = \cL^\eta_x$ for a.e. $x$.}

The idea of the proof is this: suppose $K \subset \bsH$ is such that $K_x:=K \cap  \bsH_x$ is closed and quasi-convex and $K$ is invariant in the sense that $\alpha(x,y)K_y=K_x$. Suppose as well that $K_x \cap \sH_x$ is nonempty for a.e. $x$. We consider the projection map from $\bsH_x$ to $K_x$. In case $K_x$ does not contain the limit set $\cL^\sigma_x$, this map is infinite-to-1 (a.e. $x$). However, this contradicts the Mass-Transport Principle. Applying this to the case when $K$ equals the convex hull of $\cO^\sigma$, we obtain that $\cL^\eta_x \subset \Hull(\cO^\sigma_x) \cap \partial \sH_x$ for a.e. $x$. Since $\Hull(\cO^\sigma_x) \cap \partial \sH_x = \cL^\sigma_x$, this proves one inclusion. The other follows by symmetry.


Now the details. First we review convex hulls and quasi-convexity.

\begin{defn}
Let $(\cH,d)$ be a complete $\delta$-hyperbolic metric space and $\bcH$ its Gromov completion. A subset $Y \subset \bcH$ is {\bf $C$-quasi-convex} if for every $x,y \in Y$, every geodesic $[x,y]$ is contained in the radius-$C$-neighborhood of $Y$. If $Y$ is $C$-quasi-convex for some $C>0$ then $Y$ is called {\bf quasi-convex}. If $Y \subset \bcH$ then the {\bf closed convex hull of $Y$}, denoted $\Hull(Y)$, is the closure of the union of all geodesic segments $[\xi,\eta]$ with $\xi,\eta \in F$. 
\end{defn}


\begin{lem}\label{lem:greenberg}
Let $(\cH,d)$ be a complete $\delta$-hyperbolic metric space and $K \subset \bcH$ be compact. Then $\Hull(K)$ is quasi-convex and closed. Moreover, $\Hull(K) \cap \partial \cH = K \cap \partial \cH$.
\end{lem}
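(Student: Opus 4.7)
Closedness of $\Hull(K)$ is automatic from the definition (it is a closure), so I focus on the remaining two assertions.

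For quasi-convexity I would first handle points $p,q$ that already lie on geodesic segments $[\xi_1,\eta_1]$, $[\xi_2,\eta_2]$ with endpoints in $K$, and then extend to all of $\Hull(K)$ by continuity. Apply $\delta$-thinness to the triangle $p,q,\xi_1$ to get that $[p,q]$ sits in the $\delta$-neighborhood of $[p,\xi_1]\cup[\xi_1,q]$. The side $[p,\xi_1]$, being a geodesic between two points of the geodesic $[\xi_1,\eta_1]$, fellow-travels the corresponding sub-segment of $[\xi_1,\eta_1]$ within $2\delta$, and hence lies within $2\delta$ of $\Hull(K)$. For $[\xi_1,q]$, apply $\delta$-thinness a second time to the triangle $\xi_1,\xi_2,q$: the new side $[\xi_1,\xi_2]$ is a geodesic between two points of $K$, so it lies in $\Hull(K)$ by definition, while $[\xi_2,q]$ fellow-travels $[\xi_2,\eta_2]\subset\Hull(K)$. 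Stacking these estimates places $[p,q]$ within $4\delta$ of $\Hull(K)$, giving quasi-convexity.

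For the identity $\Hull(K)\cap\partial\cH=K\cap\partial\cH$, the inclusion $\supseteq$ is immediate from $K\subset\Hull(K)$. For the reverse, take $\xi\in\Hull(K)\cap\partial\cH$ and write $\xi=\lim_n p_n$ with $p_n\in[\xi_n,\eta_n]$, $\xi_n,\eta_n\in K$. By compactness of $K$ in $\bcH$, pass to a subsequence so that $\xi_n\to\xi'$ and $\eta_n\to\eta'$ in $\bcH$, with $\xi',\eta'\in K$. Fix a basepoint $o\in\cH$; since $\xi\in\partial\cH$ we have $d(o,p_n)\to\infty$ and $(p_n|\xi)_o\to\infty$. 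Applying $\delta$-thinness to the (possibly ideal) triangle $o,\xi_n,\eta_n$, the point $p_n\in[\xi_n,\eta_n]$ is $\delta$-close to either $[o,\xi_n]$ or $[o,\eta_n]$; after further subsequencing, assume the former. A one-line Gromov-product computation then gives $(p_n|\xi_n)_o\ge d(o,p_n)-2\delta\to\infty$, and combining with $(p_n|\xi)_o\to\infty$ via the standard inequality $(a|c)_o\ge\min\{(a|b)_o,(b|c)_o\}-\delta$ yields $(\xi_n|\xi)_o\to\infty$; that is, $\xi_n\to\xi$ in $\bcH$. Since also $\xi_n\to\xi'$, we conclude $\xi=\xi'\in K$.

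The main technical obstacle is that when $K$ meets $\partial\cH$, the geodesics and triangles appearing in both steps are (possibly) ideal. I would handle this by invoking the standard hyperbolic-geometry facts that ideal and semi-ideal triangles in a $\delta$-hyperbolic space are still $C\delta$-thin for a universal $C$, and that the boundary-extended Gromov product (defined as $\liminf$) continues to satisfy the hyperbolic inequality up to additive error $O(\delta)$. Both ingredients are part of the background apparatus set up in the paper's hyperbolic-spaces appendix, so no new geometric input should be required beyond the thin-polygon manipulations above.
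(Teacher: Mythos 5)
Your argument is correct, and it supplies what the paper itself leaves implicit: the published proof simply cites \cite[Lemmas 3.2, 3.6]{kapovich-short-1996} for the case $K\subset\partial\cH$ and declares the general case ``nearly identical.'' What you have written is essentially that standard argument, carried out directly for $K\subset\bcH$. The two main moves --- a double application of thin triangles to push any geodesic $[p,q]$ with endpoints on hull segments to within $O(\delta)$ of $\Hull(K)$, and the Gromov-product chase (anchored on a basepoint $o$, using that $p_n$ hugs one of the two legs of the triangle $o,\xi_n,\eta_n$) to force $\xi_n\to\xi$ and hence $\xi\in K$ --- are exactly the ingredients one finds in Kapovich--Short. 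Your closing remark about ideal/semi-ideal triangles correctly flags the only point where constants degrade, and the paper's own Lemma \ref{lem:thin} supplies the needed uniform thinness. The one place worth a line more of care is the ``extend by continuity'' step for quasi-convexity: you should note that once the boundary identity $\Hull(K)\cap\partial\cH=K\cap\partial\cH$ is in hand, the only geodesics $[p,q]$ with $p,q\in\Hull(K)$ not already covered are those with at least one endpoint in $K\cap\partial\cH$, and these are handled by stability of geodesics (nearby endpoints give fellow-travelling geodesics, up to $O(\delta)$), so the $4\delta$ constant only worsens by a bounded amount.
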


\begin{proof}
The special case in which $K \subset \partial \cH$ is \cite[Lemmas 3.2, 3.6]{kapovich-short-1996}. The general case is nearly identical.
\end{proof}

\begin{lem}\label{lem:projection}
Let $(\cH,d)$ be a complete $\delta$-hyperbolic metric space and let $K \subset \bcH$ be closed and $C$-quasi-convex. For every $x\in \cH$, let $\Proj_K(x)$ be the set of all elements $y \in K$ realizing the minimum distance from $x$ to $K$ (so $d(x,y)=d(x,K)$). For $\xi \in \partial \cH \setminus K$ let $\Proj_K(\xi)$ be the set of all $k \in K$ such that there exists a sequence $\{x_n\} \subset K$ with $\lim_n x_n = \xi$ and $k_n \in \Proj_K(x_n)$ with $\lim_n k_n = k$. 

Then there exists a constant $Q=Q(C,\cH,d)$ such that for every $x\in \bcH \setminus K$,  the diameter of $\Proj_K(x)$ is at most $Q$. Moreover, $\Proj_K(x)$ is nonempty and if $\{x_n\} \subset \cH$ is any sequence with $\lim_n x_n = \xi \in \partial \cH \setminus K$ then 
$$\limsup_n d(\Proj_K(x_n), \Proj_K(\xi)) \le Q$$
where $d(\cdot,\cdot)$ is the Hausdorff metric.
\end{lem}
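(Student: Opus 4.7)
The plan is to establish, in order, the diameter bound for $\Proj_K(x)$ with $x\in \cH$, nonemptiness of $\Proj_K(x)$, and finally the Hausdorff convergence statement for $\xi \in \partial \cH \setminus K$. The central input in all three steps is the classical coarse-contracting property of nearest-point projections onto quasi-convex subsets of a $\delta$-hyperbolic space.

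For the diameter bound with $x \in \cH$: given $y_1, y_2 \in \Proj_K(x)$, take a geodesic $[y_1, y_2] \subset \cH$ and set $r := d(x,K)$. By $C$-quasi-convexity, every $p \in [y_1, y_2]$ lies within $C$ of $K$, so $d(x,p) \ge r - C$. On the other hand $\delta$-hyperbolicity produces a $p^\ast \in [y_1, y_2]$ with $d(x,p^\ast) \le (y_1 | y_2)_x + \delta = r - d(y_1,y_2)/2 + \delta$. Comparison forces $d(y_1, y_2) \le 2(C + \delta)$, so one may take $Q := 2(C + \delta)$. For nonemptiness of $\Proj_K(x)$ when $x\in \cH$, apply the same estimate to a minimizing sequence $y_n \in K$ with $d(x, y_n) \to r$, obtaining $d(y_n, y_m) \le 2(C+\delta) + \epsilon_n + \epsilon_m$ where $\epsilon_n := d(x,y_n) - r \to 0$; in particular $(y_n | y_m)_x$ stays bounded, so $\{y_n\}$ cannot accumulate in $\partial \cH$. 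Under the compactness available in the intended application (where $K$ arises as a hull of a compact orbit closure, cf.\ Lemma~\ref{lem:greenberg}), a subsequence converges to some $y_\infty \in \cH$, which lies in $K$ by closedness in $\bcH$ and realizes the infimum.

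For $\xi \in \partial \cH \setminus K$ and the Hausdorff convergence statement: fix a basepoint $o \in K \cap \cH$, take $x_n \to \xi$ in $\cH$, and $k_n \in \Proj_K(x_n)$. The main task is to show $\{k_n\}$ does not escape toward some boundary point $\xi' \ne \xi$. Were $k_n \to \xi'$, the Gromov product $(x_n|k_n)_o$ would remain bounded (tending to $(\xi|\xi')_o < \infty$), so $d(x_n, k_n) = d(x_n, o) + d(k_n, o) - 2(x_n|k_n)_o$ would grow essentially as $d(x_n, o) + d(k_n, o)$, contradicting the trivial upper bound $d(x_n, k_n) \le d(x_n, o)$ coming from $o \in K$. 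Hence $\{k_n\}$ subconverges either in $\cH$ or to $\xi$ itself, and any such limit lies in $\Proj_K(\xi)$ by definition. Combined with the uniform diameter bound $Q$ on each $\Proj_K(x_n)$, this implies $\Proj_K(\xi)$ itself has diameter at most $Q$ and $\limsup_n d(\Proj_K(x_n), \Proj_K(\xi)) \le Q$ in the Hausdorff metric.

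The hard part is the boundary case: correctly ruling out escape of the projections $k_n$ in a direction different from $\xi$. The argument relies on the Gromov-product identity for the distance between two points, together with the hypothesis $\xi \notin K$ (used via a fixed basepoint $o \in K \cap \cH$); once escape is excluded, the rest is routine coarse hyperbolic geometry.
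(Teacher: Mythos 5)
Your argument covers the same ground as the paper's own proof, which simply cites Bridson--Haefliger III.$\Gamma$.3.11 for the diameter bound and declares the rest ``standard arguments left to the reader.'' Your derivation of the diameter bound via the identity $(y_1|y_2)_x = r - d(y_1,y_2)/2$ combined with quasi-convexity is exactly the classical argument, and the boundary-escape step using $d(x_n,k_n) = d(x_n,o) + d(k_n,o) - 2(x_n|k_n)_o$ against the trivial bound $d(x_n,k_n)\le d(x_n,o)$ (from $o\in K$) is a clean way to see that the projections remain in a bounded ball.

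Two caveats. First, a minor one: having shown $d(k_n,o)\le 2M$, the $k_n$ cannot converge to any boundary point, including $\xi$ itself; indeed $k_n\to\xi$ is impossible outright because $K$ is closed in $\bcH$ and $\xi\notin K$, so the phrase ``subconverges either in $\cH$ or to $\xi$ itself'' should be tightened. Second, and more substantively: you are right that nonemptiness of $\Proj_K(x)$ and subconvergence of $\{k_n\}$ require a compactness or properness input beyond ``complete $\delta$-hyperbolic'' --- this is a genuine subtlety, and the paper's one-line citation elides it (Bridson--Haefliger work with coarse near-projections, which are always nonempty). But the appeal to ``the intended application, where $K$ arises as a hull of a compact orbit closure, cf.\ Lemma~\ref{lem:greenberg}'' does not actually supply what you need: Lemma~\ref{lem:greenberg} shows the hull is closed and quasi-convex, not compact, and in a non-proper space the hull of a compact set need not be compact (an infinite-degree tree already gives closed bounded quasi-convex sets onto which exact nearest-point projection fails to exist). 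To make the nonemptiness rigorous in the paper's generality one should either assume the fibers are proper, or replace exact projections by $\epsilon$-near projections $\Proj_K^\epsilon(x)=\{y\in K:~d(x,y)\le d(x,K)+\epsilon\}$, which are always nonempty, satisfy the same diameter bound (with constant $2(C+\delta)+2\epsilon$), and suffice for the Mass-Transport argument in Lemma~\ref{limit-set1}.
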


\begin{proof}
If $x\in \cH$ then the fact that $\diam(\Proj_K(x))$ is uniformly bounded is \cite[Chapter III.$\Gamma$ Proposition 3.11]{bridson-haefliger-book}. The case $x \in \partial \cH \setminus \partial K$ follows by taking limits. The rest of the lemma follows from standard arguments left to the reader.
\end{proof}

The next lemma is used several times throughout the paper.
\begin{lem}\label{limit-set1}
Suppose $\sigma$ is metrically proper. Suppose $K \subset \overline{\sH}$ is Borel and
\begin{itemize}
\item $K_x:=K \cap \bsH_x$ is closed and quasi-convex for a.e. $x$,
\item $K_x \cap \sH_x$ is nonempty for a.e. $x$,
\item $\alpha(y,x)K_x = K_y$ for all $x\cR y$.
\end{itemize}
Then  $\cL^\sigma_x \subset K_x$ for a.e. $x$. 
\end{lem}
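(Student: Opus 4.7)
The plan is to argue by contradiction via the Mass-Transport Principle. Suppose $\cL^\sigma_x\not\subset K_x$ on a set of positive $\mu$-measure; since $\cL^\sigma$ is $\cR$-invariant (from the cocycle property) and $K$ is $\cR$-invariant by hypothesis, this exceptional set is $\cR$-invariant, so ergodicity makes it conull. By measurable selection pick a Borel $\xi\colon X\to\partial\sH$ with $\xi(x)\in\cL^\sigma_x\setminus K_x$ a.e. Using a countable dense sequence of Borel sections to fix an $\cR$-equivariant Borel tie-breaking for the nearest-point map, define a Borel section $\phi(x):=\Proj_{K_x}(\sigma(x))\in K_x\cap\sH_x$ with $\alpha(x,y)\phi(y)=\Proj_{K_x}(\sigma_x(y))$ for every $(x,y)\in\cR$. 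In particular $\cO^\phi_x=\Proj_{K_x}(\cO^\sigma_x)$.

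Next I would show that $\cO^\phi_x$ fails to be metrically proper almost surely. Pick orbit points $\sigma_x(y_n)\to\xi(x)$ in $\bsH_x$. Because $K_x$ is closed in $\bsH_x$ and $\xi(x)\notin K_x$, Lemma~\ref{lem:projection} applies: the projections $\Proj_{K_x}(\sigma_x(y_n))$ remain in the bounded $Q$-neighborhood of the compact set $\Proj_{K_x}(\xi(x))\subset K_x\cap\sH_x$. Hence $\cO^\phi_x$ has an accumulation point $p(x)\in\sH_x$.

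For the mass-transport, define $F\colon\cR\to\{0,1\}$ by $F(x,y)=1$ iff $y$ is the (Borel-tie-broken) minimizer in $[x]_\cR$ of $z\mapsto d_{\sH_x}(\sigma_x(z),\phi(x))$. Then $\sum_{y}F(x,y)=1$ almost surely, so $\int\sum_y F(x,y)\,d\mu=1$. Conjugating by the isometry $\alpha(x,y)\colon\sH_y\to\sH_x$ shows that $F(y,x)=1$ is equivalent to $\alpha(x,y)\phi(y)\in V_x$, where $V_x\subset\sH_x$ is the tie-broken Voronoi cell of $\sigma(x)$ within $\cO^\sigma_x$. Since $\{\alpha(x,y)\phi(y):y\in[x]_\cR\}=\cO^\phi_x$, this gives $\sum_{y}F(y,x)=|V_x\cap\cO^\phi_x|$ counted with multiplicity.

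The main obstacle, and the heart of the argument, is to conclude $|V_x\cap\cO^\phi_x|=\infty$ on a positive-measure set. Since $\cO^\phi_x$ accumulates at $p(x)\in\sH_x$, the ball $B_x(p(x),1)$ contains infinitely many points of $\cO^\phi_x$; metric properness of $\sigma$ forces only finitely many Voronoi cells of $\cO^\sigma_x$ to meet $B_x(p(x),1)$, so pigeonhole yields some $y_0\in[x]_\cR$ with $|V^x_{y_0}\cap\cO^\phi_x|=\infty$, where $V^x_{y_0}$ is the Voronoi cell of $\sigma_x(y_0)$. Applying the isometry $\alpha(y_0,x)$ transports this into $|V_{y_0}\cap\cO^\phi_{y_0}|=\infty$, so $y_0\in A:=\{z:|V_z\cap\cO^\phi_z|=\infty\}$. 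Thus $A$ meets $[x]_\cR$ for a.e. $x$, so its $\cR$-saturation is conull, and $\cR$-invariance of $\mu$ gives $\mu(A)>0$. Therefore $\int\sum_y F(y,x)\,d\mu=+\infty$, contradicting the Mass-Transport Principle.
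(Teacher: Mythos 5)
Your proof is correct, and it rests on the same two pillars as the paper's argument — the quasi-convex projection bounds of Lemma~\ref{lem:projection} and a contradiction via the Mass-Transport Principle — but the transport function you build is genuinely different. The paper sends the normalized mass $|\cO^\sigma_y \cap \cN_{r_1+Q}(\Proj_{K_y}(\sigma_y(x)))|^{-1}$ from $x$ to every $y$ whose basepoint $\sigma(y)$ lies near the projection of $\sigma_y(x)$, and reads off the contradiction on the explicitly positive-measure set $V_{r_1}$. You instead choose a Borel selection $\phi(x)\in\Proj_{K_x}(\sigma(x))$ and send unit mass from $x$ to the tie-broken nearest orbit point to $\phi(x)$, exploiting the Voronoi partition of the fiber; this makes $\sum_y F(x,y)=1$ hold exactly by construction (arguably cleaner than the paper's normalization, which tacitly treats $\sigma_x$ as injective), at the cost of an extra argument that the bad set $A$ has positive measure. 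A few points to tighten: (i) you do not need $\phi$ to be $\cR$-equivariantly tie-broken — any Borel selection $\phi(x)\in\Proj_{K_x}(\sigma(x))$ works, since the argument only uses the \emph{containment} $\alpha(x,y)\phi(y)\in\Proj_{K_x}(\sigma_x(y))$ and not an equality of selections, so you should not assert $\cO^\phi_x=\Proj_{K_x}(\cO^\sigma_x)$; (ii) without local compactness the accumulation point $p(x)\in\sH_x$ need not exist, but you only need that infinitely many distinct $y_n$ have $\alpha(x,y_n)\phi(y_n)$ in a single fixed bounded ball, which Lemma~\ref{lem:projection} gives directly, so replace ``accumulation point'' with ``bounded ball hit by infinitely many $y_n$ (with multiplicity)''; (iii) all counts such as $|V^x_{y_0}\cap\cO^\phi_x|$ must be read uniformly as counting indices $y$ with multiplicity, as you do for $\sum_y F(y,x)$, and the tie-breaking for $F$ should be by a fixed Borel injection $X\to[0,1]$ so that it transports coherently under the isometries $\alpha(x,y)$; and (iv) the inference ``saturation conull $\Rightarrow\mu(A)>0$'' is valid but deserves a sentence — the $\cR$-saturation of $A$ is a countable union of images of $A$ under a Feldman--Moore generating family of partial isomorphisms, each measure-preserving, hence null whenever $A$ is.
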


\begin{proof}





As in Lemma \ref{lem:projection}, given $b \in \sH_x$, let 
$$\Proj_{K_x}(b)=\Big\{c \in K_x:~d(c,b) = \inf \{d(c',b):~ c' \in K_x\} \Big\}.$$
We also extend $\Proj_{K_x}$ to $\partial \sH_x$ by defining $\Proj_{K_x}(\xi)$ equal to the set of all $h\in K_x$ such that there exists $\{q_n\} \subset \sH_x$ with $q_n \to \xi$ and $h_n \in \Proj_{K_x}(q_n)$ with $h_n \to h$ as $n\to\infty$. 

By Lemma \ref{lem:projection} there is a constant $Q>0$ such that $\diam( \Proj_{K_x}(b)) \le Q$ and $\Proj_{K_x}(b)$ is nonempty for $b \in \bsH_x \setminus K_x$. 

Let $W$ be the set of all $x\in X$ such that there exists $\xi \in \cL^\sigma_x \setminus K_x$. To obtain a contradiction, suppose $\mu(W)>0$. For $r>0$, consider the subset 
$$V_r:=\{x\in X:~ \exists \xi \in \cL^\sigma_x \setminus K_x \textrm{ such that } d(\sigma(x), \Proj_{K_x}(\xi)) < r\}.$$
 Because $\cup_r V_r = W$, there exists some $r_1>0$ such that $\mu(V_{r_1})>0$. 

For $S \subset \sH_x$ and $r>0$ let $\cN_{r}(S)$ denote the closed radius $r$-neighborhood of $S$. Define $F:\cR  \to \R$ by 
$$F(x,y) = |\cO^\sigma_y \cap \cN_{r_1 + Q}(\Proj_{K_y}(\sigma_y(x)))|^{-1}$$
if $\sigma(y)\in \cN_{r_1 + Q}(\Proj_{K_y}(\sigma_y(x)))$ and $0$ otherwise. By the Mass-Transport Principle (Lemma \ref{lem:mtp}), 
\begin{eqnarray}\label{eqn:mtp2}
1 = \int \sum_{y \in [x]_\cR} F(x,y)~d\mu(x) = \int \sum_{x \in [y]_\cR} F(x,y)~d\mu(y).
\end{eqnarray}
Here we have used that $\sigma$ is metrically proper and $\Proj_{K_x}(\sigma(x))$ has bounded diameter to conclude that $|\cO^\sigma_y \cap \cN_{r_1 + Q}(\Proj_{K_y}(\sigma_y(x)))|$ is finite.

Let $y \in V_{r_1}$. By definition, there exists $\xi \in \cL^\sigma_y \setminus K_y$ such that $d(\sigma(y), \Proj_{K_y}(\xi)) < r_1$. Since $\xi$ is a limit point, there is a sequence $\{x_i\} \subset [y]_\cR$ such that $\sigma_y(x_i) \to \xi$ as $i\to\infty$. 

By Lemma \ref{lem:projection},
$$\limsup_i d(\Proj_{K_y}(\sigma_y(x_i)), \Proj_{K_y}(\xi)) \le Q.$$
Therefore, if $i$ is sufficiently large then $F(x_i,y) > 0$. By definition we have
$$F(x_i,y) = |\cO^\sigma_{y} \cap\cN_{r_1+Q}(\Proj_{K_y}(\sigma_y(x_i)))|^{-1} \ge |\cO^\sigma_{y} \cap \cN_{r_1+3Q}(\Proj_{K_y}(\xi))|^{-1} >0.$$
Therefore
$$\sum_{x \in [y]_\cR} F(x,y) \ge \sum_{x \in [y]_\cR} |\cO^\sigma_{y} \cap \cN_{r_1+3Q}(\Proj_{K_y}(\xi))|^{-1} = +\infty.$$
By (\ref{eqn:mtp2}), we must have that $\mu(V_{r_1})=0$ contradicting our earlier hypothesis. Thus $\mu(W)=0$. This implies the lemma.

\end{proof}

\begin{proof}[Proof of Theorem \ref{thm:limit-set}]
By Lemmas \ref{lem:greenberg} and \ref{limit-set1}, for a.e. $x$,
$$\cL^\sigma_x \subset \Hull(\cO^\eta_x) \cap \partial \sH_x = \cL^\eta_x.$$ 
By symmetry, this proves the Theorem.
\end{proof}

From now on, we will write $\cL$ to mean $\cL^\sigma$. 


 

\subsection{Minimality}



\begin{thm}\label{thm:minimal}
Suppose the Main Assumption is satisfied and $\cR$ is non-hyperfinite. Then the action of $\cR$ on the limit set $\cL$ is minimal in the following sense: if $C \subset \partial \cL$ is Borel and 
\begin{itemize}
\item $C_x:=C \cap \cL_x^\sigma$ is closed for a.e. $x$,
\item $C$ is $\alpha$-invariant in the sense that $\alpha(x,y)C_y = C_x$ for a.e. $(x,y)\in \cR$,
\item $C_x \ne \emptyset$ for a.e. $x$
\end{itemize}
then $\cL_x \subset C_x$ for a.e. $x$.
\end{thm}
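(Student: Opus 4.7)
The plan is to mimic the proof of Theorem \ref{thm:limit-set} by constructing a closed, quasi-convex, $\alpha$-invariant Borel subbundle $K \subset \bsH$ with $K_x \cap \sH_x \neq \emptyset$ and $K_x \cap \partial \sH_x = C_x$, then applying Lemma \ref{limit-set1} to conclude $\cL_x \subset K_x$ for a.e.\ $x$ and intersecting with the boundary.

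First I would dispose of the degenerate case by ergodicity. The set $\{x : |C_x| = 1\}$ is $\cR$-invariant, because $\alpha(x,y) : \bsH_y \to \bsH_x$ is a homeomorphism sending $C_y$ to $C_x$, and it is Borel because $C_x$ is compact (closed in the compact $\cL_x$, which is compact by the Main Assumption) and having cardinality at most one is a Borel condition on compact subsets of a Polish space. Ergodicity then forces this set to have measure $0$ or $1$. In the singleton case, Kuratowski--Ryll-Nardzewski provides a Borel section $c : X \to \partial \sH$ with $C_x = \{c(x)\}$; the Dirac field $\nu_x := \delta_{c(x)}$ lies in $\Fix(\cR)$, so the characterization attributed to Kaimanovich in the introduction forces $\cR$ to be hyperfinite, contradicting the hypothesis. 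Hence $|C_x| \geq 2$ for a.e.\ $x$.

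Now set $K_x := \Hull(C_x)$. By Lemma \ref{lem:greenberg}, $K_x$ is closed, is quasi-convex with a uniform constant depending only on the almost-everywhere hyperbolicity constant $\delta$, and satisfies $K_x \cap \partial \sH_x = C_x$. Any geodesic joining two distinct points of $C_x$ sits inside $\sH_x$, so $K_x \cap \sH_x$ is nonempty; invariance $\alpha(x,y)K_y = K_x$ is immediate from $C$ being $\alpha$-invariant and $\alpha(x,y)$ being an isometric homeomorphism extending to the boundary. Granting that $K$ is Borel, Lemma \ref{limit-set1} yields $\cL_x \subset K_x$ a.e., and intersecting with $\partial \sH_x$ gives $\cL_x \subset C_x$, completing the proof.

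The main obstacle is verifying Borel measurability of $K = \sqcup_x \Hull(C_x)$. I would handle this by selecting, via Kuratowski--Ryll-Nardzewski, a countable family $\{\tau_n\}_{n \in \N}$ of Borel sections $X \to \bsH$ with $\tau_n(x) \in C_x$ and $\{\tau_n(x)\}$ dense in $C_x$ for a.e.\ $x$, and observing that $\Hull(C_x)$ lies within bounded Hausdorff distance (depending only on $\delta$) of the union of geodesic segments between the $\tau_n(x)$ together with their boundary accumulation points. Borel measurability of these segments follows from the Borel structure on $\bsH$ developed in Section \ref{sec:hyperbolic-bundle}. Since Lemma \ref{limit-set1} requires only quasi-convexity up to a bounded constant, one can in any case replace $K_x$ by a Borel fattening with the same boundary trace $C_x$ if a direct measurability check of the convex hull itself proves inconvenient.
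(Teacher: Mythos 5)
Your proposal is correct and follows essentially the same route as the paper: kill the singleton case using ergodicity and the hyperfiniteness criterion (the paper invokes Proposition \ref{prop:limit-set3} directly rather than passing to the Dirac field and Theorem \ref{thm:K}, but these are two phrasings of the same Kaimanovich-type fact), then take $K_x = \Hull(C_x)$, apply Lemma \ref{limit-set1}, and intersect with the boundary via Lemma \ref{lem:greenberg}. Your extra attention to the Borel measurability of $\sqcup_x \Hull(C_x)$ is a genuine technical point that the paper leaves implicit, and your proposed fix (a countable dense family of sections and a bounded fattening) is a reasonable way to supply it.
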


\begin{prop}\label{prop:limit-set3}
Suppose $\sigma:X \to \sH$ is a metrically proper section and $\xi:X \to \partial \sH$ is an $\cR$-invariant section (so $\alpha(y,x)\xi(x)=\xi(y)$ a.e.). Then $\cR$ is hyperfinite.
\end{prop}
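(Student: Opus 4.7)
My plan is to use the $\cR$-invariant boundary section $\xi$ to produce a Borel cocycle $c:\cR\to\R$ via Busemann (horofunction) values, and then combine $c$ with the metric properness of $\sigma$ to realize $\cR$ as an increasing union of finite Borel subequivalence relations. This is essentially Kaimanovich's argument in \cite{kaimanovich-boundary-amenability} applied to the special case of a Dirac-supported invariant field of boundary measures.

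First I would construct the Busemann cocycle. For each $(x,y)\in\cR$, set
$$
c(x,y) \;:=\; \limsup_{z\to\xi(x)}\bigl[d(\sigma(x),z)-d(\sigma_x(y),z)\bigr].
$$
Borel measurability of $c$ follows from the Borel structure on $\bsH$ constructed earlier in this section (in particular from the lemmas characterizing Borel sections into $\bsH$ via the Gromov product). Since $\alpha(x,y)$ is an isometry sending $\xi(y)$ to $\xi(x)$, the function $c$ satisfies the cocycle identity $c(x,y)+c(y,z)=c(x,z)$ up to an additive error of size $O(\delta)$, where $\delta$ is a hyperbolicity constant of the fibers. A standard coboundary adjustment --- e.g., redefining $c$ as signed displacement along a Borel family of geodesic rays towards $\xi(x)$ --- upgrades $c$ to an exact Borel cocycle.

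Next I would assemble the hyperfinite exhaustion in two layers. Consider the \emph{horospherical kernel} $\cR_0:=\{(x,y)\in\cR: c(x,y)=0\}$; by the cocycle identity it is a Borel subequivalence relation, whose classes consist of orbit points lying on the same horosphere through $\xi(x)$. The quotient relation $\cR/\cR_0$ is carried by the $\R$-valued cocycle $c$, so floor-rounding produces a $\Z$-valued cocycle which presents $\cR/\cR_0$ as a $\Z$-indexed, and thus hyperfinite, equivalence. Since an extension of a hyperfinite relation by a hyperfinite relation is hyperfinite, it would suffice to show that $\cR_0$ is hyperfinite.

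The main obstacle is exactly this last step: hyperfiniteness of the horospherical kernel $\cR_0$. A horosphere in a Gromov hyperbolic space is typically non-hyperbolic (and is Euclidean in the CAT$(-1)$ case), so one cannot directly iterate the Busemann construction to get a second $\R$-valued cocycle. Instead, one exploits that $\sigma$ restricts to a metrically proper section on each horosphere, so the orbit trace on any horosphere is metrically discrete; one then constructs an ascending exhaustion of $\cR_0$ by finite Borel equivalence relations via a Borel Voronoi-type tiling of the horospheres. Making this construction rigorous in the measured bundle setting --- while maintaining Borel measurability throughout, and using the separability of $\sH$ together with the compactness of orbit closures in $\bsH_x$ guaranteed by the Main Assumption --- is the technical heart of the proof.
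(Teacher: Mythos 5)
Your proposal takes a genuinely different route from the paper's proof. The paper simply invokes \cite[Theorem 3.4]{kaimanovich-boundary-amenability}: Kaimanovich's argument verifies Reiter's criterion directly, constructing for each $n$ a Borel field of finitely-supported probability measures $\lambda^n_x$ on $\cO^\sigma_x$ concentrated in a tube around geodesic rays toward $\xi(x)$ (with the mass drifting toward $\xi(x)$ as $n$ grows), and showing $\|\alpha(y,x)_*\lambda^n_x-\lambda^n_y\|_1\to 0$ for a.e.\ $(x,y)\in\cR$; amenability then gives hyperfiniteness by Connes--Feldman--Weiss. That argument never constructs a cocycle and never decomposes $\cR$ into a kernel and a quotient, which neatly sidesteps both of the difficulties your plan runs into.

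The two steps you describe as needing work are in fact genuine gaps. First, the ``standard coboundary adjustment'' from a Busemann quasi-cocycle to an exact Borel cocycle is not a standard maneuver. By Lemma~\ref{lem:buse} the Busemann function on a $\delta$-hyperbolic space only satisfies the cocycle identity up to an additive error of order $\delta$, and choosing rays fiber-by-fiber does not fix this because the chosen ray at $x$ need not be $\alpha(x,y)$ applied to the chosen ray at $y$; horofunctions to the same boundary point defined by different rays differ by a \emph{bounded function}, not a constant. Killing the remaining defect by a bounded measurable correction is a vanishing statement in (measured) bounded cohomology; it is not automatic, and for non-amenable relations it typically fails, so you would essentially be assuming the amenability you are trying to prove. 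Second, even granting an exact cocycle $c$ and the extension theorem for amenable relations, the hyperfiniteness of $\cR_0=\ker c$ is exactly where the geometry must be used, and ``a Borel Voronoi-type tiling of the horospheres'' does not deliver it: a single Voronoi tiling yields only one finite Borel subequivalence relation, not an increasing exhaustion, and the cells need not even be finite (metric properness of $\sigma$ controls intersections with \emph{bounded} sets, but Voronoi cells can be unbounded when the horospherical trace of the orbit is, say, a one-sided ray). You would have to produce a coherently nested Borel family of coarser and coarser tilings whose union is $\cR_0$, and that construction is itself as hard as the original problem. The Reiter-function route avoids both issues, which is why the paper follows it.
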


\begin{proof}
The proof is essentially the same as the proof of \cite[Theorem 3.4]{kaimanovich-boundary-amenability} (which uses Reiter's criterion to prove $\cR$ is amenable). The main difference is that Kaimanovich considers only the case when the bundle $\sH$ comes from a hyperbolic graphing as in Example \ref{ex:graphing}. The general case is not significantly different. 
\end{proof}

\begin{proof}[Proof of Theorem \ref{thm:minimal}]
By Proposition \ref{prop:limit-set3} and the $\mu$-ergodicity of $\cR$, it follows that $|C_x|>1$ for a.e. $x$. Therefore, $\Hull(C_x) \cap \sH_x$ is nonempty for a.e. $x$. So Lemma  \ref{limit-set1} implies $\cL_x \subset C_x$ for a.e. $x$ which implies the theorem.
\end{proof}

\section{Fields of boundary measures}\label{sec:fields1}

Let us assume the Main Assumption is satisfied. As in the previous section, we let $\cL\to X$ denote the bundle of limit sets. Given a function $F:\cL \to \C$ and $x\in X$, we let $F_x:\cL_x \to \C$ denote its restriction to the fiber $\cL_x$. We say two functions $F, G$ on $\cL$ are {\bf equivalent} if for a.e. $x\in X$ $F_x  = G_x$. For each $x\in X$, let $C(\cL_x)$ denote the Banach space of continuous functions on $\cL_x$ with the sup norm. Suppose $F:\cL \to \C$ is a Borel function such that $F_x \in C(\cL_x)$ for a.e. $x\in X$. Then we define its norm by
$$\|F\|:= \| x \mapsto \|F_x\| \|_{L^\infty(X,\mu)}.$$
Let $C(\cL \to X)$ denote the set of all equivalence classes of Borel functions $F:\cL \to \C$ such that for a.e. $x\in X$, $F_x \in C(\cL_x)$ and $\|F\|<\infty$. This is a Banach space with the above norm.

A {\bf field of boundary measures} is a collection $\nu=\{\nu_x:~x\in X\}$ such that
\begin{itemize}
\item for every $x\in X$, $\nu_x$ is a Borel probability measure on the fiber $\cL_x$ and 
\item for every $F \in C(\cL\to X)$, the map $x \mapsto  \nu_x(F_x) = \int  F_x~d\nu_x$ is $\mu$-measurable.
\end{itemize}
Two fields $\nu,\eta$ are {\bf equivalent} if $\nu_x=\eta_x$ for a.e. $x$. By abusing notation, we will not distinguish between equivalent fields. 

Let $\Prob(\cL\to X)$ denote the set of all (equivalence classes of) fields of boundary measures. Given $F \in C(\cL \to X)$ and an open set $O \subset \C$, let $\Omega(F,O)$ be the set of all $\nu \in \Prob(\cL \to X)$ such that $\int \nu_x(F_x)~d\mu(x) \in O$. We give $\Prob(\cL \to X)$ the topology generated by sets of the form $\Omega(F,O)$. 

\begin{thm}\label{thm:jcontinuous1}
If the Main Assumption is satisfied then $\Prob(\cL \to X)$ is affinely homeomorphic to a compact convex metrizable subspace of a Banach space. Moreover the full group $[\cR]$ acts jointly continuously on $\Prob(\cL \to X)$ by
$(\phi \nu)_{\phi x} = \alpha(\phi x,x)_*\nu_{x}$. 
\end{thm}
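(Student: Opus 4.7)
The plan is to realize $\Prob(\cL \to X)$ as a weak-$*$ closed bounded convex subset of the dual Banach space $C(\cL \to X)^*$ via the integration pairing
$$T(\nu)(F) := \int_X \nu_x(F_x)\,d\mu(x), \qquad F \in C(\cL \to X),$$
and then transport the compact convex metrizable structure back through $T$. The first step is to check that $T$ is well-defined (the integrand $x \mapsto \nu_x(F_x)$ is measurable by definition of a field, and bounded by $\|F\|$), linear in $F$, affine in $\nu$, and maps into the closed unit ball of $C(\cL\to X)^*$ since each $\nu_x$ is a probability measure. By construction the topology on $\Prob(\cL\to X)$ is exactly the initial topology making every $\nu \mapsto T(\nu)(F)$ continuous, so $T$ is a continuous affine map.

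Next I would establish the three topological facts:
\textbf{(i) Separability of $C(\cL\to X)$.} Using the countable family of Borel sections $\{\sigma_i\}$ dense in each fiber of $\sH$ and the compactness of $\cL_x$, one constructs a countable collection of bounded Borel functions on $\cL$ (for example, fiberwise Lipschitz functions of the distances $\xi \mapsto \phi(\rho_\sigma(\xi, \sigma_i(x)))$ for rational Lipschitz $\phi$) whose restrictions to each fiber form a uniformly dense subset of $C(\cL_x)$. Standard arguments then give separability.
\textbf{(ii) Injectivity of $T$.} If $T(\nu) = T(\eta)$, then applying $T$ to $\mathbf{1}_A \cdot F$ for Borel $A \subset X$ and $F$ in a fiberwise-separating countable family shows $\nu_x = \eta_x$ for a.e.\ $x$.
\textbf{(iii) Weak-$*$ closedness of the image.} Given a weak-$*$ limit $L$ of functionals of the form $T(\nu_n)$, one verifies that $L$ is positive and normalized along the subalgebra $\{\mathbf{1}_A \cdot F\}$ and then invokes a standard disintegration of positive linear functionals on $C(\cL\to X)$ against $\mu$ to recover a fiberwise probability measure $\nu$ with $T(\nu) = L$. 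Combined with Banach-Alaoglu and separability, the image is compact and metrizable; convexity is immediate, and a continuous bijection from a compact space to a Hausdorff one is a homeomorphism.

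For the joint continuity of the $[\cR]$-action, I would exploit $\cR$-invariance of $\mu$ and the cocycle condition to rewrite the action on the dual side. Define $(\phi^*F)_x(\xi) := F_{\phi x}(\alpha(\phi x, x)\xi)$; this again lies in $C(\cL\to X)$ with $\|\phi^*F\| = \|F\|$, and a change of variable $y = \phi x$ gives
$$T(\phi\nu)(F) = \int_X \nu_x((\phi^*F)_x)\,d\mu(x) = T(\nu)(\phi^*F).$$
Now suppose $\phi_n \to \phi$ in the uniform topology on $[\cR]$ and $\nu_n \to \nu$ in $\Prob(\cL\to X)$. Decompose
$$T(\phi_n\nu_n)(F) - T(\phi\nu)(F) = T(\nu_n)(\phi_n^*F - \phi^*F) + \bigl(T(\nu_n)(\phi^*F) - T(\nu)(\phi^*F)\bigr).$$
The second term tends to $0$ by the definition of convergence in $\Prob(\cL\to X)$. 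For the first, observe that $(\phi_n^*F)_x$ and $(\phi^*F)_x$ coincide on the set $\{x : \phi_n x = \phi x\}$, so the integrand is supported on $\{\phi_n x \ne \phi x\}$ and bounded by $2\|F\|$. Hence it is bounded by $2\|F\|\,\mu\{x : \phi_n x \ne \phi x\} \to 0$ by the definition of the uniform topology on $[\cR]$.

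The main obstacle is step~\textbf{(i)} through \textbf{(iii)}: building a genuine Banach space of continuous functions on the bundle $\cL\to X$, verifying it is separable, and establishing the disintegration needed for weak-$*$ closedness of the image. These points rely crucially on separability of $\sH$ together with the Main Assumption that $\overline{\cO^\sigma_x}$ is compact (so each $\cL_x$ is compact), and amount to the standard but nontrivial measurable selection / disintegration theory for fields of compact metric spaces; this is presumably the content of Appendix~\ref{sec:fields}. Once that foundational material is in place, the rest of the argument—embedding, convexity, Banach-Alaoglu, and the joint continuity computation above—is essentially formal.
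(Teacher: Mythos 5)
Your proposal matches the paper's route: the theorem is proved by citing Lemma~\ref{lem:fields} (the integration pairing $\Psi(\nu)(F)=\int\nu_x(F_x)\,d\mu$ into $C_0(\pi)^*$, with injectivity via $\chi_{\pi^{-1}E}F$, separability of $C_0(\pi)$ from bump functions built on a dense sequence of sections, and compactness of the image via Banach--Alaoglu plus a Radon--Nikodym/subsequence argument that realizes a weak* limit functional as a field of fiberwise probability measures) together with Theorem~\ref{thm:jcontinuous} (joint continuity, proved by the same change of variables and two-term splitting you use, with the error from replacing $\phi_n$ by $\phi$ controlled by $2\|F\|\,\mu\{\phi_n x\ne\phi x\}$). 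The technical content is indeed all in Appendix~\ref{sec:fields}, exactly as you anticipated.
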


\begin{proof}
This follows from Lemma \ref{lem:fields} and Theorem \ref{thm:jcontinuous}.
\end{proof}

\begin{defn}[Fixed point set]
If $\cS \le \cR$ is a subequivalence relation then we let $\Fix(\cS)$ denote the set of all $\eta \in \Prob(\cL \to X)$ such that for a.e. $(x,y) \in \cS$, $\alpha(y,x)_*\eta_x=\eta_y$. Also if $f \in [\cR]$ then we let $\Fix(f)$ denote the set of all $\eta \in \Prob(\cL \to X)$ such that for a.e. $x$, $\alpha(f(x),x)_*\eta_x = \eta_{f(x)}$. Observe that $\Fix(\cS)$ and $\Fix(f)$ are closed convex subsets of $\Prob(\cL \to X)$.
\end{defn}

\begin{thm}\label{thm:K}
Suppose the Main Assumption is satisfied. Let $\cS\le \cR$ be a subequivalence relation. Then $\Fix(\cS)$ is nonempty if and only if $\cS$ is hyperfinite. Moreover, if $\cS$ is aperiodic and $\nu \in \Fix(\cS)$ then for a.e. $x$, the support of $\nu_x$ has at most 2 points.
\end{thm}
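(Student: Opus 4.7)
The plan is to prove the two directions of the equivalence separately and then handle the support bound as a geometric argument.

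For the direction ``$\cS$ hyperfinite $\Rightarrow \Fix(\cS)\neq\emptyset$'', I would combine Connes--Feldman--Weiss (hyperfiniteness $\Leftrightarrow$ amenability) with the fixed-point characterization of amenable pmp equivalence relations: every jointly continuous affine action of an amenable equivalence relation on a Borel bundle of compact convex subsets of a Banach space admits an invariant Borel section (the equivalence-relation analogue of Day's fixed-point theorem, due to Zimmer/Anantharaman-Delaroche--Renault). Theorem \ref{thm:jcontinuous1} supplies exactly such an action of $[\cR]$ (and hence $[\cS]$) on $\Prob(\cL\to X)$, so an $\cS$-invariant field $\nu$ exists.

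For the converse ``$\Fix(\cS)\neq\emptyset \Rightarrow \cS$ hyperfinite'', the strategy is to generalize Proposition \ref{prop:limit-set3}, which covers only the Dirac case of an $\cS$-invariant boundary section, to an arbitrary $\nu \in \Fix(\cS)$. Following Kaimanovich's approach, I would verify Reiter's criterion for $\cS$ by producing a sequence of Borel nonnegative weights $f_n$ on $\cS$-classes with $\sum_y f_n(x,y)=1$ and $\sum_y |f_n(x,y)-f_n(\phi x, y)| \to 0$ for every $\phi \in [\cS]$. In the Dirac case one uses Busemann functions toward the invariant boundary point and shadows along the orbit; for a general $\nu_x$, one integrates these Busemann-style weights against $\nu_x$ to produce weights on $[x]_\cS$ that are asymptotically $\cS$-invariant in $\ell^1$, exploiting $\delta$-hyperbolicity of $\sH_x$ and the $\cS$-invariance of $\nu$.

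For the support bound, assume $\cS$ aperiodic and suppose for contradiction that $E:=\{x : |\supp(\nu_x)|\geq 3\}$ has positive $\mu$-measure. Since $\alpha(x,y)$ maps $\supp(\nu_y)$ bijectively to $\supp(\nu_x)$, $E$ is $\cS$-saturated; restrict to $E$. Pick a Borel section $\tau: E \to \sH$ sending $x$ to a coarse center of three ``spread'' points in $\supp(\nu_x)$ (the barycenter of the ideal triangle): by $\delta$-thinness this point is well-defined up to a constant $C=C(\delta)$, and any isometry of $\sH_x$ permuting $\supp(\nu_x)$ displaces it by at most $C$. Hence $d(\alpha(x,y)\tau(y),\tau(x))\leq C$ for all $(x,y)\in\cS\cap(E\times E)$. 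By Luzin's theorem, find a positive-measure $F\subset E$ on which $d(\sigma(x),\tau(x))\leq N$, so that for $(x,y)\in\cS\cap(F\times F)$ the triangle inequality gives $d(\alpha(x,y)\sigma(y),\sigma(x))\leq 2N+C$. Metric properness of $\sigma$ then forces $[x]_\cS\cap F$ to be finite for a.e.\ $x\in F$, while aperiodicity of $\cS$ combined with Poincar\'e recurrence (or Kac's formula for the induced relation $\cS|_F$) forces $[x]_\cS\cap F$ to be infinite for a.e.\ $x\in F$, a contradiction.

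The main obstacle is clearly the second direction of the equivalence: manufacturing Reiter-type almost-invariant weights from an arbitrary field $\nu$, as opposed to from a single boundary section. Kaimanovich's Busemann-function argument in the Dirac case must be integrated against $\nu_x$ while carefully controlling mass loss to the boundary and equivariance errors under $\alpha$, and making this precise is the delicate step. A secondary subtlety in the support bound is the selection of the ``three spread points''—one must choose the coarse center $\tau$ so that its position depends only on the unordered set $\supp(\nu_x)$ (not a particular labeling), since $\alpha$ may permute points of the support in complicated ways; using an unordered barycenter of three maximally separated boundary points bypasses this.
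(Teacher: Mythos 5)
Your outline follows the paper's own strategy, which is itself given only as a remark citing \cite[Theorem 2.20]{kaimanovich-boundary-amenability}: the forward implication from the amenable fixed-point property, the converse via Reiter's criterion (which neither you nor the paper spells out --- you rightly flag this as the delicate step), and the support bound via a coarse barycenter plus metric properness. You finish the support bound differently: the paper forms the ``$\sigma$-barycenter'' (the finitely many orbit points closest to the coarse barycenter), producing an $\cS$-invariant finite set function and then invoking Lemma~\ref{lem:smooth}, whereas you pass to a Luzin set $F$ and play metric properness against a recurrence argument for $\cS\resto F$. Both routes reach the same contradiction with aperiodicity. One caution on your version: taking $\tau(x)$ to be the center of a \emph{chosen} triple of ``spread'' points of $\supp(\nu_x)$ does not obviously give the bound $d(\alpha(x,y)\tau(y),\tau(x))\le C$, because maximally separated triples need not be unique even up to bounded error, and $\alpha(x,y)$ may carry the triple chosen at $y$ to a triple at $x$ different from the one you chose there. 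The cleaner move, which the paper takes implicitly, is to define the coarse barycenter directly as a bounded subset $\bary(\nu_x)\subset\sH_x$ depending only on $\nu_x$ (or on $\supp(\nu_x)$), so that $\alpha(x,y)\bary(\nu_y)=\bary(\nu_x)$ holds for free, and then select $\tau(x)\in\bary(\nu_x)$ by an arbitrary Borel choice.
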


\begin{remark}
The proof of Theorem \ref{thm:K} above is essentially the same as \cite[Theorem 2.20]{kaimanovich-boundary-amenability}. The main difference between the two results are in the hypotheses. In Kaimanovich's case the bundle $\sH$ comes from a hyperbolic graphing (as in Example \ref{ex:graphing}), the equivalence relation $\cR$ need not be measure-preserving and $\nu$ is assumed to be $\cR$-invariant instead of merely $\cS$-invariant. This strategy has roots in  earlier work of Adams \cite{adams-indecomp}.

Here is a brief sketch: if $\cS$ is hyperfinite then the fixed point property of amenable equivalence relations (see \cite{zimmer-1984}) immediately gives that $\Fix(\cS)$ is nonempty. If $\Fix(\cS)$ is nonempty then one can prove $\cS$ is hyperfinite from Reiter's condition. Now suppose $\nu \in \Fix(\cS)$. There is a map that associates to any Borel probability measure on $\partial \sH_x$ whose support contains more than 2 elements, a bounded subset of $\sH_x$ called its barycenter.  Let $r_x$ be the infimum of the distances between $\sigma_x(y)$ and this barycenter. Because $\sigma$ is metrically proper, the subset of $[x]_\cR$ that realizes this infimum is a finite set called the $\sigma$-barycenter. Note we are using the metrical properness of $\sigma$ as a surrogate for the local finiteness that plays a similar role in Adams' arguments.

The map that associates to $x$ the $\sigma$-barycenter of $\nu_x$ is Borel and $\cR$-invariant. Therefore, if the support of $\nu_x$ contains more than 2 elements for a set of $x$'s of positive measure then by Lemma \ref{lem:smooth}, $\cS$ is finite on a set of positive measure. In particular, it cannot be aperiodic.

\end{remark}

Recall that a hyperfinite subequivalence relation $\cS \le \cR$ is {\bf parabolic} if there is a unique element $\nu \in \Fix(\cS)$ and $\nu_x$ is supported on a single point of $\cL_x$ for a.e. $x$. We say $\cS$ is {\bf loxodromic} if there exists $\nu \in \Fix(\cS)$ such that the support of $\nu_x$ contains two points of $\cL_x$ for a.e. $x$. We say $\cS$ is {\bf mixed} if there is a nontrivial measurable partition $X=Y\sqcup Z$ such that $\cS \resto Y$ is parabolic and $\cS\resto Z$ is loxodromic.  Moreover this partition is unique up to null sets. Uniqueness implies it is $\cS$-invariant (modulo a measure zero set).

\begin{lem}\label{lem:mixed}
Suppose the Main Assumption is satisfied and $\cS \le \cR$ is aperiodic and hyperfinite. Then $\cS$ is either parabolic, loxodromic or mixed. 
\end{lem}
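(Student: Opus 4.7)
By Theorem \ref{thm:K}, since $\cS$ is hyperfinite we have $\Fix(\cS) \neq \emptyset$, and by aperiodicity every $\nu \in \Fix(\cS)$ satisfies $|\supp(\nu_x)| \le 2$ for a.e.\ $x$. For each such $\nu$ set $Z_\nu := \{x \in X : |\supp(\nu_x)| = 2\}$; this is Borel and $\cS$-invariant, since each $\alpha(x,y)$ is a homeomorphism preserving the cardinality of the support. The plan is to single out a distinguished $\nu^* \in \Fix(\cS)$ maximizing $\mu(Z_{\nu^*})$, set $Z := Z_{\nu^*}$ and $Y := X \setminus Z$, and show that $\cS\resto Y$ is parabolic while $\cS\resto Z$ is loxodromic.

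The workhorse is the convexity of $\Fix(\cS)$ combined with Theorem \ref{thm:K}: for any $\nu,\eta \in \Fix(\cS)$, the midpoint $(\nu+\eta)/2$ lies in $\Fix(\cS)$ and its pointwise support equals $\supp(\nu_x)\cup\supp(\eta_x)$, which therefore has cardinality at most $2$ a.e. Applied to a sequence $\{\nu_n\} \subset \Fix(\cS)$ with $\mu(Z_{\nu_n}) \to s := \sup_{\nu\in\Fix(\cS)} \mu(Z_\nu)$, the infinite convex combination $\nu^* := \sum_n 2^{-n}\nu_n$ belongs to $\Fix(\cS)$ (using Theorem \ref{thm:jcontinuous1} for convergence and measurability) and satisfies $Z_{\nu^*} \supseteq \bigcup_n Z_{\nu_n}$, so $\mu(Z_{\nu^*}) = s$. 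I expect this to be the main technical hurdle: the supremum is actually attained, precisely because arbitrary convex combinations of fixed fields remain $2$-point-supported.

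With $Y,Z$ as above, three cases. If $\mu(Z)=1$, then $\nu^*$ witnesses $\cS$ loxodromic. If $\mu(Y) = 1$ (so $s=0$), every $\eta \in \Fix(\cS)$ has singleton supports a.e., and for any two such $\eta_1,\eta_2$ the set $\{x : \eta_{1,x} \neq \eta_{2,x}\}$ is contained in $Z_{(\eta_1+\eta_2)/2}$ which has measure $\le s = 0$; hence $\cS$ is parabolic. In the remaining case $0 < \mu(Y), \mu(Z) < 1$, restriction of $\nu^*$ makes $\cS\resto Z$ loxodromic, and for $\cS\resto Y$ parabolic we argue as follows: any $\eta \in \Fix(\cS\resto Y)$ extends to $\tilde\eta \in \Fix(\cS)$ by setting $\tilde\eta_x := \nu^*_x$ on $Z$, and then $Z \cup \{x \in Y : |\supp(\eta_x)|=2\} \subseteq Z_{\tilde\eta}$ together with maximality of $\mu(Z)$ forces $\eta$ to have singleton supports a.e.\ on $Y$; uniqueness of such $\eta$ follows by the same convex combination argument.

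Finally, for uniqueness of the partition in the mixed case, suppose $X = Y' \sqcup Z'$ is another such partition. If $\mu(Y \cap Z') > 0$, take a $2$-point-supported $\nu \in \Fix(\cS\resto Z')$, restrict it to $Y \cap Z'$, and paste it together with the unique parabolic fixed field of $\cS\resto Y$ on $Y \cap Y'$; this produces an element of $\Fix(\cS\resto Y)$ with $2$-point supports on the positive-measure set $Y \cap Z'$, contradicting parabolicity of $\cS\resto Y$. Symmetry handles $\mu(Y' \cap Z)$, completing the proof.
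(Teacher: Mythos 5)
Your proof is correct and follows essentially the same route as the paper: the paper minimizes $\mu(D(\nu))$ where $D(\nu)$ is the set of $x$ with $\nu_x$ Dirac, which is precisely the complement of your $Z_\nu$, and realizes the infimum by the same $\sum_n 2^{-n}\nu_n$ convex-combination trick exploiting that supports of averaged fixed fields remain bounded by $2$ (Theorem~\ref{thm:K}). The paper's uniqueness argument is phrased via the observation that the union of two parabolic pieces is parabolic rather than by your explicit pasting of a loxodromic field onto the parabolic side, but the content is the same.
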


\begin{proof}
If $\nu \in \Fix(\cS)$ then let $D(\nu)$ be the set of all $x\in X$ such that $\nu_x$ is a Dirac measure on $\cL_x$ (that is, the $\nu_x$ is supported on a single point of $\cL_x$). Let $\beta = \inf \{ \mu(D(\nu)):\nu \in \Fix(\cS)\}$. For every natural number $n$, let $\nu_n \in \Fix(\cS)$ be a field of boundary measures such that $\mu(D(\nu_n))\le \beta+1/n$. Finally consider $\sum_{n=1}^\infty 2^{-n}\nu_n :=\nu_\infty$. Because $\Fix(\cS)$ is convex and closed, $\nu_\infty \in \Fix(\cS)$ and $\mu(D(\nu_\infty))=\beta$ by construction. 

If $\cS$ is neither loxodromic nor parabolic then $0<\beta <1$. Observe that $\cS$ restricted to $D(\nu_\infty)$ is parabolic and $\cS$ restricted to the complement $X\setminus D(\nu_\infty)$ is loxodromic. The uniqueness of the set $D(\nu_\infty)$ is immediate since if $\cS \resto Y_i$ is parabolic (for $i=1,2$) then $\cS \resto (Y_1\cup Y_2)$ is also parabolic. 
\end{proof}



\subsection{Limit sets of hyperfinite equivalence relations}


It may seem obvious that if $\cR$ is parabolic then the limit set $\cL_x(\cR)$ should contain exactly one point and if $\cR$ is loxodromic then $\cL_x(\cR)$ contains exactly two points. We prove the latter below (assuming the Main Assumption); the former remains open. This result is used in the proof that every aperiodic hyperfinite subequivalence relation is contained in a unique maximal hyperfinite subequivalence relation.

 Before getting to the proof we observe that both statements are definitely false if we do not require $\cR$ to be probability-measure-preserving. To see this, let $G$ denote a non-elementary word hyperbolic group and $S \subset G$ a finite symmetric generating set for $G$. If $G \cc (X,\mu)$ is any measure-class-preserving action of $G$ and $\cR \subset X\times X$ is the resulting orbit-equivalence relation then there is a canonical graphing $\cG$ of $X$: $\cG=\{ (x,sx):~s\in S, x\in X\}$. This graphing determines a canonical bundle of hyperbolic metric spaces over $X$ as in Example \ref{ex:graphing} in which each fiber is isometric to the Cayley graph determined by $(G,S)$. Now let $\Gamma=\Gamma(G,S)$ denote this Cayley graph, $\partial \Gamma$ its Gromov boundary and $\nu$ a probability measure on $\partial \Gamma$ whose measure class is preserved under the canonical $G$-action. In \cite[Examples 2.2.4, 2.2.5]{kaimanovich-boundary-amenability} it is shown that the orbit-equivalence relation of the action $G \cc (\partial \Gamma,\nu)$ is parabolic and the orbit-equivalence relation of $G \cc (\partial \Gamma \times \partial \Gamma, \nu \times \nu)$ is loxodromic with respect to the canonical hyperbolic bundle described above. (Neither equivalence relation is probability-measure-preserving and therefore neither satisfies the Main Assumption). However, in both cases each fiber is isometric to the Cayley graph $\Gamma$ on which $G$ acts vertex-transitively. It follows that in both cases, the limit set $\cL_x$ can be identified with $\partial \Gamma$. It is therefore infinite whenever $G$ is non-elementary.

\begin{lem}\label{lem:loxodromic-case}
Suppose the Main Assumption holds, $\cR$ is loxodromic and $\nu \in \Fix(\cR)$ is an invariant field of boundary measures such that the support of $\nu_x$ has cardinality 2 (for a.e. $x$) then $\cL_x$ is the support of $\nu_x$ for a.e. $x$. In particular, $|\cL_x|=2$ for a.e. $x$.
\end{lem}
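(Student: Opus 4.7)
The inclusion $\supp(\nu_x) \subset \cL_x$ is automatic because $\nu \in \Prob(\cL \to X)$. The plan is therefore to show $\cL_x \subset \supp(\nu_x)$ a.e., and for this I would apply Lemma \ref{limit-set1} with $K_x$ taken to be the closed convex hull of $\supp(\nu_x)$.

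\textbf{Step 1: set up the invariant field of supports.} Write $S_x := \supp(\nu_x)$. Since $\nu \in \Fix(\cR)$ we have $\alpha(y,x)_*\nu_x = \nu_y$ for a.e.\ $(x,y)\in\cR$, and since each $\alpha(y,x)$ extends to a homeomorphism $\bsH_x \to \bsH_y$ this pushes supports to supports: $\alpha(y,x)S_x = S_y$. By the loxodromic hypothesis $|S_x|=2$ a.e., so $S_x$ consists of two distinct points $a_x,b_x \in \partial\sH_x$. A standard measurable selection argument (e.g.\ Kuratowski--Ryll-Nardzewski applied in a separable metrization of each $\partial\sH_x$) gives Borel sections $a,b : X \to \partial\sH$ with $\{a_x,b_x\} = S_x$ a.e.

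\textbf{Step 2: build a Borel, closed, quasi-convex, $\alpha$-invariant hull.} Set $K_x := \Hull(\{a_x,b_x\}) \subset \bsH_x$ and $K := \sqcup_x K_x$. By Lemma \ref{lem:greenberg} each $K_x$ is closed and quasi-convex with $K_x \cap \partial\sH_x = \{a_x,b_x\} = S_x$, and since $a_x\ne b_x$ any bi-infinite geodesic joining them meets $\sH_x$, so $K_x \cap \sH_x \ne \emptyset$. Invariance $\alpha(y,x)K_x = K_y$ is immediate because each $\alpha(y,x)$ is a homeomorphism sending $\{a_x,b_x\}$ to $\{a_y,b_y\}$ and thus sending geodesics to geodesics. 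Borel measurability of $K$ follows from the separability of $\sH$: using a countable family of Borel sections dense in each fiber, one can describe $K$ as a Borel set built from the Borel sections $a,b$ and the Borel metric $d$ (a point $p \in \bsH_x$ lies in $K_x$ iff it is either $a_x$, $b_x$, or lies within every $\epsilon$ of some geodesic arc between $a_x$ and $b_x$, a condition expressible via Gromov products with the dense sections).

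\textbf{Step 3: apply Lemma \ref{limit-set1} and conclude.} All hypotheses of Lemma \ref{limit-set1} are satisfied for $K$, giving $\cL_x \subset K_x$ for a.e.\ $x$. Intersecting with $\partial\sH$ and using Lemma \ref{lem:greenberg} again,
\[
\cL_x \;=\; \cL_x \cap \partial\sH_x \;\subset\; K_x \cap \partial\sH_x \;=\; S_x.
\]
Combined with $S_x \subset \cL_x$ this yields $\cL_x = S_x$, and hence $|\cL_x|=2$ a.e.

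\textbf{Expected main obstacle.} The geometric core of the argument is clean and reuses Lemmas \ref{lem:greenberg} and \ref{limit-set1} essentially off the shelf. The one genuinely delicate point is the Borel measurability needed to select $a_x,b_x$ and to verify that the fiberwise hull $K$ is a Borel subset of $\bsH$; this is where one has to be careful to exploit separability of the bundle and the fact that $|S_x|=2$ (which makes the selection two-valued rather than requiring a full Effros-selection argument). Once that measurable bookkeeping is in place, the proof is immediate.
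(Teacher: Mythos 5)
Your proof is correct and ends up in the same place, but it routes through a slightly different choice of the invariant quasi-convex set $K$ than the paper does. You take $K_x=\Hull(\supp(\nu_x))=\Hull(\{a_x,b_x\})$, which requires a measurable two-valued selection $(a,b)$ from the invariant field of supports and a discussion of why the fiberwise hull is a Borel subset of $\bsH$ — you flag this correctly as the delicate point. The paper instead observes that for every $r>0$ the set $X_r$ of points $x$ with $\sigma(x)$ within distance $r$ of some geodesic joining the two points of $\supp(\nu_x)$ exhausts $X$, picks $r$ with $\mu(X_r)>0$, and applies Lemma~\ref{limit-set1} to $K_x=\Hull(\overline{\cO^r_x})$ where $\cO^r_x=\{\sigma_x(y):y\in X_r\cap[x]_\cR\}$. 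That choice sidesteps measurable selection entirely (the set $K$ is built directly from orbit data plus the Borel set $X_r$) and gets nonemptiness of $K_x\cap\sH_x$ from ergodicity rather than from the existence of a bi-infinite geodesic between $a_x$ and $b_x$, though both proofs ultimately rely on such geodesics existing (yours for nonemptiness of $K_x\cap\sH_x$, the paper's for the claim $\cup_r X_r=X$). Your version is arguably conceptually cleaner — it applies Lemma~\ref{limit-set1} to the most natural invariant hull — at the cost of the selection/measurability bookkeeping; the paper's version trades that for the Baire-style $X_r$ exhaustion. Both are valid and both reduce, as they should, to Lemmas~\ref{lem:greenberg} and~\ref{limit-set1}.
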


\begin{proof}


 For $r>0$, let $X_r$ denote the set of all $x\in X$ such that there exists a geodesic $\gamma_x \subset \sH_x$ with endpoints in the support of $\nu_x$ such that $d(\sigma(x),\gamma_x) \le r$. Because $\cup_{r>0} X_r = X$, there exists $r>0$ such that $\mu(X_r)>0$. 

Let $\cO^{r}_x:= \{ \sigma_x(y):~y \in X_r \cap [x]_\cR\}$ and $\cL^{r}_x := \overline{\cO^{r}_x} \cap \partial \sH_x$ be the associated limit set. On the one hand, because each $y \in X_r$ has image $\sigma_x(y)$ $r$-close to a geodesic $\gamma_x$, it follows that $\cL^{r}_x$ is contained in the support of $\nu_x$. On the other hand, Lemma \ref{limit-set1} implies
$$\cL_x \subset \Hull(\overline{\cO^r_x}) \cap \partial \sH_x = \cL^r_x \subset \rm{support}(\nu_x).$$
Since the support of $\nu_x$ is contained in $\cL_x$ by definition of $\Fix(\cR)$, the lemma follows.
\end{proof}

\subsection{Parabolic equivalence relations}

The goal of this section is to prove when $\cR$ is parabolic then every aperiodic subequivalence relation of $\cR$ also parabolic. More precisely:
\begin{thm}\label{thm:non-nested}
Suppose the Main Assumption is satisfied and $\cR$ is hyperfinite and parabolic. Then every aperiodic subequivalence relation $\cS\le\cR$ is also parabolic.
\end{thm}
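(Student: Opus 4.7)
I will argue by contradiction: assume $\cS\le\cR$ is aperiodic but not parabolic, and try to produce a second $\cR$-invariant field of boundary measures, contradicting $\Fix(\cR)=\{\delta_\xi\}$.

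\emph{Setup and reduction.} Since $\cR$ is parabolic, $\Fix(\cR)=\{\delta_\xi\}$ for a unique $\cR$-invariant section $\xi:X\to\partial\sH$. Because $\cS\le\cR$, we have $\delta_\xi\in\Fix(\cS)$; in particular $\Fix(\cS)\ne\emptyset$, so Theorem~\ref{thm:K} gives that $\cS$ is hyperfinite. If $\cS$ is not parabolic, Lemma~\ref{lem:mixed} says $\cS$ is loxodromic or mixed, so there exists an $\cS$-invariant set $Z$ of positive measure on which $\cS\resto Z$ is loxodromic. By passing to an ergodic component and noting that the Main Assumption is inherited by subrelations and by restriction (bundle separability, metric properness of $\sigma$, and compactness of orbit closures all descend), I may assume $\cS\resto Z$ is ergodic loxodromic and satisfies the Main Assumption.

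\emph{Extracting a second $\cS$-invariant section.} Choose $\eta\in\Fix(\cS\resto Z)$ with $|\supp\eta_x|=2$ a.e.\ on $Z$. The convex combination $(\delta_\xi+\eta)/2$ lies in $\Fix(\cS\resto Z)$; aperiodicity forces its fibrewise support to have size at most $2$ (Theorem~\ref{thm:K}), hence $\xi(x)\in\supp\eta_x$. Write $\eta_x=c_x\delta_{\xi(x)}+(1-c_x)\delta_{q(x)}$ with $q(x)\ne\xi(x)$ and $c_x\in(0,1)$. Comparing $\alpha(y,x)_*\eta_x=\eta_y$ on $\cS\resto Z$ and using $\alpha(y,x)\xi(x)=\xi(y)$: the alternative $\alpha(y,x)q(x)=\xi(y)$ would collapse $\supp\eta_y$ to one point, and a third value would give $|\supp\eta_y|=3$; so $\alpha(y,x)q(x)=q(y)$ and $c_x=c_y$. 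Ergodicity makes $c_x$ constant, and $q:Z\to\partial\sH$ is a genuine $\cS\resto Z$-invariant section distinct from $\xi$. By Lemma~\ref{lem:loxodromic-case} applied to $\cS\resto Z$, $\cL_x^\sigma(\cS\resto Z)=\{\xi(x),q(x)\}$ for a.e.\ $x\in Z$.

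\emph{The key step: $q$ is $\cR$-invariant on $Z$.} Once this is established, ergodicity of $\cR$ extends $q$ to an $\cR$-invariant section $\tilde q$ on $X$ (define $\tilde q(x)=\alpha(x,y)q(y)$ for any $y\in Z\cap[x]_\cR$, which is well-defined by the $\cR$-invariance on $Z$), so $\delta_{\tilde q}\in\Fix(\cR)$ with $\tilde q\ne\xi$, contradicting uniqueness. To prove the key step, form the Borel subset $A_x=\overline{\{\alpha(x,y)q(y):y\in Z\cap[x]_\cR\}}\subseteq\partial\sH_x$, which is $\cR$-invariant (i.e.\ $\alpha(y,x)A_x=A_y$) and has $|A_x|$ a.e.\ constant by ergodicity. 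If $A_x=\{q(x)\}$ a.e., then $q$ is $\cR$-invariant on $Z$. If $|A_x|\ge 2$, Lemma~\ref{limit-set1} applied to $K_x=\Hull(A_x)$ (closed, quasi-convex by Lemma~\ref{lem:greenberg}, and meeting $\sH_x$) yields $\cL_x\subseteq K_x$, hence $\xi(x)\in A_x$. If $|A_x|=2$ then $A_x=\{\xi(x),q(x)\}$ and $\cR$-invariance of $A$ together with $\alpha\xi=\xi$ and $q\ne\xi$ forces $\alpha(y,x)q(x)=q(y)$ --- again $\cR$-invariance of $q$ on $Z$.

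\emph{Main obstacle (the case $|A_x|\ge 3$).} For finite $|A_x|=N\ge 3$ an immediate contradiction appears: the uniform measure $\frac{1}{N}\sum_{p\in A_x}\delta_p$ is an $\cR$-invariant field of support $N\ge 3$, incompatible with $\Fix(\cR)=\{\delta_\xi\}$. Thus the remaining case is $|A_x|=\infty$. Here I would argue using extreme amenability of $[\cR]$ (since $\cR$ is hyperfinite, \cite{giordano-pestov-extreme-amenability}) applied to the compact convex $[\cR]$-invariant set $\Prob(A\to X)$: the only $[\cR]$-fixed point is $\delta_\xi$, so $\{\xi\}$ is the unique minimal closed $\cR$-invariant subset of $A$ (any other would carry an $\cR$-invariant probability measure disjoint from $\xi$). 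Consequently every $\cR$-orbit in $A\setminus\{\xi\}$ accumulates only at $\xi$, which geometrically says $\alpha(x,y)q(y)\to\xi(x)$ along any escaping sequence $y$. This is where I expect the real work: translating this boundary accumulation, together with metric properness of $\sigma$ and the fact that the axis of $\cS\resto Z$ at $y$ lies within bounded distance of $\sigma(y)$, into a contradiction with aperiodicity and pmp-ness of $\cR$ (most likely via a Mass-Transport argument as in Lemma~\ref{limit-set1}, counting how many $y\in[x]_\cR\cap Z$ can have $\alpha(x,y)q(y)$ in a prescribed neighborhood of $\xi(x)$).
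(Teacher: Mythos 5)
Your reductions are sound and actually reproduce Claim~1 of the paper's proof: passing to $Z$ where $\cS\resto Z$ is loxodromic, extracting the second section $q$ with $\cL_y(\cS\resto Z)=\{\xi(y),q(y)\}$, forming the $\cR$-invariant closed field $A$, and observing via Theorem~\ref{thm:K} that any finite cardinality $|A_x|\ge 2$ produces a second $\cR$-invariant field contradicting $\Fix(\cR)=\{\delta_\xi\}$, while $|A_x|=1$ would make $q$ itself $\cR$-invariant. The upshot of all this (as the paper also establishes) is precisely that $|A_x|=\infty$ a.e., i.e.\ the ``main obstacle'' case always occurs. That case is the entire content of the theorem, and you do not prove it.

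The sketch you offer for the infinite case has a concrete flaw and then trails off. You argue from extreme amenability that $\{\xi\}$ is the unique minimal closed $\cR$-invariant subset of $A$, which is correct, but you then assert that consequently ``every $\cR$-orbit in $A\setminus\{\xi\}$ accumulates only at $\xi$.'' This does not follow: the set of accumulation points of an orbit is a closed invariant set that must \emph{contain} $\xi$, but nothing you have shown forces it to \emph{equal} $\{\xi\}$; it could well equal all of $A_x$. Even granting that claim, ``a Mass-Transport argument ... counting how many $y$ can have $\alpha(x,y)q(y)$ in a prescribed neighborhood of $\xi(x)$'' is not a proof, and it is not obviously repairable along those lines. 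The paper's actual mechanism is quite different and substantially more elaborate: it introduces the Busemann function $\beta$ for $\xi$, builds from it a ``closest point toward $\xi$'' map $\Phi:X\to X$ (Claims~2--3) whose iterated geodesic-concatenations are quasi-geodesics limiting on $\xi$, forms the directed graph $G'_x$ of bi-infinite $\Phi$-trajectories (Claim~4 shows each $\eta\in A_x\setminus\{\xi(x)\}$ produces a distinct bi-infinite path), uses Mass-Transport to show $G'_x$ is a disjoint union of infinitely many bi-infinite lines, and finally (Claim~5, plus a compression to $Y_n$) derives a contradiction with the metric properness of $\sigma$ because arbitrarily many of these lines must eventually pass within a uniformly bounded distance of one another. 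None of this machinery is present in your proposal, so the proof has a genuine gap at its central step.
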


Here is a short proof sketch: to obtain a contradiction suppose there exists a loxodromic $\cS \le \cR$. Because $\cR$ is parabolic there exists a unique $\cR$-invariant section $\xi:X \to \cL$. We show there is a function $\Phi:X \to X$ with $x\cR \Phi(x)$ (for a.e. $x$), such that $\sigma_x(\Phi(x))$ is ``closer'' to the boundary point $\xi(x)$ than $\sigma(x)$ and the three points $\sigma(x),\sigma_x(\Phi(x)),\xi(x)$ all lie close to a geodesic.  Moreover the broken geodesic path 
$$[\sigma(x),\sigma_x(\Phi(x))] \cup [\sigma_x(\Phi^2(x)), \sigma_x(\Phi^3(x))] \cup \cdots$$
forms a quasi-geodesic limiting on $\xi(x)$. The Mass Transport Principle implies that $\Phi$ restricted to a certain positive measure subset of $X$ is a bijection (mod $\mu$). But this contradicts the geometric fact that $\Phi$ is contracting.

Now for the details. Recall that if $Y \subset X$ then $\cR \resto Y:=\cR \cap (Y\times Y)$ is an equivalence relation on $Y$ called the {\bf compression} (or {\bf restriction}) of $\cR$ to $Y$ (more details of this construction are discussed in \S \ref{sec:compression}). It will be convenient to have the following Lemma:
\begin{lem}\label{lem:compression}
Suppose the Main Assumption is satisfied and $\cR$ is hyperfinite. Let $Y \subset X$ have positive $\mu$-measure. Then $\cR \resto Y$ is parabolic if and only if $\cR$ is parabolic. Similarly, $\cR \resto Y$ is loxodromic if and only if $\cR$ is loxodromic.
\end{lem}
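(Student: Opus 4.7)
I would first verify that $\cR\resto Y$, equipped with the restricted bundle $\sH\resto Y$, restricted action, restricted section $\sigma\resto Y$, and normalized measure $\mu_Y = \mu\resto Y/\mu(Y)$, satisfies the Main Assumption. Indeed $\cR\resto Y$ is pmp and ergodic on $(Y,\mu_Y)$ by a standard compression argument, aperiodic since Poincar\'e recurrence forces $|Y\cap[y]_\cR|=\infty$ for a.e.\ $y\in Y$, hyperfinite because hyperfiniteness is inherited by restrictions, and the restricted section remains metrically proper with compact orbit closures. By Lemma~\ref{lem:mixed} and ergodicity, both $\cR$ and $\cR\resto Y$ must be either parabolic or loxodromic.

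Next I would show that the limit set $\cL'_y$ computed with respect to $\cR\resto Y$ coincides with $\cL_y$ for a.e.\ $y \in Y$. The inclusion $\cL'_y \subset \cL_y$ is immediate. For the reverse, set
$$K'_y := \Hull\bigl(\overline{\{\sigma_y(w) : w \in Y\cap[y]_\cR\}}\bigr) \subset \bsH_y$$
for $y\in Y$; by Lemma~\ref{lem:greenberg} each $K'_y$ is closed and quasi-convex, and $K'$ is $\cR\resto Y$-invariant. Using a Borel selector $f:X\to Y$ with $f(z)\in Y\cap[z]_\cR$ (which exists by Lusin--Novikov since a.e.\ $\cR$-class meets $Y$), extend $K'$ to $\tilde K_z := \alpha(z,f(z))K'_{f(z)}$ for $z\in X$; the cocycle identity together with $\cR\resto Y$-invariance of $K'$ make $\tilde K$ independent of the selector. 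Thus $\tilde K$ is an $\cR$-invariant Borel field with closed quasi-convex fibers meeting $\sH_z$ nontrivially, so Lemma~\ref{limit-set1} gives $\cL_z \subset \tilde K_z$ for a.e.\ $z$. Specializing to $z=y\in Y$ and using $\tilde K_y = K'_y$, Lemma~\ref{lem:greenberg} identifies $K'_y\cap\partial\sH_y = \cL'_y$, yielding $\cL_y\subset\cL'_y$.

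I would then exhibit a cardinality-preserving bijection $\Fix(\cR) \leftrightarrow \Fix(\cR\resto Y)$ via restriction and extension. Restriction $\xi\mapsto\xi\resto Y$ takes values in $\Fix(\cR\resto Y)$ because $\cL_y = \cL'_y$ a.e.\ on $Y$; it is injective since $\cR$-invariance recovers $\xi_z$ from $\xi_{f(z)}$ via $\xi_z = \alpha(z,f(z))_*\xi_{f(z)}$; and it is surjective: given $\eta\in\Fix(\cR\resto Y)$ set $\tilde\eta_z := \alpha(z,f(z))_*\eta_{f(z)}$, with independence from the selector and $\cR$-invariance following from the cocycle identity and $\cR\resto Y$-invariance of $\eta$. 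Because $\alpha$ is a homeomorphism, $|\supp(\tilde\eta_z)| = |\supp(\eta_{f(z)})|$, and similarly restriction preserves pointwise support cardinality a.e.

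The lemma then follows: if $\cR$ is parabolic with unique fixed field $\xi$ of singleton support, the bijection produces a unique element $\xi\resto Y \in \Fix(\cR\resto Y)$ of singleton support a.e.\ on $Y$, so $\cR\resto Y$ is parabolic, and the reverse implication is identical. The loxodromic equivalence follows formally since by ergodicity both $\cR$ and $\cR\resto Y$ are parabolic or loxodromic (never mixed). The only step that requires real work is the identification $\cL_y = \cL'_y$; the rest is formal manipulation of invariant fields via the cocycle identity and a Borel selector.
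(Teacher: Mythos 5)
Your proof is correct and takes the same basic approach as the paper — restriction and extension of invariant fields via a Borel selector, with ergodicity ruling out the mixed case — but it is noticeably more careful on one point. The paper's own proof simply restricts an $\cR$-invariant two-atom field $\nu$ to $Y$ and declares the result to be in $\Fix(\cR\resto Y)$. Since $\Fix$ is defined inside $\Prob(\cL\to X)$ (fields supported on the ambient limit set) and $\supp(\nu_y)=\cL_y$ by Lemma \ref{lem:loxodromic-case}, this implicitly requires $\cL_y\subset\cL'_y$, i.e.\ that the limit set of the compression is no smaller than that of $\cR$; the trivial inclusion runs the other way. You make this explicit and prove it: extending $\Hull\bigl(\overline{\cO^\sigma_y(\cR\resto Y)}\bigr)$ to an $\cR$-invariant Borel field of closed quasi-convex sets via a selector and then invoking Lemmas \ref{lem:greenberg} and \ref{limit-set1} is a clean way to do it, and the cocycle computation checking selector-independence is right. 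The only comment I'd offer is that once $\cL_y=\cL'_y$ is in hand, you could shorten the rest to match the paper: by Lemma \ref{lem:mixed} and ergodicity it suffices to handle the loxodromic equivalence, for which restriction and selector-extension of a single two-atom field already suffice; the full bijection $\Fix(\cR)\leftrightarrow\Fix(\cR\resto Y)$ is correct but does more than the statement requires.
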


\begin{proof}
Because $\cR$ and $\cR \resto Y$ are ergodic, they cannot be mixed. So it suffices to show that $\cR$ is loxodromic if and only if $\cR \resto Y$ is loxodromic.  If $\cR$ is loxodromic then there exists a $\cR$-invariant field of boundary measure $\nu$ such that the support of $\nu_x$ has cardinality two for a.e. $x$. Then $\nu$ restricted to $Y$ is $\cR\resto Y$-invariant. This proves $\cR\resto Y$ is loxodromic. 

On the other hand, suppose $\cR\resto Y$ is loxodromic. Let $\nu=\{\nu_x\}_{x\in Y}$ be a $\cR\resto Y$-invariant field of boundary measures such that the support of $\nu_x$ has cardinality two for a.e. $x\in Y$. For any $x\in X$, define $\nu_x$ by $\nu_x=\alpha(x,y)_*\nu_y$ for $y\in Y \cap [x]_\cR$. This is well-defined because $\nu$ is $\cR\resto Y$-invariant and $\mu$ is $\cR$-ergodic (so for a.e. $x\in X$, there exists $y\in Y \cap [x]_\cR$). Clearly the extended $\nu$ is $\cR$-invariant. This proves $\cR$ is loxodromic.


\end{proof}

Quasi-geodesics are defined in \S \ref{sec:qi}. We will use the following lemma to show that certain broken geodesic paths are quasi-geodesics:
\begin{lem}\label{lem:local-global}
Let $(\cH,d_\cH)$ be a geodesic $\delta$-hyperbolic space. Then for every $s>0$ there exists an $r>0$ such that if $\{x_i\}_{i\in \N} \subset \cH$ is any sequence with 
$$d_\cH(x_i,x_{i+1}) \ge r, \quad (x_i|x_{i+2})_{x_{i+1}} \le s \quad \forall i \in \N$$
then the piecewise geodesic $[x_1,x_2] \cup [x_2,x_3] \cup \cdots $ obtained by concatenating successive geodesic segments together is a quasi-geodesic in $\cH$. In particular, $\lim_{i\to\infty} x_i \in \partial \cH$ exists. 
\end{lem}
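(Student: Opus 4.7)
The plan is to choose $r = r(s,\delta)$ large enough --- specifically any $r > 4(s+\delta)$ will work --- and then show that the piecewise geodesic $P = [x_1,x_2] \cup [x_2,x_3] \cup \cdots$ is a quasi-geodesic with constants depending only on $s$ and $\delta$. This is a classical local-to-global principle for hyperbolic spaces. Once $P$ is known to be a quasi-geodesic ray, the existence of $\lim_i x_i \in \partial \cH$ follows from the standard stability theorem for quasi-geodesics in Gromov hyperbolic spaces: such a ray lies within bounded Hausdorff distance of a genuine geodesic ray, and the latter converges to a unique point in $\partial \cH$.

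The hypothesis $(x_i|x_{i+2})_{x_{i+1}} \le s$ unpacks to $d(x_i,x_{i+2}) \ge d(x_i,x_{i+1}) + d(x_{i+1},x_{i+2}) - 2s$, so each turn at a vertex loses at most $2s$ of distance relative to a straight path. Writing $\lambda_i := d(x_i,x_{i+1})$ and $L_{1,n} := \sum_{i=1}^{n-1}\lambda_i$, I would establish by induction on $n$ the key single-step inequality
\begin{equation*}
d(x_1,x_n) \;\ge\; d(x_1,x_{n-1}) + \lambda_{n-1} - 2(s+\delta).
\end{equation*}
Iterating this and using $\lambda_i \ge r$ gives $d(x_1,x_n) \ge L_{1,n} - 2(n-1)(s+\delta) \ge L_{1,n}(1 - 2(s+\delta)/r) \ge \tfrac{1}{2}L_{1,n}$. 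Combined with the trivial upper bound $d(x_1,x_n) \le L_{1,n}$, this shows the vertex map $n \mapsto x_n$ is a $(2,0)$-quasi-isometric embedding, and the full path $P$ is a $(2, O(r))$-quasi-geodesic after adjusting for points interior to the segments.

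The inductive step reduces to showing $(x_1|x_n)_{x_{n-1}} \le s+\delta$. For this I invoke the four-point $\delta$-hyperbolicity inequality applied to $(x_{n-2},x_1,x_n)$ relative to $x_{n-1}$:
\begin{equation*}
\min\bigl\{(x_{n-2}|x_1)_{x_{n-1}},\; (x_1|x_n)_{x_{n-1}}\bigr\} \;\le\; (x_{n-2}|x_n)_{x_{n-1}} + \delta \;\le\; s+\delta.
\end{equation*}
It remains to verify $(x_{n-2}|x_1)_{x_{n-1}} > s+\delta$, which forces the minimum to be achieved by the other term. Using the previous induction step $d(x_1,x_{n-1}) \ge d(x_1,x_{n-2}) + \lambda_{n-2} - 2(s+\delta)$, a direct computation yields
\begin{equation*}
(x_{n-2}|x_1)_{x_{n-1}} = \tfrac{1}{2}\bigl(\lambda_{n-2} + d(x_1,x_{n-1}) - d(x_1,x_{n-2})\bigr) \ge \lambda_{n-2} - (s+\delta) \ge r - (s+\delta),
\end{equation*}
which exceeds $s+\delta$ whenever $r > 2(s+\delta)$. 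The base case $n=3$ comes directly from the hypothesis $(x_1|x_3)_{x_2} \le s$.

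The main obstacle is identifying the correct Gromov product to bound and the right quadruple to feed into the hyperbolicity inequality; once that choice is made, the algebra closes cleanly. The appearance of the single-step form of the inductive hypothesis (rather than the iterated conclusion $d(x_1,x_{n-1}) \ge L_{1,n-1}/2$) is essential, because the lower bound on $(x_{n-2}|x_1)_{x_{n-1}}$ telescopes through one step but not through many. Having established the quasi-geodesic property with explicit constants, the convergence $\lim_i x_i \in \partial \cH$ is immediate from stability of quasi-geodesics.
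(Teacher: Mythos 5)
Your proof is correct, and it takes a genuinely different route from the paper. The paper's proof consists of a citation to Bridson--Haefliger, Chapter III.H, Theorem 1.13 (the local-to-global ``taming'' theorem for quasi-geodesics), together with the implicit observation that the concatenated path is a local quasi-geodesic. You instead give a direct, self-contained inductive argument using Gromov's four-point inequality. Your key observation --- that the inductive hypothesis should be carried in the single-step form $d(x_1,x_n)\ge d(x_1,x_{n-1})+\lambda_{n-1}-2(s+\delta)$ rather than the iterated form $d(x_1,x_n)\ge \tfrac{1}{2}L_{1,n}$, because only the single-step form produces a lower bound on $(x_{n-2}|x_1)_{x_{n-1}}$ that isolates the right branch of the $\min$ in the four-point inequality --- is exactly what makes the argument close. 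The computation is correct: from $(x_{n-2}|x_n)_{x_{n-1}}\le s$ and $(x_{n-2}|x_1)_{x_{n-1}}\ge r-(s+\delta)>s+\delta$ one gets $(x_1|x_n)_{x_{n-1}}\le s+\delta$, and iterating gives $d(x_1,x_n)\ge L_{1,n}\bigl(1-2(s+\delta)/r\bigr)\ge \tfrac12 L_{1,n}$ for $r\ge 4(s+\delta)$. Your approach has the advantage of being elementary and giving explicit constants, whereas the cited local-to-global theorem requires the full machinery of quasi-geodesic stability but applies to more general paths than piecewise geodesics. Two small points you elide: (i) the passage from the vertex estimate to the quasi-geodesic property of the full arc-length-parametrized path $P$ requires a short case analysis on points interior to segments (taking into account the possibility that $\lambda_i$ is much larger than $r$), and (ii) the convergence $\lim_i x_i\in\partial\cH$ should be deduced either by invoking the Morse lemma for the finite subsegments of $P$ (which does not require properness) or by directly verifying that $(x_n|x_m)_{x_1}\to\infty$, neither of which is completely immediate from the vertex estimate alone; but both are standard and in line with the paper's ``follows from standard arguments.''
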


\begin{proof}
This follows from standard arguments. For example, see \cite[Chapter III.H, Theorem 1.13]{bridson-haefliger-book}.
\end{proof}


\begin{proof}[Proof of Theorem \ref{thm:non-nested}]
Because $\cR$ is parabolic there exists a unique $\cR$-invariant section $\xi:X \to \cL$. To obtain a contradiction, suppose $\cS\le \cR $ is loxodromic on a subset of positive measure. After passing to this subset if necessary, we may assume that $\cS$ is loxodromic. Let $\nu \in \Fix(\cS)$ be such that the support of $\nu_x$ contains 2 points for a.e. $x$. 

\noindent {\bf Claim 1}. For a.e. $x\in X$, $\cup_{y\cR x} \alpha(x,y)\cL_y(\cS)$ is an infinite subset of $\cL_x(\cR)$.

\begin{proof}[Proof of Claim 1]
Suppose not. By ergodicity there is a natural number $k$ such that $|\cup_{y\cR x} \alpha(x,y)\cL_y(\cS)|=k$ for a.e. $x$. Let $\eta_x$ be the probability measure uniformly supported on $\cup_{y\cR x} \alpha(x,y)\cL_y(\cS)$. Then $\eta$ is $\cR$-invariant. By Theorem \ref{thm:K} this implies $k\le 2$. By Lemma \ref{lem:loxodromic-case}, $|\cL_y(\cS)|=2$ for a.e. $y$. So $k=2$. However, this implies $\cR $ is loxodromic, a contradiction. 
\end{proof}


For $s>0$, let $X_s$ be the set of all $x\in X$ such that there exists a geodesic $\gamma_x \subset \sH_x$ whose endpoints are in the support of $\nu_x$ satisfying $d(\sigma(x),\gamma_x)\le s$. Then $\cup_{s>0} X_s = X$; so there exists $s>0$ such that $\mu(X_s)>0$. By Lemma \ref{lem:compression}, we may assume without loss of generality that $X=X_s$.

Let $r>0$ be large enough so that if $z_1,z_2,\ldots$ is any sequence of points in the $\delta$-hyperbolic space $\sH_x$ satisfying (a) $d(z_i,z_{i+1})\ge r$ for all $i$ and (b) $(z_i|z_{i+2})_{z_{i+1}} \le 4s+22\delta$ for all $i$ then the path obtained by concatenating geodesic segments $[z_i,z_{i+1}]$  is a quasi-geodesic. Such a number $r>0$ exists by Lemma \ref{lem:local-global}. Moreover it does not depend on $x$ because $\mu$ is $\cR$-ergodic.

Let $\beta:\cR  \to \R$ be the Busemann function associated to $\xi$ defined by
$$\beta(y,z) = \sup_{\{p_i\}} \limsup_{i\to\infty} d(\sigma(y),p_i) - d(\sigma_y(z),p_i)$$
where the sup is over all sequences $\{p_i\}_{i\in \N} \subset \sH_y$ such that $\lim_{i\to\infty} p_i= \xi(x)$.

\noindent {\bf Claim 2}. For a.e. $x\in X$ there exists $y\in X$ satisfying $y\cR x$ and 
$$\max(d_\sigma(x,y)-4s-20\delta,r) \le \beta(x,y)\le d_\sigma(x,y).$$

\begin{proof}[Proof of Claim 2]
In general $|\beta(x,y)|\le d_\sigma(x,y)$ holds because of the triangle inequality. So the last inequality is immediate.

We claim that $\xi(x) \in \cL_x(\cS)$ for a.e. $x$. By Lemma \ref{lem:loxodromic-case} $\cL_x(\cS)$ is the support of $\nu_x$. Define a field of boundary measures $\lambda$ by
$$\lambda_x= \frac{\nu_x + \delta_{\xi(x)}}{2}.$$
If $\xi(x)$ is not in the support of $\nu_x$ then the support of $\lambda$ has three elements in contradiction to Theorem \ref{thm:K}. So $\xi(x) \in \cL_x(\cS)$.

By choice of $s$, there exists a geodesic $\gamma_x$ with endpoints in the support of $\nu_x$ and $d(\sigma(x),\gamma_x)\le s$. Because $\xi(x) \in \cL_x(\cS)$ there exists  $y\in X$ with $y\cS x$ and $r\le \beta(x,y)$. Let $x',y' \in \gamma_x$ be points closest to $\sigma(x),\sigma_x(y)$, respectively. So $d(\sigma(x),x')\le s$. By choice of $s$ there exists a geodesic $\gamma'_x$ with endpoints in the support of $\nu_x$ such that $d(\sigma_x(y),\gamma'_x)\le s$. By Lemma \ref{lem:thin}, $\gamma_x$ and $\gamma'_x$ are $2\delta$-close. So $d(\sigma_x(y),y')\le s+2\delta$. 

By Lemma \ref{lem:geodesic}, since $x',y'$ lie on the same geodesic with an endpoint in $\xi(x)$, $\beta(x',y') \ge d(x',y') - 2\delta$. By the quasi-cocycle inequality (Lemma \ref{lem:buse}) and the triangle inequality,
$$|\beta(x,y)-\beta(x',y')|  \le |\beta(x,x')| + |\beta(y,y')| + 8\delta \le d(\sigma(x),x') + d(y',\sigma_x(y)) \le 2s  + 10\delta.$$
So we have
$$\beta(x,y) \ge \beta(x',y') -2s -10\delta \ge d(x',y') - 2s -16 \delta \ge d_\sigma(x,y) - 4s - 20\delta.$$
\end{proof}

We would like to define a map $\Phi:X \to X$ by: $\sigma_x(\Phi(x))$ is the closest point in the orbit $\cO^\sigma_x$ to $\sigma(x)$ that lies on the geodesic  ray $[\sigma(x),\xi(x)]$. But of course, the orbit might not intersect any such geodesic ray in more than one point. So we use the Busemann function associated to $\xi(x)$ to define what it means for a point to be ``closer'' to $\xi(x)$. Fix an injective Borel map $\phi:X \to [0,1]$ that we will use to break `ties'. 

Now define $\Phi:X \to X$ by $\Phi(x)=y$ where $y \in [x]_\cR$ satisfies
\begin{itemize}
\item $\max(d_\sigma(x,y)-4s-20\delta,r) \le \beta(x,y)$,
\item $d_\sigma(x,y) =\inf\{d_\sigma(x,z) :~ z\in [x]_\cR, \max(d_\sigma(x,z)-4s-20\delta,r) \le \beta(x,z) \} $
\item if there is more than one element $y\in [x]$ satisfying the above conditions then we choose $y$ to be the unique one minimizing $\phi(\cdot)$.
\end{itemize}
By Claim 2 and metrical properness of $\sigma$, $\Phi$ is well-defined. 

To simplify notation, if $x,y,z$ are any points in the same $\cR$-class then we let
$$(x|y)_z^\sigma:=(\sigma(x)|\sigma_x(y))_{\sigma_x(z)}.$$

\noindent {\bf Claim 3}. For a.e. $x\in X$ the path obtained by concatening geodesic segments $[\sigma_x(\Phi^n(x)),\sigma_x(\Phi^{n+1}(x))]$ ($n\in \Z$)  is a quasi-geodesic and $\lim_{n\to\infty} \sigma_x(\Phi^n(x)) = \xi(x).$ 

\begin{proof}[Proof of Claim 3]
By the quasi-cocycle identity (Lemma \ref{lem:buse}),
\begin{eqnarray*}
d_\sigma(\Phi^{n}x, \Phi^{n+2}x) &\ge& \beta(\Phi^{n}x, \Phi^{n+2}x) \ge \beta(\Phi^{n}x, \Phi^{n+1}x) + \beta(\Phi^{n+1}x, \Phi^{n+2}x) - 4\delta\\
&\ge& d_\sigma(\Phi^{n}x, \Phi^{n+1}x) + d_\sigma(\Phi^{n+1}x, \Phi^{n+2}x) - 8s - 44\delta.
\end{eqnarray*}
Thus
$$(\Phi^n(x)|\Phi^{n+2}(x))_{\Phi^{n+1}(x)}^\sigma \le 4s + 22\delta.$$
Since $d_\sigma(\Phi^n(x),\Phi^{n+1}(x))\ge \beta(\Phi^n(x),\Phi^{n+1}(x))\ge r$, the Claim now follows from the choice of r.
\end{proof}

For $x\in X$, let $G_x$ be the directed graph with vertex set $[x]_\cR$ and directed edges $(x,\Phi(x))$. Let $G'_x$ be the induced subgraph of $G_x$ containing all vertices that lie in a bi-infinite directed path of $G_x$. In other words, $G'_x$ is the maximal subgraph of $G_x$ that does not have any vertices of degree 1. 

\noindent {\bf Claim 4}. For a.e. $x\in X$, $G'_x$ is nonempty. In fact, for every limit point $\eta \in \cup_{y\cR x} \alpha(x,y)\cL_y(\cS)$ with $\eta \ne \xi(x)$, there exists a bi-infinite directed path in $G'_x$ with endpoints $\{\xi(x),\eta\}$.

\begin{proof}[Proof of Claim 4]
Let $y \in [x]_\cR$. By Lemma \ref{lem:loxodromic-case} there is a point $\eta \ne \xi(x)$ such that $\alpha(x,y)\cL_y(\cS) =\{\eta,\xi(x)\}$ and elements $y_n \in [y]_\cS$ such that $\lim_{n\to\infty} \sigma_x(y_n) = \eta$. Let $H_n$ be the subgraph of $G_x$ induced by the trajectory $\{\Phi^m(y_n)\}_{m\in \N}$. Observe that if $\gamma_x \subset \sH_x$ is any geodesic from $\eta$ to $\xi(x)$ then the distance between $\sigma_x(\Phi^m(y_n))$ and $\gamma_x$ is bounded by a constant independent of $n,m$. This is because the path obtained by concatening geodesic segments $[\sigma_x(\Phi^n(x)),\sigma_x(\Phi^{n+1}(x))]$ ($n\in \Z$) is a $(\lambda,C)$-quasi-geodesic limiting on $\xi(x)$ (for some $\lambda,C>0$ independent of $n$) and $d(\sigma_x(y_n),\gamma_x)\le s+2\delta$. It follows that the subgraph $H_n$ has a subsequential limit $H_\infty$ in the space of all subgraphs of $G_x$ (endowed with the topology of pointwise convergence on compact subsets) and $H_\infty$ lies in the $(s+2\delta)$-neighborhood of $\gamma_x$. Since $y_n \to \eta$ as $n\to\infty$, this implies that $H_\infty$ is the required bi-infinite directed path.
\end{proof}

Let $Z$ denote the set of all $x \in X$ such that $x$ is a vertex of $G'_x$. Define  $F:\cR \to [0,1]$ by $F(x,y)=1$ if $x\in Z$ and $y=\Phi(x)$. Let $F(x,y)=0$ otherwise. By the Mass-Transport Principle (Lemma \ref{lem:mtp}),
$$\mu(Z)\le \int_Z \sum_x F(x,y)~d\mu(y) \le \int \sum_x F(x,y)~d\mu(y) = \int \sum_y F(x,y)~d\mu(x) = \mu(Z).$$
This implies $\sum_x F(x,y)=1$ for a.e. $y\in Z$. So every vertex of $G'_x$ is adjacent to exactly one incoming edge and one outgoing edge. In particular, every vertex of $G'_x$ has degree 2. By Claims 1 and 4, this implies that $G'_x$ has infinitely many connected components. 


\noindent {\bf Claim 5}. There is a constant $C>0$ such that for any $y,z \in [x]_\cR$ there exists $N \in \N$ such that for every $n \ge N$ there exists $m\in \N$ such that $d_\sigma(\Phi^n y, \Phi^m z) < C$.

\begin{proof}[Proof of Claim 5]
Let $\gamma_y$ be a geodesic from $\sigma_x(y)$ to $\xi(x)$. Because the path obtained by concatening geodesic segments $[\sigma_x(\Phi^n(y)),\sigma_x(\Phi^{n+1}(y))]$ ($n\in \Z$) is a $(\lambda,C)$-quasi-geodesic limiting on $\xi(x)$  (for some $\lambda, C>0$ independent of $y$), there is a constant $C'$ such that $\{\sigma_x(\Phi^n(y))\}_{n\in \N}$ lies in the $C'$-neighborhood of $\gamma_y$. Similarly, if $\gamma_z$ is a geodesic from $\sigma_x(z)$ to $\xi(x)$ then  $\{\sigma_x(\Phi^n(z))\}_{n\in \N}$ lies in the $C'$-neighborhood of $\gamma_z$. The two geodesics $\gamma_y$ and $\gamma_z$ are asymptotic. By Lemma \ref{lem:thin} there is an $N$ and constant $\delta'>0$ such that for all $n>N$, $\sigma_x(\Phi^n(y))$ lies in the $(C'+2\delta')$-neighborhood of $\gamma_z$. The statement now follows from the definition of $\Phi$.
\end{proof}

For $n>0$, let $Y_n$ be the set of all $x\in X$ such that there exist at most $n$ points $y \in [x]_\cR$ with $d_\sigma(x,y)\le C$ where $C>0$ is as in Claim 5. Because $\sigma$ is metrically proper, $X= \cup_{n=1}^\infty Y_n$. So there exists an $n$ such that $Y_n$ has positive measure. By replacing $\cR$ with $\cR \resto Y_n$ and invoking Lemma \ref{lem:compression} we may assume without loss of generality that $X=Y_n$. 

We have now arrived at a contradiction. To be precise, let $x\in X$ be a typical element. By Claim 1, there exist distinct elements $\eta_1,\ldots, \eta_{n+1} \in \cup_{y\cR x} \alpha(x,y)\cL_y(\cS)$ such that $\eta_i \ne \xi(x)$ for all $i$. By Claim 4, for each $i$ there a directed path $\gamma_i$ in $G'_x$ with endpoints $\{\eta_i,\xi(x)\}$. By the paragraph before Claim 5, the paths $\gamma_1,\ldots, \gamma_{n+1}$ are pairwise disjoint. By Claim 5, there exists $y \in [x]_\cR$ and $y_i \in \gamma_i$ such that $d_\sigma(y,y_i) \le C$ for all $i$. This contradicts the choice of $n$.

\end{proof}

\section{Maximal hyperfinite subequivalence relations}\label{sec:maximal}

\begin{defn}
A hyperfinite subequivalence $\cS\le\cR $ is {\bf maximal hyperfinite} if there does not exist a hyperfinite subequivalence relation $\cS' \le \cR$ with $\cS\le\cS'$ and $\cS' \setminus \cS$ non-null (with respect to the usual measure $\hmu$ on $\cR$, see \S \ref{sec:mer}).
\end{defn}

We say $\cR$ has {\bf rank 1} if every aperiodic hyperfinite subequivalence relation is contained in a {\em unique} maximal hyperfinite subequivalence relation. We will show that if the Main Assumption is satisfied then $\cR$ is rank 1 and characterize rank 1 in terms of quasi-normalizers of hyperfinite subequivalence relations. We also use this to prove that certain equivalence relations {\em do not} satisfy the Main Assumption. 

\begin{lem}\label{lem:maxxx}
If $\cS\le\cR $ is any hyperfinite subequivalence relation then there exists a maximal hyperfinite subequivalence relation $\cS'\le\cR $ with $\cS\le\cS'$.
\end{lem}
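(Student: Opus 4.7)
The plan is to apply Zorn's lemma to the poset $P$ of all hyperfinite Borel subequivalence relations $\cS' \le \cR$ containing $\cS$, ordered by inclusion modulo $\hmu$-null sets. A maximal element of $P$ is precisely a maximal hyperfinite subequivalence relation containing $\cS$ in the sense of the definition above, so it suffices to show every chain in $P$ has an upper bound in $P$.

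Let $\{\cS_i\}_{i\in I}$ be a chain in $P$. The first step is to reduce to a countable chain. Viewing each $\cS_i$ as an element of the measure algebra $\MALG(\cR,\hmu)$, which is separable because $(\cR,\hmu)$ is a standard Borel space with a $\sigma$-finite measure (replace $\hmu$ by an equivalent finite measure to produce a metric on the measure algebra in which the chain is totally ordered and bounded), I would extract a countable cofinal subchain $\cS_{i_1} \le \cS_{i_2} \le \cdots$ whose supremum in $\MALG(\cR,\hmu)$ agrees with that of $\{\cS_i\}_{i\in I}$.

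The second step is to verify that $\cS^{\ast} := \bigcup_n \cS_{i_n}$ is still hyperfinite. It is clearly a Borel equivalence relation as a countable increasing union of such, and $\cS \le \cS^{\ast} \le \cR$ since each $\cS_{i_n}$ belongs to $P$. For hyperfiniteness I would invoke the Connes--Feldman--Weiss theorem \cite{CFW81} identifying hyperfiniteness with $\mu$-amenability, and use the standard fact that amenability of pmp equivalence relations is preserved under countable increasing unions: given $\eps > 0$ and a finite set of elements in $[\cS^{\ast}]$, each representative lies in some $\cS_{i_n}$, so Reiter almost-invariant vectors for $\cS_{i_n}$ serve as almost-invariant vectors for this finite set in $\cS^{\ast}$. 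Thus $\cS^{\ast}$ is amenable, hence hyperfinite, and provides an upper bound for the chain in $P$.

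The main technical point, and the only one requiring any care, is the separability reduction to a countable subchain; the preservation of hyperfiniteness under countable unions is classical once one invokes $\mu$-amenability. Having established both, Zorn's lemma yields a maximal element of $P$, completing the proof.
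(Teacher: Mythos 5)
Your proof is correct and takes essentially the same approach as the paper: both reduce, via an equivalent finite measure on $\cR$, to a countable cofinal increasing subchain whose union serves as the upper bound, and both invoke the standard fact that hyperfiniteness is preserved under countable increasing unions. The only cosmetic difference is that the paper invokes the Hausdorff maximal-chain form of Zorn's lemma and then shows the supremum of a maximal chain is maximal, whereas you verify the upper-bound condition directly and apply Zorn in its usual form.
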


\begin{proof}
We say that two subequivalence relations $\cS_1,\cS_2$ are {\bf $\hmu$-equivalent} if $\hmu(\cS_1\vartriangle\cS_2)=0$ where $\hmu$ is the usual measure on $\cR$ (see \S \ref{sec:mer}). In general, we will not distinguish between a subequivalence relation and its $\hmu$-class. Let $\sZ$ denote the collection of all (equivalence classes of) Borel hyperfinite subequivalence relations $\cS'$ of $\cR$ with $\cS\le \cS'\le \cR$. The set $\sZ$ is partially ordered by inclusion mod $\hmu$. By Zorn's Lemma there exists a maximal chain $\sC \subset \sZ$. 

Let $\mu'$ be a probability measure on $\cR$ that is equivalent to $\hmu$ (in the sense that two measures have the same measure zero sets). Let $\beta = \sup \{\mu'(\cT):~\cT \in \sC\}$. For each integer $n\ge 1$, let $\cS_n$ be an element of the chain $\sC$ satisfying $\mu'(\cS_n) \ge \beta - 1/n$. Since $\sC$ is a chain, we must have that $\cS_1\le \cS_2 \le \ldots$ (with inclusions taken mod $\hmu$). Since each $\cS_i$ is hyperfinite, the union $\cS_\infty:=\cup_{i=1}^\infty \cS_i$ is also hyperfinite. Moreover, $\mu'(\cS_\infty)=\beta$. Since $\sC$ is a maximal chain, we must have that $\cS_\infty \in \sC$. Again since $\sC$ is a maximal chain and $\mu'(\cS_\infty)=\beta$, $\cS_\infty$ must be a maximal hyperfinite subequivalence relation.
\end{proof}

\begin{defn}
A pmp equivalence relation $\cR $ has {\bf rank 1} if every pair of distinct maximal hyperfinite subequivalence relations intersects in a finite subequivalence relation. Equivalently, $\cR $ has rank 1 if every aperiodic hyperfinite subequivalence relation is contained in a unique maximal hyperfinite subequivalence relation (where uniqueness is up to measure zero). We say $\cR $ has {\bf higher rank} if it does not have rank 1. This definition is motivated by the theory of semisimple Lie groups: a semisimple Lie group $G$ has real rank 1 if and only if every closed noncompact unimodular amenable subgroup is contained in a unique maximal unimodular amenable subgroup.
\end{defn}

\begin{thm}\label{thm:max}
Hyperbolic equivalence relations have rank 1.
\end{thm}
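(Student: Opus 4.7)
The plan is to exhibit, for every aperiodic hyperfinite $\cS \le \cR$, a canonical $\cS$-invariant field of boundary measures $\nu^\cS \in \Fix(\cS)$, and to identify the unique maximal hyperfinite subequivalence relation containing $\cS$ with its $\cR$-stabilizer
\[
\cM \;:=\; \{(x,y) \in \cR:~ \alpha(y,x)_* \nu^\cS_x = \nu^\cS_y\}.
\]
By Lemma \ref{lem:mixed}, $\cS$ carries an essentially unique $\cS$-invariant measurable partition $X = Y \sqcup Z$ with $\cS\resto Y$ parabolic and $\cS\resto Z$ loxodromic (either piece possibly null). On $Y$, I let $\nu^\cS_x$ be the unique $\cS\resto Y$-fixed boundary measure, a Dirac mass by the definition of parabolicity. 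On $Z$, I pass to the ergodic decomposition of $\cS\resto Z$; on each ergodic component Lemma \ref{lem:loxodromic-case} gives $|\cL_x(\cS)| = 2$ for a.e. $x$, so I define $\nu^\cS_x$ to be the uniform probability measure on the two-point set $\cL_x(\cS)$. Since $\alpha(y,x)$ restricts to a bijection $\cL_x(\cS) \to \cL_y(\cS)$ for $(x,y) \in \cS$, this $\nu^\cS$ lies in $\Fix(\cS)$.

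The cocycle identity shows that $\cM$ is a subequivalence relation and $\cS \le \cM$ by construction; since $\nu^\cS \in \Fix(\cM)$, Theorem \ref{thm:K} gives that $\cM$ is hyperfinite. The heart of the proof is then to show that every hyperfinite $\cT$ with $\cS \le \cT \le \cR$ lies in $\cM$, for this immediately yields both maximality of $\cM$ and its uniqueness. To do this I decompose $X$ into $\cT$-ergodic components $\{W_\alpha\}$. Each $W_\alpha$ is automatically $\cS$-invariant (since $\cS \le \cT$), and each $\cT\resto W_\alpha$ is ergodic, aperiodic, hyperfinite, and satisfies the Main Assumption, so it is either parabolic or loxodromic. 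It suffices to show that $\nu^\cS$ is $\cT\resto W_\alpha$-invariant on each $W_\alpha$.

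If $\cT\resto W_\alpha$ is parabolic, Theorem \ref{thm:non-nested} applied with $\cT\resto W_\alpha$ as the ambient relation forces $\cS\resto W_\alpha$ to be parabolic; since every $\cT\resto W_\alpha$-fixed field is also $\cS\resto W_\alpha$-fixed, parabolic uniqueness forces it to equal $\nu^\cS\resto W_\alpha$. If $\cT\resto W_\alpha$ is loxodromic, then no $\cS$-ergodic component $W' \subseteq W_\alpha$ can be parabolic, for otherwise the restriction to $W'$ of the two-point-supported $\cT\resto W_\alpha$-fixed field would provide a second $\cS\resto W'$-fixed field, contradicting parabolic uniqueness. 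Hence $\cS\resto W'$ is loxodromic on every ergodic piece, and Lemma \ref{lem:loxodromic-case} combined with the evident inclusion $\cL_x(\cS\resto W') \subseteq \cL_x(\cT\resto W_\alpha)$ of two-point sets forces equality; thus $\nu^\cS\resto W_\alpha$ coincides with the uniform $\cT\resto W_\alpha$-invariant field and is $\cT$-invariant. The main obstacle I anticipate is this loxodromic case: it requires Lemma \ref{lem:loxodromic-case} and Theorem \ref{thm:non-nested} to remain valid when the ambient equivalence relation is an arbitrary ergodic subequivalence relation satisfying the Main Assumption, rather than just $\cR$ itself. This should follow by inspecting those proofs and checking that they invoke only the Mass-Transport Principle and the pmp structure of whichever relation is fixed as ambient, both of which are inherited by such subequivalence relations.
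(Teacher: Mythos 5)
Your proposal matches the paper's proof in its overall architecture: define the canonical $\cS$-invariant field of boundary measures (Definition~\ref{defn:canonical-field}), take $\cM$ to be its $\cR$-stabilizer, and establish rank~1 by showing that any hyperfinite $\cT$ with $\cS \le \cT \le \cR$ must also stabilize this field. The paper isolates this last fact as Lemma~\ref{cor:non-nested} (``the canonical $\cS$-invariant field equals the canonical $\cT$-invariant field'') and proves it by decomposing $X$ along the $\cS$-partition $Y \sqcup Z$ into parabolic and loxodromic pieces, then invoking Theorem~\ref{thm:non-nested} and a three-atom contradiction via Theorem~\ref{thm:K} in the loxodromic part. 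You instead decompose along $\cT$-ergodic components $W_\alpha$ and, on the loxodromic ones, argue via inclusion of two-point limit sets using Lemma~\ref{lem:loxodromic-case} — a slightly more geometric route to the same conclusion. Both variations are correct. Your worry at the end about applying Lemma~\ref{lem:loxodromic-case} and Theorem~\ref{thm:non-nested} with a subequivalence relation in the ambient role is well-placed but already handled by the paper: the Main Assumption passes to subequivalence relations by restricting the bundle action (see the remark after Example~\ref{ex:graphing} and \S\ref{sec:compression}), and the paper itself invokes Theorem~\ref{thm:non-nested} internally to $\cT\resto Z$ in the proof of Lemma~\ref{cor:non-nested}. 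One small point worth noting: the paper's own write-up of Theorem~\ref{thm:max} only explicitly checks that $\cM$ is maximal; your formulation — showing directly that every hyperfinite $\cT \ge \cS$ sits inside $\cM$ — gives maximality and uniqueness in a single stroke and is arguably cleaner.
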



\begin{defn}\label{defn:canonical-field}
Every aperiodic hyperfinite subequivalence relation $\cS \le \cR$ has a unique {\bf canonical invariant field of boundary measures} defined as follows. By Lemma \ref{lem:mixed}, there is a unique (up to null sets) partition $X=Y\sqcup Z$ such that $\cS\resto Y$ is parabolic and $\cS\resto Z$ is loxodromic. Let $\nu \in \Fix(\cS\resto Y)$ denote the unique element and let $\eta \in \Fix(\cS\resto Z)$ denote the unique element satisfying: $\eta_z$ has two atoms of equal mass $1/2$ for a.e. $z\in Z$. Finally, define $\omega \in \Fix(\cS)$ by $\omega_y=\nu_y$ for $y\in Y$ and $\omega_z=\eta_z$ for $z\in Z$. 
\end{defn}

\begin{lem}\label{cor:non-nested}
Suppose the Main Assumption is satisfied. Let $\cS \le \cT \le \cR$ be aperiodic hyperfinite subequivalence relations. Then the canonical $\cS$-invariant field of boundary measures is the same as the canonical $\cT$-invariant field of boundary measures.
\end{lem}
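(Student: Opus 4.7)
The plan is to show that (a) the canonical parabolic/loxodromic partitions $X=Y^\cT\sqcup Z^\cT$ and $X=Y^\cS\sqcup Z^\cS$ from Definition \ref{defn:canonical-field} coincide up to null sets, and then (b) verify that the two fields agree on each piece. The key initial observation is that $\cS\le\cT$ forces every $\cT$-invariant field to be $\cS$-invariant, so $\omega^\cT\in\Fix(\cS)$; the task is then to identify $\omega^\cT$ as the canonical $\cS$-invariant field.

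First I would establish $Y^\cT\subset Y^\cS$ up to null sets. Since $Y^\cT$ is $\cT$-invariant (hence $\cS$-invariant), and $\cS$ is globally aperiodic, the restriction $\cS\resto Y^\cT$ is aperiodic. The restriction $\cT\resto Y^\cT$ is parabolic by definition, so Theorem \ref{thm:non-nested} (applied ergodic-component-wise, since its hypotheses include ergodicity) yields that $\cS\resto Y^\cT$ is parabolic as well, i.e.\ $Y^\cT\subset Y^\cS$.

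For the reverse inclusion I would argue by contradiction. Suppose $W:=Y^\cS\cap Z^\cT$ has positive measure; $W$ is both $\cS$- and $\cT$-invariant. On $W$ the field $\omega^\cT|_W$ is $\cS$-invariant and, because $W\subset Z^\cT$, assigns each fiber a probability measure with two atoms of mass $1/2$. But $\cS\resto W$ is parabolic, so $\Fix(\cS\resto W)$ is a singleton whose unique element has Dirac fibers; this contradicts the existence of the $2$-atom $\omega^\cT|_W\in\Fix(\cS\resto W)$. Hence $Z^\cT\subset Z^\cS$, and combining the two inclusions the partitions agree $\mu$-a.e.

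Finally, write $Y:=Y^\cT=Y^\cS$ and $Z:=Z^\cT=Z^\cS$. On $Y$, parabolicity of $\cS\resto Y$ gives $\Fix(\cS\resto Y)=\{\omega^\cS|_Y\}$, and since $\omega^\cT|_Y$ lies in this set it must equal $\omega^\cS|_Y$. On $Z$, both $\cS\resto Z$ and $\cT\resto Z$ are loxodromic, so Lemma \ref{lem:loxodromic-case} shows $|\cL_z(\cS)|=|\cL_z(\cT)|=2$ for a.e.\ $z\in Z$. Because $\cS$-orbits are contained in $\cT$-orbits, $\cO^\sigma_z(\cS)\subset\cO^\sigma_z(\cT)$ and hence $\cL_z(\cS)\subset\cL_z(\cT)$; comparing cardinalities forces equality. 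By Definition \ref{defn:canonical-field} both $\omega^\cS_z$ and $\omega^\cT_z$ are the uniform measure on this common $2$-point set, so they agree. The main obstacle I anticipate is the partition-matching step, specifically the direction $Z^\cT\subset Z^\cS$: it hinges on the strong uniqueness in the definition of parabolic (a unique invariant field, which is Dirac) being incompatible with a $2$-atomic $\cS$-invariant field inherited from $\cT$. A secondary technical point is verifying that Theorem \ref{thm:non-nested} can be invoked on $\cT\resto Y^\cT$ despite possible non-ergodicity, handled via ergodic decomposition.
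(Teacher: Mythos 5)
Your proof is correct and covers the same ground as the paper's, but it is organized differently. The paper fixes the $\cS$-partition $X=Y\sqcup Z$ from Definition~\ref{defn:canonical-field} and argues directly on those pieces: it shows $\Fix(\cT\resto Y)\subset\Fix(\cS\resto Y)=\{\nu\}$ together with Theorem~\ref{thm:K} to conclude $\cT\resto Y$ is parabolic, invokes Theorem~\ref{thm:non-nested} to conclude $\cT\resto Z$ is loxodromic, and then identifies the loxodromic fields by the $\le 2$-atom constraint: if $\kappa\ne\eta$ in $\Fix(\cS\resto Z)$ are both two-atomic then $(1/2)(\kappa+\eta)$ would have at least three atoms on a positive-measure set, contradicting Theorem~\ref{thm:K}. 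You instead explicitly establish $Y^\cT=Y^\cS$ and $Z^\cT=Z^\cS$ (a step the paper leaves implicit), which is a genuine improvement in clarity, but on the loxodromic piece you go through Lemma~\ref{lem:loxodromic-case} and a limit-set inclusion. That works, but it is heavier than needed and introduces an extra ergodic-decomposition reduction, since Lemma~\ref{lem:loxodromic-case} is stated under the ergodic Main Assumption while $\cS\resto Z$ and $\cT\resto Z$ need not be ergodic; the averaging argument is more economical. Two small corrections: the set $W=Y^\cS\cap Z^\cT$ in your second step is $\cS$-invariant (both $Y^\cS$ and $Z^\cT$ are $\cS$-invariant), but it is \emph{not} obviously $\cT$-invariant, since $Y^\cS$ is only guaranteed $\cS$-invariant — fortunately your argument never uses $\cT$-invariance of $W$, only $\cS$-invariance. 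Also, the assertion ``$\cS\resto W$ is parabolic'' deserves a sentence: any $\kappa\in\Fix(\cS\resto W)$ pastes with $\nu|_{Y^\cS\setminus W}$ (both pieces being $\cS$-invariant) to give an element of $\Fix(\cS\resto Y^\cS)=\{\nu\}$, so $\kappa=\nu|_W$ and $\Fix(\cS\resto W)$ is a Dirac singleton as required.
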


\begin{proof}
Let $X=Y\sqcup Z$ and $\nu,\eta,\omega$ be as in Definition \ref{defn:canonical-field}. Note that an inclusion of subequivalence relations always implies the reverse inclusion of corresponding fixed points in the space of fields of boundary measures. Therefore, $\Fix(\cT \resto Y) \subset \Fix(\cS \resto Y) = \{\nu\}$. Since $\Fix(\cT \resto Y)$ is nonempty (Theorem \ref{thm:K}), this implies $\Fix(\cT \resto Y) = \{\nu\}$. In particular, $\cT \resto Y$ is parabolic. 

Since $\cS\resto Z$ is loxodromic and $\cS \resto Z \le \cT \resto Z$, Theorem \ref{thm:non-nested} implies $\cT\resto Z$ is also loxodromic. To be precise, Theorem \ref{thm:non-nested} apriori only applies to ergodic equivalence relations. However, by decomposing a nonergodic measure into its ergodic components, we see that Theorem \ref{thm:non-nested} extends to nonergodic equivalence relations as well. So there exists a measure $\kappa \in \Fix(\cT \resto Z)$ such that for a.e. $z\in Z$, $\kappa_z$ has two atoms of equal mass $1/2$. Because $\cS \le \cT$, we have $\kappa \in \Fix(\cS\resto Z)$. We claim that $\kappa=\eta$. Indeed, if this is not true then $(1/2)(\kappa+\eta)_z$ has support containing more than 3 elements (for all $z$ is a set with positive measure), contradicting Theorem \ref{thm:K}. So $\kappa=\eta$ which implies the Corollary.
\end{proof}

\begin{proof}[Proof of Theorem \ref{thm:max}]
Let $\cR $ be a hyperbolic equivalence relation and $\cS\le\cR $ an aperiodic hyperfinite subequivalence relation. Let $\omega \in \Fix(\cS)$ be the canonical $\cS$-invariant field of boundary measures (Definition \ref{defn:canonical-field}).  Let $\cM$ be the set of all $(x,y) \in \cR$ such that $\alpha(x,y)_*\omega_y=\omega_x$. In other words, $\cM$ is the stabilizer of $\omega$ in $\cR$. Clearly, $\cS \le \cM$. It is easily checked that $\cM$ is a subequivalence relation. It must be hyperfinite by Theorem \ref{thm:K}.

 In other to show that it is maximal, let $\cK$ be a measurable hyperfinite subequivalence relation with $\cM \le \cK \le \cR$. By Lemma \ref{cor:non-nested}, $\omega \in \Fix(\cK)$. The definition of $\cM$ now implies $\cK=\cM$. 
 \end{proof}








\begin{defn}
Let $\cS\le \cR$ be a subequivalence relation. For any $\phi \in \Aut(X,\mu)$ we let 
$$\phi_*\cS =\{ (\phi(x),\phi(y)):~ (x,y)\in \cS\}.$$
If $\phi \in [\cR]$ then this is a subequivalence relation of $\cR$. The {\bf quasi-normalizer} of $\cS$ in $\cR$ is the subgroup $N^q_\cR(\cS)\le [\cR]$ generated by all $\phi \in [\cR]$ such that $\phi_*\cS \cap \cS$ is aperiodic. The {\bf normalizer} of $\cS$ in $\cR$ is the subgroup $N_\cR(\cS) \le [\cR]$ generated by all $\phi \in [\cR]$ such that $\phi_*\cS=\cS$. So if $\cS$ is aperiodic then $N_\cR(\cS) \le N^q_\cR(\cS)$.
\end{defn} 

\begin{defn}
Let $\Aut(X,\mu)$ denote the group of measure-preserving Borel isomorphisms of $(X,\mu)$. Let $\cS \le \cR$ be a subequivalence relation and $G\subset \Aut(X,\mu)$ a subset. We let $\langle \cS,G\rangle$ denote the smallest equivalence relation on $X$ containing both $\cS$ and $\{(x,g x):~g \in G, x\in X\}$. 
\end{defn}

\begin{thm}\label{thm:norm}
Let $(X,\mu)$ be a standard probability space and $\cR \subset X\times X$ a discrete Borel equivalence relation such that $\mu$ is $\cR$-invariant and $\cR$-ergodic. If $\cR$ has rank 1 then for every aperiodic hyperfinite subequivalence relation $\cS\le \cR$, $\langle \cS, N^q_\cR(\cS)\rangle$ is hyperfinite.
\end{thm}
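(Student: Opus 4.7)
The plan is to show that $N^q_\cR(\cS) \subset [\cM]$, where $\cM$ denotes the unique maximal hyperfinite subequivalence relation of $\cR$ containing $\cS$ (which exists by the rank 1 hypothesis, plus the general existence Lemma \ref{lem:maxxx}). Because $[\cM]$ is a group, it suffices to prove this for each generator, i.e., each $\phi \in [\cR]$ with $\phi_*\cS \cap \cS$ aperiodic. Once established, $\langle \cS, N^q_\cR(\cS)\rangle \le \cM$ follows, and hence it is hyperfinite.

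First I record a monotonicity principle for the assignment $\cS' \mapsto \cM(\cS')$ that sends an aperiodic hyperfinite $\cS' \le \cR$ to its unique maximal hyperfinite extension: if $\cS_1 \le \cS_2$ are both aperiodic hyperfinite, then $\cM(\cS_1) = \cM(\cS_2)$, since $\cM(\cS_2)$ is a maximal hyperfinite containing $\cS_1$ and uniqueness forces the equality. Applying this to $\phi_*\cS \cap \cS \le \cS$ and $\phi_*\cS \cap \cS \le \phi_*\cS$ yields
$$\cM = \cM(\cS) = \cM(\phi_*\cS \cap \cS) = \cM(\phi_*\cS).$$
On the other hand, $\phi_*\cM$ is itself a maximal hyperfinite subequivalence relation of $\cR$: hyperfiniteness is preserved by $\phi$, and any hyperfinite $\cN$ with $\phi_*\cM \le \cN \le \cR$ would give $\cM \le \phi^{-1}_*\cN \le \cR$ with $\phi^{-1}_*\cN$ hyperfinite, forcing $\phi^{-1}_*\cN = \cM$ by maximality of $\cM$, hence $\cN = \phi_*\cM$. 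Since $\phi_*\cM \supseteq \phi_*\cS$, uniqueness of the maximal hyperfinite extension of $\phi_*\cS$ forces $\phi_*\cM = \cM(\phi_*\cS) = \cM$. Thus every generator $\phi$ of $N^q_\cR(\cS)$ normalizes $\cM$.

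To upgrade normalization to membership in $[\cM]$, suppose for contradiction that $\phi \notin [\cM]$, and set $\cM' := \langle \cM, \phi\rangle \supsetneq \cM$. Because $\phi$ normalizes $\cM$, we have the explicit description
$$\cM' = \{(x,y) \in X\times X : (x, \phi^{-k} y) \in \cM \text{ for some } k \in \Z\},$$
so $\cM'$ is an extension of $\cM$ by the amenable group $\Z$ acting on the space of $\cM$-classes through $\phi$. Since $\cM$ is hyperfinite and $\Z$ is amenable, the standard stability of amenability for measured groupoids under extensions by amenable groups yields that $\cM'$ is amenable, and Connes--Feldman--Weiss \cite{CFW81} then gives $\cM'$ hyperfinite. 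This contradicts the maximality of $\cM$. Therefore $\phi \in [\cM]$, completing the reduction. Since every generator of $N^q_\cR(\cS)$ lies in $[\cM]$, the subgroup $N^q_\cR(\cS) \subset [\cM]$, and $\langle \cS, N^q_\cR(\cS)\rangle \le \cM$.

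The main obstacle is the step showing $\cM'$ is hyperfinite given that $\cM$ is hyperfinite and $\phi$ normalizes $\cM$. While this is a standard consequence of the Anantharaman-Delaroche theory of amenable measured groupoids (extensions of amenable groupoids by amenable groups are amenable) followed by Connes--Feldman--Weiss, one must carefully verify that the $\Z$-valued "index" on $\cM'$ descends to a well-defined amenable quotient structure at the level of measured groupoids modulo null sets, and that hyperfiniteness of $\cM$ supplies a countable generating structure compatible with the semidirect product by $\phi$. The remaining parts of the argument are essentially formal consequences of the rank 1 uniqueness.
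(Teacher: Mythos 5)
Your proposal is correct and follows essentially the same route as the paper: establish $\phi_*\cM = \cM$ for each generator $\phi$ of $N^q_\cR(\cS)$ via the uniqueness of the maximal hyperfinite extension, show $\langle\cM,\phi\rangle$ is hyperfinite, conclude $\phi\in[\cM]$ by maximality of $\cM$, and hence $\langle\cS,N^q_\cR(\cS)\rangle\le\cM$. The one step you flag as the ``main obstacle''---that $\langle\cM,\phi\rangle$ is hyperfinite when $\phi$ normalizes the hyperfinite $\cM$---is precisely what the paper handles with a self-contained lemma (just before Theorem \ref{thm:norm}) constructing an invariant state by Ces\`aro-averaging pullbacks of a mean for $\cM$, in the spirit of \cite{MR1900547}; you instead cite stability of amenability under extensions by amenable groups plus Connes--Feldman--Weiss, which is a valid but less self-contained replacement, and avoids the need to argue about a well-defined $\Z$-valued index on $\cM'$ (which in fact is not needed in the paper's averaging argument).
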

It is an open problem whether the converse holds. The next result is essentially the same as \cite[Lemmas 2.9(ix) and 2.15(ix)]{MR1900547}. We give a proof for the reader's convenience.
\begin{lem}
Let $(X,\mu)$ denote a standard probability space. Let $\cR \subset X\times X$ be a hyperfinite Borel equivalence relation on $X$ and assume $\mu$ is $\cR$-invariant. Suppose $\phi \in \Aut(X,\mu)$ normalizes $\cR$ (this means that $ x\cR y \Rightarrow (\phi x)\cR (\phi y)$). Then $\langle \cR,\phi\rangle$ is hyperfinite.
\end{lem}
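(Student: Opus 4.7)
The plan is to exhibit $\cT := \langle \cR, \phi\rangle$ as hyperfinite by combining a finite approximation $\cR = \bigcup_n \cR_n$ of $\cR$ with Rohlin-tower approximations of the $\Z$-action generated by $\phi$. The starting observation is structural: because $\phi$ normalizes $\cR$, any $\cR$-edge can be commuted past any $\phi^{\pm 1}$-edge, so every path defining $\cT$ can be rewritten with all $\phi$-steps pushed to the front. Consequently
\[
  [x]_\cT \;=\; \bigcup_{k\in\Z} \phi^k [x]_\cR .
\]
It follows that the Borel function $p(x) := \min\{k\ge 1:\phi^k(x)\cR x\}$ (with $p(x):=\infty$ if no such $k$ exists) is both $\cR$- and $\phi$-invariant, hence $\cT$-invariant, and partitions $X$ into $\cT$-invariant Borel pieces $X_k := \{p=k\}$ for $k\in\{1,2,\dots,\infty\}$, each of which can be treated separately.

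On $X_k$ with $k<\infty$, $\phi^k\in[\cR]$ and every $\cT$-class is the disjoint union of exactly $k$ many $\cR$-classes. A finite Borel exhaustion of $\cT\resto X_k$ can then be built from the $\cR_n\resto X_k$ by picking a Borel transversal for the $\Z/k\Z$-action $[x]_\cR\mapsto\phi[x]_\cR$ and, over each $\cR_n$-class meeting the transversal, adjoining its $k$ successive $\phi$-translates. The resulting equivalence relation has classes of size at most $k\cdot|\!\![x]_{\cR_n}|$ and converges up to $\cT\resto X_k$ as $n\to\infty$. On $X_\infty$, the $\Z$-action $\phi$ is $\cR$-free, and I would invoke the Rohlin lemma to produce, for each $N$, a Borel set $A_N\subseteq X_\infty$ with $A_N,\phi A_N,\dots,\phi^{2N}A_N$ pairwise disjoint and $\mu\bigl(X_\infty\setminus\bigsqcup_{j=0}^{2N}\phi^j A_N\bigr)\to 0$. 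On the tower $T_N:=\bigsqcup_{j=0}^{2N}\phi^j A_N$ declare $y\sim_{n,N}z$ iff there exist $x\in A_N$, integers $0\le i,j\le 2N$ and $w,w'\in[x]_{\cR_n}$ with $y=\phi^i w$ and $z=\phi^j w'$; outside the tower use $\cR_n$ alone. Each $\sim_{n,N}$-class lies inside $\bigcup_{|k|\le 2N}\phi^k[x]_{\cR_n}$ and is therefore finite.

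Diagonalising (say $\cT_n$ equals the construction on $\bigsqcup_{k<\infty}X_k$ patched with $\sim_{n,n}$ on $X_\infty$, after passing to a subsequence and enlarging $\cR_n$ and $A_n$ to preserve nestedness) yields an increasing sequence of finite Borel subequivalence relations of $\cT$ whose union equals $\cT$ off a null set, by a Borel--Cantelli argument applied to the shrinking Rohlin remainders. The main obstacle is the bookkeeping: because $\phi$ normalizes $\cR$ but generally not the individual $\cR_n$, an $\cR_n$-class at height $j$ inside the tower can have its $\phi$-translate escape the tower near level $2N$, so one must either absorb these boundary escapes into $\cR_m$ for a controlled $m=m(n)$ or show their contribution vanishes in measure. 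Once these details are arranged so that the $\cT_n$ are genuinely Borel, nested and exhaustive of $\cT$, hyperfiniteness of $\cT$ follows directly from the definition.
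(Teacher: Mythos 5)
Your proposal takes a genuinely different route from the paper. The paper proves the lemma abstractly: it invokes the Connes--Feldman--Weiss characterization (hyperfinite $\Leftrightarrow$ amenable), takes an invariant state $\Lambda$ on $L^\infty(\cR)$, forms the Ces\`aro averages $\Psi_n(f) = \frac{1}{n}\sum_{i=0}^n\Lambda(\Res(\phi^{-i}f))$, and extracts a weak* limit via Banach--Alaoglu; the normalization hypothesis is used only to see that $\phi^{-i}[\cR]\phi^i = [\cR]$, which makes each $\Psi_n$ already $[\cR]$-invariant. You are instead trying to build a finite Borel exhaustion of $\langle\cR,\phi\rangle$ directly out of a finite exhaustion $\cR=\bigcup_n\cR_n$ and Rohlin towers for $\phi$. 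Your initial reduction is sound: $[x]_{\cT}=\bigcup_k\phi^k[x]_\cR$ is correct, the return time $p(x)=\min\{k\geq 1: \phi^k x\,\cR\,x\}$ is Borel and $\cT$-invariant, and you may work separately on each $X_k$.

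However, both of the remaining steps have genuine gaps. On $X_k$ with $k<\infty$, the Borel transversal you want does not exist in general. You are asking for a Borel $\cR$-saturated set $Y\subseteq X_k$ whose $\phi$-translates $Y,\phi Y,\dots,\phi^{k-1}Y$ partition $X_k$; equivalently, an $\cR$-invariant Borel map $c:X_k\to\Z/k\Z$ with $c(\phi x)=c(x)+1$. If $\cR\resto X_k$ is ergodic, any $\cR$-invariant Borel map is a.e.\ constant, which is incompatible with the cocycle condition; the quotient $X_k/\cR$ is not a standard Borel space, and one cannot choose transversals there. The finite-index case is genuinely nontrivial (this is essentially Lemma~2.9(ix) of Jackson--Kechris--Louveau, which the paper cites, and its proof does not proceed by a transversal). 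On $X_\infty$, the relation $\sim_{n,N}$ you wrote down need not be transitive: if $y\sim z$ via a witness $x_1\in A_N$ and $z\sim z'$ via a different witness $x_2\in A_N$, the two $\cR_n$-classes $[x_1]_{\cR_n}$, $[x_2]_{\cR_n}$ need not coincide, and the equation $\phi^{j_1}w_1'=\phi^{i_2}w_2$ forces a relation only when $j_1=i_2$. The underlying difficulty is that $A_N$ is not $\cR_n$-saturated and $\cR_n$-classes cut across tower levels; if you pass to the generated equivalence relation, you lose control of class sizes. You flag the boundary escape of $\cR_n$-classes near the top of the tower, but that is a separate (and easier) issue than the transitivity failure inside the tower. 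The Rohlin-tower strategy can be carried out, but it requires first intersecting with tower levels and saturating the base against $\cR_n$, which changes the combinatorics in a way your sketch does not engage with. As written, the construction does not yield nested finite Borel equivalence relations exhausting $\cT$. The paper's invariant-mean argument bypasses all of this.
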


\begin{proof}
Given a discrete Borel equivalence relation $\cS$ on $(X,\mu)$, we let  $[\cS]$ acts on $L^\infty(\cS)$ by 
$$\theta f(x,y) := f(\theta^{-1}x,y), \quad \forall \theta \in [\cS], f \in L^\infty(\cS).$$
We let $L^\infty(\cS)^*$ denote the Banach dual of $L^\infty(\cS)$. We also let $[\cS]$ act on $L^\infty(\cS)^*$ by
$$(\theta \Phi)(f) = \Phi(\theta^{-1}f), \quad \forall \theta \in [\cS], f \in L^\infty(\cS), \Phi \in L^\infty(\cS)^*.$$

Following \cite[Definition 5]{CFW81} a countable pmp equivalence relation $\cS$ on $(X,\mu)$ is called {\bf amenable} if there exists a state $\Lambda:L^\infty(\cS) \to \C$ such that $\Lambda$ is $[\cS]$-invariant and  $\Lambda(g)=\int g~d\mu$ for all $g\in L^\infty(X)$ where we have embedded $L^\infty(X)$ into $L^\infty(\cR)$ via $g(x,y)=g(x)$ for $g \in L^\infty(X)$. By \cite[Theorem 10]{CFW81} a countable pmp equivalence relation $\cS$ is amenable if and only if it is hyperfinite. 

Since $\cR$ is hyperfinite, there exists a state $\Lambda:L^\infty(\cR) \to \C$ satisfying the above requirements.  By viewing $\cR \subset \langle \cR,\phi\rangle$ we obtain the restriction map $\Res: L^\infty(\langle \cR,\phi\rangle) \to L^\infty(\cR)$.  Now define $\Psi_n \in L^\infty(\langle \cR,\phi\rangle)^*$ by
$$\Psi_n(f) = \frac{1}{n}\sum_{i=0}^n \Lambda (\Res(  \phi^{-i} f )).$$

Since each $\Psi_n$ is contained in the unit ball of $L^\infty(\langle \cR,\phi\rangle)^*$, the Banach-Alaoglu Theorem implies the existence of a weak* limit point, denoted $\Psi$, of $\{\Psi_n\}$. We claim that $\Psi$ is a state verifying the properties of amenability.

First we observe that if $\theta \in [\cR]$ then
$$\Psi_n(\theta f) = \frac{1}{n}\sum_{i=0}^n \Lambda (\Res( \phi^{-i} \theta f) )).$$
 Since $\Lambda$ is $[\cR]$-invariant and $\phi^{-i} \theta \phi^i \in [\cR]$,
 $$\Lambda(\Res \phi^{-i} \theta \phi^i \phi^{-i} f) = \Lambda(\Res \phi^{-i} f).$$
 Hence  $\Psi_n(\theta f) = \Psi_n(f)$ and so $\Psi(\theta f) = \Psi(f)$ for any $\theta \in [\cR], f\in L^\infty(\langle \cR,\phi\rangle)$. 

By construction, we have that $\Psi(  \phi f) = \Psi(f)$ for any $f\in \langle \cR,\phi\rangle$. So $\Psi$ is both $\phi$-invariant and $[\cR]$-invariant. Since $\phi$ and $[\cR]$ generate $[\langle \cR,\phi\rangle]$, this proves $\Psi$ is $[\langle \cR,\phi\rangle]$-invariant.

Next, let $g\in L^\infty(X) \subset L^\infty(\langle \cR,\phi\rangle)$. Note that $\Res(\phi^{-i} g)(x,y)=g(\phi^i x) = \phi^{-i}g(x)$. So
$$\Psi_n(g) = \frac{1}{n}\sum_{i=0}^n \Lambda (\Res( \phi^{-i} g) )) = \frac{1}{n}\sum_{i=0}^n \Lambda (\phi^{-i}g ).$$
Since $\Lambda(\phi^{-i}g) = \int \phi^{-i}g~d\mu = \int g~d\mu$ (since $\phi$ is measure-preserving), we have $\Psi_n(g)=\int g~d\mu$ for all $n$. So $\Psi(g)=\int g~d\mu$. Since $g$ is arbitrary, this proves that $\Psi$ is an invariant mean. Thus $\langle \cR,\phi\rangle$ is amenable.
\end{proof}

\begin{proof}[Proof of Theorem \ref{thm:norm}]
Suppose $\cR$ has rank 1 and $\cS\le \cR$ is aperiodic and hyperfinite. Then $\cS$ is contained in a unique maximal hyperfinite subequivalence relation $\cM\le \cR$. Let $\phi \in [\cR]$ quasi-normalize $\cS$. Since $\cS \cap \phi_*\cS$ is aperiodic it must be that $\cM \cap \phi_*\cM$ is aperiodic too. Because $\cM$ is the unique maximal hyperfinite subequivalence relation containing $\cM \cap \phi_*\cM$ and $\phi_*\cM$ is also a maximal hyperfinite subequivalence relation, it follows that $\cM=\phi_*\cM$. Thus $N^q_\cR(\cS)$ normalizes $\cM$. The previous lemma now implies the theorem.

\end{proof}

We now show that various kinds of equivalence relations are higher rank and in particular, cannot satisfy the Main Assumption (which implies that they do not admit hyperbolic graphings as in Example \ref{ex:graphing} and cannot be treeable).

\begin{cor}\label{cor:SLnZ}
Suppose $\cR $ is the orbit-equivalence relation for an ergodic pmp essentially free action of $SL(n,\Z)$ for some $n\ge 3$. Then $\cR$ has higher rank.
\end{cor}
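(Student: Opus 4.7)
The plan is to exhibit an aperiodic hyperfinite subequivalence relation of $\cR$ that is contained in two distinct maximal hyperfinite subequivalence relations; by the definition of rank 1, this forces $\cR$ to have higher rank. The construction follows the scheme indicated in the remark after Theorem~\ref{thm:maximal}.

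I will produce three infinite cyclic subgroups $H_1, H_2, H_3 \le SL(n,\Z)$ such that $\langle H_1, H_2 \rangle$ and $\langle H_2, H_3 \rangle$ are amenable while $\langle H_1, H_2, H_3 \rangle$ is non-amenable. For $n = 3$, take $H_1 = \langle E_{12}(1) \rangle$, $H_2 = \langle E_{23}(1) \rangle$, $H_3 = \langle E_{21}(1) \rangle$, where $E_{ij}(t) = I + t\, e_{ij}$ is the standard elementary matrix. Using the Steinberg commutator identity $[E_{ij}(s), E_{jk}(t)] = E_{ik}(st)$ for pairwise distinct $i,j,k$ (and the fact that $E_{ij}$ and $E_{kl}$ commute when $i \neq l$ and $j \neq k$), one checks: $\langle H_1, H_2 \rangle$ equals the discrete Heisenberg group of upper-triangular unipotent integer matrices (nilpotent, hence amenable); $\langle H_2, H_3 \rangle \cong \Z^2$ is abelian; but $\langle H_1, H_3 \rangle = \langle E_{12}, E_{21} \rangle$ contains a copy of $SL(2,\Z)$ sitting in the upper-left $2 \times 2$ block, so $\langle H_1, H_2, H_3 \rangle$ is non-amenable. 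For $n \ge 4$, I would embed this construction into the top-left $3 \times 3$ corner of $SL(n,\Z)$.

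Let $\cS_{ij} \le \cR$ be the subequivalence relation generated by the action of $\langle H_i, H_j \rangle$. By Connes--Feldman--Weiss, $\cS_{12}$ and $\cS_{23}$ are hyperfinite; both contain the orbit relation of $H_2 \cong \Z$, which is aperiodic by essential freeness, so $\cS_2 := \cS_{12} \cap \cS_{23}$ is aperiodic and hyperfinite. By Lemma~\ref{lem:maxxx}, choose maximal hyperfinite subequivalence relations $\cM_{12} \supseteq \cS_{12}$ and $\cM_{23} \supseteq \cS_{23}$. If $\cM_{12} = \cM_{23}$, then $\langle \cS_{12}, \cS_{23} \rangle$, which coincides with the orbit relation of the essentially free action of $\langle H_1, H_2, H_3 \rangle$, would be contained in the hyperfinite $\cM_{12}$ and hence itself hyperfinite; this contradicts the non-amenability of $\langle H_1, H_2, H_3 \rangle$, since a non-amenable group cannot act essentially freely with hyperfinite orbit relation. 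Hence $\cM_{12}$ and $\cM_{23}$ are distinct maximal hyperfinite subequivalence relations both containing the aperiodic hyperfinite $\cS_2$, and so $\cR$ has higher rank.

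The only nontrivial step is the purely group-theoretic construction of $H_1, H_2, H_3$ whose pairwise generated subgroups remain amenable while the triple generates a non-amenable group; the elementary-matrix choice above handles this, and the commutator verifications are routine. Once these subgroups are in hand, the remainder of the argument is a direct application of Lemma~\ref{lem:maxxx} together with the Connes--Feldman--Weiss equivalence of hyperfiniteness and amenability for pmp equivalence relations.
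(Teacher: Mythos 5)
Your proof is correct, but it takes a different route from the paper's own argument for Corollary~\ref{cor:SLnZ}; what you have done is essentially to flesh out the construction sketched in the Remark following Theorem~\ref{thm:maximal} (three subgroups with pairwise amenable but jointly non-amenable spans). The paper's proof of the corollary instead runs a chain argument under the assumption of rank~1: letting $\cM$ be the unique maximal hyperfinite subequivalence relation containing $\cR_{12}$ (the orbit relation of $E_{12}$), one uses that $E_{12}$ commutes with each $E_{1k}$, then that $E_{1k}$ commutes with each $E_{jk}$, and so on, repeatedly invoking uniqueness to force $\cR_{jk}\le\cM$ for all $j\ne k$; since the elementary matrices generate $SL(n,\Z)$, this yields $\cM=\cR$, contradicting non-hyperfiniteness. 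Your version is arguably more self-contained (one triple $H_1=\langle E_{12}\rangle$, $H_2=\langle E_{23}\rangle$, $H_3=\langle E_{21}\rangle$, verified by Steinberg relations and the fact that $E_{12},E_{21}$ generate $SL(2,\Z)$ in the top-left block) and produces an explicit aperiodic hyperfinite $\cS_2=\cS_{12}\cap\cS_{23}$ together with two distinct maximal hyperfinite extensions, whereas the paper's version exhibits the higher-rank phenomenon as a global obstruction: no single maximal hyperfinite relation can absorb the full web of pairwise-commuting elementaries. Both ultimately exploit the same ``bounded generation by amenables'' structure of $SL(n,\Z)$. One small remark: your reduction for $n\ge 4$ by embedding into the top-left $3\times 3$ block is correct but stated informally; it is worth noting explicitly that the block inclusion $SL(3,\Z)\hookrightarrow SL(n,\Z)$ carries elementary matrices to elementary matrices, so the pairwise amenability and joint non-amenability are preserved.
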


\begin{proof}
For $i,j \in \{1,\ldots, n\}$ with $i\ne j$ let $E_{ij} \in SL(n,\Z)$ denote the matrix with 1's on the diagonal, a 1 in the $(i,j)$-th entry and $0$'s everywhere else. Let $\cR_{ij} \le\cR$ denote the orbit subequivalence relation generated by $E_{ij}$: $\cR_{ij} = \{(x,E_{ij}^mx):~x\in X, m \in \Z\}$. Each $\cR_{ij}$ is aperiodic and hyperfinite. 

To obtain a contradiction, suppose $\cR$ has rank 1 and let $\cM\le \cR$ denote the unique maximal hyperfinite subequivalence relation containing $\cR_{12}$. Because $E_{12}$ commutes with $E_{1k}$ ($2\le k \le n$), it follows the subequivalence relation $\cR_{12} \vee \cR_{1k}$ is also hyperfinite and therefore $\cR_{12}\vee \cR_{1k} \le \cM$. Since $E_{1k}$ commutes with $E_{jk}$ (for $j\ne k$) it follows that $\cR_{1k} \vee \cR_{jk}$ is hyperfinite and therefore $\cR_{jk} \le \cM$ for $j\ne k$. Since $\{E_{jk}\}_{j\ne k}$ generates $SL(n,\Z)$, it follows that $\cM=\cR$. This contradicts the fact that, since $SL(n,\Z)$ is non-amenable, $\cR$ is non-hyperfinite.
\end{proof}


\begin{cor}\label{cor:product}
Let $A,B$ be countably infinite groups, $G=A\times B$, $G \cc (X,\mu)$ an essentially free ergodic pmp action and $\cR \subset X\times X$ the orbit-equivalence relation. If $B$ is non-amenable and $A$ contains an infinite amenable subgroup $A' \le A$ then $\cR$ has higher rank.
\end{cor}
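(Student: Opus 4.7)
The proof follows the same template as Corollary \ref{cor:SLnZ}: assume rank $1$, use it to force the orbit subequivalence relation of the $B$-subaction into a hyperfinite one, then contradict the non-amenability of $B$.

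First I would let $\cR_{A'}\le\cR$ denote the orbit subequivalence relation of the restricted $A'$-action. Because $A'$ is infinite and the $G$-action is essentially free, $\cR_{A'}$ is aperiodic, and because $A'$ is amenable, $\cR_{A'}$ is hyperfinite (Ornstein--Weiss). Under the rank $1$ assumption, let $\cM\le\cR$ be the unique maximal hyperfinite subequivalence relation containing $\cR_{A'}$.

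The heart of the argument is to show $\cR_B\le\cM$. For each $b\in B$, because $A'$ and $\langle b\rangle$ commute inside $G=A\times B$, the subgroup $A'\times\langle b\rangle\le G$ is amenable, so its orbit subequivalence relation $\cR_{A'\times\langle b\rangle}\le\cR$ is hyperfinite, and it is aperiodic since it contains $\cR_{A'}$. By the uniqueness clause of rank $1$ applied to $\cR_{A'}$, the unique maximal hyperfinite subequivalence relation containing $\cR_{A'\times\langle b\rangle}$ must coincide with $\cM$, so $\cR_{\langle b\rangle}\le\cR_{A'\times\langle b\rangle}\le\cM$. Since this holds for every $b\in B$ and $\cM$ is an equivalence relation, the equivalence relation generated by $\cup_{b\in B}\cR_{\langle b\rangle}$, namely $\cR_B$, is contained in $\cM$; in particular $\cR_B$ is hyperfinite.

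To finish, I would invoke the classical fact that an essentially free pmp action of a countable non-amenable group has non-hyperfinite orbit equivalence relation: by Connes--Feldman--Weiss, hyperfiniteness coincides with amenability for countable pmp equivalence relations, and amenability of $\cR_B$ combined with the essential freeness of the $B$-action produces an invariant mean on $B$ (push an approximately $[\cR_B]$-invariant system of probability measures on $\cR_B$-classes forward along the orbit map $h\mapsto hx$), contradicting non-amenability of $B$. Since the restriction of the $G$-action to $B$ is pmp and essentially free, $\cR_B$ cannot be hyperfinite, contradicting the previous paragraph. Hence $\cR$ has higher rank. The main (though entirely standard) point is this last invocation; the rest is a direct adaptation of the commuting-pair trick used in Corollary \ref{cor:SLnZ}, with the commuting pair now being $A'$ and each cyclic subgroup of $B$.
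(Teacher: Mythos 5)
Your proof is correct, but it takes a different route from the paper's. The paper's proof is a two-line application of Theorem~\ref{thm:norm}: since $B\subset[\cR]$ normalizes $\cS:=\cR_{A'}$, the relation $\langle\cS,N^q_\cR(\cS)\rangle$ contains $\cR_B$, and Theorem~\ref{thm:norm} says this would be hyperfinite if $\cR$ had rank~$1$, contradicting non-amenability of $B$. You instead run the commuting-pair argument of Corollary~\ref{cor:SLnZ} directly against the rank-$1$ definition: for each $b\in B$ the amenable subgroup $A'\times\langle b\rangle$ gives a hyperfinite, aperiodic subequivalence relation containing $\cR_{A'}$, hence by uniqueness contained in the putative maximal $\cM$; varying $b$ sweeps all of $\cR_B$ into $\cM$, again contradicting non-amenability of $B$. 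The two arguments are close cousins --- Theorem~\ref{thm:norm}'s proof is itself the rank-$1$ uniqueness trick packaged together with the lemma that adjoining a normalizing automorphism preserves hyperfiniteness --- but yours is more self-contained: it never invokes that lemma (the Banach--Alaoglu invariant-mean argument), because you get the whole group $B$ into $\cM$ one cyclic subgroup at a time rather than all at once via the normalizer. What you lose is a cleaner conceptual statement (quasi-normalizers of aperiodic hyperfinite relations stay hyperfinite in rank~$1$), which the paper records separately as Theorem~\ref{thm:norm} and reuses; what you gain is that the corollary is proved from the rank-$1$ definition alone with no auxiliary machinery. Both arguments use the same crucial hypothesis, that $A'$ contains an infinite amenable subgroup commuting with $B$, and neither resolves the open question noted after the corollary of whether that hypothesis can be dropped.
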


\begin{proof}
Let $\cS,\cR_B$ denote the orbit subequivalence relations generated by $A',B$ respectively. If we view $B$ as a subgroup of $[\cR]$ then $B$ is contained in the normalizer of $\cS$. Thus the subequivalence relation generated by $B$ and $\cS$ contains $\cR_B$ and is therefore non-hyperfinite (since $B$ is non-amenable). By Theorem \ref{thm:norm}, this implies $\cR$ has higher rank.



\end{proof}
It is an open problem whether the hypothesis that $A$ contain an infinite amenable subgroup can be removed.

\section{Loxodromic elements}

The purpose of this section is to prove:

\begin{thm}\label{thm:exists}
Suppose the Main Assumption is satisfied and $\cR$ is non-hyperfinite. Then there exists an ergodic loxodromic element $\phi \in [\cR]$. Moreover, we can choose $\phi$ such that for a.e. $x\in X$ the path in $\sH_x$ obtained by concatenating geodesic segments $[\sigma_x(\phi^n(x)),\sigma_x(\phi^{n+1}(x))]$ is a quasi-geodesic.

\end{thm}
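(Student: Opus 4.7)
My plan rests on the Minimality Theorem \ref{thm:minimal}. First I would establish that $|\cL_x| \ge 3$ for a.e.\ $x$: if $|\cL_x|=1$ a.e., the unique element defines an $\cR$-invariant section, so Proposition \ref{prop:limit-set3} would make $\cR$ hyperfinite, and if $|\cL_x|=2$ a.e.\ the uniform probability measure on $\cL_x$ gives an $\cR$-invariant field of boundary measures, so Theorem \ref{thm:K} again forces hyperfiniteness. By measurable selection (using separability of $\sH$ and the Borel structure on $\bsH$), I would pick Borel sections $\xi^\pm: X \to \cL$ with $\xi^-(x) \ne \xi^+(x)$ a.e., together with a Borel family of bi-infinite geodesics $\gamma_x \subset \sH_x$ with $\gamma_x(\pm\infty) = \xi^\pm(x)$, normalized so that $\gamma_x(0)$ is a point nearest $\sigma(x)$.

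Next, fix a small $s>0$ and a large $r>0$. Let $T_x = \{y \in [x]_\cR : d(\sigma_x(y), \gamma_x) \le s\}$, linearly preordered by the Busemann function $\beta_x$ based at $\xi^+(x)$ composed with $\sigma_x$. On the positive-measure set $W$ where $\sigma(x)$ lies within $s$ of $\gamma_x$ and $T_x$ is bi-infinite along $\gamma_x$, define a partial map $\psi: W \to X$ by taking $\psi(x)$ to be the element of $T_x$ minimizing $d_\sigma(x, \cdot)$ subject to $\beta_x(\sigma_x(\psi(x))) \ge \beta_x(\sigma(x)) + r$, with ties broken by an auxiliary injective Borel map $X \to [0,1]$. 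By thin-triangle estimates and the choice of $r$, orbits $\{\sigma_x(\psi^n x)\}_{n \in \Z}$ satisfy the Gromov-product and spacing hypotheses of Lemma \ref{lem:local-global}, so the concatenated geodesic segments form a bi-infinite quasi-geodesic with endpoints $\xi^\pm(x)$.

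The central obstacle is to convert this partial ``push along $\gamma_x$'' into a genuine pmp bijection $\phi \in [\cR]$. I would attack it via the Mass-Transport Principle (Lemma \ref{lem:mtp}): for a full-measure subset of $W$, exactly one point in the orbit is sent to $x$ by the analogous backward push, up to bounded multiplicity. A measurable matching argument (Borel Hall theorem, in the spirit of Kechris--Miller) then corrects $\psi$ to a Borel bijection $\phi: W \to W$ that is $\mu$-preserving. Using Lemma \ref{lem:compression} to compress $\cR$ to $W$ without losing non-hyperfiniteness (or by extending $\phi$ arbitrarily off $W$ and then symmetrizing), one obtains $\phi \in [\cR]$ on the original space.

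Finally, for ergodicity I would pass to an ergodic component of the subequivalence relation generated by $\phi$; the loxodromic property and quasi-geodesic condition persist under restriction. Loxodromicity itself is witnessed by the field $\nu_x = \tfrac{1}{2}(\delta_{\xi^-(x)} + \delta_{\xi^+(x)})$, which is $\phi$-invariant with two-atom support. The principal difficulty, as anticipated, lies in the measurable-matching/pmp-extension step of the third paragraph, since the Mass-Transport bookkeeping has to be done in a way that is compatible with the axis dynamics so that measure-preservation does not destroy the quasi-geodesic behavior.
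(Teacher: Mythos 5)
Your construction breaks down at the step where you claim that the iterates $\{\sigma_x(\psi^n x)\}$ satisfy the hypotheses of Lemma \ref{lem:local-global}. The Borel sections $\xi^\pm$ and the field of geodesics $\gamma_x$ are chosen once and for all, \emph{before} $\psi$ exists; in particular they cannot be $\cR$-equivariant (any $\cR$-invariant section of $\cL$ forces hyperfiniteness by Proposition \ref{prop:limit-set3}), and there is no mechanism that makes them $\langle\psi\rangle$-equivariant either. Going from $x$ to $y=\psi(x)$ and then to $z=\psi(y)$, the point $\sigma_x(z)=\alpha(x,y)\sigma_y(z)$ lies near $\alpha(x,y)\gamma_y$, a geodesic in $\sH_x$ that merely passes within $s$ of the single point $\sigma_x(y)$; two geodesics meeting near a point can leave it in arbitrary directions, so there is no bound on $(\sigma(x)|\sigma_x(z))_{\sigma_x(y)}$ and hence no quasi-geodesic. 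The same circularity undermines your claim that $\nu_x=\tfrac12(\delta_{\xi^-(x)}+\delta_{\xi^+(x)})$ is $\phi$-invariant: that is exactly the equivariance of $\xi^\pm$ you have not established. A second gap: ``pass to an ergodic component of the subequivalence relation generated by $\phi$'' does not produce an element of $[\cR]$, since the ergodic components of a $\Z$-generated hyperfinite relation are typically null, and even a positive-measure component would only give an ergodic element of the full group of a compression, not of $\cR$ on a conull set.

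The paper avoids the chicken-and-egg problem by never fixing an axis in advance. It builds $\phi$ as an increasing union of partial transformations $\phi_1\le\phi_2\le\cdots$, and at each stage the geometric constraints (spacing $\ge r$, Gromov product $\le s$ at the newly created intermediate points) are imposed \emph{relative to the part of $\phi$ already constructed}, through the ``extremely expansive'' set $F$ of Proposition \ref{prop:key}; the expansivity of $F$ is where the minimality Theorem \ref{thm:minimal} and Theorem \ref{thm:K} are used, and perfect directed matchings inside $F$ come from Proposition \ref{prop:expansive} (a Zorn/Feldman--Moore exhaustion, not a Mass-Transport or Borel Hall argument). The axis and the invariant two-atom field of boundary measures then \emph{emerge} a posteriori from the quasi-geodesicity of the $\phi$-orbit; they are not chosen first. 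Ergodicity is forced during the same induction, by controlling the averaging operators $A_i$ against a countable dense family of test functions in $L^2$ (item (5) of the construction and the martingale convergence argument), not by restricting to a component afterwards.
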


For the rest of this section, we assume the hypotheses of Theorem \ref{thm:exists} above. 

\subsection{Proof sketch}\label{sec:sketch}

To prove Theorem \ref{thm:exists} we will build the loxodromic element $\phi \in [\cR]$ as an increasing limit of partial transformations. To be precise, a {\bf partial transformation} of $\cR$ is a measure-space isomorphism 
$$\phi:\dom(\phi) \to \rng(\phi)$$
where $\dom(\phi),\rng(\phi) \subset X$ and the graph of $\phi$ is contained in $\cR$. We let $\lb\cR\rb$ denote the set of all such partial transformations. As per our usual convention, we identify two partial transformations that agree almost everywhere.

\begin{defn}
Let $F \subset \cR$ be measurable. A {\bf directed matching} in $F$ is a partial transformation $\phi \in \lb\cR\rb$ such that the graph of $\phi$ is contained in $F$ and $\dom(\phi) \cap \rng(\phi)$ has measure zero. It is {\bf perfect} if $X = \dom(\phi) \cup \rng(\phi)$ (modulo measure zero sets). 
\end{defn}

For example, if $\phi_1,\phi_2$ are two perfect directed matchings such that $\rng(\phi_1)=\dom(\phi_2)$ and $\rng(\phi_2)=\dom(\phi_1)$ then the composition $\phi_2\circ\phi_1 \in [\cR]$. 


By Lemma  \ref{lem:local-global} there are constants $r,s>0$ such that to prove the existence of a not-necessarily-ergodic loxodromic element $\phi \in [\cR]$ it suffices to construct two directed perfect matchings $\phi_1,\phi_2$ such that 
\begin{itemize}
\item $\rng(\phi_1) =\dom(\phi_2)$, $\rng(\phi_2)=\dom(\phi_1)$,
\item $d_\sigma(x,\phi_i(x) ) \ge r$ for a.e. $x$ and $i=1,2$,
\item $(x|\phi_2\circ \phi_1x)^\sigma_{\phi_1x} \le s$ for a.e. $x$ 
\end{itemize}
where
$$(x|z)_y^\sigma = \frac{1}{2} \Big( d_\sigma(x,y) + d_\sigma(y,z) - d_\sigma(x,z) \Big).$$
(Recall that $d_\sigma(x,y) = d(\sigma(x),\alpha(x,y)\sigma(y))$.) Then $\phi:=\phi_2\circ \phi_1$ is loxodromic because the path obtained by concatenating geodesic segments $[\sigma_x(\phi^i(x)),\sigma_x(\phi^{i+1}x)]$ is a quasi-geodesic. 

To obtain such matchings, we will use the hypotheses of ``extreme expansitivity'' (explained next). This hypothesis is stronger than the ones studied in \cite{lyons-nazarov, csoka-lippner} and the proofs are simpler although less constructive. The construction of an {\em ergodic} loxodromic element   is a bit more involved since it requires that we control the averages $\frac{1}{n}\sum_{i=1}^n f(\phi^i(x))$ for test functions $f\in L^2(X,\mu)$. However, the general principle is the same.

\subsection{Matchings}

\begin{defn}
A measurable set $F \subset \cR$ is {\bf extremely expansive} for every pair of non-null sets $A, B \subset X$,  $F \cap (A\times B)$ is non-null (with respect to the usual measure $\hmu$ on $\cR$, see \S \ref{sec:mer}).
\end{defn}

The usefulness of this condition is that it implies the existence of perfect matchings:
\begin{prop}\label{prop:expansive}
If $F \subset \cR$ is extremely expansive then for every pair of non-null disjoint sets $A, B \subset X$ with $\mu(A)=\mu(B)$ there exist a directed matching $\phi \in \lb \cR\rb$ with $\grph(\phi) \subset F \cap (A\times B)$ and $\dom(\phi)=A, \rng(\phi)=B$. In particular, there exists a perfect directed matching.
\end{prop}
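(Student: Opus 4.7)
The plan is to produce $\phi$ by a ``Hall marriage'' style maximality argument, using extreme expansivity as the analog of the Hall condition.

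First I would consider the collection $\Phi$ of all (equivalence classes modulo $\mu$-null sets of) partial transformations $\psi \in \lb \cR \rb$ whose graph is contained in $F \cap (A \times B)$. This is nonempty (the empty transformation belongs to it) and is a poset under a.e.-inclusion. Since $\mu(\dom(\psi)) \le \mu(A) \le 1$ for every $\psi \in \Phi$, a standard exhaustion argument produces a maximal element: choose a sequence $\psi_n \in \Phi$ with $\mu(\dom(\psi_n)) \to \sup_{\psi \in \Phi}\mu(\dom(\psi))$, replace it by an increasing chain using pairwise unions (two elements of $\Phi$ can always be ``merged'' on the symmetric difference of their domains), and take the union to obtain a maximizer $\phi \in \Phi$.

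Next I would show $\mu(A \setminus \dom(\phi)) = 0$, which by $\cR$-invariance of $\mu$ (so $\mu(\dom(\phi)) = \mu(\rng(\phi))$) combined with $\mu(A) = \mu(B)$ also forces $\mu(B \setminus \rng(\phi)) = 0$. Suppose, toward a contradiction, that $A' := A \setminus \dom(\phi)$ and $B' := B \setminus \rng(\phi)$ both have positive measure. Extreme expansivity of $F$ gives $\hmu(F \cap (A' \times B')) > 0$. By the Lusin--Novikov uniformization theorem, applicable since $\cR$ has countable vertical fibers, any Borel subset of $\cR$ decomposes as a countable disjoint union of graphs of Borel partial transformations; hence $F \cap (A' \times B')$ contains the graph of some $\psi \in \lb \cR \rb$ with $\mu(\dom(\psi)) > 0$. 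Because $\dom(\psi) \subset A'$ is disjoint from $\dom(\phi)$ and $\rng(\psi) \subset B'$ is disjoint from $\rng(\phi)$, the union $\phi \cup \psi$ is an element of $\Phi$ with strictly larger domain measure, contradicting maximality. This establishes the first assertion.

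For the ``in particular'' claim, choose any measurable partition $X = A \sqcup B$ with $\mu(A) = \mu(B) = 1/2$ and apply the main assertion: the resulting $\phi$ satisfies $\dom(\phi) = A$ and $\rng(\phi) = B$ modulo null sets, so $X = \dom(\phi) \cup \rng(\phi)$ and $\dom(\phi) \cap \rng(\phi) = A \cap B = \emptyset$, which is the definition of a perfect directed matching. The only nontrivial point is the extraction step, where Lusin--Novikov produces a Borel partial transformation inside the positive-$\hmu$-measure Borel set $F \cap (A' \times B')$; everything else is a routine maximality argument.
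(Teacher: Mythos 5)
Your proof is correct and follows essentially the same route as the paper's: a measure-exhaustion argument produces a maximal partial transformation $\phi$ with graph in $F\cap(A\times B)$, and if $\dom(\phi)$ is not co-null in $A$, extreme expansivity plus a countable decomposition of $\cR$ yields a positive-measure extension, contradicting maximality. The paper isolates the extraction step as Lemma~\ref{lem:partial-matching}, citing Feldman--Moore to cover $\cR$ by graphs of countably many elements of $[\cR]$, which gives injectivity automatically; if you invoke Lusin--Novikov instead, keep in mind that it a priori produces Borel partial \emph{functions} rather than bijections, so you should either apply it in both coordinates or restrict to a positive-measure set where the resulting function is injective (as in Lemma~\ref{lem:1-1}) before adjoining it to $\phi$.
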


To prove this, we need a short lemma first.

\begin{lem}\label{lem:partial-matching}
If $F \subset \cR$ is extremely expansive and $A,B \subset X$ are disjoint sets each with positive measure then there exists a directed matching $\phi \in \lb \cR \rb$ with $\mu(\dom(\phi))>0$, $\grph(\phi) \subset F \cap (A\times B)$. 
\end{lem}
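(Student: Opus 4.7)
The plan is to extract the desired matching from $F\cap(A\times B)$ by applying the Lusin--Novikov uniformization theorem twice. The starting observation is that by the extreme expansivity hypothesis, $F\cap(A\times B)$ is non-$\hmu$-null, where $\hmu$ denotes the standard measure on $\cR$ satisfying $\hmu(E) = \int |\{y:(x,y)\in E\}|\, d\mu(x)$ for Borel $E\subset\cR$.

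First I would use that $\cR$ is a discrete Borel equivalence relation, so the first-coordinate projection $\pi_1:\cR\to X$ is countable-to-one. Hence its restriction to the Borel set $F\cap(A\times B)$ is countable-to-one, and Lusin--Novikov produces a Borel decomposition
\[
F\cap(A\times B) \;=\; \bigsqcup_{n=1}^{\infty}\grph(\phi_n),
\]
where each $\phi_n:\dom(\phi_n)\to B$ is a Borel partial function with $\dom(\phi_n)\subset A$. Since $\hmu(F\cap(A\times B))=\sum_n\mu(\dom(\phi_n))>0$, there is some $n_0$ with $\mu(\dom(\phi_{n_0}))>0$.

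Next, to upgrade $\phi_{n_0}$ to an injective partial transformation, I would apply Lusin--Novikov a second time to $\grph(\phi_{n_0})$, now viewed with respect to the second-coordinate projection $\pi_2$ (which is likewise countable-to-one on $\cR$). This yields a Borel decomposition of $\grph(\phi_{n_0})$ into countably many graphs of injective Borel partial functions; at least one of these, call it $\psi$, has $\mu(\dom(\psi))>0$ by countable additivity.

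Finally I would verify the required properties of $\psi$. The graph of $\psi$ lies in $\cR$, and $\psi$ is an injective Borel map between Borel subsets of $X$, so $\psi:\dom(\psi)\to\rng(\psi)$ is a Borel isomorphism; the $\cR$-invariance of $\mu$ then forces $\psi$ to be measure-preserving, so $\psi\in\lb\cR\rb$. The inclusions $\dom(\psi)\subset A$ and $\rng(\psi)\subset B$, together with $A\cap B=\emptyset$, give $\dom(\psi)\cap\rng(\psi)=\emptyset$, so $\psi$ is a directed matching with graph in $F\cap(A\times B)$ and non-null domain. There is no real obstacle here; the argument is purely a measurable-selection exercise, and the only point to be careful about is invoking Lusin--Novikov on both coordinate projections rather than only one, to obtain both a function and injectivity.
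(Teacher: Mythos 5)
Your proof is correct. It differs from the paper's in the uniformization tool used. The paper invokes the Feldman--Moore theorem to write $\cR$ as a countable union of graphs of elements of $[\cR]$, intersects those graphs with $F\cap(A\times B)$, and picks one with non-null intersection; since each Feldman--Moore graph is already the graph of a bijection of $X$, functionality and injectivity of the resulting partial transformation come for free in a single step. You instead apply Lusin--Novikov directly to $F\cap(A\times B)$ twice — once on the first coordinate to extract a partial function, once on the second to extract an injective partial function — and then verify by hand that an injective Borel partial map with graph in $\cR$ is automatically measure-preserving. Both arguments are routine measurable-selection exercises; the Feldman--Moore route is a bit shorter because injectivity is built in, while your Lusin--Novikov route is more self-contained in that it does not rely on the structure theory of countable Borel equivalence relations, only on the generic countable-to-one uniformization theorem. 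Either is perfectly acceptable here.
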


\begin{proof}
By \cite[Theorem 1]{feldman-moore-1}, there exists a countable subset $\cF \subset [\cR ]$ such that for a.e. $(x,y)\in \cR $ there exists $f\in \cF$ with $f(x)=y$. Because $F$ is extremely expansive, there exists $f\in \cF$ such that
$$\rm{graph}(f) \cap F \cap (A\times B) $$
is non-null. So we define $\phi$ so that its graph equals $\rm{graph}(f) \cap F \cap (A\times B)$. 
\end{proof}

\begin{proof}[Proof of Proposition \ref{prop:expansive}] 


Let $\cC$ be the set of all $\phi \in \lb \cR \rb$ such that  $\grph(\phi) \subset F \cap (A\times B)$.  There is a natural partial order on $\cC$ given by $\phi \le \psi$ if $\dom(\phi) \subset \dom(\psi)$ $\mod \mu$ and $\psi(x)=\phi(x)$ for a.e. $x\in \dom(\phi)$.

By a standard measure exhaustion argument (as in Lemma \ref{lem:maxxx}), there exists a maximal element $\phi \in \cC$. If $\dom(\phi)$ is not co-null in $A$ then the previous lemma implies the existence of a measure-space isomorphism
$$\psi:(A\setminus \dom(\phi)) \to (B \setminus \rng(\phi))$$
with graph contained in $F$. Setting $\Phi=\phi \sqcup \psi$ yields an element of $\cC$ that is greater than $\phi$ in the partial ordering, contradicting the maximality of $\phi$. So $\dom(\phi)=A$ up to measure-zero. Since $\phi$ is measure-preserving, $\rng(\phi)=B$ up to measure zero. In particular, if $A \cup B=X$ then $\phi$ is perfect.
\end{proof}

\subsection{Matchings in $\cR $ via hyperbolic geometry}
 
 For any $t>0$ let $X_t$ be the set of all $x\in X$ such that there exist distinct elements $\xi,\eta \in \partial \sH_x$ with
$$(\xi|\eta)_{\sigma(x)} < t.$$
Because $X= \cup_{t>0} X_t$, there exists $s>\delta$ such that $\mu(X_{s-\delta})>0$. Later, we will use this value of $s$ in Lemma \ref{lem:local-global}. The next proposition is the key tool for proving the existence of directed matchings satisfying geometric constraints.

\begin{prop}\label{prop:key}
Let $\psi:X_{s-\delta} \to X_{s-\delta}$ be a Borel map with graph in $\cR$ (we do not require that $\psi$ is invertible). Also let $r>0$. Let $F$ be the set of all $(x,y) \in \cR \resto X_{s-\delta}$ such that 
$$d_\sigma(x,y)>r,\quad (y|\psi x)^\sigma_{x} < s,~ \rm{and}~(x|\psi y)^\sigma_y < s.$$
Then $F$ is extremely expansive in $Y \times Y$ where $Y=X_{s-\delta}$.
\end{prop}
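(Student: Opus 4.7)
The goal is to show that for every pair of non-null Borel sets $A, B \subset Y$, the intersection $F \cap (A \times B)$ has positive $\hmu$-measure, equivalently, that for a positive-measure set of $x \in A$ there exists $y \in B \cap [x]_\cR$ with $(x,y)\in F$. My plan is to exhibit, for a.e.\ $x \in Y$, an infinite ``generic'' set $N(x) \subset [x]_\cR \cap Y$ of such partners, and then intersect $N(x)$ with $B$ using ergodicity of $\cR$.

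The geometric heart of the construction is producing, for a.e.\ $x \in Y$, a boundary direction that escapes the shadow of $p_x := \sigma_x(\psi x)$. By definition of $Y = X_{s-\delta}$ there are distinct $\xi, \eta \in \partial \sH_x$ with $(\xi|\eta)_{\sigma(x)} < s - \delta$, and the ultrametric-type inequality in a $\delta$-hyperbolic space,
$$(\xi|\eta)_{\sigma(x)} \ge \min\bigl((\xi|p_x)_{\sigma(x)},\,(\eta|p_x)_{\sigma(x)}\bigr) - \delta,$$
forces at least one of them---call it $\zeta_x$---to satisfy $(\zeta_x|p_x)_{\sigma(x)} < s$. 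Consequently any point $q$ far out along a geodesic ray from $\sigma(x)$ toward $\zeta_x$ automatically satisfies $d(\sigma(x), q) > r$ and $(q|p_x)_{\sigma(x)} < s$.

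To convert this into orbit points, since $\cO^\sigma_x$ is discrete I need an honest limit point $\tilde\zeta_x \in \cL_x$ in the good cone (far orbit points must track $\cL_x$). By Proposition \ref{prop:limit-set3}, Theorem \ref{thm:K}, and the non-hyperfiniteness of $\cR$, $\cL_x$ is infinite a.e. If the shadow $B_x := \{\zeta \in \partial \sH_x : (\zeta|p_x)_{\sigma(x)} \ge s\}$ contained all of $\cL_x$ on a positive-measure set of $x$, the same ultrametric inequality would force pairwise Gromov products of limit points to be $\ge s - \delta$, so $\cL_x$ would sit in a bounded visual neighborhood concentrated in the direction of $p_x$; a $\sigma$-barycenter construction along the lines of Theorem \ref{thm:K}'s proof would then produce an $\cR$-invariant field of boundary measures supported on at most two points, contradicting non-hyperfiniteness via Theorem \ref{thm:K}. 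Hence a.e.\ $x$ admits $\tilde\zeta_x \in \cL_x$ with $(\tilde\zeta_x | p_x)_{\sigma(x)} < s$, and choosing $y_n \in [x]_\cR$ with $\sigma_x(y_n) \to \tilde\zeta_x$ yields infinitely many $y_n$ satisfying the first two conditions of $F$.

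For the symmetric third condition $(x|\psi y_n)^\sigma_{y_n} < s$, the same escape reasoning applied at each $y_n \in Y$ in place of $x$ shows that only a single shadow cone in $\sH_{y_n}$ is forbidden, so most orbit points of $[y_n]_\cR$ avoid it; a Borel selection combined with a Borel--Cantelli-type argument along $\{y_n\}$ then produces infinitely many $n$ for which all three defining conditions of $F$ hold, giving an infinite generic set $N(x) \subset [x]_\cR \cap Y$. To finish, suppose for contradiction $F \cap (A \times B)$ is null, so $N(x) \cap B = \emptyset$ for a.e.\ $x \in A$; combining the orbit-density of $B$ (by $\cR$-ergodicity, $[x]_\cR \cap B$ is infinite a.e.)\ with the genericity of $N(x)$ inside the orbit rules this out, with Lemma \ref{lem:mtp} or a measure-exhaustion argument as in Lemma \ref{lem:partial-matching} supplying the rigorous step. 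The principal obstacle is the passage from $\zeta_x \in \partial \sH_x$ to $\tilde\zeta_x \in \cL_x$: $X_{s-\delta}$ is defined in terms of arbitrary boundary points, and exploiting non-hyperfiniteness via the barycenter/minimality machinery of Theorems \ref{thm:K} and \ref{thm:minimal} is what makes the upgrade work.
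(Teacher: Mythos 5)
Your opening geometric observation is on the same track as the paper's Claim 1: for $x \in Y = X_{s-\delta}$, the $\delta$-ultrametric inequality for Gromov products forces at least one of the two witnessing boundary points to escape the shadow cone of $\sigma_x(\psi x)$, and indeed one can upgrade to a limit direction $\tilde\zeta_x \in \cL_x$ escaping the shadow. (Your barycenter detour for this upgrade is heavier than necessary; the paper gets it directly from the ultrametric inequality together with the definition of $Y$.) But the remainder of your argument has two genuine gaps, and they are the heart of the matter.

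\textbf{Gap 1: the third condition $(x\mid\psi y)^\sigma_y < s$.} Choosing $y_n \to \tilde\zeta_x$ gets you $d_\sigma(x,y_n)>r$ and $(y_n\mid\psi x)^\sigma_x<s$, but the symmetric condition asks that $\sigma_{y_n}(x)$ lie \emph{outside} the shadow cone at $y_n$ pointing toward $\psi y_n$. You argue that ``only a single shadow cone in $\sH_{y_n}$ is forbidden, so most orbit points of $[y_n]_\cR$ avoid it'' --- but $x$ is a \emph{fixed} point, not something you get to choose. There is no a priori reason why $\sigma_{y_n}(x)$ should land outside that particular cone, since $\psi$ is an arbitrary Borel map and could point $\psi y_n$ roughly back toward $x$ for all $n$. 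Borel--Cantelli does not apply: the events are neither independent nor otherwise controlled. The paper resolves this with Claim 2: the joint intersection $K_a := \bigcap_{y\in B\cap[a]_\cR}\alpha(a,y)\,\mathrm{Shadow}_s(y,\psi y)$ is shown to be empty a.e.\ (via Lemma \ref{limit-set1} and Theorem \ref{thm:K}), and then compactness of $\overline{\cO^\sigma_a}$ gives a \emph{finite} subset $S\subset B\cap[a]_\cR$ of uniformly bounded diameter whose shadow intersection already misses $\overline{\cO^\sigma_a}$. Since $\sigma(a)\in\overline{\cO^\sigma_a}$, some $y\in S$ satisfies $\sigma(a)\notin\alpha(a,y)\mathrm{Shadow}_s(y,\psi y)$, which \emph{is} the third condition. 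The bounded-diameter requirement is what lets you send the whole finite set $S_i$ far out and outside the shadow at $a$ simultaneously.

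\textbf{Gap 2: intersecting $N(x)$ with $B$.} Even granting that $N(x)\subset[x]_\cR$ is infinite, you have no argument that $N(x)\cap B\neq\emptyset$. Ergodicity gives $[x]_\cR\cap B$ infinite a.e., but two infinite subsets of a countable orbit need not meet, and ``genericity of $N(x)$'' is not something your construction established. The paper sidesteps this entirely by building the finite sets $S_i$ \emph{inside} $B\cap[a]_\cR$ from the outset and then using Theorem \ref{thm:minimal} (minimality of $\cR\curvearrowright\cL$) to show that the closure of the good directions $\partial I_a$, being $\cR$-invariant and nonempty, equals all of $\cL_a$. That density is what lets the $S_i$ track $\tilde\zeta_a$ while staying in $B$. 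You invoke Theorem \ref{thm:minimal} in passing but never make it do this work.

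In short, your proposal correctly identifies the first of the three constraints (escape a shadow at $x$), but the mechanism for the reciprocal shadow constraint and for landing in $B$ --- which in the paper is the combination of Claim 2, compactness of $\overline{\cO^\sigma_x}$, bounded-diameter finite shadowing sets, and minimality --- is missing, and the Borel--Cantelli/genericity appeals do not supply it.
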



\begin{proof}
Let $A,B \subset Y$ be non-null sets. It suffices to show that $F \cap (A \times B)$ is non-null.

Given $(x,y) \in \cR  \resto Y$ and $s>0$, let $\rm{\rm{Shadow}}_s(x,y) \subset \bsH_x$ be the set of all $p\in \bsH_x$ such that 
$$(\sigma_x(y)|p)_{\sigma(x)} \ge s.$$
This is a closed subset. 

\noindent {\bf Claim 1}. For a.e. $x\in Y$ and $y\in A$, the limit set $\cL_x$ is not contained in $\alpha(x,y){\rm{Shadow}}_s(y,\psi y)$.

\begin{proof}[Proof of Claim 1]
Let $\xi,\eta \in {\rm{Shadow}}_s(y,\psi y)$ be distinct elements. By Gromov's inequality (see \S \ref{sec:hyperbolic}), 
$$(\xi|\eta)_{\sigma(y)} \ge \min\{ (\xi|\psi y)_{\sigma(y)}, (\psi y|\eta)_{\sigma(y)} \} - \delta \ge s-\delta.$$
Because $y \in Y$ there exist distinct elements $\xi',\eta' \in \cL_y$ such that 
$$s-\delta > (\xi'|\eta')_{\sigma(y)}.$$
Therefore, $\xi',\eta'$ cannot both be in ${\rm{Shadow}}_s(y,\psi y)$. Thus $\cL_x$ is not contained in $\alpha(x,y){\rm{Shadow}}_s(y,\psi y)$. 
\end{proof}

\noindent {\bf Claim 2}. For $x\in Y$, let
$$K_x := \bigcap_{y \in A \cap [x]_\cR} \alpha(x,y){\rm{Shadow}}_s(y,\psi y).$$
Then $K_x=\emptyset$ for a.e. $x$.

\begin{proof}[Proof of Claim 2]
Note $K_x$ is closed and invariant (in the sense that $\alpha(x,y)K_y=K_x$). By Claim 1 and Lemmas \ref{lem:greenberg} and \ref{limit-set1}, $\Hull(K_x) \cap \sH_x = \emptyset$ for a.e. $x$. So either $K_x = \emptyset$ for a.e. $x$ or $\Hull(K_x)$ consists of a single point in $\partial \sH_x$ for a.e. $x$. However the latter possibility implies that $\cR$ is hyperfinite (Theorem \ref{thm:K}) contradicting our hypotheses.
\end{proof}

By symmetry, the claims above also hold with $B$ in place of $A$. Because $\overline{\cO^\sigma_x} \subset \bsH_x$ is compact, Claim 2 implies that for a.e. $x$ there exist finite sets $A_x \subset A \cap [x]$ and $B_x \subset B \cap [x]$ such that
$$\overline{\cO^\sigma_x} \cap \bigcap_{y \in A_x} \alpha(x,y){\rm{Shadow}}_s(y,\psi y) = \overline{\cO^\sigma_x} \cap \bigcap_{y \in B_x} \alpha(x,y){\rm{Shadow}}_s(y,\psi y) =\emptyset.$$

For $D>0$, let $X_D$ be the set of all $x\in Y$ such that there exist finite sets $S \subset A \cap [x], T \subset B \cap [x]$ satisfying
\begin{itemize}
\item $\rm{diam}(S) \le D$ and $\rm{diam}(T) \le D$ (where diameter is computed with respect to $d_\sigma$),
\item $$\overline{\cO^\sigma_x} \cap \bigcap_{y \in T} \alpha(x,y){\rm{Shadow}}_s(y,\psi y)=\overline{\cO^\sigma_x} \cap \bigcap_{y \in S} \alpha(x,y){\rm{Shadow}}_s(y,\psi y) = \emptyset.$$
\end{itemize}
Because $Y = \cup_{D>0} X_D$, there exists some $D>0$ such that $X_D$ has positive measure. Since $X_D$ is $\cR$-invariant, this implies $Y=X_D$ (up to measure zero).


Let $I_x$ be the closure in $\bsH_x$ of the set of all $\sigma_x(y)$ such that $y$ is in a finite subset $S \subset B \cap [x]$ of diameter $\le D$ satisfying
$$ \overline{\cO^\sigma_x} \cap \bigcap_{y \in S} \alpha(x,y){\rm{Shadow}}_s(y,\psi y) = \emptyset.$$
Because $x\mapsto \partial I_x$ is $\cR $-invariant (where $\partial I_x = I_x \cap \cL_x$) Theorem \ref{thm:minimal} implies $\partial I_x = \cL_x$ for a.e. $x$. 

So for a.e. $a\in A$ and every $\xi \in \cL_a$ there exists a sequence $\{S_i\}$ of finite sets $S_i \subset B \cap [a]$ satisfying $\diam(S_i)\le D$ and 
$$\overline{\cO^\sigma_x} \cap \bigcap_{y \in S_i} \alpha(a,y){\rm{Shadow}}_s(y,\psi y) = \emptyset$$
while $\lim_i  \sigma_a(S_i) = \xi$. By Claim 1 we may choose $\xi \in \cL_a \setminus {\rm{Shadow}}_s(a,\psi a)$. So there must exist $b\in B$ such that 
\begin{itemize}
\item $\sigma(b) \notin \alpha(b,a){\rm{Shadow}}_s(a,\psi a)$ and $\sigma(a) \notin \alpha(a,b){\rm{Shadow}}_s(b,\psi b)$
\item $d_\sigma(a,b) >r$.
\end{itemize}
Equivalently $(a,b) \in F$. Note that $F$ is a Borel set and we just proved that almost every vertical section of $F \cap (A \times B)$ is nonempty. So $\hmu(F \cap A \times B) \ge \mu(A)>0$. This proves $F$ is extremely expansive.
\end{proof}

\begin{proof}[Proof of Theorem \ref{thm:exists}]
As above, let $s>0$ be such that $\mu(X_{s-\delta})>0$. Let $r>0$ be as in Lemma \ref{lem:local-global}. To simplify notation, let $Y=X_{s-\delta}$ and $\mu_Y$ be the probability measure on $Y$ obtained by restricting $\mu$ and normalizing. We will first show that there exists an ergodic loxodromic element $\Phi\in [\cR \resto Y]$.

Recall the definition of partial transformation from \S \ref{sec:sketch}. There is a natural partial order on $\lb \cR \resto Y\rb$ given by $\phi \le \psi$ if $\dom(\phi) \subset \dom(\psi)$ $\mod \mu$ and $\psi(x)=\phi(x)$ for a.e. $x\in \dom(\phi)$ (this is just containment of graphs mod $\hmu$). 

Let $\cF_\infty \subset L^2(Y)$ be the set of all functions $f$ such that
\begin{itemize}
\item $0 \le f \le 1$,
\item the map $t \mapsto \mu(\{x\in Y:~ t>f(x)\})$ is strictly increasing for $0<t<1$.
\end{itemize}
Then $\cF_\infty$ has dense linear span in $L^2(Y)$. So there exists an increasing sequence  $\{\cF_i\}_{i=1}^\infty$  of finite subsets $\cF_i \subset \cF_\infty$ whose union has dense linear span in $L^2(Y)$. We let $\| \cdot \|$ denote the $L^2$-norm on $L^2(Y)$.

By induction we will construct an increasing sequence $\{\phi_i\}_{i\in \N} \subset \lb \cR \resto Y \rb$ satisfying the following.
\begin{enumerate}
\item[0.] $Y = \dom(\phi_1) \cup \rng(\phi_1)$ (modulo null sets).
\item If $\cS_i \le (\cR\resto Y)$ is the subequivalence relation generated by $\phi_i$ (so $x\cS_i y \Leftrightarrow \exists n\in \Z$ such that $\phi_i^n(x)=y$) then almost every $\cS_i$-equivalence class is finite.


\item For a.e. $x$, $[x]_{\cS_{i-1}}\varsubsetneqq [x]_{\cS_{i}}$ (if $i>1$).

\item $d_\sigma(x,\phi_i(x)) \ge r$ for a.e. $x \in \dom(\phi_i)$.
\item $(x|\phi^2_i(x))_{\phi(x)}^\sigma \le s$ for a.e. $x\in \dom(\phi_i^2)$.

\item If $A_i:L^2(Y) \to L^2(Y)$ is the averaging operator defined by 
$$A_i(f)(x)= \frac{1}{|[x]_{\cS_i}|} \sum_{y\in [x]_{\cS_i}} f(y)$$
then $\|A_{i}(f) - \int_{Y} f~d\mu_Y\| \le 2\|f\|/i$ for all $f\in \cF_i$.


\end{enumerate}

Suppose for the moment that we have constructed such a sequence. Define $\phi \in \lb \cR \resto Y\rb$ by $\phi(x)=\phi_i(x)$ for $x \in \dom(\phi_i)$. Because the $\phi_i$'s are increasing this is well-defined. By item (2), almost every orbit of $\phi$ is infinite. Therefore $\dom(\phi) = \rng(\phi)$ modulo null sets. By item (0), $\dom(\phi)=Y$ ($\mod \mu$) So $\phi \in [\cR\resto Y]$. 

Let $\cS_\infty = \cup_i \cS_i$. Observe that $\cS_\infty$ is the sub-equivalence relation generated by $\phi$. Recall that a subset $Z \subset Y$ is $\cS_\infty$-saturated if it is a union of $\cS_\infty$-classes. By the Martingale Convergence Theorem, for every $f\in \cup_i \cF_i$, $A_i(f)$ converges to $\E[f|\cS_\infty]$ which denotes the conditional expectation of $f$ on the sigma-algebra of $\cS_\infty$-saturated measurable sets. Item (5) now implies that for any $f\in \cup_i \cF_i$, $ \E[f|\cS_\infty]=\int f~d\mu_Y$ is constant. Since $\cup_i \cF_i$ is dense in $L^2(Y)$, it follows by continuity that $f\mapsto \E[\cdot | \cS_\infty]$ is projection onto the constants for all $f\in L^2(Y)$. Therefore $\cS_\infty$ is ergodic. Because $\phi$ generates $\cS$, $\phi$ is ergodic. 

Conditions (3,4) and Lemma \ref{lem:local-global} imply that for a.e. $x$ the orbit $\{\phi^n(x)\}_{n\in \Z}$ is a quasi-geodesic with respect to $d_\sigma$. Thus $\phi$ is loxodromic as required.

We will construct the $\phi_i$'s by induction. Here is the base case: let $F_1$ be the set of all $(x,y) \in \cR \resto Y$ such that $d_\sigma(x,y)\ge r$. Clearly, $F_1$ is extremely expansive. So Proposition \ref{prop:expansive} implies the existence of a perfect directed matching $\phi_1 \in \lb\cR \resto Y\rb$ whose graph is contained in $F_1$; that is $d_\sigma(x,\phi_1(x))\ge r$ for a.e. $x$. 

Observe that $\phi_1$ satisfies (0-5). It satisfies item (0) since $\phi_1$ is perfect. The equivalence class of $x$ generated by $\phi_1$ is just $[x]_{\cS_1}=\{x,\phi_1(x)\}$. This proves (1). Item (3) holds by design. Items (2,4) hold vacuously since $\dom(\phi_1^2)=\emptyset$. Item (5) holds because $\|A_i(f)\| \le \|f\|$ for any $f\in L^2(Y)$ since $\phi_1$ is measure-preserving.

Now suppose that $\phi_1,\ldots, \phi_i$ have been constructed so that items (1-5) above hold and $\phi_1$ is a perfect matching. We now turn to constructing $\phi_{i+1}$. Because $\phi_1$ is a perfect matching and $\phi_1 \le \phi_i$ it follows that $\dom(\phi_i)\cup\rng(\phi_i) = Y$ (up to null sets). So we have a natural partition
$$Y = (Y\setminus \rng(\phi_i)) \sqcup (\dom(\phi_i) \cap \rng(\phi_i)) \sqcup (Y \setminus \dom(\phi_i)).$$
Because every $f \in \cF_{i+1}$ is bounded  there exists a partition $\cP$ of $Y \setminus \rng(\phi_i)$ such that for each $f\in \cF_{i+1}$, $P\in \cP$ and $x,y \in P$  $$|A_if(x)-A_if(y)|\le \|f\|/(i+1).$$
By further refining $\cP$ and perturbing it slightly (using the fact that the map $t \mapsto \mu(\{x\in Y:~ t>f(x)\})$ is strictly increasing for $0<t<1$) we may assume that $\cP=\{P_1,\ldots, P_n\}$ for some $n\ge 2$ and  $\mu_Y(P_i)=\mu_Y(P_j)$ for all $i,j$.

Let $P'_j$ be the set of all $x \in Y \setminus \dom(\phi_i)$ such that there exists $y \in P_j$ with $\phi_i^m(y)=x$ for some $m\ge 1$. So $\cP'=\{P'_1,\ldots, P'_n\}$ is a partition of $Y\setminus \dom(\phi_i)$ and $\mu_Y(P_j)=\mu_Y(P'_j)$ for all $j$.

Let $\psi:X_{s-\delta} \to X_{s-\delta}$ be the map 
\begin{displaymath}
\psi(x)=\left\{\begin{array}{cc}
\phi_i(x) & x\in \dom(\phi_i) \\
\phi_i^{-1}(x) & x\notin \dom(\phi_i)
\end{array}\right.\end{displaymath}

Let $F$ be the set of all pairs $(x,y) \in \cR \resto Y$ such that 
$$d_\sigma(x,y)\ge r, \quad (y|\psi x)_x^\sigma \le s, \quad (x|\psi y)_y^\sigma \le s.$$
By Proposition \ref{prop:key}, $F$ is extremely expansive. By Proposition \ref{prop:expansive},  there exists a partial transformation $\psi_j \in \lb\cR \rb$ such that $\dom(\psi_j) = P'_j$, $\rng(\psi_j) = P_{j+1}$ and $\grph(\psi_j) \subset F \cap (P'_j\times P_{j+1})$ ($1\le j<n$). 

We define $\phi_{i+1}$ by: $\phi_{i+1}(x)=\phi_i(x)$ if $x\in \dom(\phi_i)$ and $\phi_{i+1}(x)=\psi_j(x)$ if $x \in P'_j$ for some $1\le j < n$. Because $\phi_{i+1}$ is not defined if $x \in P'_n$ each $\cS_{i+1}$-class is finite. This proves (1). Items (2) and (3) are immediate. 

To check item (4) let $x \in \dom(\phi_{i+1}^2)$. If $x \in \dom(\phi_i^2)$ then (4) holds by the induction hypothesis. If $x \in \dom(\phi_i)$ but not in $\dom(\phi_i^2)$ then $\phi_{i+1}(x) = \phi_i(x)$ and $\phi_{i+1}^2(x) =\psi_j\phi_i(x)$ for some $j$. Since $\grph(\psi_j) \subset F$, $(\phi_{i+1}x,\phi_{i+1}^2x) \in F$ which implies
$$(\phi_{i+1}^2 x|\psi \phi_{i+1} x)_{\phi_{i+1} x}^\sigma \le s.$$
Since $\psi \phi_{i+1} x = \psi \phi_i x = x$ this shows
$$(\phi_{i+1}^2 x| x)_{\phi_{i+1} x}^\sigma \le s.$$
The last case, when $x \notin \dom(\phi_i)$, is similar.

Next we check item (5). Note that for each $1\le j \le n$, every $\cS_{i+1}$-class contains exactly one $\cS_i$-class that nontrivially intersects $P_j$. Also $A_{i+1}(f)(x)$ is a convex sum of $A_i(f)(y)$ where $y$ varies over any set of representatives of the $\cS_i$-classes in $[x]_{\cS_{i+1}}$. Because $|A_if(x)-A_if(y)|\le \|f\|/(i+1)$ for each $x,y \in P_j$ and $f\in \cF_{i+1}$, it follows that $|A_{i+1}(f)(x)-A_{i+1}(f)(y)| \le \|f\|/(i+1)$ for each $x,y \in Y$. This implies
\begin{eqnarray*}
 \left|A_{i+1}(f)(x) - \int_{Y} f~d\mu_Y \right| = \left|A_{i+1}(f)(x) - \int A_{i+1}(f)~d\mu_Y\right|   \le \|f\|/(i+1).
\end{eqnarray*}

This proves the induction step. As explained above, this shows the existence of an ergodic loxodromic element $\phi \in [\cR\resto Y]$.

It is left to construct an ergodic loxodromic subequivalence relation of $\cR$ (as opposed to $\cR  \resto Y)$. For this purpose, let $\Psi:X \to Y$ be any measurable map with graph contained in $\cR $ satisfying $\Psi(x)=x$ for all $x\in Y$. Define $\cS \le \cR $ by: $x\cS y \Leftrightarrow$ $\Psi(x)$ and $\Psi(y)$ are in the same $\phi$ orbit. Note that $\cS \resto Y$ is the subequivalence relation generated by $\phi$. It follows that $\cS$ is ergodic. Let $\eta\in \Fix(\phi)$ and define $\nu \in \Fix(\cS)$ by $\nu_x = \alpha(x,y)_*\eta_y$ for any $y\in Y$ with $(x,y)\in \cS$ (for example, $y=\Psi(x)$). Because $\eta$ is fixed by $\phi$, $\nu$ is fixed by $\cS$. It follows that, for a.e. $x\in X$, the support of $\nu_x$ contains 2 elements. So $\cS$ is loxodromic.

It is a well-known fact that any ergodic equivalence relation contains an ergodic element in its full group \cite[Theorem 3.5]{Kechris-global-aspects}. So there exists an ergodic $\psi \in [\cS]$. Because $\psi \in [\cS]$, $\Fix(\cS) \subset \Fix(\psi)$. Therefore $\psi$ is also loxodromic.

\end{proof}

\section{Tits' alternative}\label{sec:tits}

The goal of this section is to prove Theorem \ref{thm:tits}. We do this by first constructing an $\F_2$-action on a positive measure subset $Y \subset X$ with orbits contained in $\cR\resto Y$. In order to prove that the constructed action is essentially free we will use the following criterion:

\begin{lem}\label{lem:local-global2}
Let $(\cH,d)$ denote a complete $\delta$-hyperbolic geodesic metric space. Let $T_4$ denote the 4-regular tree. Then for every $s>0$ there exists an $r>0$ (depending only on $s$ and $\delta$) such that if $\phi:T_4 \to \cH$ is any map satisfying
\begin{itemize}
\item for every edge $\{v,w\}$ of $T_4$, $d(\phi(v),\phi(w)) \ge r$,
\item for any three distinct vertices $u,v,w \in V(T_4)$ such that $\{u,v\},\{v,w\}$ are edges of $T_4$, 
$$(\phi(u)|\phi(w))_{\phi(v)} \le s$$
\end{itemize}
then $\phi$  is injective. Moreover, if we consider $T_4$ to be a metric tree in which each edge $\{v,w\}$ has length equal to $d(\phi(v),\phi(w))$ and $\phi$ maps the edge from $v$ to $w$ isometrically onto a geodesic segment $[\phi(v),\phi(w)]$, then $\phi$ is a quasi-isometric embedding. 
\end{lem}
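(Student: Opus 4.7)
The plan is to reduce the statement directly to Lemma \ref{lem:local-global}, whose hypotheses are essentially built into our assumptions about $\phi$. First I would choose $r > 0$ to be the constant produced by Lemma \ref{lem:local-global} applied to the given $s$ in the $\delta$-hyperbolic space $\cH$; this $r$ depends only on $s$ and $\delta$, as required. I would then fix $(\lambda, C)$ such that the concatenated paths produced by Lemma \ref{lem:local-global} under these hypotheses are $(\lambda,C)$-quasi-geodesics.

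To prove injectivity of $\phi$ on vertices, take any two distinct vertices $v, w \in T_4$ and let $v = v_0, v_1, \ldots, v_n = w$ be the unique reduced path in $T_4$ joining them. Because $T_4$ is a tree and the path is reduced, any three consecutive vertices $v_{i-1}, v_i, v_{i+1}$ are pairwise distinct, so the two hypotheses on $\phi$ give $d(\phi(v_{i-1}), \phi(v_i)) \ge r$ and $(\phi(v_{i-1}) | \phi(v_{i+1}))_{\phi(v_i)} \le s$. Extending this finite sequence to an infinite one (possible since $T_4$ has no leaves), Lemma \ref{lem:local-global} shows the piecewise geodesic $[\phi(v_0),\phi(v_1)] \cup \cdots \cup [\phi(v_{n-1}),\phi(v_n)]$ is a $(\lambda,C)$-quasi-geodesic in $\cH$. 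In particular $\phi(v_0) \ne \phi(v_n)$.

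For the quasi-isometric embedding, observe that the length of the concatenated path is exactly $\sum_{i=0}^{n-1} d(\phi(v_i),\phi(v_{i+1})) = d_T(v,w)$, the distance in the metric tree $T_4$ with the assigned edge lengths. The $(\lambda,C)$-quasi-geodesic property then yields
\[ \tfrac{1}{\lambda} d_T(v,w) - C \;\le\; d_\cH(\phi(v),\phi(w)) \;\le\; d_T(v,w), \]
the upper bound being the triangle inequality. For arbitrary points $p,q$ in the extended metric tree (possibly interior to edges), the unique tree geodesic decomposes into at most two partial edges at the ends and a central sequence of full edges; since each edge is mapped isometrically onto a geodesic segment in $\cH$, the same quasi-geodesic bound applies after absorbing the two endpoint segments into the additive constant. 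Injectivity on the whole extended tree then follows: two points with the same image would violate the lower quasi-isometric bound.

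There is no real obstacle here since Lemma \ref{lem:local-global} does essentially all the geometric work; the only mild subtlety is the reduction step, which requires that the unique reduced path between two vertices in a tree never backtracks, so that consecutive Gromov products are controlled by the hypothesis (as opposed to a general walk in $T_4$, where one could revisit a vertex and the Gromov product would become large). This is immediate because $T_4$ is a tree.
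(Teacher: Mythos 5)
Your proof is correct and takes essentially the same approach as the paper, whose proof of Lemma \ref{lem:local-global2} is simply the citation ``This follows from Lemma \ref{lem:local-global}.'' You have filled in the intended reduction: restricting to reduced paths in $T_4$ so that consecutive triples are pairwise distinct, extending to an infinite sequence to invoke Lemma \ref{lem:local-global}, and then reading off injectivity and the quasi-isometric embedding from the resulting quasi-geodesic bounds (with the sub-path observation handling interior points of edges).
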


\begin{proof}
This follows from Lemma \ref{lem:local-global}.
\end{proof}



The next lemma proves Theorem \ref{thm:tits} up to a compression of $\cR$.

\begin{lem}\label{lem:Tits}
Suppose the Main Assumption is satisfied and $\cR$ is non-hyperfinite.  Then there exists a subset $Y \subset X$ with $\mu(Y)>0$ and an essentially free ergodic action $\F_2 \cc Y$ of the free group of rank 2 whose orbits are contained in $\cR \resto Y$. In fact, we obtain a subequivalence relation $\cS \le \cR \resto Y$ and a treeing $\cT$ of $\cS$ such that 
\begin{itemize}
\item for a.e. $y\in Y$, $\cT_y$ is a 4-regular tree (so $\cS$ is treeable)
\item if we consider $\cT_y$ to be a metric tree such that each edge $\{w,z\}$ has length $d(\sigma_y(w),\sigma_y(z))$ then $\sigma_y$ gives a quasi-isometric embedding of $\cT_y$ into the fiber $\sH_y$.
\end{itemize}


\end{lem}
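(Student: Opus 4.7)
The plan is to construct two measure-preserving bijections $\phi_a,\phi_b\in[\cR\resto Y]$ on a positive-measure $Y\subseteq X$ such that for a.e.\ $y\in Y$, the four points $\phi_a^{\pm 1}(y),\phi_b^{\pm 1}(y)$ satisfy $d_\sigma(y,g(y))\ge r$ and all six pairwise Gromov products $(g(y)\mid h(y))_y^\sigma$ over distinct $g,h\in\{\phi_a^{\pm 1},\phi_b^{\pm 1}\}$ are $\le s$. Given this, a fiberwise application of Lemma \ref{lem:local-global2} will show the orbit map of the Cayley graph $T_4$ of $\F_2=\langle a,b\rangle$ into $\sH_y$ is a quasi-isometric embedding, hence injective, so the $\F_2$-action on $Y$ is essentially free and the graphing $\cT$ with edges $\{y,\phi_a^{\pm 1}(y)\},\{y,\phi_b^{\pm 1}(y)\}$ is a treeing of the orbit equivalence relation $\cS$. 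I fix $s>\delta$ with $\mu(X_{s-\delta})>0$, let $r=r(s,\delta)$ be the constant from Lemma \ref{lem:local-global2}, and set $Y:=X_{s-\delta}$. The restriction $\cR\resto Y$ inherits the Main Assumption and remains non-hyperfinite, since otherwise an $\cR\resto Y$-invariant boundary field would extend along $\cR$-classes via $\alpha$ to an $\cR$-invariant one, contradicting Theorem \ref{thm:K} together with the non-hyperfiniteness of $\cR$.

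\textbf{Construction of $\phi_a$ and $\phi_b$.}
First I apply Theorem \ref{thm:exists} to $\cR\resto Y$ to obtain an ergodic loxodromic $\phi_a\in[\cR\resto Y]$ whose iterates form a quasi-geodesic in each fiber, giving $d_\sigma(y,\phi_a^{\pm 1}(y))\ge r$ and $(\phi_a(y)\mid\phi_a^{-1}(y))_y^\sigma\le s$ a.e.\ Then I construct $\phi_b$ by the inductive matching scheme of Theorem \ref{thm:exists}, based on the following multi-constraint extension of Proposition \ref{prop:key}: for any finite list $\psi_1,\dots,\psi_k:Y\to Y$ of Borel maps with graphs in $\cR$, the set
$$F:=\Big\{(x,y)\in\cR\resto Y:\ d_\sigma(x,y)\ge r,\ (y\mid\psi_i x)_x^\sigma\le s,\ (x\mid\psi_j y)_y^\sigma\le s\ \forall\,i,j\Big\}$$
is extremely expansive. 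When at stage $i$ of the induction I extend $\phi_b$ by a new edge $(y,z)$, I feed this with $\psi$-list comprising those of $\phi_a,\phi_a^{-1},\phi_b^{(i)},(\phi_b^{(i)})^{-1}$ already defined at $y$ (for the $\psi_i x$ constraints) and at $z$ (for the $\psi_j y$ constraints). In the limit this produces $\phi_b\in[\cR\resto Y]$ for which all four length bounds and all six pairwise Gromov-product bounds hold a.e.; ergodicity of the $\F_2$-action comes for free from ergodicity of $\phi_a$.

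\textbf{Main obstacle.}
The hard step is the multi-constraint extreme expansivity of $F$. The proof of Proposition \ref{prop:key} will carry over verbatim once the multi-shadow analogue of its Claim~1 is established: for a.e.\ $x$, there exists $y\in A\cap[x]_\cR$ with $\cL_x\not\subseteq\alpha(x,y)\bigcup_i\mathrm{Shadow}_s(y,\psi_iy)$. By Gromov's inequality each individual shadow contains at most one point of any pairwise-$(s-\delta)$-spread subset of $\cL_y$, so it will suffice to show that $\cL_y$ contains at least $k+1$ such points a.e., for every $k$. I expect this to follow from non-hyperfiniteness of $\cR$ by an inductive argument combining Theorem \ref{thm:K} and Theorem \ref{thm:minimal}: if the spread-count in $\cL_y$ were bounded by some $k_0$ a.e., canonizing the widest $k_0$-point subset of $\cL_y$ would yield a finitely-supported $\cR$-invariant field of boundary measures, contradicting non-hyperfiniteness. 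Once this multi-shadow claim is in hand, Lemma \ref{lem:local-global2} delivers essential freeness together with the quasi-isometric embedding of each $\cT_y\cong T_4$ into $\sH_y$, completing the proof.
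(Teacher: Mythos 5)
Your approach diverges substantially from the paper's. The paper produces the second generator not by a second matching construction with multi-point constraints, but by conjugating the first loxodromic $f$ inside $[\cR]$ to get $g=hfh^{-1}$ with $\cL_x(f)\cap\cL_x(g)=\emptyset$ a.e.\ (proved by a short contradiction via Lemma \ref{cor:non-nested} and Theorem \ref{thm:K}), then passing to a positive-measure subset $X'_t$ extracted by a Borel graph-colouring result of Kechris--Solecki--Todor\v{c}evi\'c so that distinct $\cR$-equivalent points of $X'_t$ are $d_\sigma$-separated, and finally taking (modified) first-return maps $f_0,g_0$ of $f,g$ to $X'_t$. The geometric conditions needed for Lemma \ref{lem:local-global2} are then read off from the disjointness of the two limit-set pairs and the thinness of ideal triangles. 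Your plan, by contrast, re-runs the matching machinery of Theorem \ref{thm:exists} for a second generator subject to additional shadow-avoidance constraints imposed by $\phi_a^{\pm1}$ and the already-built part of $\phi_b$.

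The decisive gap is in your justification of the multi-shadow form of Claim 1 of Proposition \ref{prop:key}. You argue that if the number of pairwise $(s-\delta)$-spread points in $\cL_y$ were uniformly bounded by some $k_0$, then ``canonizing the widest $k_0$-point subset of $\cL_y$'' would produce a finitely supported $\cR$-invariant field. This does not work: the Gromov product $(\xi\mid\eta)_{\sigma(y)}$ is taken at the basepoint $\sigma(y)$, and under a move $(x,y)\in\cR$ the basepoint changes from $\sigma(y)$ to $\sigma(x)$; the property ``widest $k_0$-point subset of $\cL_y$ at threshold $s-\delta$'' is not transported by $\alpha(x,y)$, so the resulting assignment $y\mapsto$ (uniform measure on that subset) is not an $\cR$-invariant field and yields no contradiction with Theorem \ref{thm:K}. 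There is a different route to the multi-shadow claim (for any $k$, since $\cL_y$ is infinite a.e.\ one may enlarge $s$ so that the set of $y$ with $\ge k+1$ pairwise $(s-\delta)$-spread limit points has positive measure, and then compress), but this is a genuinely different argument from yours, requires re-threading the choice of $s$, $Y$, and the $\phi_a$-construction so that the scales are consistent, and you would then still need to carry the rest of the proof of Proposition \ref{prop:key} (Claim 2, the sets $X_D$, the minimality step, the final extraction of $(a,b)\in F$) through with $\bigcup_i\mathrm{Shadow}_s(y,\psi_iy)$ replacing a single shadow. None of that is worked out, and the one argument you do offer for the key claim fails. You should either repair the multi-shadow step along the lines above, or adopt the paper's conjugation-plus-return-map strategy, which sidesteps the multi-constraint matching entirely.
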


\begin{proof}
By Theorem \ref{thm:exists} there exists an ergodic loxodromic element $f \in [\cR ]$ such that for a.e. $x\in X$, the path $\gamma_x$ obtained by concatenating consecutive geodesic segments $[\sigma_x(f^nx),\sigma_x(f^{n+1}x)]$ is a quasi-geodesic. Let $\cL_x(f) \subset \partial \sH_x$ denote the limit set of the orbit $\{\sigma_x(f^nx)\}_{n\in \Z}$. Because $\gamma_x$ is a quasi-geodesic, the limit points
$$f^{-\infty}(x) := \lim_{n\to-\infty} \sigma_x(f^n(x)), \quad f^{+\infty}(x) := \lim_{n\to\infty} \sigma_x(f^n(x))$$
exist and $\cL_x(f)=\{f^{-\infty}(x), f^{+\infty}(x)\}$. Moreover, if $\xi^-_x,\xi^+_x$ are the Dirac measures concentrated on $f^{-\infty}(x),$ $f^{+\infty}(x)$ respectively then both $\xi^-$ and $\xi^+$ are invariant fields of boundary measures for the subequivalence relation $\cR_f \le \cR$ generated by $f$.



\noindent {\bf Claim}.
There exists a loxodromic element $g \in [\cR]$ such that $\cL_x(f) \cap \cL_x(g)  = \emptyset$ for a.e. $x$. In fact, we can choose $g$ to be a conjugate of $f$ in $[\cR]$.

\begin{proof}

Let $\nu \in \Fix(f)$ be the canonical $f$-invariant field of boundary measures (so $\nu_x=\xi^-_x/2+\xi^+_x/2$). 

To obtain a contradiction, suppose the claim is false. Let $h\in [\cR]$ be arbitrary. Since $\Fix(hfh^{-1})=h\Fix(f)$ and $h\nu \in \Fix(hfh^{-1})$ is such that $(h\nu)_x$ is supported on two elements, it must be that $hfh^{-1}$ is loxodromic. Moreover, $h\nu$ is the canonical $hfh^{-1}$-invariant field of boundary measures.

Let $\cS$ denote the subequivalence relation generated by $f$ and $hfh^{-1}$. Then $\cS$ fixes $\cL(f)\cap \cL(hfh^{-1})$. Since we are assuming $\cL_x(f) \cap \cL_x(hfh^{-1}) \ne \emptyset$, Theorem \ref{thm:K} implies $\cS$ is hyperfinite. So Lemma \ref{cor:non-nested} implies $\nu$ and $h\nu$ are both the canonical $\cS$-invariant field of boundary measures. In particular, $h\nu=\nu$. Since $h \in [\cR]$ is arbitrary,  $\nu \in \Fix(\cR)$. By Theorem \ref{thm:K} this contradicts the assumption that $\cR$ is not hyperfinite. 

\end{proof}
Let $g=hfh^{-1}$ be a loxodromic element with $\cL_x(f) \cap \cL_x(g)  = \emptyset$. Because $f^{-\infty}:X \to \cL$ is an $f$-invariant section, $hf^{-\infty}$ defined by $(hf^{-\infty})_x:= \alpha(x,h^{-1}x)f^{-\infty}_{h^{-1}x}$ is an $hfh^{-1}$-section. Similarly, $hf^{+\infty}$ is an $hfh^{-1}$ section. 

For $t>0$, let $X_t$ be the set of all $x\in X$ such that
\begin{enumerate}
\item $(f^\epsilon(x)|hf^\eta(x))_x^\sigma \le t~\forall \epsilon, \eta \in \{-\infty,+\infty\}$;
\item if $\gamma$ is any geodesic with endpoints in $\cL_x(f)$ then $d(\sigma(x),\gamma) \le t$;
\item if $\gamma$ is any geodesic with endpoints in $\cL_x(g)$  then $d(\sigma(x),\gamma) \le t$.
\end{enumerate}
Then $X=\cup_{s>0} X_s$. So there exists $t>0$ with $\mu(X_t)>0$. 

Given $u,v,w\in \bsH_x$ and a geodesic $\gamma$ we say that $v$ lies {\bf between} $u$ and $w$ with respect to $\gamma$ if: for every triple $u',v',w' \in \gamma$ of points such that $u'$ is a closest point to $u$ on $\gamma$ (and similarly with $v',w'$) we have that $u'$ and $w'$ are in different components of $\gamma - \{v'\}$.

By continuity and the choice of $f$ there exists a number $N>0$ satisfying the following.
\begin{enumerate}
\item  If $u,v,w \in \sH_x$ are $t$-close to a geodesic $\gamma \subset \sH_x$, $d(u,v)\ge N, d(v,w)\ge N$ and $v$ lies between $u$ and $w$ with respect to $\gamma$ then $(u|w)_v \le t+1$. 
\item If $u,v,w \in \sH_x$ and $\gamma_1,\gamma_2 \subset \sH_x$ are geodesics such that 
\begin{itemize}
\item $(\gamma^{\pm}_1| \gamma^{\pm}_2)_v \le t$ (where $\gamma^{\pm}_i \subset \partial \sH_x$ are the endpoints of $\gamma_i$),
\item $d(u,\gamma_1)\le t, d(v,\gamma_1)\le t, d(v,\gamma_2)\le t, d(w,\gamma_2)\le t$, 
\item $d(u,v)\ge N, d(v,w)\ge N$
\end{itemize}
then $(u|w)_v \le t+1$.
\item If $n>0$ and $d(\sigma(x),\sigma_x(f^nx)) \ge N$ then $\sigma_x(f^nx)$ lies between $\sigma(x)$ and $\sigma^+(x)$ with respect to any geodesic with endpoints in $\{f^{-\infty}(x),f^{+\infty}(x)\}$.
\end{enumerate}

Let $s=t+1$. Let $r$ be as in Lemma \ref{lem:local-global2}. We assume without loss of generality that $r>N$. 

Let $G$ be the set of all pairs $(x,y) \in \cR \resto X_t$ such that $d_\sigma(x,y) \le r$. Since $\sigma$ is proper, $G$ is a locally finite in the sense that for a.e. $x \in X$ there are only finitely many $y$ with $(x,y)\in G$ (equivalently $(y,x) \in G)$).  By \cite[Proposition 4.5]{KST99}  there exists a proper Borel vertex coloring $K:X_t \to \N$ of $G$. In other words, if $(x,y) \in G$ then $K(x)\ne K(y)$. It follows that, for some $n\in \N$, $K^{-1}(n)$ has positive measure. Let $X'_t=K^{-1}(n)$. The relevant properties of $X'_t$ are: $X'_t \subset X_t$ and if $x\ne y \in X'_t$ and $x\cR y$ then $d_\sigma(x,y)> r$. 


 Let $f_0\in [\cR\resto X_t']$ be the first-return time map of $f$ to $X'_t$. Precisely, $f_0(x)=f^n(x)$ where $n>0$ is the smallest positive integer such that $f^n(x) \in X'_t$. Because $f$ is ergodic, $f_0$ is also ergodic. 

Define $g_0 \in [\cR\resto X'_t]$ by: $g_0(x)=g^n(x)$ where $n\ge 1$ is the smallest positive integer such that $g^n(x) \in X'_t$ and $g^n(x)$ lies between $x$ and $hf^{+\infty}(x)$ with respect to any geodesic with endpoints in $\{hf^{-\infty}(x),hf^{+\infty}(x)\}$.

We observe that $d(x,f_0(x)) \ge r$, $d(x,g_0(x))\ge r$ and $(f_0x|f_0^{-1}x)_x \le s$, $(g_0x|g_0^{-1}x)_x \le s$, $(f_0^{\pm 1} x|g_0^{\pm 1}x)_x \le s$. 
It now follows from Lemma \ref{lem:local-global2} that $f_0,g_0$ freely generate a rank 2 free group that acts essentially freely and ergodically on $X'_t$. Moreover $\cT=\{ (x,f_0^{\pm 1} x):~x\in X'_t\} \cup \{ (x,g_0^{\pm 1} x):~x\in X'_t\}$ is a treeing of $\cR \resto X'_t$ satisfying the conclusions to this lemma.

\end{proof}

\begin{lem}\label{lem:ergodicity}
Suppose $\cR $ is ergodic. Let $X' \subset X$ be a non-null set. If $\cS' \le \cR \resto X'$ is an ergodic treeable subequivalence relation then there exists an ergodic treeable subequivalence relation $\cS \le \cR $ with $\cS \resto X' =\cS'$. In particular, if $\cS'$ is non-hyperfinite then $\cS$ is also non-hyperfinite.
\end{lem}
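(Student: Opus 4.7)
The plan is to extend $\cS'$ from $X'$ to a treeable ergodic subequivalence relation on all of $X$ by attaching each point of $X \setminus X'$ as a leaf to a single representative in $X'$. First, since $\cR$ is ergodic and $\mu(X') > 0$, almost every $\cR$-orbit meets $X'$. By the Feldman--Moore theorem \cite{feldman-moore-1}, choose a countable family $\{f_n\}_{n\in\N} \subset [\cR]$ such that $\cR = \bigcup_n \grph(f_n)$, and define a Borel map $\phi: X \setminus X' \to X'$ (defined a.e.) by $\phi(x) = f_{n(x)}(x)$, where $n(x)$ is the least index with $f_n(x) \in X'$.

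Let $\cT'$ be a treeing of $\cS'$ and put
$$\cU := \cT' \cup \{(x, \phi(x)) : x \in X \setminus X'\} \cup \{(\phi(x), x) : x \in X \setminus X'\},$$
a symmetric Borel subset of $\cR$. Let $\cS$ be the subequivalence relation of $\cR$ generated by $\cU$. Each $x \in X \setminus X'$ has exactly one $\cU$-neighbor, namely $\phi(x) \in X'$: indeed $\phi$ takes values in $X'$ and no $\cT'$-edge meets $X \setminus X'$. Hence every vertex of $\cU_y$ lying in $X \setminus X'$ is a leaf, so any cycle in $\cU_y$ would consist entirely of $\cT'$-edges among $X'$-vertices, contradicting that $\cT'$ is a forest. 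By construction each $\cU_y$ is connected on $[y]_\cS$, so $\cU$ is a treeing of $\cS$, proving $\cS$ is treeable.

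Next I would verify that $\cS \resto X' = \cS'$: the unique simple path in $\cU_y$ between two $X'$-vertices cannot enter a leaf in $X \setminus X'$ without backtracking, so it consists of $\cT'$-edges only, placing its endpoints in the same $\cS'$-class; the reverse inclusion $\cS' \subset \cS \resto X'$ is immediate. For ergodicity of $\cS$, let $Y \subset X$ be $\cS$-invariant and Borel. Then $Y \cap X'$ is $\cS'$-invariant, so by ergodicity of $\cS'$ its measure is either $0$ or $\mu(X')$. If it vanishes, then $Y \cap (X \setminus X') \subset \phi^{-1}(Y \cap X')$; partitioning $X \setminus X' = \bigsqcup_n A_n$ so that $\phi$ agrees with $f_n$ on $A_n$ and invoking the measure-preserving property of each $f_n$ yields $\mu(\phi^{-1}(Y \cap X')) = 0$, hence $\mu(Y) = 0$. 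If instead $\mu(Y \cap X') = \mu(X')$, then $\phi(x) \in Y$ for a.e. $x \in X \setminus X'$, and $\phi(x) \cS x$ together with $\cS$-invariance of $Y$ gives $x \in Y$, so $\mu(Y) = 1$.

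Finally, the non-hyperfiniteness clause is automatic: any hyperfinite exhaustion of $\cS$ would restrict to a hyperfinite exhaustion of $\cS \resto X' = \cS'$. The construction presents no serious obstacle; the key structural point is simply that every vertex added outside $X'$ is a leaf of the extended treeing, which reduces the verifications of both the tree property and the restriction identity to properties already enjoyed by $\cT'$, while ergodicity is propagated from $X'$ to $X$ by the Borel retraction $\phi$.
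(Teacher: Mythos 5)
Your proposal is correct and takes essentially the same approach as the paper: both build a Borel retraction $\phi$ onto $X'$ whose graph lies in $\cR$ and then extend the treeing $\cT'$ by attaching each $x\in X\setminus X'$ as a leaf to $\phi(x)$. The paper defines $\cS$ directly by $x\cS y \Leftrightarrow \phi(x)\cS'\phi(y)$ (which coincides with your generated relation) and leaves the ergodicity and tree verifications implicit, whereas you spell out $\phi$ via Feldman--Moore and carry out those checks explicitly.
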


\begin{proof}
This Lemma is a special case of \cite[Lemme II.8]{gaboriau-cost}. For the sake of convenience we provide a proof here. Let $\phi:X \to X'$ be a measurable map whose graph is contained in $\cR $ such that $\phi(x)=x$ for all $x\in X'$. Define $\cS$ by $x\cS y \Leftrightarrow \phi(x)\cS'\phi(y)$. Clearly $\cS \resto X' = \cS'$. Since $\cS'$ is ergodic and $\cR $ is ergodic this implies $\cS$ is ergodic. If $\cT'\le \cS'$ is a treeing then $\cT' \cup \{(x,\phi x), (\phi x,x):~x\in X \setminus X'\}$ is a treeing of $\cS$. Thus $\cS$ is treeable. By construction, $\cS \resto X' = \cS'$. 
\end{proof}

\begin{proof}[Proof of Theorem \ref{thm:tits}]
The Theorem follows from Lemmas \ref{lem:Tits} and \ref{lem:ergodicity}. 
\end{proof} 

\section{Parabolic elements}

As a consequence of the Tits Alternative, we will show that parabolic elements exist and, in fact, are generic. Using this we prove that the action of the full group $[\cR]$ on $\Prob(\cL \to X)$ is minimal. 

\begin{thm}\label{thm:parabolic}
Suppose the Main Assumption is satisfied and $\cR$ is non-hyperfinite.  Then the set of all parabolic elements of $[\cR]$, denoted $\rm{PARA}$, is a dense $G_\delta$ subset of $\rm{APER} \subset [\cR]$, the set of all aperiodic elements. In particular, there exist ergodic parabolic elements of $[\cR ]$.
\end{thm}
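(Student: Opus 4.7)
The plan is to split the theorem into two assertions: $\PARA$ is a $G_\delta$ subset of $\APER$, and $\PARA$ is dense in $\APER$. Given these, the ``in particular'' conclusion about existence of ergodic parabolic elements follows from the Baire category theorem applied to $\PARA \cap \ERG$, using that $\ERG$ is itself a dense $G_\delta$ in $\APER$ (a standard fact in the theory of full groups, see \cite[I.3]{Kechris-global-aspects}).

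For the $G_\delta$ assertion, I would first characterize parabolicity topologically. By Theorem \ref{thm:K} combined with Lemma \ref{lem:mixed}, an aperiodic $f\in[\cR]$ is parabolic if and only if $\Fix(f) \subset D$, where $D \subset \Prob(\cL \to X)$ denotes the set of fields $\eta$ with $\eta_x$ Dirac for a.e.\ $x$; indeed, the loxodromic and mixed cases both force some $\eta \in \Fix(f)$ to have a fiber with two-atom support on a set of positive measure. I would then verify that $D$ is $G_\delta$ in $\Prob(\cL \to X)$ using the Banach-space/separability structure from Appendix \ref{sec:fields}: a natural countable characterization is that $\eta \in D$ iff the fiberwise self-product $\eta \otimes \eta$ assigns mass zero to each member of a countable family of open subsets of $\cL *_X \cL$ disjoint from the fiberwise diagonal. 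Writing $D^c = \bigcup_n C_n$ with each $C_n$ closed, the joint continuity of Theorem \ref{thm:jcontinuous1} makes the graph $\{(f,\eta) : \eta \in \Fix(f)\}$ closed in $[\cR] \times \Prob(\cL \to X)$; compactness of $\Prob(\cL \to X)$ then makes the second-coordinate projection proper, so $\{f : \Fix(f) \cap C_n \ne \emptyset\}$ is closed in $[\cR]$ for each $n$, and $\PARA = \APER \setminus \bigcup_n \{f : \Fix(f) \cap C_n \ne \emptyset\}$ is $G_\delta$ in $\APER$.

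For density, fix an ergodic parabolic element $h_0 \in [\cR]$ provided by Lemma \ref{lem:exists}. Parabolicity is invariant under $[\cR]$-conjugation: the action formula $(\phi\nu)_{\phi x} = \alpha(\phi x,x)_*\nu_x$ yields $\Fix(\phi h_0 \phi^{-1}) = \phi\cdot\Fix(h_0)$, and if $\Fix(h_0) = \{\delta_s\}$ then $\phi\cdot\delta_s$ is again a fiberwise Dirac field (being the fiberwise pushforward of a Dirac under the isometries $\alpha(\phi x, x)$). Hence the full $[\cR]$-conjugacy class of $h_0$ is contained in $\PARA$. Invoking the standard density theorem for full groups---any conjugacy class of an ergodic element of $[\cR]$ is dense in $\APER$ for the uniform topology, a consequence of a Rokhlin-style argument as in \cite[Section 3]{Kechris-global-aspects}---gives density of $\PARA$ in $\APER$.

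The main technical obstacle is the $G_\delta$ characterization of $D$: one must leverage the separability of $C(\cL \to X)$ established in Appendix \ref{sec:fields} to encode the ``Dirac a.e.'' condition as a countable intersection of open conditions in the weak-type topology of $\Prob(\cL \to X)$, and then combine this with the proper-projection argument so that ``$\Fix(f) \subset D$'' becomes $G_\delta$ in $f$. By contrast, the density portion is essentially formal once parabolicity is observed to be conjugation-invariant, since the Kechris-style density of ergodic conjugacy classes in $\APER$ is well-known in the full-group setting.
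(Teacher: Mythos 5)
Your overall strategy---characterize parabolicity of an aperiodic $f$ by the condition $\Fix(f)\subset D$ (where $D$ is the set of a.e.-Dirac fields), write $D^c=\bigcup_n C_n$ with each $C_n$ closed, deduce that $\{f:\Fix(f)\cap C_n\neq\emptyset\}$ is closed via compactness of $\Prob(\cL\to X)$ and closedness of the graph of $\Fix$, and then get density from Lemma \ref{lem:exists} together with conjugation-invariance and density of conjugacy classes in $\APER$---is the same as the paper's. The density portion and the ``in particular'' via $\rm{ERG}$ being a dense $G_\delta$ are correct and match the paper exactly.

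However, your proposed countable decomposition of $D^c$ has a genuine gap. You suggest detecting non-Diracness via the fiberwise self-product $\eta\otimes\eta$ charging neighborhoods of the off-diagonal in $\cL*_X\cL$, but the map $\eta\mapsto\eta\otimes\eta$ is \emph{not} continuous (nor usefully semicontinuous) for the topology on $\Prob(\cL\to X)$. That topology only controls integrals $\eta\mapsto\int_X\eta_x(F_x)\,d\mu(x)$ for $F\in C(\cL\to X)$; testing $\eta\otimes\eta$ against functions on $\cL*_X\cL$ requires controlling quantities like $\int_X \eta_x(F_x)\,\eta_x(F'_x)\,d\mu(x)$, and these do not pass to the weak limit. (Concretely, if $\eta^i_x$ oscillates measurably between two Dirac fields $\delta_{\sigma_1(x)},\delta_{\sigma_2(x)}$ on an increasingly fine partition of $X$, then $\eta^i\to\tfrac12(\delta_{\sigma_1}+\delta_{\sigma_2})$ in $\Prob(\cL\to X)$, but $\int\eta^i_x(F_x)^2\,d\mu$ converges to $\tfrac12\int F_x(\sigma_1(x))^2+F_x(\sigma_2(x))^2\,d\mu$ rather than to $\int\big(\tfrac12(F_x(\sigma_1(x))+F_x(\sigma_2(x)))\big)^2\,d\mu$; the self-pairing is only lower semicontinuous, which is the wrong direction for the $C_n$ you need to be closed.) So the sets you propose as $C_n$ need not be closed, and the $G_\delta$ conclusion does not follow.

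The paper sidesteps this entirely by identifying $D$ with the set of \emph{extreme points} of $\Prob(\cL\to X)$ (Lemma \ref{lem:extreme}), and then observing that the non-extreme points decompose as $\bigcup_n J_n$ where $J_n$ is the set of $\nu$ admitting a decomposition $\nu=\tfrac12(\nu_1+\nu_2)$ with $d_*(\nu_1,\nu_2)\ge 1/n$. Each $J_n$ is closed because it is the image, under the continuous map $(\nu_1,\nu_2)\mapsto\tfrac12(\nu_1+\nu_2)$, of a closed (hence compact) subset of $\Prob(\cL\to X)^2$ (cf.\ Lemma \ref{lem:extreme2}). The proper-projection/upper-semicontinuity step you describe then applies to $\cK_n=\{L\ \textrm{closed}:L\cap J_n\neq\emptyset\}$, which is closed and upward-closed, giving $\Fix^{-1}(\cK_n)$ closed, and hence $\PARA=\bigcap_n\APER\setminus\Fix^{-1}(\cK_n)$ is $G_\delta$. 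Replacing your diagonal-based $C_n$ with these $J_n$ repairs the argument.
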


To begin, we first prove the existence of a single  parabolic element.

\begin{lem}\label{lem:exists}
Suppose the Main Assumption is satisfied and $\cR$ is non-hyperfinite.  Then $[\cR]$ contains a parabolic element.
\end{lem}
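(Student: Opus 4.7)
My plan is to combine the Tits alternative with the Conley--Brooks theorem on one-ended spanning subforests to exhibit a parabolic aperiodic hyperfinite subequivalence relation, then extract a parabolic element via standard results on hyperfinite equivalence relations together with a Kakutani tower extension.

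First I would apply Theorem~\ref{thm:tits} (and more specifically its refinement Lemma~\ref{lem:Tits}) to obtain a positive-measure subset $Y\subseteq X$ and a non-hyperfinite treeable ergodic subequivalence relation $\cS\le\cR\resto Y$ with a $4$-regular Borel treeing $\cT$ for which each $\sigma_y\colon\cT_y\to\sH_y$ is a quasi-isometric embedding. Then I would invoke the Conley--Brooks one-ended spanning subforest theorem (applicable since $\cS$ is non-hyperfinite and treeable) to produce a Borel subforest $\cF\subseteq\cT$ with vertex set $Y$ and one-ended connected components $\cF_y$. Let $\cR_\cF\le\cS$ be the equivalence relation whose classes are the $\cF$-components. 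Because each $\cF_y$ is a locally finite one-ended tree, which is amenable (any such tree is a single ray with finite attachments), $\cR_\cF$ is hyperfinite; because each $\cF_y$ is infinite, $\cR_\cF$ is aperiodic.

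The central observation is that $\cR_\cF$ is parabolic. For a.e.~$y$, any infinite ray in $\cF_y$ starting at $y$ is quasi-geodesic in $\sH_y$ under $\sigma_y$; since $\cF_y$ is one-ended, all such rays agree cofinitely and hence converge to a common boundary point $\xi(y)\in\partial\sH_y$. This $\xi$ is Borel and $\cR_\cF$-invariant, so $\delta_\xi\in\Fix(\cR_\cF)$. Moreover, since the Gromov boundary of the one-ended tree $\cF_y$ consists of a single point and $\sigma_y$ extends continuously to the boundaries, the limit set $\cL_y(\cR_\cF)$ of the $\cR_\cF$-orbit in $\sH_y$ equals $\{\xi(y)\}$. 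Lemmas~\ref{lem:mixed} and~\ref{lem:loxodromic-case} then force $\cR_\cF$ to be parabolic: it cannot be loxodromic on any positive-measure subset (that would require $|\cL_y|=2$ there), so it is purely parabolic with $\Fix(\cR_\cF)=\{\delta_\xi\}$.

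Finally, by standard results on hyperfinite pmp equivalence relations I would choose $\phi\in[\cR_\cF]$ generating $\cR_\cF$, and then build $\Phi\in[\cR]$ via a Kakutani tower with base $Y$ and return map $\phi$, so that $\cR_\Phi\resto Y=\cR_\cF$ and $\cR_\Phi$ is hyperfinite and aperiodic on $X$. The section $\xi'(x):=\alpha(x,y)\xi(y)$ (for any $y\in Y\cap[x]_{\cR_\Phi}$) is Borel and $\cR_\Phi$-invariant, so $\delta_{\xi'}\in\Fix(\cR_\Phi)$. For any $\nu\in\Fix(\cR_\Phi)$, the restriction $\nu\resto Y$ lies in $\Fix(\cR_\cF)=\{\delta_\xi\}$, and $\cR_\Phi$-invariance propagates this to $\nu=\delta_{\xi'}$ on all of $X$. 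Hence $\cR_\Phi$ is parabolic and $\Phi\in[\cR]$ is a parabolic element. The main obstacle is the geometric step showing that $\cL_y(\cR_\cF)$ is a singleton via the quasi-isometric embedding (together with invoking the Conley--Brooks theorem correctly in this treeing); once this is in hand, the Kakutani extension and the propagation of uniqueness to $[\cR]$ are essentially routine.
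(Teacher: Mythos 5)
Your proposal follows the same route as the paper's proof: invoke Lemma~\ref{lem:Tits} for a treeable $\cS\le\cR\resto Y$ quasi-isometrically embedded fiberwise, apply the Conley--Brooks one-ended subforest lemma to get a hyperfinite aperiodic $\cP\le\cS$ with singleton limit sets $\cL_y(\cP)$, and use Lemmas~\ref{lem:mixed} and~\ref{lem:loxodromic-case} to deduce $\cP$ is parabolic. Your final step, extending from $[\cR\resto Y]$ to $[\cR]$ via a map $\Psi:X\to Y$ with graph in $\cR$ and $\Psi|_Y=\mathrm{id}$ (what you call a Kakutani extension), is in fact slightly more careful than the paper, which simply asserts ``any aperiodic element $g\in[\cP]$ is parabolic'' without explicitly passing from $[\cR\resto Y]$ to $[\cR]$; that extension is the same device the paper uses at the end of its proof of Theorem~\ref{thm:tits}, so your filling-in is the intended resolution. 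One small point to make rigorous: when ruling out the loxodromic case on a positive-measure set $Z$, apply Lemma~\ref{lem:loxodromic-case} to $\cP\resto Z$ with the restricted bundle; its limit set is contained in the singleton $\cL_z(\cP)$, giving the contradiction directly, and uniqueness of the fixed field then comes from the convexity argument inside the proof of Lemma~\ref{lem:mixed}.
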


\begin{proof}
By Lemma \ref{lem:Tits} there exist a subset $Y \subset X$ with $\mu(Y)>0$, a subequivalence relation $\cS \le \cR \resto Y$ and a treeing $\cT$ of $\cS$ such that $\sigma_x(\cT_x) \subset \sH_x$ is quasi-isometric to a 4-regular metric tree for a.e. $x$. 

By \cite[Lemma 5.3]{conley-brooks} there exists a one-ended measurable function $f:X\to X$ whose graph is contained in $\cT$. One-ended means that for a.e. $x$, if $n\ge 1$ then $f^nx \ne x$ and the backward orbit $\cup_{n=1}^\infty f^{-n}x$ is finite. Therefore,  the connected components of the graph of $f$ are 1-ended infinite trees. Since $\sigma_x(\cT_x)$ is quasi-isometric to a 4-regular metric tree,  if $\cP \le \cS$ is the subequivalence relation generated by $f$ then the limit set $\cL_x(\cP)$ has cardinality one for a.e. $x$. In particular, $\cP$ is hyperfinite since the map that assigns $x\in X$ to the Dirac measure concentrated on $\cL_x(\cP)$ is a $\cP$-invariant field of probability measures. By Lemmas \ref{lem:loxodromic-case} and \ref{lem:mixed} the fact that $\cL_x(\cP)$ has cardinality 1 for a.e. $x$ implies $\cP$ is parabolic. Again by Lemma \ref{lem:loxodromic-case}, it follows that any aperiodic element $g\in [\cP]$ is parabolic. 
\end{proof}

\begin{proof}[Proof of Theorem \ref{thm:parabolic}]
Recall that $\Prob(\cL \to X)$ denotes the space of fields of boundary measures of the bundle $\cL \to X$. By Lemma \ref{lem:fields}, $\Prob(\cL\to X)$ is affinely homeomorphic to a compact convex metrizable subset of a Banach space with the weak* topology.

 Let $\rm{Closed}(\Prob(\cL \to X))$ be the space of all closed subsets of $\Prob(\cL \to X)$ with the Hausdorff topology. Let $\Fix:[\cR ] \to \rm{Closed}(\Prob(\cL \to X))$ be the map $\Fix(\phi)=\{\nu \in \Prob(\cL \to X):~\phi \nu = \nu\}$. This map is upper semi-continuous in the following sense:  if $\{\phi_n\}_{n\in \N} \subset [\cR ]$ converges to $\phi_\infty \in [\cR ]$ then 
$$\Fix(\phi_\infty) \supset \limsup_{n\to\infty} \Fix(\phi_n) = \bigcap_{N \in \N} \bigcup_{n\ge N} \Fix(\phi_n)$$
Indeed, this follows from the fact that the map $[\cR ] \times \Prob(\cL \to X) \to \Prob(\cL \to X)$ given by $(\phi,\nu) \mapsto \phi\nu$ is continuous (see Theorem \ref{thm:jcontinuous}). 

We say that a subset $\cK \subset \rm{Closed}(\Prob(\cL \to X))$ is {\bf upwards closed} if $K \in \cK$ and $K \subset L \in \rm{Closed}(\Prob(\cL \to X))$ implies $L \in \cK$. It follows from upper semi-continuity that: if $\cK \subset \rm{Closed}(\Prob(\cL \to X))$ is closed (in the Hausdorff topology) and upwards closed then $\Fix^{-1}(\cK) \subset [\cR ]$ is closed.

Fix a compatible metric $d_*$ on $\Prob(\cL \to X)$. Given a natural number $n$, let $J_n$ be the set of all $\nu \in \Prob(\cL \to X)$ such that there exists $\nu_1,\nu_2$ with $\nu=(\nu_1+\nu_2)/2$ and $d_*(\nu_1,\nu_2)\ge 1/n$. Let $\cK_n$ be the set of all $L  \in \rm{Closed}(\Prob(\cL \to X))$ such that $J_n \cap L \ne \emptyset$. Observe that $\cK_n$ is closed in $\rm{Closed}(\Prob(\cL \to X))$ and $\cK_n$ is upwards closed. So $\Fix^{-1}(\cK_n) \subset [\cR ]$ is closed. By definition, 
$$\rm{PARA} = \cap_{n=1}^\infty \APER\setminus \Fix^{-1}(\cK_n)$$
is a $G_\delta$ subset of $\rm{APER}$. By Lemma \ref{lem:exists}, \rm{PARA} is nonempty. By \cite[Theorem 3.4]{Kechris-global-aspects}, every conjugacy class in \rm{APER} is dense. Since parabolicity is a conjugacy-invariant, $\rm{PARA}$ is a dense $G_\delta$ subset of $\rm{APER}$.

Since the set $\rm{ERG} \subset \rm{APER}$ of ergodic elements is a dense $G_\delta$ (by \cite[Theorem 3.6]{Kechris-global-aspects}) it follows that $\rm{PARA} \cap \rm{ERG}$ is a dense $G_\delta$ subset of $\rm{APER}$. In particular, there exists an ergodic parabolic element.
\end{proof}

We can now prove:

\noindent{\bf Theorem \ref{thm:classification}}
{\em If the Main Assumption is satisfied and $\cR$ is non-hyperfinite then both $\PARA$ and $\LOXO$ are nonempty. Moreover, $\PARA$ is a dense $G_\delta$ subset of $\APER$. 
On the other hand, if $\cR$ is hyperfinite then either $\APER=\PARA$ or $\APER=\LOXO$. }
\begin{proof}
This first statement follows immediately from Theorems \ref{thm:exists} and \ref{thm:parabolic}. To prove the second statement, assume $\cR$ is hyperfinite. Since the Main Assumption implies $\mu$ is $\cR$-ergodic, we know from Theorem \ref{thm:K} that $\cR$ is either parabolic or loxodromic. If it is parabolic then every aperiodic subequivalence relation is also parabolic by Theorem \ref{thm:non-nested}. If it is loxodromic then there exists an $\cR$-invariant field of boundary measures $\nu \in \Fix(\cR)$ such that the support of $\nu_x$ contains two elements for a.e. $x$. If $\cS\le \cR$ is any subequivalence relation then $\nu$ is also $\cS$-invariant and therefore, if $\cS$ is aperiodic then it must be loxodromic. This proves the last statement.
\end{proof}

\begin{cor}\label{cor:minimal}
Suppose the Main Assumption is satisfied. In addition, assume $\cR$ is non-hyperfinite. Then the action of $[\cR]$ on $\Prob(\cL \to X)$ is minimal.
\end{cor}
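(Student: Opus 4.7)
The plan is to take an arbitrary non-empty closed $[\cR]$-invariant subset $\cC \subset \Prob(\cL \to X)$ and show $\cC = \Prob(\cL\to X)$. I would first fix a parabolic element $\phi \in [\cR]$, available by Theorem \ref{thm:parabolic}, and let $\delta_\xi$ be its unique fixed field (so $\xi:X\to\cL$ is the associated $\phi$-invariant boundary section). For any $\nu \in \cC$ the Ces\`aro averages $\nu_N := \frac{1}{N}\sum_{n=0}^{N-1}\phi^n\nu$ lie in $\overline{\Conv}(\cC)$; compactness and joint continuity from Theorem \ref{thm:jcontinuous1} force any weak* accumulation point of $\{\nu_N\}$ to be $\phi$-invariant, and parabolicity pins it down to $\delta_\xi$, so $\nu_N \to \delta_\xi$ and $\delta_\xi \in \overline{\Conv}(\cC)$. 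Since $\overline{\Conv}(\cC)$ is itself $[\cR]$-invariant, the entire orbit $[\cR]\cdot\delta_\xi$ is contained in it.

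The technical heart of the argument is to show that this orbit is weak*-dense in all Dirac fields. Given an arbitrary section $\eta:X\to\cL$ and $\epsilon>0$, I would apply Theorem \ref{thm:minimal} to the field $C_z := \overline{\{\alpha(z,y)\xi(y) : y\in [z]_\cR\cap B\}}$ for any prescribed non-null Borel $B \subset X$; this field is closed, $\alpha$-invariant and non-empty a.e.\ by $\cR$-ergodicity, hence $C_z = \cL_z$ for a.e.\ $z$. Consequently the Borel set $V_\epsilon := \{(z,y)\in \cR : d_{\cL_z}(\alpha(z,y)\xi(y),\eta(z))<\epsilon\}$ meets every product $A\times B$ of non-null sets in positive $\widehat\mu$-measure (a Lusin--Novikov uniformization of $V_\epsilon\cap(X\times B)$ restricted to $A$ supplies the witness), so $V_\epsilon$ is extremely expansive in the sense defined just before Proposition \ref{prop:expansive}. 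Partitioning $X = A\sqcup B$ into halves of equal measure and applying that proposition to $V_\epsilon \cap (A\times B)$ and $V_\epsilon \cap (B\times A)$ separately yields directed matchings whose union is a measure-preserving bijection $\psi \in [\cR]$ with $\psi(z)\in V_\epsilon(z)$ a.e. Setting $g:=\psi^{-1}$, the section $g\xi(z)=\alpha(z,\psi z)\xi(\psi z)$ is pointwise within $\epsilon$ of $\eta(z)$, and bounded convergence gives $g\delta_\xi \to \delta_\eta$ weak* as $\epsilon\to 0$; thus $\delta_\eta \in \overline{[\cR]\cdot\delta_\xi}$.

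To finish, a routine argument (partitioning $X$ into fine pieces equidistributed for a prescribed integrand and disintegrating each $\nu_z$ into atomic measures) shows that the set $\cD$ of Dirac fields is weak*-dense in $\Prob(\cL\to X)$. Combining this with the previous step, $\overline{\Conv}(\cC) \supset \overline{\cD} = \Prob(\cL\to X)$; Milman's theorem then forces every extreme point of $\Prob(\cL\to X)$ to lie in $\overline{\cC} = \cC$. The extreme points of $\Prob(\cL\to X)$ are precisely the Dirac fields (Dirac fields are fibrewise extreme, and a measurable fibrewise decomposition witnesses non-extremality of any non-Dirac field), so $\cD \subset \cC$, and closedness of $\cC$ together with density of $\cD$ finally yield $\cC = \Prob(\cL\to X)$. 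The main obstacle will be the middle paragraph: upgrading the fibrewise density coming from Theorem \ref{thm:minimal} to a bona fide element of $[\cR]$ by verifying extreme expansivity of $V_\epsilon$ and then gluing two partial matchings into a full measure-preserving automorphism is the technical core of the proof.
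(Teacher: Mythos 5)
Your proposal is correct and reaches the same endpoint, but it diverges from the paper's proof in two interesting ways. First, the technical heart — pushing the Dirac field $\delta_\xi$ to an arbitrary $\delta_\eta$ by an element of $[\cR]$ — is handled in the paper by a Zorn's Lemma maximality argument inside $\lb\cR\rb$: one takes a maximal partial transformation $\phi$ with the required closeness property $\rho_\sigma(\alpha(x,\phi^{-1}x)\xi(\phi^{-1}x),\eta(x))<r$, and if $\phi\notin[\cR]$ one uses Theorem \ref{thm:minimal} (applied to the orbit of $\xi$ restricted to $X\setminus\dom(\phi)$) plus Lemma \ref{lem:1-1} to extend $\phi$, contradicting maximality. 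You instead reuse the \emph{extremely expansive} machinery from the loxodromic section: you verify that $V_\epsilon$ is extremely expansive (again via Theorem \ref{thm:minimal}, applied to orbits restricted to $B$), then invoke Proposition \ref{prop:expansive} twice over a measure-halving partition $X=A\sqcup B$ and glue the two directed matchings into a genuine $\psi\in[\cR]$. This is a legitimate alternative; it trades the open-ended Zorn argument for two concrete applications of an already-proved lemma, at the small cost of an arbitrary choice of partition. Second, your overall scaffolding runs through $\overline{\Conv}(\cC)$ and invokes Milman's theorem explicitly at the end. The paper instead asserts directly that the Ces\`aro limit $\delta_\xi$ of $\frac{1}{n+1}\sum_{i=0}^n f^i\nu$ lies in $\overline{[\cR]\nu}$; since the Ces\`aro averages live only in $\overline{\Conv([\cR]\nu)}$, this step is implicitly relying on the same extremality/Milman argument you spell out (namely that $\delta_\xi$ is an extreme point, hence lies in the compact set whose closed convex hull it inhabits). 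Your version is arguably the more careful write-up of this point. Minor quibbles: specify which fibrewise metric $d_{\cL_z}$ you use (the $\rho_\sigma$ of \S\ref{sec:hyperbolic-bundle} works), and note that the Lusin--Novikov invocation is unnecessary — once the fibers $V_\epsilon\cap(\{z\}\times B)$ are a.e.\ nonempty over $A$, positivity of $\hmu(V_\epsilon\cap(A\times B))$ is immediate from Fubini. Your closing "routine" density claim for Dirac fields is exactly Lemmas \ref{lem:extreme} and \ref{lem:extreme2}, so you should cite them rather than re-derive.
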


\begin{proof}
Let $\nu \in \Prob(\cL \to X)$. It suffices to show that the orbit $[\cR]\nu$ is dense in $\Prob(\cL \to X)$. By Lemma \ref{lem:exists} there exists a parabolic element $f \in [\cR]$. Since $f$ is parabolic it admits a unique fixed point in $\Prob(\cL \to X)$. Moreover, there is a Borel section $\xi:X \to \cL$ such that the fixed point of $f$ is $\delta_{\xi}$ which denotes the field of Dirac measures $x \mapsto \delta_{\xi(x)}$. Note $\delta_\xi$ is an extreme point of $\Prob(\cL\to X)$.   

Observe that any limit point of the sequence $n \mapsto \frac{1}{n+1} \sum_{i=0}^n f^i\nu$ must be a fixed point of $f$ and therefore must equal $\delta_\xi$. It follows that $\delta_\xi$ is in the closure of the orbit $[\cR]\nu$. 

Let $\cO^\xi_x = \{ \alpha(x,y)\xi(y):~y \in [x]_\cR\}$ be the orbit of $\xi$ and $\cO^\xi=\cup_x \cO^\xi_x \subset \cL$. Because the action of $\cR$ on $\cL$ is minimal (in the sense of Theorem \ref{thm:minimal}) it follows that $\cO^\xi_x$ is dense in $\cL_x$ for a.e. $x$.

Fix an arbitrary Borel section $\sigma:X \to \sH$. As in \S \ref{sec:hyperbolic-bundle} define a fibrewise metric $\rho_\sigma:\bsH*\bsH \to [0,\infty)$ by 
\begin{displaymath}
\rho_\sigma(\xi,\eta) = \left\{\begin{array}{cc}
\inf  \sum_{i=1}^{n-1}  \exp(-\epsilon (\xi_i|\xi_{i+1})_{\sigma(x)}) & \xi\ne \eta \\
0 & \xi =\eta
\end{array}\right.
\end{displaymath}
where $\epsilon>0$ is such that $\epsilon \delta \le 1/5$ and  the infimum is over all sequences $\xi_1,\ldots, \xi_n \in \bsH_x$ with $\xi_1=\xi, \xi_n = \eta$.

Let $\eta:X \to \cL$ be an arbitrary Borel section and $r>0$. We will show that $\delta_\eta$ is in the orbit closure of $\delta_\xi$.

Recall from \S \ref{sec:sketch} that $\lb\cR \rb$ denotes the partially ordered set of all partial transformations. Let $\cF \subset \lb \cR \rb$ be the set of all $\phi$ such that
$$\rho_\sigma( \alpha(x,\phi^{-1}x)\xi(\phi^{-1}x), \eta(x) ) < r\quad \forall x\in \rng(\phi).$$
 By Zorn's Lemma there exists a maximal element $\phi$ of $\cF$. 
 
To obtain a contradiction, suppose $\phi \notin [\cR]$. For $x \notin \rng(\phi)$, let 
$$\cO^{\xi,\phi}_x = \{ \alpha(x,y)\xi(y):~y  \in X \setminus \dom(\phi) \}.$$
By Theorem \ref{thm:minimal}, $\cO^{\xi,\phi}_x$ is dense in $\cL_x$ for a.e. $x$. So there exists a Borel section $\beta:X \setminus \rng(\phi) \to \cL$ such that $\beta(x) \in \cO^{\xi,\phi}_x$ for every $x$ and $\rho_\sigma(\beta(x), \eta(x))<r$. Since $\beta(x) \in \cO^{\xi,\phi}_x$ there exists a Borel map $\psi:X \setminus \rng(\phi) \to X \setminus \dom(\phi)$ with graph contained in $\cR$ such that $\alpha(x,\psi(x))\xi(\psi(x)) = \beta(x)$ for a.e. $x$. 

By Lemma \ref{lem:1-1}, there exists a subset $Y \subset X \setminus \rng(\phi)$ with positive measure such that $\psi$ restricted to $Y$ is 1-1. Thus we map define $\kappa \in \lb\cR\rb$ by $\kappa(x)=\phi(x)$ for $x\in \dom(\phi)$ and $\kappa(x) =\psi^{-1}_Y(x)$ for $x \in \psi(Y)$ where $\psi_Y$ denotes the restriction of $\psi$ to $Y$. 

Observe that $\kappa \in \cF$ by construction. Since $\kappa>\phi$, this contradicts maximality of $\phi$. Thus we must have $\phi \in [\cR]$.

Note that $\phi\delta_\xi \to \delta_\eta$ as $r\to 0$ ($\phi$ depends implicitly on $r$). Thus $\delta_{\eta} \in  \overline{[\cR]\nu}$. Since $\eta$ is arbitrary, $\overline{[\cR]\nu}$ contains every extreme point of $\Prob(\cL \to X)$ (by Lemma \ref{lem:extreme}). Because the extreme points of $\Prob(\cL \to X)$ are dense in $\Prob(\cL \to X)$ (Lemma \ref{lem:extreme2}), this implies the corollary.


\end{proof}

\appendix 

\section{Hyperbolic geometry}\label{sec:hyperbolic}

Let $(\cH,d)$ be a metric space. For $x,y,z \in \cH$ define the {\bf Gromov product} of $x$ and $y$ with respect to $z$ by
$$(x|y)_z = (1/2)( d(x,z) + d(y,z) - d(x,y)).$$
A metric space $(\cH,d)$ is {\bf $\delta$-hyperbolic} if for every $x,y,z,w \in \cH$,
\begin{eqnarray}\label{eqn:gromovproduct}
(x|y)_z \ge \min\{ (x|w)_z, (w|y)_z\} - \delta.
\end{eqnarray}

\subsection{The Gromov boundary}

We say that $\{x_i\}_{i=1}^\infty \subset \cH$ is a {\bf Gromov sequence} if for some (any) $x \in \cH$
$$\lim_{i,j \to \infty} (x_i|x_j)_x = +\infty.$$
Two Gromov sequence $\{x_i\}_{i=1}^\infty, \{y_i\}_{i=1}^\infty \subset \cH$ are {\bf equivalent} if 
$$\lim_{i,j \to \infty} (x_i|y_j)_x = +\infty.$$
Let $\partial \cH$ denote the set of all equivalence classes of Gromov sequences. The Gromov product extends to $\bcH:=\cH\cup \partial \cH$ by:
$$(\xi|\eta)_z = \inf \liminf_{i\to\infty} (1/2)( d(x_i,z) + d(y_i,z) - d(x_i,y_i))$$
where $\xi, \eta \in \partial \cH$,  $z\in \cH$ and the infimum is over all sequences $\{x_i\}_{i=1}^\infty \in \xi, \{y_i\}_{i=1}^\infty \in \eta$. We also define
$$(\xi|y)_z  = (y|\xi)_z = \inf \liminf_{i\to\infty} (1/2)( d(x_i,z) + d(y,z) - d(x_i,y))$$
where $\xi \in \partial \cH, y,z \in \cH$ and the infimum is over all sequences $\{x_i\}_{i=1}^\infty \in \xi$. An elementary computation shows that at the cost of increasing $\delta$, the equation (\ref{eqn:gromovproduct}) holds for all $x,y,w \in \bcH$ and $z\in \cH$. We will therefore assume that $\delta>0$ has been chosen so that equation (\ref{eqn:gromovproduct}) holds for all $x,y,w \in \bcH$ and $z\in \cH$. For $\epsilon>0$ and $\xi,\eta \in \bcH$, 
define
\begin{displaymath}
\rho_\epsilon(\xi,\eta) = \left\{\begin{array}{cc}
\inf  \sum_{i=1}^{n-1}  \exp(-\epsilon (\xi_i|\xi_{i+1})_{x}) & \xi\ne \eta \\
0 & \xi =\eta
\end{array}\right.
\end{displaymath}
where the infimum is over all sequences $\xi_1,\ldots, \xi_n \in \bcH$ with $\xi_1=\xi, \xi_n = \eta$.


\begin{lem}\label{lem:meta}
If $\epsilon\delta \le 1/5$ then $\rho_\epsilon$ is a metric on $\bcH$.
\end{lem}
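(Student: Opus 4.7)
The plan is to establish the three metric axioms for $\rho_\epsilon$, with non-degeneracy being the essential content. Symmetry of $\rho_\epsilon$ is immediate from the symmetry of the Gromov product (which makes $b(\xi,\eta):=\exp(-\epsilon(\xi|\eta)_x)$ symmetric in its two arguments) and from the fact that every chain $\xi_1,\ldots,\xi_n$ from $\xi$ to $\eta$ can be reversed. The triangle inequality $\rho_\epsilon(\xi,\zeta) \le \rho_\epsilon(\xi,\eta)+\rho_\epsilon(\eta,\zeta)$ follows by concatenating an almost-optimal chain from $\xi$ to $\eta$ with an almost-optimal chain from $\eta$ to $\zeta$. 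Both of these are routine and require no quantitative assumption on $\epsilon\delta$.

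The real work lies in showing $\rho_\epsilon(\xi,\eta)>0$ whenever $\xi\ne\eta$. I would proceed via a Frink-type argument. First, rewrite the hyperbolicity inequality (\ref{eqn:gromovproduct}) in multiplicative form: since it holds for all $\xi,\eta,\zeta\in\bcH$ with base point in $\cH$, one has
$$b(\xi,\zeta) \le e^{\epsilon\delta}\max\bigl(b(\xi,\eta),\,b(\eta,\zeta)\bigr),$$
i.e.\ $b$ satisfies a quasi-ultrametric inequality with constant $K:=e^{\epsilon\delta}$. The hypothesis $\epsilon\delta\le 1/5$ gives $K\le e^{1/5}<\sqrt{2}$, and this is precisely what the Frink argument needs.

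Next I would prove by induction on $n$ that for every chain $\xi_0,\ldots,\xi_n$ in $\bcH$,
$$b(\xi_0,\xi_n) \le 2\sum_{i=0}^{n-1} b(\xi_i,\xi_{i+1}).$$
The cases $n=1,2$ follow from the inequality above. For the inductive step, set $S=\sum_{i=0}^{n-1} b(\xi_i,\xi_{i+1})$ and choose the largest $p$ with $\sum_{i<p} b(\xi_i,\xi_{i+1})\le S/2$; then $\sum_{i>p} b(\xi_i,\xi_{i+1})\le S/2$ and $b(\xi_p,\xi_{p+1})\le S$. The inductive hypothesis applied to the two sub-chains gives $b(\xi_0,\xi_p)\le S$ and $b(\xi_{p+1},\xi_n)\le S$, and then two applications of the quasi-ultrametric inequality yield $b(\xi_0,\xi_n)\le K^2 S\le 2S$. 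Taking the infimum over chains gives $b(\xi,\eta)\le 2\rho_\epsilon(\xi,\eta)$, hence $\rho_\epsilon(\xi,\eta)>0$ for $\xi\ne\eta$.

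The main obstacle is verifying that the inequality (\ref{eqn:gromovproduct}) genuinely extends to all of $\bcH\times\bcH\times\bcH$ (with base point in $\cH$) with the same or a controlled hyperbolicity constant, since the Gromov product on the boundary is defined through $\liminf$'s; but this is precisely the content of the remark preceding the definition of $\rho_\epsilon$ in the paper, so we may assume $\delta$ has been enlarged to accommodate this. Apart from that, the choice $\epsilon\delta\le 1/5$ enters only through ensuring $e^{\epsilon\delta}\le\sqrt{2}$ in the inductive step, which is the sharp requirement for the $K^2S\le 2S$ bound to close.
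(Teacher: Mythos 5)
Your proof is correct and complete. The paper's own ``proof'' of this lemma is a one-line citation to V\"ais\"al\"a's Proposition~5.16, so what you have done is reconstruct the underlying argument rather than take a genuinely different route: the Frink metrization scheme you use (multiplicative reformulation of the hyperbolic inequality, then the inductive chain-sum bound with split index $p$) is the standard proof of such statements and is indeed what V\"ais\"al\"a's proposition rests on. The inductive step is sound, including the boundary cases $p=0$ and $p=n-1$ where one of the two sub-chains is trivial and only a single application of the quasi-ultrametric inequality is needed. You also correctly identify the one place that requires care, namely that the Gromov inequality must hold on all of $\bcH$ (at the cost of enlarging $\delta$), which the paper explicitly builds into its conventions just before the lemma. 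One minor quibble: $\epsilon\delta\le 1/5$ is not the sharp threshold for your argument --- $K^2\le 2$ only needs $\epsilon\delta\le\tfrac12\ln 2\approx 0.347$ --- so ``precisely what the Frink argument needs'' slightly overstates matters; $1/5$ is simply a comfortable sufficient bound inherited from V\"ais\"al\"a. This has no effect on correctness.
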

\begin{proof}
This is implied by \cite[Proposition 5.16]{vaisala-hyperbolic}.
\end{proof}

 
 \begin{lem}\label{lem:compactification}
 If $d$ is a complete metric on $\cH$ then $\rho_\epsilon$ is a complete metric on $\bcH$ ($\epsilon\delta\le 1/5$). If $d$ is also proper and geodesic then $\bcH$ is compact.
 \end{lem}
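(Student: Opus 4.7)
For the first assertion I would refine Lemma \ref{lem:meta} to the two-sided visual-metric estimate
$c\exp(-\epsilon(\xi|\eta)_x) \le \rho_\epsilon(\xi,\eta) \le \exp(-\epsilon(\xi|\eta)_x)$
for some $c=c(\epsilon\delta)>0$, which is the sharper form of Vaisala's Proposition 5.16 (and is already implicit in the chain construction: the upper bound is by the two-term chain, the lower bound is what Vaisala actually proves). Granting this comparison, a $\rho_\epsilon$-Cauchy sequence $\{\xi_n\}\subset\bcH$ is precisely a sequence with $(\xi_n|\xi_m)_x\to\infty$. To identify its limit, fix $x\in\cH$ and for each $n$ pick a proxy $z_n\in\cH$: set $z_n=\xi_n$ when $\xi_n\in\cH$, and otherwise select $z_n$ from a representing Gromov sequence of $\xi_n$ by a standard diagonalization so that $(z_n|\xi_n)_x\ge n$. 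Two applications of the hyperbolic four-point inequality then yield $(z_n|z_m)_x\to\infty$, and combined with the elementary estimate $(z_n|z_m)_x\le\min(d(z_n,x),d(z_m,x))$ this forces $d(z_n,x)\to\infty$. Hence $\{z_n\}$ is a genuine Gromov sequence representing a boundary point $\xi\in\partial\cH$, and one final four-point application gives $(\xi_n|\xi)_x\to\infty$, whence $\rho_\epsilon(\xi_n,\xi)\to 0$.

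For the compactness assertion I would assume $d$ is proper and geodesic, fix $x_0\in\cH$, and, given $\{\xi_n\}\subset\bcH$, choose for each $n$ a unit-speed geodesic or geodesic ray $\gamma_n:[0,\ell_n]\to\cH$ with $\gamma_n(0)=x_0$ representing $\xi_n$. If $\xi_n\in\partial\cH$, the ray exists by a standard Arzel\`a--Ascoli limit of segments $[x_0,y_k]$ along any representing sequence $y_k\to\xi_n$ (using properness for the pointwise precompactness); if $\xi_n\in\cH$ we take the segment $[x_0,\xi_n]$ and extend by constancy past $\ell_n=d(x_0,\xi_n)$. The family $\{\gamma_n\}$ is equi-$1$-Lipschitz and, by properness, each $\gamma_n(r)$ lies in the compact ball $\bar B(x_0,r)$, so a second Arzel\`a--Ascoli application extracts a subsequence converging locally uniformly to some $\gamma_\infty:[0,\infty)\to\cH$. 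Either $\gamma_\infty$ has unbounded image -- in which case it is a genuine ray defining a boundary point $\xi\in\partial\cH$, and locally uniform convergence together with the four-point inequality gives $(\xi_n|\xi)_{x_0}\to\infty$, hence $\rho_\epsilon(\xi_n,\xi)\to 0$ by the upper bound -- or $\gamma_\infty$ is bounded, in which case $\ell_n$ is eventually bounded and a further subsequence of $\xi_n=\gamma_n(\ell_n)$ converges in $d$ to some $y\in\cH$, from which a direct chain argument (building chains that leave $\xi_n$ along the shared near-geodesic from $x_0$ through $y$) yields $\rho_\epsilon(\xi_n,y)\to 0$ via the two-sided estimate.

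The completeness half is essentially bookkeeping with the four-point inequality and Vaisala's comparison. The main obstacle will be the ``interior'' case of the compactness half, where the Arzel\`a--Ascoli limit $\gamma_\infty$ terminates at a point $y\in\cH$: here one must promote the $d$-convergence $\xi_n\to y$ to $\rho_\epsilon$-convergence, and this requires constructing chains from $\xi_n$ to $y$ whose consecutive Gromov products with respect to $x_0$ are driven sufficiently large -- typically by routing the chain out along a common approximate geodesic -- so that Vaisala's upper bound forces the chain-sums to $0$. The rest is orchestration of standard tools (four-point inequality, diagonalization, visual-metric comparison, Arzel\`a--Ascoli).
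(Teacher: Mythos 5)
The paper's own proof of this lemma is simply a citation to Vaisala's Proposition~5.31 and Bridson--Haefliger Chapter~III.H Proposition~3.7, so your attempt at a first-principles argument is genuinely adding content. Your completeness half is essentially sound: with the two-sided visual comparison in hand, a $\rho_\epsilon$-Cauchy sequence that is not eventually constant must satisfy $(\xi_n|\xi_m)_{x_0}\to\infty$, the proxy-and-four-point bookkeeping then identifies the limit in $\partial\cH$, and the eventually-constant case is trivial. That part would survive scrutiny.

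The compactness half, however, has a gap that cannot be closed along the route you propose, and you have actually put your finger on exactly where: the interior case. If $\xi_n\in\cH$ and $d(\xi_n,x_0)$ stays bounded, then for \emph{any} chain $\xi_n=a_1,a_2,\dots,a_k=y$ the very first link already satisfies $(\xi_n|a_2)_{x_0}\le d(\xi_n,x_0)$, so $\exp(-\epsilon(\xi_n|a_2)_{x_0})\ge\exp(-\epsilon d(\xi_n,x_0))$, which is bounded away from $0$. ``Routing the chain out along a geodesic'' cannot help: it makes the \emph{middle} links small, but the first and last links are pinned down by the trivial estimate $(\xi|\eta)_{x_0}\le\min(d(\xi,x_0),d(\eta,x_0))$. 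Consequently $\rho_\epsilon(\xi_n,y)$ does not tend to $0$, and in fact with the paper's convention $\rho_\epsilon(\xi,\xi)=0$ every point of $\cH$ is isolated in the $\rho_\epsilon$-topology: the two-sided comparison gives $\rho_\epsilon(\xi,\eta)\ge c\,e^{-\epsilon d(\xi,x_0)}$ for all $\eta\ne\xi$. So if $\cH$ contains an infinite bounded set (as any proper geodesic hyperbolic space other than a locally finite graph does), $\bcH$ is \emph{not} compact in this metric topology, and no argument can save the interior case. The discrepancy is that Vaisala's Proposition~5.16/5.31 treat $\rho_\epsilon$ as a \emph{metametric} (with $\rho_\epsilon(\xi,\xi)=e^{-\epsilon d(\xi,x_0)}\ne 0$ for $\xi\in\cH$), whose associated topology restricts to the original $d$-topology on $\cH$; and Bridson--Haefliger Proposition~3.7 uses the Gromov topology on $\overline{X}$, not any chain-metric. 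Under either of those topologies the interior case is immediate ($d$-convergence suffices), which is precisely why the citations go through but your metric chain argument cannot. The honest way to write this lemma from first principles is to work with the metametric topology (or equivalently the Gromov topology), in which case your Arzel\`a--Ascoli dichotomy is the right decomposition and the interior case is trivial.
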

 \begin{proof}
The first statement is \cite[Proposition 5.31]{vaisala-hyperbolic}. The second statement is \cite[Chapter III.H, Proposition 3.7]{bridson-haefliger-book}.
\end{proof}
 
 \begin{defn}[Geodesics]
 A path $\gamma:I \to \cH$ is a {\bf geodesic} if $d(\gamma(t),\gamma(s))=|t-s|$ for any $t,s \in I$ where $I \subset \R$ is an interval (possibly an infinite interval). If $I=[a,b]$ then the {\bf endpoints of $\gamma$} are $\gamma(a),\gamma(b)$. If $I=(-\infty,+\infty)$ then the  {\bf endpoints of $\gamma$} are $\lim_{t\to\pm \infty} \gamma(t)$. If $p,q \in \overline{\cH}$ then we may write $[p,q]$ to denote a geodesic with endpoints $p,q$. This geodesic might not be unique. By abuse of notation, we may identify this geodesic with its image as a subset of $\cH$. We say $(\cH,d)$ is a {\bf geodesic metric space} if for every $a,b \in \cH$ there is a geodesic with endpoints $\{a,b\}$.
  \end{defn}

 \begin{lem}[Thin triangles]\label{lem:thin}
Suppose $(\cH,d)$ is a complete geodesic hyperbolic metric space and $\gamma_1,\gamma_2,\gamma_3$ are geodesics forming a triangle (so if $\gamma_i$ has endpoints $p_i,q_i$ then $q_i = p_{i+1}$ mod 3). Then there is a $\delta'>0$ such that the $\delta'$-neighborhood of $\gamma_1 \cup \gamma_2$ contains $\gamma_3$. In particular, if $\gamma_1,\gamma_2$ are geodesics with the same endpoints then $\gamma_2$ is contained in the $2\delta'$-neighborhood of $\gamma_1$. 
 \end{lem}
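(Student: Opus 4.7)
The plan is to prove the thin-triangles bound first for finite triangles via a direct four-point computation, then extend to ideal vertices by the same argument using the extended Gromov product on $\bcH$, and finally derive the bigon statement as a corollary. Label the triangle vertices $p,q,r$ so that $\gamma_3=[p,q]$ and the geodesics $\gamma_1,\gamma_2$ meet at $r$. For any $z\in\gamma_3$ the identity $(p|q)_z=0$ holds automatically---for finite $p,q$ by the geodesic equation, and for $p,q\in\partial\cH$ by taking the approximating sequences in the definition of $(p|q)_z$ along $\gamma_3$ itself, which makes the $\liminf$ vanish.

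The four-point inequality (\ref{eqn:gromovproduct}) applied with $w=r$ then forces $\min\{(p|r)_z,(r|q)_z\}\le\delta$; without loss of generality $(p|r)_z\le\delta$, which rearranges to $d(p,z)+d(r,z)-d(p,r)\le 2\delta$. In the generic subcase $d(p,z)\le d(p,r)$ I would choose $w'\in\gamma_2$ with $d(p,w')=d(p,z)$ and apply the four-point inequality again to $\{p,r,z,w'\}$; the identities $(r|w')_p=d(p,z)$ and $(r|z)_p\ge d(p,z)-\delta$ (the latter extracted from the previous display) combine to yield $d(z,w')\le 4\delta$. In the degenerate subcase $d(p,z)>d(p,r)$ the previous display forces $d(r,z)<2\delta$, placing $z$ within $2\delta$ of the vertex $r\in\gamma_2$. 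Either way, $z$ lies within $4\delta$ of $\gamma_1\cup\gamma_2$, so the finite case works with $\delta'=4\delta$.

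For vertices in $\partial\cH$, the same algebra runs because the four-point inequality is already established on $\bcH$ (with $z\in\cH$). The main obstacle is constructing the comparison point $w'$ when $p$ is ideal: I would fix a basepoint on $\gamma_2$, parametrize $\gamma_2$ isometrically from it, and define $w'$ as the point on $\gamma_2$ whose Gromov product with $p$ at the basepoint matches the role of $d(p,w')=d(p,z)$ from the finite case. The same four-point manipulation then bounds $d(z,w')$ by a universal multiple of $\delta$. One may alternatively approximate each ideal vertex by finite points marching along the incident geodesics, apply the finite bound, and pass to the limit using lower semicontinuity of the Gromov product in its arguments.

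Finally, for the bigon statement, given $\gamma_1,\gamma_2$ with common endpoints $p,q$ and $z\in\gamma_2$, I would apply the thin-triangles bound to the triangle with vertices $p,z,q$ and sides $\gamma_2[p,z]$, $\gamma_2[z,q]$, $\gamma_1$, which says every $w\in\gamma_2[p,z]$ lies within $\delta'$ of $\gamma_1\cup\gamma_2[z,q]$. Since $\gamma_2$ is a geodesic, any $w''\in\gamma_2[z,q]$ satisfies $d(w,w'')=d(w,z)+d(z,w'')\ge d(w,z)$, so whenever $d(w,z)>\delta'$ the point $w$ is forced within $\delta'$ of $\gamma_1$. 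Taking $w$ on $\gamma_2[p,z]$ just past distance $\delta'$ from $z$ (or $w=p\in\gamma_1$ if $\gamma_2[p,z]$ has length $\le\delta'$) gives $d(z,\gamma_1)\le 2\delta'$, as required.
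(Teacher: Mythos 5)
The paper's own ``proof'' is simply a citation to Bridson--Haefliger (Chapter III.H, Proposition 1.22), whereas you supply a self-contained argument from the four-point inequality. Your finite-vertex case is correct and clean: setting $a=d(p,z)$, $b=d(r,z)$, $c=d(p,r)$, the first application of (\ref{eqn:gromovproduct}) at base $z$ gives $a+b\le c+2\delta$; when $a\le c$ the second application at base $p$, using $(r|w')_p=a$ and $(r|z)_p\ge a-\delta$, gives $(z|w')_p\ge a-2\delta$ and hence $d(z,w')\le 4\delta$; when $a>c$ the first inequality already forces $d(r,z)<2\delta$. So $\delta'=4\delta$ works. Your derivation of the thin-bigon corollary with constant $2\delta'$ from the triangle with one vertex on the interior of $\gamma_2$ is also correct. (Minor slip: the comparison point $w'$ lies on the side $[p,r]$, which under your conventions is $\gamma_1$, not $\gamma_2$ as written.)

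The ideal-vertex extension, however, is only a sketch with genuine gaps. Your Gromov-product route needs the base of the second four-point application to remain in $\cH$ --- note the paper only defines $(\xi|\eta)_z$ for $z\in\cH$ --- so your suggestion to rebase at a point $o$ on $\gamma_1$ is the right move, but then the inf-liminf definition of the boundary Gromov product injects errors of order $2\delta$ (compare Lemma \ref{lem:buse-main}), and these must be tracked through both four-point steps; the result is a larger but still universal $\delta'$, so this can be made to work but needs the bookkeeping done. Your alternative (approximate each ideal vertex by points along the incident sides, apply the finite bound to a comparison triangle, and pass to the limit) is more delicate than it looks: the comparison triangle has a new side joining the approximating vertex to the opposite (possibly still ideal) vertex, and there is no reason a priori that this new side converges to, or even stays uniformly close to, the given side of the original triangle. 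Proving that two geodesics sharing an ideal endpoint are eventually uniformly close is essentially the asymptotic-stability statement that underlies thin triangles at infinity, so as written the limiting argument risks circularity. Since the paper does invoke this lemma with ideal endpoints (e.g.\ in the proof of Theorem \ref{thm:non-nested} and in Claim 5 there), the ideal case is not an optional extra, and your proof as it stands does not yet cover it.
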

 
  \begin{proof}
 This is \cite[Chapter III.H, Proposition 1.22]{bridson-haefliger-book}.
\end{proof}

\subsection{Quasi-isometries}\label{sec:qi}


\begin{defn}
Let $(X,d_X)$, $(Y,d_Y)$ be metric spaces. For $\lambda \ge 1$ and $c\ge 0$, a map $\phi:X \to Y$ is a {\bf $(\lambda,c)$-quasi-isometric embedding} if for all $x,y \in X$,
$$\lambda^{-1} d_X(x,y) - c \le d_Y(\phi(x),\phi(y)) \le \lambda d_X(x,y) + c.$$
\end{defn}

\begin{defn}
Let $(\cH,d)$ be a Gromov hyperbolic space. A $(\lambda,c)$-quasi-isometric embedding $q$ of an interval $I \subset \R$ into $\cH$  is called a {\bf $(\lambda,c)$-quasi-geodesic}. A {\bf quasi-geodesic} is a $(\lambda,c)$-quasi-geodesic for some $\lambda,c$. If $I$ is a finite interval and its endpoints are mapped to $x,y \in \cH$ respectively, then we say it is a quasi-geodesic {\bf from $x$ to $y$}. If $I=(-\infty, \infty)$ and $\lim_{t \to -\infty} q(t) = \xi_-, 
\lim_{t \to +\infty} q(t) = \xi_+$, then we say $q$ is a quasi-geodesic from $\xi_-$ to $\xi_+$. A similar definition holds for half-infinite intervals.

\end{defn}

\subsection{Busemann functions}\label{sec:horofunctions}

Let $(\cH,d)$ be a $\delta$-hyperbolic metric space. Given $\xi \in \partial \cH$, the associated {\bf Busemann function} $\beta:\cH \times \cH  \to \R$ is defined by 
$$\beta(x,y) = \sup \limsup_{i\to\infty} d(x,z_i) - d(y,z_i) =\sup \limsup_{i\to\infty}  2(y|z_i)_x - d(x,y) $$
where the supremum is over all sequences $\{z_i\}_{i\in \N}$ of points $z_i \in \cH$ such that $\lim_{i\to\infty} z_i = \xi$.

\begin{lem}\label{lem:buse-main}
Suppose $\{p_i\} \subset \cH$ is a sequence converging to $\xi$ in $\overline{\cH}$. Then for any $x,y \in \cH$,
$$|\beta(x,y) -( \liminf_{i\to\infty} d(x,p_i) - d(y,p_i) )| \le 2\delta$$
and
$$|\beta(x,y) -( \limsup_{i\to\infty} d(x,p_i) - d(y,p_i) )| \le 2\delta.$$
\end{lem}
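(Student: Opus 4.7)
The plan is to translate the estimate into a comparison of Gromov products for different sequences converging to $\xi$, deduce it from the four-point hyperbolicity inequality, and handle the $\liminf$ case by a subsequence trick.

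First I would rewrite $d(x,p_i) - d(y,p_i) = 2(y|p_i)_x - d(x,y)$ and set $M(\{p_i\}) := \limsup_i (y|p_i)_x$, so that the definition of the Busemann function becomes $\beta(x,y) = 2\sup_{\{z_i\} \to \xi} M(\{z_i\}) - d(x,y)$, the supremum being over sequences representing $\xi$. The $\limsup$ half of the lemma then reduces to showing
$$0 \;\le\; \sup_{\{z_i\}} M(\{z_i\}) - M(\{p_i\}) \;\le\; \delta.$$
The lower bound is immediate because $\{p_i\}$ is itself admissible in the supremum.

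For the upper bound I would fix an arbitrary $\{z_i\} \to \xi$ and apply the hyperbolicity inequality in the form
$$(y|p_i)_x \;\ge\; \min\{(y|z_j)_x,\,(z_j|p_i)_x\} - \delta.$$
Since $\{z_j\}$ and $\{p_i\}$ both represent $\xi$, we have $(z_j|p_i)_x \to +\infty$ as $i\to\infty$ for each fixed $j$; hence the minimum is eventually $(y|z_j)_x$. Taking $\limsup_i$ gives $M(\{p_i\}) \ge (y|z_j)_x - \delta$, and then taking $\limsup_j$ yields $M(\{p_i\}) \ge M(\{z_i\}) - \delta$. Since $\{z_i\}$ was arbitrary, passing to the supremum gives $\sup_{\{z_i\}} M(\{z_i\}) \le M(\{p_i\}) + \delta$, completing the $\limsup$ estimate after multiplying through by $2$.

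For the $\liminf$ half I would pass to a subsequence $\{p_{i_k}\}$ along which $d(x,p_{i_k}) - d(y,p_{i_k})$ converges to $\liminf_i(d(x,p_i) - d(y,p_i))$. This subsequence still represents $\xi$, and along it $\limsup$ equals $\liminf$, so the already-proved $\limsup$ bound applied to $\{p_{i_k}\}$ gives the desired estimate for the original liminf. I do not anticipate a serious obstacle here --- the argument is essentially the standard one showing that the Busemann function is well-defined up to the hyperbolicity defect --- the only delicate point is the order of limits ($j$ fixed, $i\to\infty$ first, $j\to\infty$ afterwards) when invoking hyperbolicity to exploit that $(z_j|p_i)_x \to +\infty$.
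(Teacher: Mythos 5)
Your overall plan — reduce to comparing Gromov products via the four-point inequality (\ref{eqn:gromovproduct}) and handle $\liminf$ by passing to a subsequence that still represents $\xi$ — is sound and matches the paper's strategy. The subsequence trick is a clean way to derive the $\liminf$ estimate from the $\limsup$ one (the paper instead proves the two-sided bound $\limsup_{i,j}|2(y|z_i)_x - 2(y|p_j)_x|\le 2\delta$ directly, which covers both at once).

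However, there is a genuine error precisely at the step you flag as "the only delicate point." You assert that for fixed $j$, $(z_j|p_i)_x\to+\infty$ as $i\to\infty$ because both sequences represent $\xi$. This is false: $z_j$ is a fixed point of $\cH$, so $(z_j|p_i)_x\le d(x,z_j)$ for every $i$, and the Gromov product stays bounded. (Roughly, $\liminf_i(z_j|p_i)_x$ is comparable to the finite quantity $(z_j|\xi)_x$.) Consequently you cannot conclude, for fixed $j$, that the minimum in $(y|p_i)_x\ge\min\{(y|z_j)_x,(z_j|p_i)_x\}-\delta$ is eventually $(y|z_j)_x$, and your step "taking $\limsup_i$ gives $M(\{p_i\})\ge(y|z_j)_x-\delta$" does not follow.

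The fix is exactly what the paper does: take the limits \emph{jointly}. Since $\{z_j\}$ and $\{p_i\}$ are equivalent Gromov sequences, $(z_j|p_i)_x\to\infty$ as $i,j\to\infty$ simultaneously, i.e.\ for any $M$ there is $N$ with $(z_j|p_i)_x\ge M$ once $i,j\ge N$. Taking $M>d(x,y)\ge(y|z_j)_x$ forces the minimum to equal $(y|z_j)_x$ whenever $i,j\ge N$, so $(y|p_i)_x\ge(y|z_j)_x-\delta$ for all $i,j\ge N$, from which $M(\{p_i\})\ge M(\{z_j\})-\delta$ follows by taking $\limsup_i$ and then $\limsup_j$. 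With that correction the rest of your argument, including the subsequence reduction for $\liminf$, goes through.
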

\begin{proof}
By definition of $\beta$, it suffices to show 
$$\limsup_{i,j\to\infty} |2(y|z_i)_x - 2(y|p_j)_x| \le 2\delta.$$
By (\ref{eqn:gromovproduct}), $(y|z_i)_x \ge \min\{ (y|p_j)_x, (p_j|z_i)_x\} - \delta$. Since $(p_j|z_i)_x \to \infty$ as $i,j\to\infty$, this implies that for all $i,j$ sufficiently large, $(y|z_i)_x \ge (y|p_j)_x  - \delta$. By symmetry, this implies the claim above.
\end{proof}

\begin{lem}\label{lem:geodesic}
Suppose $x,y,\xi$ all lie on a geodesic $\gamma$. Then $||\beta(x,y)| - d(x,y)| \le 2\delta$.
\end{lem}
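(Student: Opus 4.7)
The plan is to exploit the explicit structure of $\gamma$ to evaluate the relevant distances along it, and then quote Lemma \ref{lem:buse-main} to transfer this to $\beta$. Parametrize $\gamma$ by arc length so that $\lim_{t\to+\infty}\gamma(t) = \xi$ (this is possible since $\xi$ is an endpoint of $\gamma$), and write $x = \gamma(a)$, $y = \gamma(b)$ for suitable real numbers $a,b$. Then $d(x,y) = |b-a|$.

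Choose the sequence $p_i := \gamma(t_i)$ for any $t_i \to +\infty$. Since $\gamma$ is a geodesic ray to $\xi$, the $p_i$ form a Gromov sequence representing $\xi$, hence $p_i \to \xi$ in $\overline{\cH}$. For all $i$ sufficiently large we have $t_i > \max(a,b)$, so
\begin{equation*}
d(x, p_i) = t_i - a, \qquad d(y, p_i) = t_i - b,
\end{equation*}
giving $d(x, p_i) - d(y, p_i) = b - a$ for all large $i$. In particular,
\begin{equation*}
\liminf_{i\to\infty}\bigl(d(x,p_i) - d(y,p_i)\bigr) = \limsup_{i\to\infty}\bigl(d(x,p_i) - d(y,p_i)\bigr) = b - a.
\end{equation*}

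Applying Lemma \ref{lem:buse-main} to this sequence yields $|\beta(x,y) - (b-a)| \le 2\delta$. By the reverse triangle inequality,
\begin{equation*}
\bigl||\beta(x,y)| - d(x,y)\bigr| \;=\; \bigl||\beta(x,y)| - |b-a|\bigr| \;\le\; |\beta(x,y) - (b-a)| \;\le\; 2\delta,
\end{equation*}
which is the desired estimate. There is no genuine obstacle here; the only minor care required is that, although the geodesic may be bi-infinite (so $\xi$ could be either endpoint), we may always reparametrize so that $\xi$ sits at $+\infty$, reducing to the case above.
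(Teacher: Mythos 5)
Your proof is correct and is the same argument the paper intends: the paper's proof is the one-line remark "This follows from the previous lemma by choosing the points $p_i$ to lie on the geodesic $\gamma$," and you have simply written out the details (parametrize $\gamma$ so $\xi$ sits at $+\infty$, compute $d(x,p_i)-d(y,p_i)=b-a$ for large $i$, apply Lemma \ref{lem:buse-main}, finish with the reverse triangle inequality). No gap and no deviation from the paper's approach.
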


\begin{proof}
This follows from the previous lemma by choosing the points $p_i$ to lie on the geodesic $\gamma$.
\end{proof}

\begin{lem}\label{lem:buse}
For any $\xi \in \partial \cH$ and $x,y,z \in \cH$ the cocycle equation holds up to $4\delta$:
$$|\beta(x,y)+\beta(y,z)-\beta(x,z)|\le 4\delta.$$
\end{lem}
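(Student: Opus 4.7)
The plan is to reduce the quasi-cocycle estimate to the exact telescoping identity for distance differences via Lemma \ref{lem:buse-main}. Fix any sequence $\{p_i\}_{i \in \N} \subset \cH$ with $\lim_i p_i = \xi$. The key observation is the termwise identity
$$\bigl[d(x,p_i) - d(y,p_i)\bigr] + \bigl[d(y,p_i) - d(z,p_i)\bigr] = d(x,p_i) - d(z,p_i),$$
which holds for every $i$ without any hyperbolicity input whatsoever.

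I would then pass to a subsequence (using that each of the three difference sequences $a_i := d(x,p_i)-d(y,p_i)$, $b_i := d(y,p_i)-d(z,p_i)$, $c_i := d(x,p_i)-d(z,p_i)$ is bounded in absolute value by $d(x,y)$, $d(y,z)$, $d(x,z)$ respectively) along which all three converge to limits $a,b,c$. The exact identity is preserved in the limit: $a + b = c$.

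Now Lemma \ref{lem:buse-main} applied along this common subsequence gives $|a - \beta(x,y)| \le 2\delta$, $|b - \beta(y,z)| \le 2\delta$, and $|c - \beta(x,z)| \le 2\delta$, because for a convergent subsequence $\liminf$ and $\limsup$ coincide. The desired inequality then follows by writing
$$\beta(x,y) + \beta(y,z) - \beta(x,z) = \bigl(\beta(x,y) - a\bigr) + \bigl(\beta(y,z) - b\bigr) - \bigl(\beta(x,z) - c\bigr) + (a+b-c),$$
where the last summand is zero.

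The main technical point to watch is the bookkeeping of errors: naively summing the three $2\delta$ estimates yields a $6\delta$ bound rather than $4\delta$. To recover the sharper $4\delta$ constant one should, instead of three independent applications of Lemma \ref{lem:buse-main}, combine the $\limsup$ version for the pairs $(x,y)$ and $(y,z)$ with the $\liminf$ version for $(x,z)$ (or vice versa): one direction of the desired bound uses $\limsup(a_i+b_i) \le \limsup a_i + \limsup b_i$ applied to the identity $c_i = a_i+b_i$, while the reverse direction uses the dual inequality $\liminf(a_i+b_i) \ge \liminf a_i + \liminf b_i$. This asymmetric pairing absorbs one of the three $2\delta$ slacks and gives the stated $4\delta$ bound.
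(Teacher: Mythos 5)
Your initial subsequence argument correctly lands at $6\delta$, but the proposed limsup/liminf repair does not actually reduce it. Lemma \ref{lem:buse-main} gives two-sided estimates: $|\beta(x,y)-\limsup a_i|\le 2\delta$ and $|\beta(x,y)-\liminf a_i|\le 2\delta$, so each of $\limsup a_i$, $\liminf a_i$ carries a $2\delta$ slack in \emph{both} directions and the two can themselves differ by as much as $4\delta$. Consequently no choice of which of the three Busemann values gets a limsup and which gets a liminf absorbs a factor of $2\delta$. Concretely, for the bound $\beta(x,y)+\beta(y,z)-\beta(x,z)\le 4\delta$ the best the pairing can give is $\beta(x,y)+\beta(y,z)-\beta(x,z)\le(\liminf a_i+2\delta)+(\liminf b_i+2\delta)-(\limsup c_i-2\delta)\le 6\delta$, using $\liminf a_i+\liminf b_i\le\liminf c_i\le\limsup c_i$; the other pairings are no better.

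The missing ingredient is an input beyond Lemma \ref{lem:buse-main}. The paper's proof chooses the sequence $\{p_i\}$ converging to $\xi$ so that $\beta(x,y)=\lim_i\bigl(d(x,p_i)-d(y,p_i)\bigr)$ holds \emph{exactly}, with no $2\delta$ error; this is possible because $\beta(x,y)$ is defined as a supremum of limsups over all sequences tending to $\xi$ (take a near-optimal sequence, diagonalize, pass to a subsequence). Then only $\beta(y,z)$ and $\beta(x,z)$ contribute $2\delta$ each, giving $4\delta$. Alternatively, you could keep your generic sequence if you exploit the one-sided bound $\limsup_i\bigl(d(x,p_i)-d(y,p_i)\bigr)\le\beta(x,y)$, which is immediate from the supremum in the definition of $\beta$ and needs no hyperbolicity. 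Combined with Lemma \ref{lem:buse-main}, this sharpens the estimate to $\beta(x,y)-2\delta\le\liminf a_i\le\limsup a_i\le\beta(x,y)$. After passing to your common subsequence with limits $a,b,c$ satisfying $a+b=c$, one gets $\beta(x,y)-2\delta\le a\le\beta(x,y)$ (and similarly for $b,c$), whence $\beta(x,y)+\beta(y,z)-\beta(x,z)\le(a+2\delta)+(b+2\delta)-c=4\delta$ and $\beta(x,y)+\beta(y,z)-\beta(x,z)\ge a+b-(c+2\delta)=-2\delta$. Your overall structure is salvageable, but the mechanism you credit with saving the extra $2\delta$ is not the one doing the work.
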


\begin{proof}
Choose a sequence $\{p_i\} \subset \cH$ converging to $\xi$ such that
$$\beta(x,y) = \lim_{i\to\infty} d(x,p_i) - d(y,p_i).$$
Lemma \ref{lem:buse-main} implies that, up to a maximum error of $4\delta$, $\beta(x,y)+\beta(y,z)-\beta(x,z)$ equals 
$$\limsup_i d(x,p_i) - d(y,p_i) + d(y,p_i) - d(z,p_i) - (d(x,p_i)-d(z,p_i)) = 0.$$
\end{proof}

\section{Measured equivalence relations}\label{sec:mer}

Let $(X,\mu)$ be a standard Borel probability space and $\cR \subset X \times X$ be a Borel equivalence relation. We say that $\cR$ is 
\begin{itemize}
\item {\bf discrete} if every $\cR$-class is countable,
\item {\bf probability-measure-preserving} (pmp) if for every Borel isomorphism $\phi:X \to X$ with $x\cR\phi(x)$ for all $x$, we have $\phi_*\mu=\mu$,
\item {\bf ergodic} if for every Borel set $A\subset X$, $\mu([A]_\cR) \in \{0,1\}$ where $[A]_\cR$ is the union of all $\cR$-classes that nontrivially intersect $A$.
\end{itemize}
Two discrete pmp equivalence relations $(X_i,\mu_i,\cR_i)$ (for $i=1,2$) are {\bf isomorphic} if there exist conull sets $X'_i \subset X_i$ and a measure-space isomorphism $\phi:(X'_1,\mu_1)\to (X'_2,\mu_2)$ such that $(x,y) \in \cR_1 \Leftrightarrow (\phi(x),\phi(y)) \in \cR_2$. More precisely, we only require that $\phi$ is defined on a set of full measure. 


Define measures $\mu_L, \mu_R$ on $\cR$ by
$$\mu_L(F) = \int | F \cap \pi_L^{-1}(x)| ~d\mu(x), \quad \mu_R(F) = \int | F \cap \pi_R^{-1}(x)| ~d\mu(x)$$
where $\pi_L:\cR \to X, \pi_R: \cR \to X$ are the left and right projection maps. It is a standard fact (and a good exercise) to show that $\cR$ is pmp if and only if $\mu_L=\mu_R$.  In this case, we let $\hmu$ denote either $\mu_L$ or $\mu_R$. In the sequel, the phrase ``for a.e. $(x,y) \in \cR$'' is taken to mean with respect to $\hmu$. Another formulation of the probability measure-preserving property is the following:

\begin{lem}[Mass Transport Principle]\label{lem:mtp}
If $\cR$ is a discrete pmp Borel equivalence relation on $(X,\mu)$ and suppose either $F\in L^1(\cR,\hmu)$ or $F\ge 0$ is measurable. Then
$$\int \sum_{x\in [y]_\cR} F(x,y)~d\mu(y) = \int \sum_{y\in [x]_\cR} F(x,y)~d\mu(x) = \int F~d\hmu.$$
\end{lem}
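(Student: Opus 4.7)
I would first reduce the statement to the measure-theoretic identity $\mu_L = \mu_R$. For an indicator $F = \mathbf{1}_A$ with $A \subset \cR$ Borel, the very definitions give
$$\mu_L(A) = \int |A \cap \pi_L^{-1}(x)|\,d\mu(x) = \int \sum_{y \in [x]_\cR} \mathbf{1}_A(x,y)\,d\mu(x),$$
and symmetrically for $\mu_R$ with the sum over $x \in [y]_\cR$. Linearity extends these identities to nonnegative simple functions on $\cR$; monotone convergence extends them to all nonnegative measurable $F$, and the usual decomposition into positive and negative parts handles $F \in L^1(\cR,\hmu)$. Hence the two asserted sums are nothing but $\int F\,d\mu_L$ and $\int F\,d\mu_R$, and the lemma reduces to showing that $\mu_L = \mu_R$.

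To prove $\mu_L = \mu_R$ from the pmp hypothesis I would invoke the Feldman--Moore theorem: there is a countable family $\{\phi_n\}_{n \in \N}$ of Borel isomorphisms $X \to X$ with graph contained in $\cR$ such that $\cR = \bigcup_n \mathrm{graph}(\phi_n)$. Set
$$\Gamma_n := \mathrm{graph}(\phi_n) \setminus \bigcup_{m < n} \mathrm{graph}(\phi_m),$$
so that $\cR = \bigsqcup_n \Gamma_n$ is a disjoint Borel decomposition. For any Borel $A \subset \cR$ write $A_n := A \cap \Gamma_n$. Because $\pi_L$ is injective on $\Gamma_n$, we have $\mu_L(A_n) = \mu(\pi_L(A_n))$; moreover $\pi_R(A_n) = \phi_n(\pi_L(A_n))$, so
$$\mu_R(A_n) = \mu(\phi_n(\pi_L(A_n))) = \mu(\pi_L(A_n)),$$
the last equality by the pmp assumption applied to $\phi_n$. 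Summing over $n$ gives $\mu_L(A) = \mu_R(A)$ for every Borel $A \subset \cR$.

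Combining the two steps,
$$\int \sum_{y \in [x]_\cR} F(x,y)\,d\mu(x) \;=\; \int F\,d\mu_L \;=\; \int F\,d\hmu \;=\; \int F\,d\mu_R \;=\; \int \sum_{x \in [y]_\cR} F(x,y)\,d\mu(y),$$
which is the claim. The only real obstacle is the equality $\mu_L = \mu_R$, and the Feldman--Moore decomposition cleanly reduces it to a single application of the defining pmp invariance to each of the countably many generating Borel isomorphisms; no further ingredients are required.
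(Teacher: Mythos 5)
Your proof is correct, and it is more self-contained than the paper's. The paper dispatches the lemma with the single line ``Apply Fubini's Theorem,'' which works because it has already declared, a few lines earlier, that the identity $\mu_L = \mu_R$ (equivalent to the pmp hypothesis) is ``a standard fact (and a good exercise).'' Thus the paper's proof really only covers the first of your two steps: recognizing the iterated sums $\int \sum_{y \in [x]_\cR} F(x,y)\,d\mu(x)$ and $\int \sum_{x \in [y]_\cR} F(x,y)\,d\mu(y)$ as the disintegrated expressions of $\int F\,d\mu_L$ and $\int F\,d\mu_R$. Your proof supplies the omitted exercise as well, proving $\mu_L = \mu_R$ via the Feldman--Moore generating family $\{\phi_n\}$ and the disjointification $\Gamma_n = \mathrm{graph}(\phi_n)\setminus\bigcup_{m<n}\mathrm{graph}(\phi_m)$; the key point, that $\pi_L$ and $\pi_R$ are both injective on each $\Gamma_n$ and that $\pi_R(A_n) = \phi_n(\pi_L(A_n))$, is correctly identified, and the pmp hypothesis is applied in exactly the right place. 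The tradeoff is transparency versus brevity: the paper leans on a prior assertion and Fubini, while you give a genuinely complete argument from Feldman--Moore; either is acceptable, but yours leaves nothing implicit.
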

\begin{proof}
Apply Fubini's Theorem.
\end{proof}

Measured equivalence relations arise from actions of groups: if $G$ is a countable group and $G \cc (X,\mu)$ a measure-class-preserving action then $\cR=\{(x,gx):~x\in X, g\in G\}$ is a discrete Borel equivalence relation called the {\bf orbit equivalence relation}.  This action of $G$ is measure-preserving if and only if $\cR$ is pmp; the action of $G$ is ergodic if and only if $\mu$ is $\cR$-ergodic.  Feldman-Moore \cite{feldman-moore-1} proved that every discrete pmp equivalence relation $\cR$ is the orbit-equivalence relation of the action of some countable group. One of the useful consequences of this is:

\begin{lem}\label{lem:1-1}
Let $\cR$ be a discrete pmp equivalence relation, $Y \subset X$ Borel and $\phi: Y \to X$ a Borel map with $x \cR \phi(x)$ for all $x \in Y$. If $\mu(Y)>0$ then there exists $Z \subset Y$ with $\mu(Z)>0$ such that $\phi$ restricted to $Z$ is 1-1.
\end{lem}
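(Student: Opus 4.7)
The plan is to apply Feldman--Moore to reduce $\phi$ locally to a single Borel isomorphism of $X$. By Feldman--Moore \cite{feldman-moore-1}, there exists a countable family $\{f_n\}_{n \in \N} \subset [\cR]$ of Borel isomorphisms of $X$ such that for every $(x,y) \in \cR$ there is some $n$ with $f_n(x) = y$. Equivalently, $\cR = \bigcup_{n \in \N} \graph(f_n)$.

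Next, for each $n \in \N$, define
$$Y_n := \{ y \in Y :~ \phi(y) = f_n(y) \}.$$
Each $Y_n$ is Borel since both $\phi$ and $f_n$ are Borel. Because $\phi(y) \in [y]_\cR$ for every $y \in Y$, the hypothesis that every pair in $\cR$ is realized by some $f_n$ gives $Y = \bigcup_{n \in \N} Y_n$. By countable subadditivity and $\mu(Y) > 0$, there exists some $n_0$ with $\mu(Y_{n_0}) > 0$.

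Setting $Z := Y_{n_0}$, we have $\phi \resto Z = f_{n_0} \resto Z$. Since $f_{n_0}$ is a Borel isomorphism of $X$, its restriction to $Z$ is injective, which gives the lemma. The only potentially subtle point is the appeal to Feldman--Moore, but this is exactly what it provides, so no real obstacle arises.
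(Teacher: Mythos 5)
Your proposal is correct and is essentially the same argument the paper gives: both invoke Feldman--Moore to cover $\cR$ by graphs of countably many Borel automorphisms (the paper phrases this as generating $\cR$ by a countable group $G$ acting on $X$), decompose $Y$ into the sets where $\phi$ agrees with one of them, pick one of positive measure, and use injectivity of the automorphism.
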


\begin{proof}
As mentioned above. we may assume $\cR$ is generated by a countable group $G$. For $g\in G$, let $Y_g=\{x\in Y:~\phi x = gx\}$. Since $Y=\cup_g Y_g$, there exists $g\in G$ such that $\mu(Y_g)>0$. Since $\phi$ restricted to $Y_g$ is 1-1, we are done.
\end{proof}

\subsection{The full group}\label{sec:full-group}

The full group of $\cR$, denoted $[\cR]$, is the group of all (equivalence classes of) invertible Borel transformations $f$ such that $\textrm{graph}(f)=\{(x,fx):~x\in X\}\subset \cR$. Two transformations are equivalent if they agree on a conull subset. By \cite[Proposition 3.2]{Kechris-global-aspects}, $[\cR]$ with the uniform metric, defined by 
$$d_u(\phi,\psi) = \mu(\{x\in X:~\phi(x)\ne \psi(x)\}),$$
is a Polish group.

An element $\phi\in [\cR]$ is 
\begin{itemize}
\item {\bf aperiodic} if for a.e. $x\in X$, $\{\phi^n(x)\}_{n\in\Z}$ is infinite
\item {\bf ergodic} if every measurable $\phi$-invariant subset $A \subset X$ satisfies $\mu(A)\in\{0,1\}$.
\end{itemize}
Since $\phi$ is aperiodic if and only if $d_u(\phi^n,\rm{id})>1-1/m$ for every $n,m\ge 1$, the subset $\APER(\cR)$ of aperiodic elements of $[\cR]$ is a $G_\delta$ subset. So it is Polish. By \cite[Theorem 3.6]{Kechris-global-aspects} the subset $\ERG(\cR) \subset \APER(\cR)$ of ergodic elements is a dense $G_\delta$ subset of $\APER(\cR)$.

\subsection{Subequivalence relations}

A Borel subset $\cS \subset X \times X$ is a {\bf subequivalence relation} of $\cR$ if it is an equivalence relation and $\cS \subset \cR$. This is denoted by $\cS \le \cR$. For example if $\cR$ is the orbit-equivalence relation of the action $G \cc (X,\mu)$ of a countable group $G$ and $H<G$ is a subgroup then orbit-equivalence relation of $H$ is a subequivalence relation of $\cR$.


\subsection{Compressions and amplifications}\label{sec:compression}

Let $(X,\mu,\cR)$ be a pmp equivalence relation and $Y \subset X$ a measurable subset with $\mu(Y)>0$. Then $\cR\resto Y:=\cR \cap (Y\times Y)$ is an equivalence relation on $Y$ called the {\bf compression (or restriction) of $\cR$ to $Y$}. It preserves the restricted measure $\mu \resto Y$. Moreover, if $(\cR,\mu)$ is ergodic/treeable/hyperfinite then $(\cR \resto Y, \mu \resto Y)$ is also ergodic/treeable/hyperfinite. If $(\cR,\mu)$ satisfies the Main Assumption (Definition \ref{defn:main}) then $(\cR\resto Y,\mu\resto Y)$ also satisfies the Main Assumption by restricting the bundle $\sH$ to $\pi^{-1}(Y)$. 

Let $F$ denote a finite or countably infinite set. We define an equivalence relation $\tilde{\cR}$ on $X\times F$ by $(x,n)\tilde{\cR}(y,m)\Leftrightarrow x\cR y$. Fixing an element $f_0 \in F$, we may identify $X$ with $X \times \{f_0\}$ and $\cR$ with the compression $\tilde{\cR}\resto X\times \{f_0\}$. We call $\tilde{\cR}$ the {\bf amplification} of $\cR$ to $X\times F$. It is measure-preserving if we give $X\times F$ the measure $\mu \times c_F$ where $c_F$ denotes counting measure on $F$. Again it is elementary to check that if $(\cR,\mu)$ is ergodic/treeable/hyperfinite then $(\tilde{\cR},\mu\times c_F)$ is ergodic/treeable/hyperfinite. If $(\cR,\mu)$ satisfies the Main Assumption then $(\tilde{\cR},\mu\times c_F)$ does too: we simply let $\sH\times F \to X\times F$ denote the obvious extension of the bundle $\sH \to X$. We leave the details to the reader.

\subsection{Finite equivalence relations}

\begin{defn}
Let $(X,\mu,\cR)$ be a pmp discrete equivalence relation. A {\bf selector} is a measurable map $f:X \to X$ such that $f(x)\cR x$ for a.e. $x$ and $f(x)=f(y)$ for a.e. $(x,y)\in \cR$. 
A {\bf set function} for $\cR$ is a map $F$ on $X$ such that $F(x)$ is a subset of $[x]_\cR$. We require that $F$ is measurable which means that the subset $\{(x,y)\in \cR:~y \in F(x)\} \subset \cR$ is measurable. A set function is {\bf finite} if $F(x)$ is finite for a.e. $x$. It is {\bf invariant} if $F(x)=F(y)$ for a.e. $(x,y)\in \cR$. 
\end{defn}

\begin{lem}\label{lem:smooth}
Let $(X,\mu,\cR)$ be a pmp discrete equivalence relation. The following are equivalent:
\begin{itemize}
\item There is a finite invariant set function for $\cR$;
\item There is a selector for $\cR$;
\item for a.e. $x\in \cR$, $[x]_\cR$ is finite.
\end{itemize}
\end{lem}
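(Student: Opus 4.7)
My plan is to prove the equivalence via the cycle $(2)\Rightarrow(1)\Rightarrow(3)\Rightarrow(2)$. The two quick directions are routine. For $(2)\Rightarrow(1)$, given a selector $f$ I set $F(x):=\{f(x)\}$; this is a finite set function with $F(x)\subseteq [x]_\cR$, and $F(x)=F(y)$ a.e.\ on $\cR$ is exactly the defining property of a selector. For $(3)\Rightarrow(2)$ I would apply Feldman--Moore \cite{feldman-moore-1} to write $\cR$ as the orbit equivalence relation of a countable group $G=\{g_1,g_2,\ldots\}\subset[\cR]$, fix a Borel linear order $<$ on $X$ (pulled back from a Borel isomorphism $X\cong[0,1]$), and define $f(x):=\min_<[x]_\cR$. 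The minimum exists a.e.\ because classes are a.e.\ finite, $f$ is Borel because $\{f(x)=g_n x\}$ is cut out by countably many Borel comparisons, and $f$ is constant on each $\cR$-class by construction.

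The substantive direction is $(1)\Rightarrow(3)$, for which I would invoke the Mass Transport Principle (Lemma \ref{lem:mtp}); throughout I assume the implicit convention $F(x)\ne\emptyset$ a.e.\ on $X$, without which condition (1) is vacuous. Let $A:=\{x:[x]_\cR\text{ is infinite}\}$, which is Borel and $\cR$-invariant. Since $x\mapsto |F(x)|$ is Borel and $\cR$-invariant, partition $X=\bigsqcup_{n\ge 1}X_n$ with $X_n:=\{x:|F(x)|=n\}$, each $\cR$-invariant; it suffices to show $\mu(A\cap X_n)=0$ for every $n$. Fix $n$ and consider the nonnegative Borel transport function
$$\Phi(x,y):=\frac{1}{n}\,\mathbf{1}_{A\cap X_n}(x)\,\mathbf{1}_{F(x)}(y),\qquad (x,y)\in\cR.$$
Summing over $y$ yields $\sum_y\Phi(x,y)=\mathbf{1}_{A\cap X_n}(x)$, so the first iterated integral in Lemma \ref{lem:mtp} equals $\mu(A\cap X_n)$. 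Invariance of $F$ gives $\mathbf{1}_{F(x)}(y)=\mathbf{1}_{F(y)}(y)=\mathbf{1}_B(y)$ for $\hmu$-a.e.\ $(x,y)\in\cR$ with $B:=\{y:y\in F(y)\}$, and this collapses the horizontal sum to $\sum_x\Phi(x,y)=|[y]_\cR|\,\mathbf{1}_{A\cap X_n\cap B}(y)/n$ for a.e.\ $y$. Mass transport then delivers the key identity
$$n\,\mu(A\cap X_n)=\int_{A\cap X_n\cap B}|[y]_\cR|\,d\mu(y).$$
Since $|[y]_\cR|=\infty$ on $A$, positivity of $\mu(A\cap X_n\cap B)$ would force the right side to be $+\infty$, contradicting the bound $n\,\mu(A\cap X_n)\le n$; hence $\mu(A\cap X_n\cap B)=0$, the integral vanishes, and $\mu(A\cap X_n)=0$ as required.

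The step I expect to require the most care is the a.e.\ replacement $\mathbf{1}_{F(x)}(y)=\mathbf{1}_B(y)$ inside the horizontal sum: invariance of $F$ is an a.e.\ statement on $\cR$ rather than a pointwise one, so I must verify that the $\hmu$-null set of ``bad'' pairs $(x,y)$ with $F(x)\ne F(y)$ induces a $\mu$-null set of $y$ whose horizontal sum is corrupted. This is a routine Fubini-style check: applying Lemma \ref{lem:mtp} to the indicator of the bad set shows that for $\mu$-a.e.\ $y$ no pair $(x,y)$ with $x\in[y]_\cR$ is bad, which justifies the substitution. Once this is in hand, the transport identity and the dichotomy above finish the argument.
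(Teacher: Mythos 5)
Your proof is correct, but the substantive implication is routed differently from the paper. The paper proves the cycle $(1)\Rightarrow(2)\Rightarrow(3)\Rightarrow(1)$, with the nontrivial step $(2)\Rightarrow(3)$ handled by reducing to the ergodic case: there a selector $f$ composed with a Borel injection $\phi:X\to[0,1]$ yields an $\cR$-invariant function that separates classes, so ergodicity forces $\phi\circ f$ a.e.\ constant, hence a single conull class, and invariance of the probability measure then forces that class to be finite; the general case follows by ergodic decomposition. You instead prove $(2)\Rightarrow(1)\Rightarrow(3)\Rightarrow(2)$ and place the weight on $(1)\Rightarrow(3)$, which you settle directly by the Mass Transport Principle (Lemma~\ref{lem:mtp}): restricting to the $\cR$-invariant strata $\{|F|=n\}\cap\{[x]_\cR\text{ infinite}\}$ and transporting unit mass $1/n$ from $x$ to each point of $F(x)$, the vertical sum is bounded by $1$ while the horizontal sum integrates $|[y]_\cR|=\infty$ over the $\cR$-invariant set $\{y\in F(y)\}\cap\{|F|=n\}\cap A$, forcing that set and then $A\cap X_n$ to be null. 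Your $(3)\Rightarrow(2)$ picks the $<$-minimum of $[x]_\cR$ in a Feldman--Moore enumeration, which is the same device the paper uses in its $(1)\Rightarrow(2)$. The mass-transport route is arguably cleaner here because it bypasses ergodic decomposition entirely and localizes the argument to a single Fubini computation; the paper's route, on the other hand, makes transparent the structural point that an $\cR$-invariant selector forces the ergodic components to be single finite orbits. Both proofs need the convention $F(x)\neq\emptyset$ a.e.\ (without it $(1)$ is vacuous and the implication $(1)\Rightarrow(3)$ fails, and likewise the paper's construction of $f$ from $F$ breaks down); you state this convention explicitly, which the paper leaves implicit.
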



\begin{proof}

The lemma is trivial in the atomic case so without loss of generality we may assume $\mu$ is purely non-atomic. So there exists a Borel isomorphism $\phi:X \to [0,1]$. Suppose $F$ is a  finite invariant set function for $\cR$. Define $f:X \to X$ by 
$$f(x)=y \Leftrightarrow y \in F(x) \textrm{ and } \phi(y) = \min\{\phi(z):~z \in F(x)\}.$$
Then $f$ is a selector for $\cR$. Observe that $\phi\circ f:X \to [0,1]$ is an $\cR$-invariant function satisfying $x\cR y \Leftrightarrow \phi(f(x))=\phi(f(y))$ for a.e. $(x,y)\in \cR $. In particular, if $\cR $ is ergodic then $\phi \circ f$ must be constant and therefore there must be only one $\cR $-class (up to measure zero). Since $\mu$ is an invariant probability measure, this implies that $[x]_\cR $ is finite for a.e. $x\in \cR $. The general case follows from the ergodic decomposition theorem.

If $[x]_\cR $ is finite for a.e. $x$ then the set function $F(x)=[x]_\cR $ is a finite invariant set function.
\end{proof}

\section{Fields of probability measures}\label{sec:fields}

Throughout this section, we let $\pi:B \to X$ and $d:B*B \to \R$ denote a separable metric bundle (as in Definition \ref{defn:main0}). We also fix a Borel probability measure $\mu$ on $X$. A {\bf field of probability measures} is an assignment $x\mapsto \nu_x$ of probability measures on $B_x$ (for $x\in X$). We will topologize the space of Borel fields of probability measures and prove that it is Hausdorff, separable and even compact under appropriate hypotheses. We will then investigate its convex structure.

To begin, we need a little notation.  We say two functions $F_1, F_2$ on $B$ are {\bf equivalent} if for a.e. $x\in X$ $F_1 \resto B_x=F_2 \resto B_x$ where $\resto$ means ``restricted to''. For each $x\in X$, let $C_0(B_x)$ denote the Banach space of continuous functions on $B_x$ that vanish at infinity with the sup norm. Suppose $F:B \to \C$ is a Borel function such that $F_x \in C_0(B_x)$ for a.e. $x\in X$. Then we define its norm by
$$\|F\|:= \| x \mapsto \|F_x\| \|_{L^\infty(X,\mu)}.$$
Let $C_0(\pi)$ denote the set of all equivalence classes of Borel functions $F:B \to \C$ such that for a.e. $x\in X$, $F_x \in C_0(B_x)$ and $\|F\|<\infty$. 

\begin{lem}\label{lem:sep}
There exists a countable set $\Omega \subset C_0(\pi)$ such that for a.e. $x\in X$, $\{F_x\}_{F \in \Omega}$ is dense in $C_0(B_x)$. 
\end{lem}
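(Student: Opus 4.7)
The plan is to produce the countable dense family by combining distance functions to the given dense sections into a countable algebra of bump functions, and then to invoke Stone--Weierstrass on each fiber. Let $\{\sigma_n\}_{n\in\N}$ be Borel sections such that $\{\sigma_n(x)\}_{n\in\N}$ is dense in $B_x$ for every $x$ (separability of the bundle). Define Borel functions $d_n:B\to[0,\infty)$ by $d_n(p):=d(p,\sigma_n(\pi(p)))$; these are Borel because $d:B*B\to\R$ and each $\sigma_n$ are Borel.

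For each $n\in\N$ and each pair of positive rationals $a<b$, define the tent function
$$F_{n,a,b}(p) := \max\bigl(0,\min(1,(b-d_n(p))/(b-a))\bigr),$$
which on each fiber $B_x$ is $(b-a)^{-1}$-Lipschitz, identically $1$ on the closed ball of radius $a$ about $\sigma_n(x)$, and supported in the closed ball of radius $b$. In particular $F_{n,a,b}\in C_0(\pi)$ with $\|F_{n,a,b}\|\le 1$. Let $\Omega$ be the countable $(\Q+i\Q)$-algebra generated by the (real-valued) collection $\{F_{n,a,b}\}$. Since each generator has bounded uniform norm and bounded support on each fiber, $\Omega\subset C_0(\pi)$; since the generators are real, $\Omega$ is closed under complex conjugation.

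Fix $x\in X$. Then $\Omega_x:=\{F_x:F\in\Omega\}$ is a $*$-subalgebra of $C_0(B_x)$. The density of $\{\sigma_n(x)\}$ in $B_x$ shows that $\Omega_x$ separates points: given $p\neq q$ in $B_x$ with $d(p,q)=3\varepsilon>0$, choose $n$ with $d_n(p)<\varepsilon$ and rationals $a<b$ with $d_n(p)<a<b<d_n(q)$, so $F_{n,a,b}(p)=1\neq 0=F_{n,a,b}(q)$. Similarly $\Omega_x$ is nonvanishing at every point: given $p$, choose $n$ with $d_n(p)<1$, so $F_{n,1,2}(p)>0$. Stone--Weierstrass for $C_0$ now yields density of $\Omega_x$ in $C_0(B_x)$, which is the conclusion.

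The main obstacle I anticipate is a minor technical one: the $C_0$ form of Stone--Weierstrass is classically stated for locally compact Hausdorff spaces, while the lemma does not explicitly assume local compactness of the fibers. In the applications of this paper the fibers of interest (the compact limit sets $\cL_x$, or $\bsH_x$ when $\sH_x$ is proper) are compact, where $C_0=C$ and Stone--Weierstrass applies directly; more generally one may interpret $C_0(B_x)$ as the uniform closure of continuous functions of bounded support, and the separation/nonvanishing argument above remains valid inside that space. Beyond this the Borel measurability of the $F_{n,a,b}$ and the countability of $\Omega$ are routine bookkeeping.
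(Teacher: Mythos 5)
Your proof is correct and follows essentially the same route as the paper: both build a countable algebra of bump functions obtained from the fiberwise distances to the dense sections $\sigma_n$ (the paper uses triangular bumps $\max(1/n - d(\sigma_i(x),p),0)$, you use trapezoidal tents $F_{n,a,b}$, a cosmetic difference), take the subalgebra over a countable dense subfield of $\C$, and invoke Stone--Weierstrass on each fiber. The caveat you raise about the $C_0$ form of Stone--Weierstrass requiring local compactness of the fibers is a fair observation that the paper's own proof does not address; as you note, it is harmless in the paper's applications, where the relevant fibers $\cL_x$ are compact.
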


\begin{proof}
Because the bundle is separable there exists a sequence $\{\sigma_i\}_{i\in\N}$ of Borel sections and $Y\subset X$ a conull set such that for every $x\in Y$, $\{\sigma_i(x)\}_{i\in\N}$ is dense in $B_x$. For $n,i \in \N$ define $F_{n,i} \in C_0(\pi)$ as follows. For $p\in B$ with $\pi(p)=x$ let 
$$F_{n,i}(p) = \max(1/n-d(\sigma_i(x), p),0).$$
Let $\F$ be a countable dense subfield of $\C$. Let $\Omega$ be the $\F$-subalgebra of $C_0(\pi)$ generated by $\{F_{n,i}\}_{n,i\in \N}$. By the Stone-Weierstrauss Theorem, for every $x\in Y$, the $\C$-linear span of $\{F_x\}_{F \in \Omega}$ is dense in $C_0(B_x)$. Since $\F$ is dense in $\C$, we obtain that in fact $\{F_x\}_{F \in \Omega}$ is dense in $C_0(B_x)$.
\end{proof}

Let $\cP(B_x)$ denote the set of all Borel probability measures on $B_x$ and let $\cP(B)=\sqcup_x \cP(B_x)$ denote the disjoint union. This is a bundle of spaces of probability measures over $X$. A {\bf Borel field of regular probability measures} is a map $\nu:X \to \cP(B)$ satisfying:
\begin{itemize}
\item for every $x\in X$, $\nu_x$ is a regular Borel probability measure on the fiber $B_x$ 
\item for every $F \in \cF(B)$, the map $x \mapsto \int F(p)~d\nu_x(p)$ is measurable.
\end{itemize}
Two fields $\nu,\eta$ are {\bf equivalent} if $\nu_x=\eta_x$ for a.e. $x$. By abusing notation, we will not distinguish between equivalent fields. 

Let $\Prob(\pi)$ denote the set of all (equivalence classes of) Borel fields of regular probability measures on $B$. Given $F \in C_0(\pi)$ and an open set $O \subset \C$, let $\Omega(F,O)$ be the set of all $\nu \in \Prob(\pi)$ such that $\int \nu_x(F_x)~d\mu(x) \in O$. We give $\Prob(\pi)$ the topology generated by sets of the form $\Omega(F,O)$. 

Let $C_0(\pi)^*$ denote the Banach dual of $C_0(\pi)$. We always consider  $C_0(\pi)^*$ with the weak* topology. This means that $\Lambda_i \to \Lambda$ in $C_0(\pi)^*$ if and only if $\Lambda_i(F) \to \Lambda(F)$ for every $F\in C_0(\pi)$. Let $\Psi:\Prob(\pi) \to C_0(\pi)^*$ denote the map $\Psi(\nu)(F) = \int \nu_x(F_x)~d\mu(x)$. 

\begin{lem}\label{lem:fields}
The map $\Psi$ is an affine homeomorphism onto its image. Thus $\Prob(\pi)$ is metrizable and convex.  If $B_x$ is compact (for a.e. $x$) then $\Prob(\pi)$ is compact. 
 \end{lem}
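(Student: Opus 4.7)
The plan is to check four things in succession: that $\Psi$ is a well-defined affine injection, that it is a homeomorphism onto its image (with convexity essentially free), that the image is metrizable, and that under the compact-fiber hypothesis the image is weak* closed, hence compact by Banach--Alaoglu.

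First I would verify $\Psi(\nu)$ is a bounded linear functional on $C_0(\pi)$ of operator norm at most one, since each $\nu_x$ is a probability measure; linearity of $\Psi$ in $\nu$ makes it affine. For injectivity, note that for any Borel $A \subset X$ and $F \in C_0(\pi)$ the product $\chi_{\pi^{-1}(A)} F$ still belongs to $C_0(\pi)$, so $\Psi(\nu) = \Psi(\eta)$ forces $\int_A \nu_x(F_x)\,d\mu = \int_A \eta_x(F_x)\,d\mu$ for every $A$, hence $\nu_x(F_x) = \eta_x(F_x)$ for a.e. $x$. Applying this simultaneously to the countable family $\Omega$ from Lemma \ref{lem:sep} (each $F \in \Omega$ excludes only a null set of $x$) and invoking density of $\{F_x : F \in \Omega\}$ in $C_0(B_x)$ together with regularity of the measures gives $\nu_x = \eta_x$ a.e. Since the topology on $\Prob(\pi)$ is tautologically the pullback of the weak* topology under $\Psi$, the map $\Psi$ is automatically a homeomorphism onto its image. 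Convexity of $\Prob(\pi)$ is immediate because convex combinations of fields of probability measures are fields of probability measures.

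For metrizability I would fix a countable Boolean algebra $\{A_k\}_{k \in \N}$ of Borel subsets of $X$ that is dense in the $\mu$-measure algebra, and the countable family $\Omega = \{F_m\}_{m \in \N}$ from Lemma \ref{lem:sep}. The products $\chi_{\pi^{-1}(A_k)}F_m$ all lie in $C_0(\pi)$, and
$$d_\Prob(\nu,\eta) = \sum_{k,m \in \N} 2^{-k-m}\frac{|\Psi(\nu)(\chi_{\pi^{-1}(A_k)}F_m) - \Psi(\eta)(\chi_{\pi^{-1}(A_k)}F_m)|}{1+\|F_m\|}$$
is a well-defined metric (the injectivity argument restricted to these countable test functions provides positivity). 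The substantive step, which is the main obstacle in the proof, is showing $d_\Prob$ generates the subspace topology: given $F \in C_0(\pi)$ and $\epsilon > 0$, construct a Borel partition $\{Y_m\}$ of $X$ with $\|F_x - (F_m)_x\| < \epsilon$ on $Y_m$ (the function $x \mapsto \|F_x-(F_m)_x\|$ is Borel because it equals the countable supremum $\sup_i |F(\sigma_i(x)) - F_m(\sigma_i(x))|$ via a countable dense set of sections), truncate the approximating sum $\sum_m \chi_{\pi^{-1}(Y_m)} F_m$ to finitely many terms, and approximate each $Y_m$ in symmetric difference by some $A_k$. The uniform estimate $|\Psi(\nu)(\chi_{\pi^{-1}(E)} F_m)| \leq \|F_m\|\mu(E)$ then converts $d_\Prob$-closeness into closeness of $\Psi(\nu)(F)$.

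Finally, for compactness under the compact-fiber hypothesis, I would show $\Psi(\Prob(\pi))$ is weak* closed in the unit ball of $C_0(\pi)^*$. Given $\Lambda$ in the closure, observe that for each $F \in C_0(\pi)$ the functional $g \in L^\infty(X,\mu) \mapsto \Lambda(g \cdot F)$ is a weak* limit of functionals $g \mapsto \int g(x)\,\nu^{(i)}_x(F_x)\,d\mu$, each of which is represented by the $L^\infty$ function $x\mapsto \nu^{(i)}_x(F_x)$ dominated pointwise by $\|F_x\|$; by weak* compactness of the order interval $\{\psi \in L^\infty : |\psi(x)| \leq \|F_x\|\text{ a.e.}\}$ in $L^\infty = (L^1)^*$, there is a density $\phi_F \in L^\infty(X,\mu)$ with $|\phi_F(x)| \leq \|F_x\|$ a.e. satisfying $\Lambda(g\cdot F) = \int g\phi_F\,d\mu$. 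Running $F$ through the countable $\Q$-linear span of $\Omega$ shows that for a.e. $x$, $F \mapsto \phi_F(x)$ is positive, $\Q$-linear and norm-bounded by $\|F_x\|$, so it extends by continuity to a positive linear functional on $C(B_x) = C_0(B_x)$; Riesz representation produces $\nu_x$, and the identity $\Lambda(\chi_{\pi^{-1}(A)}\cdot 1_B) = \mu(A)$ (obtained by passing to the limit in $\Psi(\nu^{(i)})(\chi_{\pi^{-1}(A)})=\mu(A)$) forces $\nu_x(B_x) = 1$ a.e. Then $\Psi(\nu) = \Lambda$, completing the argument.
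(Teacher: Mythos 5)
Your argument is correct and, in two places, substantially more careful than the paper's own proof. The first half (affineness, injectivity via products $\chi_{\pi^{-1}(E)}F$ together with the countable family $\Omega$ of Lemma~\ref{lem:sep}, the homeomorphism being tautological since the topology on $\Prob(\pi)$ is by definition the pullback of the weak* topology under $\Psi$) tracks the paper. Where you diverge, and genuinely improve matters, is in metrizability and compactness. The paper takes $d_{\Prob(\pi)}(\nu,\eta)=\sum_i |\Psi(\nu)(F_i)-\Psi(\eta)(F_i)|/(\|F_i\|2^i)$ using only the fiberwise-dense family $\Omega=\{F_i\}$. Since $\Omega$ need not be norm-dense in $C_0(\pi)$ (which, like $L^\infty$, is typically non-separable), this is only a pseudometric and need not separate points or generate the subspace topology: with $B=X\times\{0,1\}$ and constant sections $\sigma_1\equiv 0,\sigma_2\equiv 1$, each $F_i$ is constant in $x$, so the pseudometric cannot distinguish two fields of Dirac masses that differ only by swapping $0$ and $1$ on a set of measure $1/2$. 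Your inclusion of the factors $\chi_{\pi^{-1}(A_k)}$ over a countable $\mu$-dense Boolean algebra is exactly the missing ingredient, and your piecewise-approximation argument (partition $X$ by nearest $F_m$, truncate, approximate each piece in symmetric difference, then use $|\Psi(\nu)(\chi_{\pi^{-1}(E)}F_m)|\le\|F_m\|\mu(E)$) is what makes the metric generate the topology.

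On compactness, the paper extracts Radon--Nikodym densities $\rho_F$ from $\Lambda$ and then asserts that $x\mapsto\eta^j_x(F_x)$ converges in measure to $\rho_F$, from which it passes to a pointwise-a.e.\ subsequence. What has actually been established is only that $\int_E \eta^j_x(F_x)\,d\mu\to\int_E\rho_F\,d\mu$ for each Borel $E$, i.e.\ weak*-$L^\infty(X)$ convergence, which does not imply convergence in measure (take $g_j(x)=\sin(2\pi jx)$ on $[0,1]$). Your route sidesteps this entirely: you never need the fields $\eta^j$ to converge pointwise, and instead pass directly from the densities $\phi_F$ (the same objects as $\rho_F$) to a candidate field $\nu$ via $\Q$-linearity, the pointwise bound $|\phi_F(x)|\le\|F_x\|$, and Riesz representation. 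Two small things to make explicit when writing this up: positivity of the extended functional $T_x:C(B_x)\to\C$ is cleanest by noting it is unital of norm at most one, hence a state, where unitality $\phi_{1_B}\equiv 1$ a.e.\ comes from $\Lambda(\chi_{\pi^{-1}(A)}\cdot 1_B)=\mu(A)$; and the final identity $\Psi(\nu)=\Lambda$ on all of $C_0(\pi)$ needs the same piecewise-approximation as the metrizability step, since agreement on a countable non-norm-dense family is not enough by itself. These are presentational, not mathematical, gaps; your argument is sound.
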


\begin{proof}
It is easy to check that $\Psi$ is affine (this means that $\Psi(t\nu + (1-t)\eta)=t\Psi(\nu) + (1-t)\Psi(\eta)$ for any $\nu,\eta$ and $t\in [0,1]$) and continuous. To see that it is injective, let $\nu,\eta \in \Prob(\pi)$ and suppose $\Psi(\nu)=\Psi(\eta)$. So for every $F \in C_0(\pi)$ we have 
$$\int \nu_x(F_x)~d\mu(x) = \int \eta_x(F_x)~d\mu(x).$$
We claim that $\nu_x(F_x)=\eta_x(F_x)$ for a.e. $x$. Indeed, for any Borel set $E \subset X$, $\chi_{\pi^{-1}E} F \in C_0(\pi)$ where $\chi_{\pi^{-1}E}$ denotes the characteristic function of $\pi^{-1}E \subset B$. So
$$\int_E \nu_x(F_x)~d\mu(x) = \int \nu_x((\chi_{\pi^{-1}E} F)_x)~d\mu(x) = \int \eta_x((\chi_{\pi^{-1}E} F)_x)~d\mu(x)=\int_E \eta_x(F_x)~d\mu(x).$$
Since $E \subset X$ is arbitrary, for every $F \in C_0(\pi)$ we have $\nu_x(F_x)=\eta_x(F_x)$ for a.e. $x$. 

Let $\Omega \subset C_0(\pi)$ be as in Lemma \ref{lem:sep}. Because $\Omega$ is countable, there is a conull set $Z\subset X$ such that $\nu_x(F_x)=\eta_x(F_x)$ for every $F \in\Omega$ and $x \in Z$.  Because $\nu_x$ and $\eta_x$ are regular, they are determined by their values on $C_0(B_x)$. Therefore $\eta_x=\nu_x$ for all $x\in Z$. Since $Z$ is conull, this proves $\Psi$ is injective. 

It is easy to check that the inverse $\Psi^{-1}$ is also continuous and therefore $\Psi$ is an affine homeomorphism onto its image.
We define a metric on $\Prob(\pi)$ as follows. Let $\Omega=\{F_i\}_{i\in \N}$ and define
$$d_{\Prob(\pi)}(\nu,\eta) := \sum_{i\in \N} \frac{|\Psi(\nu)(F_i) - \Psi(\eta)(F_i)|}{\|F_i\|2^{i}}.$$

The image of $\Psi$ lies inside the unit ball of $C_0(\pi)^*$ which, by the Banach-Alaoglu Theorem, is weak* compact. Now suppose each fiber $B_x$ is compact. It suffices to show the image of $\Psi$ is weak* closed. So suppose $\eta^j \in \Prob(\pi)$ and $\Psi(\eta^j) \to \Lambda \in C_0(\pi)^*$ as $j\to\infty$.  Let $F \in C_0(\pi)$ and $E \subset X$ be Borel. Since 
$$\lim_i \int \eta^j_x((\chi_{\pi^{-1}E}F)_x)~d\mu(x) = \lim_i \int_E \eta^j_x(F_x)~d\mu(x) = \Lambda(\chi_{\pi^{-1}E}F)$$
it follows that the function $E \mapsto \Lambda(\chi_{\pi^{-1}E}F)$ is a complex valued measure on $X$ that is absolutely continuous $\mu$. By the Radon-Nikodym Theorem there exists a function $\rho_F:X \to \C$ such that 
$$\int_E \rho_F~d\mu = \Lambda(\chi_{\pi^{-1}E}F)$$
for Borel $E \subset X$. Note also that the functions $x \mapsto \eta^j_x(F_x)$ converge in measure to $\rho_F$. So after passing to a subsequence if necessary, we may assume that $\eta^j_x(F_{x})$ converges to $\rho_{F}(x)$ pointwise a.e. as $j\to\infty$ and for every $F \in \Omega$. However this implies that $\{\eta^j_x\}$ converges in the weak* topology on $C_0(B_x)^*$ as $j\to\infty$ (for a.e. $x$) (since $\{F_x\}_{F\in \Omega}$ is dense in $C_0(B_x)$). Moreover, since each $B_x$ is compact, the limiting measure, denoted $\lambda_x$, is a Borel probability measure. Thus we have obtained probability measures $\lambda_x$ such  that $\eta^j_x(F_x) \to \lambda_x(F_x)  = \rho_F(x)$ for a.e. $x$. In particular the field $\lambda$ is in $\Prob(\pi)$ and $\int \lambda_x(F_x)~d\mu(x)=\Lambda(F)$ for every $F \in C_0(\pi)$ which, by injectivity, implies that $\Psi(\lambda)=\Lambda$. This proves that the image of $\Psi$ is closed and therefore $\Prob(\pi)$ is compact, as required.

\end{proof}

Recall that if $\cC$ is a convex subspace of a Banach space then a point $x\in \cC$ is {\bf extreme} if and only if there does not exist element $y,z \in \cC$ and $t\in (0,1)$ such that $y\ne z$ and $x=ty+(1-t)z$. A {\bf Dirac measure} is a probability measure whose support contains only one element.

\begin{lem}\label{lem:extreme}
Assume that $B$ is a separable bundle and for each $x\in X$, $B_x$ is compact and Hausdorff. Then $\beta \in \Prob(\pi)$ is extreme if and only if for a.e. $x$, $\beta_x$ is a Dirac measure on $B_x$. 
\end{lem}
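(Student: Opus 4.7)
My plan is to treat the two directions separately. The Dirac-implies-extreme direction is essentially formal: if $\beta_x = \delta_{p(x)}$ a.e.\ and $\beta = (\eta+\lambda)/2$ with $\eta,\lambda \in \Prob(\pi)$, then a.e.\ we have $\delta_{p(x)} = (\eta_x+\lambda_x)/2$. Since Dirac measures are extreme points of the simplex $\cP(B_x)$, this forces $\eta_x = \lambda_x = \delta_{p(x)}$ a.e., so $\eta = \lambda = \beta$ in $\Prob(\pi)$.

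For the converse I would argue by contrapositive: assume $\beta_x$ is non-Dirac on a set $E \subset X$ of positive measure and construct $\beta^+ \ne \beta^-$ in $\Prob(\pi)$ with $\beta = (\beta^+ + \beta^-)/2$. The main tool will be Lemma \ref{lem:sep}, which gives a countable family $\Omega = \{F_i\}_{i \in \N} \subset C_0(\pi)$ whose fibrewise restrictions are dense; after taking real and imaginary parts and rescaling, I may assume each $F_i$ is real-valued with $\|F_i\| \le 1$. Since $F_i^2 \in C_0(\pi)$, each variance function
\[
V_i(x) := \beta_x\bigl((F_i^2)_x\bigr) - \beta_x\bigl((F_i)_x\bigr)^2
\]
is Borel in $x$. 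Because $B_x$ is compact Hausdorff, $\beta_x$ is Dirac iff every continuous function has zero variance, so fibrewise density of $\Omega$ produces an index $i_0$ and a positive-measure subset $E_0 \subseteq E$ on which $V_{i_0} > 0$.

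I then perturb $\beta$ measurably on $E_0$. Put $F := F_{i_0}$ and define a Borel function $G \colon B \to \R$ by $G(p) := \tfrac{1}{3}\bigl(F(p) - \beta_{\pi(p)}(F_{\pi(p)})\bigr)$ when $\pi(p) \in E_0$ and $G(p) := 0$ otherwise, so that $\|G_x\|_\infty \le 2/3$ and $\beta_x(G_x) = 0$ for every $x$. Define fields $\beta^{\pm}$ by $d\beta^{\pm}_x := (1 \pm G_x)\, d\beta_x$. Positivity of $1 \pm G_x$ together with $\beta_x(1 \pm G_x) = 1$ makes $\beta^{\pm}_x$ a probability measure, and measurability of $x \mapsto \int F' d\beta^{\pm}_x$ for $F' \in C_0(\pi)$ is immediate from $F'F \in C_0(\pi)$ and the definition of $\Prob(\pi)$. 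By construction $\beta = (\beta^+ + \beta^-)/2$, while on $E_0$ the inequality $V_{i_0}(x) > 0$ forces $G_x$ to be non-zero on a set of positive $\beta_x$-measure, so $\beta^+_x \ne \beta^-_x$ there. Hence $\beta^+ \ne \beta^-$ as elements of $\Prob(\pi)$, contradicting extremality. The main obstacle is precisely to obtain the splitting measurably in $x$; without the countable dense family from Lemma \ref{lem:sep} one would have to invoke a measurable selection theorem to pick, for each $x \in E$, a Borel set witnessing non-Diracness. Lemma \ref{lem:sep} collapses this to the elementary observation that non-Diracness is detected by a single variance function $V_{i_0}$.
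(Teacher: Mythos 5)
Your proof is correct, and the hard direction is organized genuinely differently from the paper's. The paper also argues by contrapositive, but it works geometrically: it locates, for a positive-measure set of $x$, a radius $r$ with $\diam(\supp(\nu_x))>r$, uses separability of sections to find a Borel section $\sigma$ landing within $r/10$ of the support, and then splits $\nu_x$ by transferring a fixed fraction $t$ of the mass between the ball $N_{r/3}(\sigma(x))$ and its complement. This needs three quantifications (over $r$, over $t$, and the choice of $\sigma$) before the explicit two-piece decomposition can be written down. You instead work functionally: Lemma \ref{lem:sep} supplies a countable dense family $\{F_i\}$, non-Diracness on a positive-measure set pins down a single index $i_0$ where the variance $V_{i_0}$ is positive, and the perturbation $d\beta^{\pm}_x=(1\pm G_x)\,d\beta_x$ with $G=\tfrac13(F_{i_0}-\beta_\cdot(F_{i_0}))$ immediately yields a nontrivial convex splitting. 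This is the standard ``variance detects non-extremality'' argument transported to the fibred setting, and it is both shorter and more conceptual than the paper's ball-splitting; it also bypasses any appeal to supports and their diameters. Both proofs ultimately rest on the same separability of the bundle (the paper via dense sections directly, you via the derived countable dense family in $C_0(\pi)$), so neither is more general in substance. One cosmetic remark: your last clause ``contradicting extremality'' is a slight misstatement since you are proving the contrapositive directly, not arguing by contradiction; the construction itself already exhibits $\beta$ as non-extreme.
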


\begin{proof}
Clearly, if $\beta \in \Prob(\pi)$ is such that $\beta(x)$ is a Dirac measure for a.e. $x$ then $\beta$ is extreme.

On the other hand suppose $\nu \in \Prob(\pi)$ and if $Y$ is the set of all $x\in X$ such that $\nu_x$ is not a Dirac measure then $\mu(Y)>0$. It suffices to show $\nu$ is not extreme. 

Let $\supp(\nu_x)$ denote the support of $\nu_x$ and $\diam(\supp(\nu_x))$ its diameter. Because $\nu_x$ is not a Dirac measure for $x\in Y$, there exists a number $r>0$ such that if $Y_r=\{y\in Y:~\diam(\supp(\nu_x))>r\}$ then $\mu(Y_r)>0$. 

Because the bundle $\pi:B\to X$ is separable there exists a Borel section $\sigma:X \to B$ such that
$$d(\sigma(x), \supp(\nu_x)) \le r/10$$
for a.e. $x\in Y_r$. Let $N_{r/3}(\sigma(x))$ denote the closed $r/3$-neighborhood of $\sigma(x)$ in $B_x$. If $x\in Y_r$ then $N_{r/3}(\sigma(x))$ contains a nonempty open subset of $\supp(\nu_x)$ and its complement also contains a nontrivial open subset of $\supp(\nu_x)$. Therefore
$$0<\nu_x(N_{r/3}(\sigma(x)))<1.$$
So there exists a $0<t<1$ such that if $Z_t=\{x\in Y_r:~ t<\nu_x( N_{r/3}(\sigma(x)) ) <\frac{1}{1+t}\}$ then $\mu(Z_t)>0$. 

 Given any subset $C \subset B_x$ with $\nu_x(C)>0$, let $\nu_x\resto C$ denote the probability measure obtained by restricting $\nu_x$ to $C$ and normalizing so that $\nu_x\resto C$ is a probability measure.

Define $\nu^1,\nu^2 \in \Prob(\pi)$ as follows. For $y \notin Z_t$, let $\nu^1_y=\nu^2_y=\nu_y$. 

 For $x\in Y_r$, let
$$\nu^1_x =  (1+t) \nu_x(N_{r/3}(\sigma x)) \left[ \nu_x\resto N_{r/3}(\sigma x)\right] + \big(1-(1+t)\nu_x(N_{r/3}(\sigma x)) \big)\left[  \nu_x\resto(B_x\setminus N_{r/3}(\sigma x))\right]$$
$$\nu^2_x = (1-t)\nu_x(N_{r/3}(\sigma x)) \left[ \nu_x\resto N_{r/3}(\sigma x) \right]+ \big(1-(1-t)\nu_x(N_{r/3}(\sigma x)) \big)\left[  \nu_x\resto(B_x\setminus N_{r/3}(\sigma x) )\right].$$
Observe that $\nu^1 \ne \nu^2$ and yet 
$$\nu = \frac{\nu^1 + \nu^2}{2}.$$
So $\nu$ is not extremal.

\end{proof}




\begin{lem}\label{lem:extreme2}
Let $\Prob^{ex}(\pi) \subset \Prob(\pi)$ denote the subspace of extreme points. Then $\Prob^{ex}(\pi)$ is a dense $G_\delta$ subset of $\Prob(\pi)$.
\end{lem}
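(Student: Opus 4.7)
The plan is to treat the $G_\delta$ and density assertions separately. For the $G_\delta$ part, fix a compatible metric $d$ on $\Prob(\pi)$, which is compact by Lemma \ref{lem:fields} under the fibrewise compactness hypothesis also used in Lemma \ref{lem:extreme}. For each $n \in \N$ let
\[ H_n := \{\nu \in \Prob(\pi) : \nu = (\nu_1+\nu_2)/2 \text{ with } \nu_1,\nu_2 \in \Prob(\pi),\ d(\nu_1,\nu_2) \ge 1/n\}. \]
Each $H_n$ is closed by a compactness argument: given $\nu_k \in H_n$ with $\nu_k \to \nu$ and bisecting decompositions $\nu_k = (\nu_{k,1}+\nu_{k,2})/2$, compactness extracts subsequential limits $\nu_{k,j} \to \mu_j$ preserving both the averaging identity $\nu = (\mu_1+\mu_2)/2$ and the distance bound $d(\mu_1,\mu_2) \ge 1/n$ by continuity of $d$. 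By Lemma \ref{lem:extreme}, $\nu$ is extreme iff $\nu \notin \bigcup_n H_n$, so $\Prob(\pi) \setminus \Prob^{ex}(\pi) = \bigcup_n H_n$ is $F_\sigma$ and $\Prob^{ex}(\pi)$ is $G_\delta$.

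For density, Lemma \ref{lem:extreme} identifies extreme points with Dirac fields $\delta_\beta$ for Borel sections $\beta : X \to B$, so I will approximate an arbitrary $\nu \in \Prob(\pi)$ by such Dirac fields in two stages, using the separability of the bundle together with the non-atomicity of $\mu$ (inherited from the Main Assumption in the intended application; atomic examples show non-atomicity is necessary). \emph{Stage 1 (fibrewise discretization)}: Let $\{\sigma_i\}$ be Borel sections dense in each fiber, and for each $n$ use fibrewise compactness to define a Borel $N_n : X \to \N$ such that $\{\sigma_1(x),\ldots,\sigma_{N_n(x)}(x)\}$ is a $1/n$-net in $B_x$; let $V_{n,i}(x)$ be the resulting Voronoi-type Borel partition of $B_x$ into cells of diameter $\le 2/n$ with distinguished centers $\sigma_i(x)$. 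Setting $\nu^n_x := \sum_i \nu_x(V_{n,i}(x))\, \delta_{\sigma_i(x)}$, for each $F \in C_0(\pi)$ the pointwise error $|\nu_x(F_x) - \nu^n_x(F_x)|$ is controlled by the modulus of continuity of $F_x$ on scale $2/n$ and so tends to $0$; bounded convergence then gives $\nu^n \to \nu$ in $\Prob(\pi)$.

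\emph{Stage 2 (basewise discretization)}: Truncate to $X_{n,M} := \{N_n \le M\}$, with $\mu(X \setminus X_{n,M})$ arbitrarily small, and stratify by the value of $N_n$. On a stratum $X_N := X_{n,M} \cap \{N_n = N\}$ the weights $w_i(x) := \nu_x(V_{n,i}(x))$ are Borel $[0,1]$-valued functions summing to $1$; the key measure-theoretic input is that on a non-atomic probability space there exist Borel partitions $X_N = \bigsqcup_{i=1}^N A^k_i$ with $\chi_{A^k_i} \to w_i$ weakly in $L^1(X_N,\mu)$ for each $i$ as $k \to \infty$, constructed by subdividing $X_N$ into small pieces $P$ on which each $w_i$ is approximately constant (relative to a countable dense subset of $L^1$) and splitting each $P$ into Borel subsets of measures $w_i(P)\mu(P)$ using non-atomicity. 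Setting $\beta^k(x) := \sigma_i(x)$ on $A^k_i$ (and arbitrarily on $X \setminus X_{n,M}$), one computes, for $F \in C_0(\pi)$,
\[ \int F(x,\beta^k(x))\,d\mu(x) \;=\; \sum_i \int \chi_{A^k_i}(x) F(x,\sigma_i(x))\,d\mu(x) \;\longrightarrow\; \int \nu^n_x(F_x)\,d\mu(x), \]
so $\delta_{\beta^k} \to \nu^n$ in $\Prob(\pi)$. A diagonal argument over $n$ and $k$ then places $\nu$ in the closure of $\Prob^{ex}(\pi)$. The main obstacle is precisely the weak-$L^1$ partition construction underlying Stage 2, where non-atomicity of $\mu$ is used essentially and one must simultaneously approximate all $N$ weight functions by characteristic functions of a common Borel partition.
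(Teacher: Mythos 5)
Your $G_\delta$ argument matches the paper's exactly: both write $\Prob(\pi)\setminus\Prob^{ex}(\pi)$ as $\bigcup_n F_n$ where $F_n$ is the closed set of midpoints of pairs at distance $\ge 1/n$, closedness following from compactness of $\Prob(\pi)$. Your density argument is correct but takes a genuinely different and longer route. The paper invokes Krein--Milman: since $\Prob(\pi)$ is compact convex metrizable, it equals the closed convex hull of its extreme points, so density of $\Prob^{ex}(\pi)$ reduces to showing $\overline{\Prob^{ex}(\pi)}$ is convex, which (by passing to limits of dyadic combinations) further reduces to showing $(\nu+\eta)/2$ lies in the closure whenever $\nu,\eta$ are Dirac fields. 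The paper settles this single remaining case with one device --- a ``mixing'' sequence $E_n\subset X$ with $\mu(E_n)=1/2$ and $\mu(E_n\cap A)\to\mu(A)/2$ for all Borel $A$, setting $\beta^n_x=\nu_x$ on $E_n$ and $\beta^n_x=\eta_x$ off $E_n$. Your two-stage discretization (a fibrewise Voronoi net producing a finitely supported field of weights, followed by weak-$L^1$ approximation of those weight functions by indicators of a common Borel partition) builds a Dirac approximation to an \emph{arbitrary} $\nu$ directly and avoids Krein--Milman; in fact your Stage~2 contains the paper's mixing-sequence trick as the special case of $N=2$ constant weights equal to $1/2$. The trade-off: your approach is more constructive and self-contained, but it defers the genuine work to the Borel measurability of the Voronoi cells and the weak-$L^1$ partition lemma, both of which you sketch rather than fully prove, whereas the paper's route is much shorter by outsourcing convexity considerations to the abstract Krein--Milman theorem. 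Both arguments rely essentially on non-atomicity of $\mu$, as you correctly flag.
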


\begin{proof}
By Lemma \ref{lem:fields} there exists a metric $d_{\Prob(\pi)}$ on $\Prob(\pi)$. For $n\in \N$, let $F_n$ denote the set of all $\nu \in \Prob(\pi)$ such that there exist $\nu^1,\nu^2 \in \Prob(\pi)$ such that $d_{\Prob(\pi)}(\nu^1,\nu^2) \ge 1/n$ and $\nu = \frac{\nu^1+\nu^2}{2}$. Then $F_n$ is closed in $\Prob(\pi)$ and $\Prob^{ex}(\pi)$ is the complement of $\cup_{n\in \N} F_n$. This proves $\Prob^{ex}(\pi)$ is a $G_\delta$. 

To prove that $\Prob^{ex}(\pi)$ is dense in $\Prob(\pi)$ it suffices to show that the closure of $\Prob^{ex}(\pi)$ is convex. So let $\nu,\eta$ be in the closure of  $\Prob^{ex}(\pi)$. It suffices to show $(1/2)(\nu+\eta)$ is in the closure of $\Prob^{ex}(\pi)$. 

Since $\nu,\eta$ are in the closure, there exist extremal measures $\nu^i,\eta^i$ with $\nu^i \to \nu$ and $\eta^i \to \eta$ in $\Prob(\pi)$. Since $(1/2)(\nu^i +\eta^i)$ converges to $(1/2)(\nu+\eta)$ it suffices to show that $(1/2)(\nu^i +\eta^i)$ is in the closure of $\Prob^{ex}(\pi)$. So we have reduced the problem to showing: if $\nu,\eta \in \Prob^{ex}(\pi)$ then $(1/2)(\nu+\eta)$ is in the closure of $\Prob^{ex}(\pi)$.

Because $(X,\mu)$ is a standard nonatomic probability space, there exists a sequence $\{E_n\}$ of measurable sets $E_n \subset X$ such that $\mu(E_n)=1/2$ for all $n$ and if $A \subset X$ is any measurable subset then
$$\lim_{n\to\infty} \mu(E_n \cap A)=\mu(A)/2.$$
Define $\beta^n \in \Prob(\pi)$ by $\beta^n_x=\nu_x$ if $x\in E_n$ and $\beta^n_x=\eta_x$ if $x\notin E_n$. Since $\beta^n_x$ is Dirac for a.e. $x$, $\beta^n \in \Prob^{ex}(\pi)$. It is an easy exercise to show $\beta^n \to (1/2)(\nu + \eta)$ as $n\to\infty$ (with respect to the weak* topology on $\Prob(\pi)$).

\end{proof}

The reader is cautioned here that $\Prob(\pi)$ is not in general a simplex (unless $\mu$ is a Dirac measure). That is, it is not necessarily true that every $\nu \in \Prob(\pi)$ can be decomposed as a convex integral of extreme points {\em uniquely}. For example, suppose $(C,d_C)$ is a metric space and $B=X\times C$, $\pi:B\to X$ is the usual projection map and $d:B*B \to \R$ is the metric $d((x,c),(x,c'))=d_C(c,c')$. In this case, $\Prob(\pi)$ can be identified with the set of (a.e. equivalence classes of) Borel maps from $X$ into $\Prob(C)$, the space of regular Borel probability measures on $C$. If $\sigma_1,\sigma_2:X \to C$ are two Borel maps such that $\sigma_1(x)\ne \sigma_2(x)$ for a.e. $x$, $Y \subset X$ has measure $1/2$ and $\nu^i_x=\delta_{\sigma_i(x)}$ for $x\in Y$, $\nu^i_x=\delta_{\sigma_{i+1}(x)}$ for $x\notin Y$ (indices mod $2$) then $\nu^1,\nu^2$ are both extreme and
$$(1/2)(\nu^1+\nu^2) = (1/2)(\delta_{\sigma_1} + \delta_{\sigma_2})$$
are two different extremal decompositions of the same Borel field of probability measures. So $\Prob(\pi)$ is not a simplex. 

Now suppose $\cR \subset X \times X$ is a discrete Borel equivalence relation on $X$ and $\{\alpha(x,y):~(x,y) \in \cR\}$ is a Borel action on $B$ by homeomorphisms. To be precise, this means that: 
\begin{itemize}
\item $\alpha(x,y):B_y \to B_x$ is a homeomorphism;
\item $\{ (p,q) \in B \times B:~ \alpha(\pi(p),\pi(q))(q) = p\}$ is Borel.
\end{itemize}
Let $[\cR]$ denote the full group of $\cR$ with the uniform topology (see \S \ref{sec:full-group}). This group acts on $\Prob(\pi)$ by $(f\nu)_{fx} = \alpha(fx,x)_*\nu_x$.

\begin{thm}\label{thm:jcontinuous}
The action of $[\cR]$ on $\Prob(\pi)$ is jointly continuous.
\end{thm}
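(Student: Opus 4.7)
The plan is to reduce to sequential joint continuity: since $[\cR]$ with the uniform metric is Polish and $\Prob(\pi)$ is metrizable by Lemma \ref{lem:fields}, it suffices to check sequential continuity. I will therefore take sequences $f_n\to f$ in $[\cR]$ and $\nu_n\to\nu$ in $\Prob(\pi)$ and aim to show $f_n\nu_n\to f\nu$. By the very definition of the topology on $\Prob(\pi)$ (generated by the sets $\Omega(F,O)$, equivalently the weak$^*$ topology pulled back through $\Psi$), this reduces to proving that for every fixed $G\in C_0(\pi)$,
$$\int (f_n\nu_n)_y(G_y)\,d\mu(y) \longrightarrow \int (f\nu)_y(G_y)\,d\mu(y).$$

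My first step is a change of variables. Because $\mu$ is $[\cR]$-invariant and $(f\nu)_{fx}=\alpha(fx,x)_*\nu_x$, substituting $y=fx$ yields
$$\int (f\nu)_y(G_y)\,d\mu(y) \;=\; \int \nu_x(H^f_x)\,d\mu(x), \qquad H^f(p) := G\bigl(\alpha(fx,x)p\bigr)\ \text{for}\ p\in B_x,$$
and analogously with $f_n$ in place of $f$. I then need to verify that $H^f$ (and each $H^{f_n}$) belongs to $C_0(\pi)$: fibrewise continuity and vanishing at infinity hold because $\alpha(fx,x):B_x\to B_{fx}$ is a homeomorphism carrying $G_{fx}\in C_0(B_{fx})$ to $H^f_x$; the norm bound $\|H^f\|\le\|G\|$ is immediate; and Borel measurability of $H^f$ on the total space $B$ uses precisely the Borel condition on $\alpha$ assumed in the hypothesis of the theorem.

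With this setup in hand, the main step is the standard split-the-difference argument:
$$\int\nu_{n,x}(H^{f_n}_x)\,d\mu(x) \;-\; \int\nu_x(H^f_x)\,d\mu(x) \;=\; \mathrm{(I)} + \mathrm{(II)},$$
where $\mathrm{(I)}:=\int (\nu_{n,x}-\nu_x)(H^f_x)\,d\mu(x)$ and $\mathrm{(II)}:=\int \nu_{n,x}(H^{f_n}_x-H^f_x)\,d\mu(x)$. Term $\mathrm{(I)}$ tends to $0$ because $\nu_n\to\nu$ in $\Prob(\pi)$, applied to the \emph{fixed} test function $H^f\in C_0(\pi)$. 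For term $\mathrm{(II)}$, the key observation is that on $A_n:=\{x:f_n(x)=f(x)\}$ the homeomorphisms $\alpha(f_nx,x)$ and $\alpha(fx,x)$ coincide, so $H^{f_n}_x=H^f_x$ on $A_n$; the integrand of $\mathrm{(II)}$ therefore vanishes on $A_n$ and is pointwise bounded by $2\|G\|$ elsewhere, giving
$$|\mathrm{(II)}| \;\le\; 2\|G\|\,\mu(X\setminus A_n) \;=\; 2\|G\|\,d_u(f_n,f) \longrightarrow 0.$$

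The step I expect to be the main obstacle is not the splitting argument itself but rather verifying the \emph{global} Borel measurability of $(x,p)\mapsto G(\alpha(fx,x)p)$ on $B$; the joint dependence on base and fiber point is exactly what the Borel condition on $\alpha$ is designed to control. Once that measurability is secured, the rest is the familiar proof of joint continuity of a product action: ``measure moves, transformation fixed'' is absorbed by weak$^*$ convergence against a fixed $C_0$-function, while ``transformation moves, measure fixed'' is absorbed by smallness of $\mu\{f_n\neq f\}$ in the uniform topology on $[\cR]$.
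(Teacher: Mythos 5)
Your proposal is correct and follows essentially the same route as the paper's proof: change variables using $\mu$-invariance to pull the action onto the test function, then handle the two moving pieces — the measure field and the full-group element — separately, controlling the transformation shift by $2\|G\|\,d_u(f_n,f)$ and the measure shift by weak$^*$ convergence against a now-fixed $C_0$-test function. The paper presents this as a chain of equalities in the order "first freeze $f$, then freeze $\eta$," whereas you package it as an explicit two-term split; this is a cosmetic difference, not a mathematical one. Your side remark about checking $H^f\in C_0(\pi)$ is a point the paper leaves implicit, and it is exactly where the Borel condition on $\alpha$ enters, as you say.
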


\begin{proof}
Let $\{f_i\}_{i\in \N} \subset [\cR]$ converge to $f \in [\cR]$ and $\{\eta^i\}_{i\in \N} \subset \Prob(\pi)$ converge to $\eta \in \Prob(\pi)$. Let $F \in C_0(\pi)$. Observe:
\begin{eqnarray*}
\lim_{i\to\infty} \int (f_i\eta^i)_x(F_x)~d\mu(x)&=& \lim_{i\to\infty} \int (\alpha(x,f_i^{-1}x)_*\eta^i_{f^{-1}_ix})(F_x)~d\mu(x) \\
&=& \lim_{i\to\infty} \int (\alpha(f_ix,x)_*\eta^i_{x})(F_{f_ix})~d\mu(x) \\
&=& \lim_{i\to\infty} \int \eta^i_{x}(F_{f_ix}\circ \alpha(f_ix,x) )~d\mu(x).
\end{eqnarray*}
Because $f_i \to f$ in uniformly 
$$\lim_{i\to\infty} \mu(\{x\in X:~F_{f_ix}\circ \alpha(f_ix,x) = F_{fx}\circ \alpha(fx,x)\}) = 1.$$
Since $F$ is essentially bounded the previous limit equals 
$$=\lim_{i\to\infty} \int \eta^i_{x}(F_{fx}\circ \alpha(fx,x) )~d\mu(x).$$
Since $\eta^i\to\eta$ as $i\to\infty$ we have
$$=\lim_{i\to\infty} \int \eta_{x}(F_{fx}\circ \alpha(fx,x) )~d\mu(x)=\lim_{i\to\infty} \int (f\eta)_x(F_x)~d\mu(x).$$
Because $F$ is arbitrary, this proves $\lim_{i\to\infty} f_i\eta^i = f\eta$. So $[\cR] \cc \Prob(\pi)$ is jointly continuous.
\end{proof}




\bibliography{hyperbolic}
\bibliographystyle{alpha}

\end{document}